

\newif\ifshowflag
\showflagtrue   
\showflagfalse   
\newif\ifshowqstn
\showqstntrue   
\showqstnfalse   
\newif\ifshowinfo
\showinfotrue   
\showinfofalse   

\newcommand{\flag}[1]{\ifshowflag
  {\noindent\color{red}{$\clubsuit\clubsuit\clubsuit$\; {\sffamily #1}\; $\clubsuit\clubsuit\clubsuit$}}\fi}


\documentclass[12pt]{amsart}
\usepackage{amssymb,amscd}

\usepackage{graphicx}
\usepackage{xcolor}         




\hfuzz          = 5pt
\textwidth      = 6.5in                 
\textheight     = 9.0in                 

\topskip        = 0.0in                 
\headheight     = 6pt                   
\headsep        = 0.25in                

\topmargin      =-0.25in                

\oddsidemargin  = 0.0in                 
\evensidemargin = 0.0in                 

\numberwithin{equation}{section}        


\hyphenation{ho-meo-mor-phism ho-meo-mor-phisms self-ho-meo-mor-phism self-ho-meo-mor-phisms quasi-hy-per-bo-lic}

\newcommand{\Va}{V\"ais\"al\"a}     

\newcommand{\HappyFace}{($\ddot{\smile}$)}



\catcode`\@=11       

\def\rf#1{\@rf{#1}#1:;;}
\def\rfs#1{\@rfs{#1}#1:;;}
\def\rfm#1{\@rfF#1<>;;}

\def\@C{C}\def\@CC{CC}\def\@E{E}\def\@F{F}\def\@L{L}\def\@P{P}\def\@PP{PP}\def\@Q{Q}
\def\@R{R}\def\@S{S}\def\@T{T}\def\@TT{TT}\def\@X{X}\def\@XX{XX}\def\@Ex{Ex}
\def\@s{s}\def\@ss{ss}\def\@f{f}

\def\@rf#1#2:#3;;{\def\@b{#2}
  \ifx\@b\@C Corollary~\ref{#1}\else%
  \ifx\@b\@CC Corollary~\ref{#1}\else%
  \ifx\@b\@E (\ref{#1})\else
  \ifx\@b\@Ex Exercise~\ref{#1}\else%
  \ifx\@b\@F Fact~\ref{#1}\else%
  \ifx\@b\@L Lemma~\ref{#1}\else%
  \ifx\@b\@P Proposition~\ref{#1}\else%
  \ifx\@b\@PP Proposition~\ref{#1}\else%
  \ifx\@b\@Q Question~\ref{#1}\else%
  \ifx\@b\@R Remark~\ref{#1}\else%
  \ifx\@b\@S Section~\ref{#1}\else%
  \ifx\@b\@T Theorem~\ref{#1}\else%
  \ifx\@b\@TT Theorem~\ref{#1}\else%
  \ifx\@b\@X Example~\ref{#1}\else%
  \ifx\@b\@XX Example~\ref{#1}\else%
  \ifx\@b\@s \S\ref{#1}\else
  \ifx\@b\@ss \S\ref{#1}\else
  \ifx\@b\@f Figure~\ref{#1}\else%
  \ref{#1}\fi\fi\fi\fi\fi\fi\fi\fi\fi\fi\fi\fi\fi\fi\fi\fi\fi\fi}

\def\@rfs#1#2:#3;;{\def\@b{#2}
  \ifx\@b\@C Corollaries~\ref{#1}\else%
  \ifx\@b\@CC Corollaries~\ref{#1}\else%
  \ifx\@b\@Ex Exercises~\ref{#1}\else%
  \ifx\@b\@F Facts~\ref{#1}\else%
  \ifx\@b\@L Lemmas~\ref{#1}\else%
  \ifx\@b\@P Propositions~\ref{#1}\else%
  \ifx\@b\@PP Propositions~\ref{#1}\else%
  \ifx\@b\@Q Questions~\ref{#1}\else%
  \ifx\@b\@R Remarks~\ref{#1}\else%
  \ifx\@b\@S Sections~\ref{#1}\else%
  \ifx\@b\@T Theorems~\ref{#1}\else%
  \ifx\@b\@TT Theorems~\ref{#1}\else%
  \ifx\@b\@X Examples~\ref{#1}\else%
  \ifx\@b\@XX Example~\ref{#1}\else%
  \ifx\@b\@s \S\S\ref{#1}\else
  \ifx\@b\@ss \S\S\ref{#1}\else
  \ifx\@b\@f Figures~\ref{#1}\else%
  \ref{#1}\fi\fi\fi\fi\fi\fi\fi\fi\fi\fi\fi\fi\fi\fi\fi\fi\fi}

\def\@rfF<#1>#2;;{\def\@c{#2}
  \@rfs{#1}#1:;;\ifx\@c\empty\else\@rfL:#2;;\fi}

\def\@rfL:#1<#2>#3;;{\def\@b{#2}\def\@c{#3}
  #1\ifx\@b\empty\else\ref{#2}\ifx\@c\empty\else\@rfL:#3;;\fi\fi}

\catcode`\@=12       


\definecolor{darkblue}{rgb}{0,0,0.6}
\definecolor{darkgreen}{rgb}{0,0.4,0}
\definecolor{darkred}{rgb}{0.6,0,0}
\definecolor{lightblue}{rgb}{0.8,0.8,1}
\definecolor{lightgreen}{rgb}{0.25,1,0.25}
\definecolor{lightred}{rgb}{1,0.5,0.5}
\definecolor{lightpurple}{rgb}{1,0.4,0.6}
\definecolor{darkpurple}{rgb}{0.5,0,0.5}

\newcommand{\ds}{\displaystyle} \newcommand{\half}{\frac{1}{2}}       \newcommand{\qtr}{\frac{1}{4}}
                \newcommand{\xra}{\xrightarrow}

\newcommand{\comp}{\circ}           	        
\newcommand{\sm}{\setminus}                     

\newcommand{\lp}{\left(}    \newcommand{\rp}{\right)}   
\newcommand{\lex}{\lesssim}   
\newcommand{\eqx}{\simeq}

\def\vint_#1{\mathchoice
          {\mathop{\vrule width 6pt height 3 pt depth -2.5pt
                  \kern -8pt \intop}\nolimits_{\kern -4pt#1}}%
          {\mathop{\vrule width 5pt height 3 pt depth -2.6pt
                  \kern -6pt \intop}\nolimits_{#1}}%
          {\mathop{\vrule width 5pt height 3 pt depth -2.6pt
                  \kern -6pt \intop}\nolimits_{#1}}%
          {\mathop{\vrule width 5pt height 3 pt depth -2.6pt
                   \kern -6pt \intop}\nolimits_{#1}}}

\newcommand{\ifff}{if and only if }  \newcommand{\wrt}{with respect to }  \newcommand{\param}{parametrization}

        \newcommand{\holo}{holomorphic}

\newcommand{\homeo}{homeomorphism}

\newcommand{\mob}{M\"obius}        \newcommand{\MT}{\mob\ transformation}

\newcommand{\alf}{\alpha}       \newcommand{\del}{\delta}   \newcommand{\Del}{\Delta}
\newcommand{\veps}{\varepsilon}  \newcommand{\gam}{\gamma}
\newcommand{\Gam}{\Gamma}       \newcommand{\kap}{\kappa}   \newcommand{\lam}{\lambda}
\newcommand{\Lam}{\Lambda}      \newcommand{\om}{\omega}    \newcommand{\Om}{\Omega}
\newcommand{\sig}{\sigma}       \newcommand{\Sig}{\Sigma}   \newcommand{\tha}{\theta}
\newcommand{\vth}{\vartheta}    \newcommand{\Th}{\Theta}
\newcommand{\ups}{\upsilon}     

\newcommand{\mcA}{{\mathcal A}}



\DeclareMathOperator{\id}{{\mathsf{id}}}        

\providecommand{\abs}[1]{\lvert#1\rvert}        



\DeclareMathOperator*\Star{\text{\Huge$\star$}}



\DeclareMathOperator{\md}{\mathsf{mod}}

\DeclareMathOperator{\diam}{\mathsf{diam}}
\DeclareMathOperator{\dist}{\mathsf{dist}}






\newcommand{\B}{\mathsf{B}}     
\newcommand{\D}{\mathsf{D}}     
\newcommand{\A}{\mathsf{A}}     

		 
\DeclareMathOperator{\Arg}{Arg}
\DeclareMathOperator{\Log}{Log}

\newcommand{\mathfont}{\mathsf} 
\newcommand{\mfA}{{\mathfont A}}      %
\newcommand{\mfC}{{\mathfont C}}      
\newcommand{\mfD}{{\mathfont D}}      
\newcommand{\mfN}{{\mathfont N}}      
\newcommand{\mfR}{{\mathfont R}}      
\newcommand{\mfS}{{\mathfont S}}      
\newcommand{\mfZ}{{\mathfont Z}}      

      %
\newcommand{\cmfD}{\bar{\mathfont D}}      
\newcommand{\RS}{\hat{\mfC}}     
%


\newcommand{\bd}{\partial}      
\newcommand{\bA}{{\partial A}}  \newcommand{\bB}{{\partial B}}  
\newcommand{\bD}{{\partial D}}    

\newcommand{\cA}{{\bar A}}      
\newcommand{\cB}{{\bar B}}      %
      %
	\newcommand{\bOm}{{\partial\Omega}} 



\newtheorem{Thm}{Theorem}
\newtheorem{Cor}[Thm]{Corollary}        
           %



\theoremstyle{remark}
         %
        %

\theoremstyle{plain}
\newtheorem*{thm*}{Theorem}         
\newtheorem*{lma*}{Lemma}           
\newtheorem*{cor*}{Corollary}
\newtheorem*{conj*}{Conjecture}
\newtheorem*{prop*}{Proposition}

\theoremstyle{remark}
\newtheorem*{claim*}{Claim}
\newtheorem*{xx*}{Example}
\newtheorem*{xxs*}{Examples}
\newtheorem*{fact*}{Fact}
\newtheorem*{qstn*}{Question}
\newtheorem*{rmk*}{Remark}
\newtheorem*{rmks*}{Remarks}

\swapnumbers                
\theoremstyle{plain}
\newtheorem{thm}[equation]{Theorem}
\newtheorem{lma}[equation]{Lemma}
\newtheorem{cor}[equation]{Corollary}
\newtheorem{prop}[equation]{Proposition}

\theoremstyle{remark}

\newtheorem{xx}[equation]{Example}

\newtheorem{fact}[equation]{Fact}

\newtheorem{qstn}[equation]{Question}

\newtheorem{rmk}[equation]{Remark}
\newtheorem{rmks}[equation]{Remarks}


  {\par\smallskip\noindent{\em #1}}{\par\smallskip}

\newenvironment{noname}[1]
  {\par\smallskip\noindent%
   \leftskip=\nnlen\rightskip=\nnlen\addtolength{\leftmargini}{\nnlen}%
   \em #1}%
  {\par\smallskip\addtolength{\leftmargini}{-\nnlen}}
\newlength{\nnlen}\setlength{\nnlen}{30pt}

  {\par\smallskip\noindent\refstepcounter{equation}\theequation.{ \em #1.}}%
  {\qed\smallskip}
\newenvironment{pf*}[1]{\subsubsection*{#1}}{\qed\smallskip}	

\newcounter{aenumctr} 
\newenvironment{aenum}
  {\begin{list}%
    {\rm(\alph{aenumctr})}
    {\usecounter{aenumctr}}
    \setlength{\rightmargin}{\leftmargin}}
  {\end{list}} 

\newcommand{\subann}{\mathrel{\ooalign{$\subset$\cr\hidewidth\raisebox{0.22ex}{\hspace{1mm}$\scriptstyle\rm a$}\hidewidth}}}

\newcommand{\csubann}{\mathrel{\ooalign{$\subset$\cr\hidewidth\raisebox{0.22ex}{\hspace{1mm}$\scriptstyle\rm c$}\hidewidth}}}

\newcommand{\BL}{bi-Lipschitz}

\newcommand{\bp}{\mathsf{bp}}
\newcommand{\BP}{\mathsf{BP}}
\newcommand{\BPt}{\textsf{BP}}
\newcommand{\BPEP}{\textsf{BPEP}}

\newcommand{\core}{\mathsf{core}}
\newcommand{\band}{\mathsf{band}}

\newcommand{\bdi}{\bd_{\rm in}}     
\newcommand{\bdo}{\bd_{\rm out}}    

\newcommand{\Coo}{{\mfC}_{01}}		
\newcommand{\Cab}{{\mfC}_{ab}}		
\newcommand{\Co}{{\mfC}_*}			
\newcommand{\Do}{{\mfD}_*}			

\newcommand{\kk}{\mathsf{k}}        
\newcommand{\SSS}{\mathsf{S}}       
\newcommand{\MM}{\mathsf{M}}        
\newcommand{\LL}{\mathsf{L}}        
\newcommand{\XL}{\mathsf{X}}        



\begin{document} 
\title[Hyperbolic \& Quasi-Hyperbolic Quasi-Geodesics]{Quasi-Hyperbolic Geodesics are\\Hyperbolic Quasi-Geodesics}
\date{\today}


\author{David A. Herron}
\address{Department of Mathematical Sciences, University of Cincinnati, OH 45221-0025, USA}
\email{David.Herron@UC.edu}

\author{Stephen M. Buckley}
\address{Department of Mathematics, National University of Ireland, Maynooth, Co.~Kildare, Ireland}
\email{sbuckley@maths.nuim.ie}

\thanks{ The first author was supported in part by the Charles Phelps Taft Memorial Fund.  The second author was supported in part by Enterprise Ireland and Science Foundation Ireland.  Both authors were supported by the US NSF grant DMS-1500454}



\keywords{hyperbolic metric, quasihyperbolic metric, quasi-geodesics, Gromov hyperbolicity}
\subjclass[2010]{Primary: 30F45, 30L99; Secondary: 51F99, 53C22, 30C62}

\begin{abstract}
This is a tale describing the large scale geometry of Euclidean plane domains with their hyperbolic or quasihyperbolic distances.
We prove that in \emph{any} hyperbolic plane domain, hyperbolic and quasihyperbolic quasi-geodesics are the same curves.
We also demonstrate the simultaneous Gromov hyperbolicity of such domains with their hyperbolic or quasihyperbolic distances.
\end{abstract}

\dedicatory{Dedicated to Alan Beardon, for his deep insights into hyperbolic geometry.}

\newcommand{\PV}{{{\small \em  preliminary version---please do not circulate}}}
\maketitle
%

\section{Introduction}  \label{S:Intro} 
Throughout this article $\Om$ denotes a hyperbolic plane domain: $\Om\subset\mfC$ is open and connected and $\Om^c:=\mfC\sm\Om$ contains at least two points.  Each such $\Om$ carries a unique maximal constant curvature -1 conformal metric $\lam\,ds=\lam_\Om ds$ usually referred to as the \emph{Poincar\'e hyperbolic metric} on $\Om$.  The length distance $h=h_\Om$ induced by $\lam\,ds$ is called \emph{hyperbolic distance} in $\Om$.  There is also a \emph{quasihyperbolic metric} $\del^{-1}ds=\del_\Om^{-1}ds$ on $\Om$, whose length distance $k=k_\Om$ is called \emph{quasihyperbolic distance} in $\Om$; here $\del(z)=\del_\Om(z):=\dist(z,\bOm)$ is the Euclidean distance from $z$ to the boundary of $\Om$.  See \rf{ss:h&k} and \rf{ss:hk-d&g} for more details.

A straightforward, albeit non-trivial, argument reveals that the metric spaces $(\Om,h)$ and $(\Om,k)$ are isometric \ifff $\Om$ is an open half-plane and the isometry is the restriction of a M\"obius transformation.  Furthermore, these metric spaces are bi-Lipschitz equivalent \ifff the identity map is bi-Lipschitz. \flag{should i give proofs?}

It is well-known that the identity map $(\Om,k)\xra{\id}(\Om,h)$ enjoys the following properties:
\begin{itemize}
  \item  The map $\id$ is a 2-Lipschitz \homeo.\footnote{In fact, the map $\id$ is always quasiconformal, but need not be quasisymmetric.}
  \item  For any simply connected $\Om$, $\id$ is 2-bi-Lipschitz.\footnote{That $\id^{-1}$ is 2-Lipschitz is a consequence of Koebe's One Quarter Theorem.}
  \item  In general, $\id$ is bi-Lipschitz \ifff $\Om^c$ is uniformly perfect.\footnote{This is quantitative: the bi-Lipschitz and uniformly perfectness constants depend only on each other.}
\end{itemize}
See, for instance, \cite{BP-beta} and \cite{Pomm-unifly-perfect1}.

However, for general hyperbolic plane domains $\Om$, there is no simple metric control on $\id^{-1}$.  For example, given \emph{any} sequences $(h_n)_1^\infty$ and $(k_n)_1^\infty$ of positive numbers with say $1\ge h_n\to0$ and $2\le k_n\to\infty$, there are sequences $(a_n)_1^\infty, (b_n)_1^\infty$ of points in the punctured unit disk $\mfD_*:=\mfD\sm\{0\}$ with hyperbolic and quasihyperbolic distance $h_*(a_n,b_n)=h_n$ and $k_*(a_n,b_n)=k_n$.  Also, here the identity map $(\D_*,k_*)\to(\D_*,h_*)$ fails to be quasisymmetric.  See \rf{X:D*} in \rf{ss:punxd D}.

Nonetheless, the hyperbolic and quasihyperbolic geodesics\footnote{See \rf{ss:hk-d&g} and \rf{s:P&Gs} for definitions and terminology.} in $\D_*$ appear quite similar.  Near the unit circle the two metrics are bi-Lipschitz equivalent\footnote{Indeed,
$\forall\;\half\le|z|<1, \; 1/\del_*(z)\le\lam_*(z)\le2/\del_*(z)$.},
so each geodesic in one space (with endpoints near the unit circle) is a quasi-geodesic in the other space.  Near the puncture: we can pull back both metrics via the exponential map; a hyperbolic geodesic in $\D_*$ pulls back to a circular subarc in the left-half-plane which has euclidean length at most $\pi/2$ times the euclidean distance between its endpoints; and, this translates into saying that the quasihyperbolic length of the original hyperbolic geodesic is at most $\pi/2$ times the quasihyperbolic distance between its endpoints.  In summary, we find that for any points $a,b$ in $\mfD_*$ and any hyperbolic geodesic $[a,b]_h$ in $\D_*$, we have
\[
  \ell_k\lp [a,b]_h \rp \le 11\, k(a,b)\,, \quad\text{so $[a,b]_h$ is a quasihyperbolic $11$-quasi-geodesic in $\D_*$}\,;
\]
and similarly quasihyperbolic geodesics in $\D_*$ are hyperbolic quasi-geodesics.  

Here we prove that this phenomenon holds for \emph{every} hyperbolic plane domain.  Thus, these two sometimes similar but often quite different metric spaces actually have quite similar geometry.  Based on the first author's numerous discussions with many non-believers at various times during the past decade, this is apparently a surprising fact!

\begin{Thm}  \label{TT:geos}
There are absolute constants $H_o$ and $K_o$ such that for any hyperbolic plane domain $\Om$, any pair of points $a,b\in\Om$, any hyperbolic geodesic $[a,b]_h$, and any quasihyperbolic geodesic $[a,b]_k$,
\[
  \ell_k\lp [a,b]_h \rp \le K_o\,k(a,b) \quad\text{and}\quad  \ell_h\lp [a,b]_k \rp \le H_o\, h(a,b)\,.
\]
Moreover, for each $\Lam\ge1$ there are explicit constants $H$ and $K$ that depend only on $\Lam$ such that for any hyperbolic $\Lam$-quasi-geodesic $\gam^h$ and any quasihyperbolic $\Lam$-quasi-geodesic $\gam^h$ both with endpoints $a$ and $b$,
\[
  \ell_k\lp \gam^h \rp \le K\,k(a,b) \quad\text{and}\quad  \ell_h\lp \gam^k \rp \le H\, h(a,b)\,.
\]
\end{Thm}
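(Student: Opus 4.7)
My plan is to reduce the four inequalities to the two geodesic ones (first pair), which I then prove by a thick--thin decomposition of $\Om$.  The universal pointwise estimate $\lam_\Om\le2/\del_\Om$ (the first bulleted fact in the introduction) gives $\ell_h(\gam)\le2\ell_k(\gam)$ for every rectifiable $\gam\subset\Om$.  Applied to $\gam^k=[a,b]_k$ this produces $\ell_h(\gam^k)\le2k(a,b)$ for free; the substantive content of $\ell_h(\gam^k)\le H_o\,h(a,b)$ is the replacement of $k(a,b)$ by $h(a,b)$, which is non-trivial because $k(a,b)/h(a,b)$ can be arbitrarily large (as the $\D_*$ discussion shows).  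The opposite inequality $\ell_k(\gam^h)\le K_o\,k(a,b)$ is equally delicate: along a hyperbolic geodesic the density ratio $\lam_\Om\del_\Om$ can be arbitrarily small, so $\ell_k(\gam^h)$ may dwarf $\ell_h(\gam^h)=h(a,b)$, and one must show that the resulting $k$-inflation is never worse than the $k$-inflation between the endpoints themselves.

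I would split the geodesic analysis into two regimes.  In the \emph{thick regime} where $\lam_\Om\del_\Om$ is bounded below -- globally on a simply-connected sub-domain, or locally near any boundary component that is a non-degenerate continuum of comparable diameter to its distance from $\gam$ -- the identity map is quantitatively bi-Lipschitz (by the uniformly perfect fact) and both length bounds are immediate.  In the \emph{thin regime} near an isolated boundary point, or near a boundary component of very small diameter, I would pass to a universal cover of a punctured or annular neighborhood: for a puncture this is the exponential map from a half-plane (as in the introduction's computation for $\D_*$), and for a small continuum it is a logarithm-type cover of an annular neighborhood.  In either cover the hyperbolic geodesics are explicit circular arcs and a direct calculation bounds their $k$-length by a fixed multiple (say $\pi/2$) of the $k$-distance between their endpoints, and symmetrically with $h$ and $k$ interchanged.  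For a general geodesic $\gam$ joining $a,b$, I would then partition $\gam$ into finitely many sub-arcs, each either thick or contained in a single thin neighborhood, apply the appropriate local bound to each piece, and sum using the triangle inequality at the sub-arc endpoints along $\gam$.

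The principal obstacle is to execute this patching with an \emph{absolute} constant, since $\gam$ may wind past arbitrarily many thin boundary components: the telescoping must charge each thin contribution to the corresponding portion of $k(a,b)$ (respectively $h(a,b)$) without double-counting.  I expect this to hinge on a Gehring-Hayman-type rigidity asserting that a hyperbolic geodesic cannot re-enter a fixed thin neighborhood without making commensurate progress in $k$-distance between endpoints, and symmetrically.  With the geodesic case in hand, the quasi-geodesic extension should follow either by repeating the thick--thin localization on a $\Lam$-quasi-geodesic $\gam$ -- the hypothesis $\ell_*(\gam)\le\Lam\,d_*(a,b)$ already controls the number and total size of thin passages along $\gam$ -- or, alternatively, by invoking the simultaneous Gromov hyperbolicity of $(\Om,h)$ and $(\Om,k)$ promised in the abstract together with Morse-lemma stability to transfer the geodesic bound to any quasi-geodesic, with the explicit constants $H=H(\Lam)$ and $K=K(\Lam)$ emerging from that combination.
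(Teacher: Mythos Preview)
Your thick--thin outline is, at a high level, exactly the paper's strategy: ``thick'' is where the Beardon--Pommerenke function $\bp$ is bounded (so $\lam\del\eqx1$ by \eqref{E:BP est}), ``thin'' is where $\bp$ is large, and the thin pieces sit inside large fat separating annuli whose geometry one analyzes directly.  But your proposal stops precisely where the work begins.

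The sentence ``The principal obstacle is to execute this patching with an absolute constant\ldots I expect this to hinge on a Gehring--Hayman-type rigidity'' names the difficulty but does not solve it.  What you need is not a vague Gehring--Hayman principle but a concrete mechanism guaranteeing that a (quasi-)geodesic cannot re-enter a thin region after leaving it.  The paper supplies this via the \emph{ABC property} (\rf{P:ABC}, \rfs{R:ABC}): both hyperbolic and quasihyperbolic quasi-geodesics cross any moderate-modulus annulus in $\mcA_\Om$ at most once.  This is what lets you (i) prove that the thin annuli can be chosen with pairwise disjoint cores (\rf{P:dsjt cores}), (ii) conclude that \emph{both} quasi-geodesics cross each thin annulus exactly once (\rf{L:cores separate}), and hence (iii) put the ``bad'' subarcs of $\Gam^h$ and $\Gam^k$ in one-to-one correspondence with concentric endpoints.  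Without this, your telescoping step has no reason to terminate with an absolute constant: a hyperbolic geodesic could, a priori, wander in and out of the same thin neighborhood many times.  Your universal-cover computation for a single puncture is fine, but it does not by itself control the global combinatorics.

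There is also a second gap you gloss over.  Even after the bad subarcs are matched up and shown to have comparable lengths (this is the content of \rf{TT:bad??}), you still need the \emph{good} subarcs $\gam^h_i$ and $\gam^k_i$ to have comparable $h$- and $k$-lengths.  That $\ell_h(\gam^g_i)\eqx\ell_k(\gam^g_i)$ is immediate from $\bp\lex1$ there, but $\ell_g(\gam^h_i)\eqx\ell_g(\gam^k_i)$ is not: the two good subarcs have \emph{different} endpoints.  The paper closes this by arranging that corresponding endpoints lie on the same boundary circle of a core, hence are at bounded $k$-distance (\rf{ss:FE}).

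Finally, your proposed shortcut for the quasi-geodesic case---invoke simultaneous Gromov hyperbolicity plus the Morse lemma---is circular: in the paper, \rf{TT:GrHyp} is proved \emph{using} \rf{TT:geos} (see \rf{S:Gromov bs}), not the other way around.  The correct route is the one you list first: run the same thick--thin argument directly on $\Lam$-chordarc paths, using that they too enjoy the ABC property (\rfs{R:ABC}).
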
                   

An immediate corollary of \rf{TT:geos} is that in any hyperbolic plane domain, the hyperbolic and quasihyperbolic quasi-geodesics are exactly the same curves, and this is quantitative.  Another easy consequence (see \rf{C:F&KPY}) is that the same sort of phenomenon holds for hyperbolic domains in the Riemann sphere, but here we replace the quasihyperbolic metric with either the Ferrand or Kulkarni-Pinkhall-Thurston metrics; again, the (hyperbolic, Ferrand, or Kulkarni-Pinkhall-Thurston) lengths of any of the (hyperbolic, Ferrand, or Kulkarni-Pinkhall-Thurston) quasi-geodesics are all comparable.

In addition, we establish the following marvelous fact concerning the simultaneous Gromov hyperbolicity of plane domains with their hyperbolic or quasihyperbolic distance geometry.  This further supports our assertion that these two geometries really are quite similar.

\begin{Thm}  \label{TT:GrHyp}
For any hyperbolic plane domain $\Om$, $(\Om,h)$ and $(\Om,k)$ are either both Gromov hyperbolic or both not Gromov hyperbolic.
\end{Thm}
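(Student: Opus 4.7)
The plan is to derive Theorem~\ref{TT:GrHyp} from Theorem~\ref{TT:geos} by combining it with the Morse lemma on quasi-geodesic stability. The key preparatory observation is that Theorem~\ref{TT:geos}, applied to arbitrary subsegments of a geodesic, shows that every $h$-geodesic is a $(K_o,0)$-quasi-geodesic in $(\Om,k)$ when parametrized by $k$-arclength, and symmetrically every $k$-geodesic is an $(H_o,0)$-quasi-geodesic in $(\Om,h)$. Thus the two length spaces share the same family of quasi-geodesics, quantitatively, and the problem reduces to showing that Gromov hyperbolicity propagates across this equivalence.

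For the easier implication, suppose $(\Om,k)$ is $\delta_k$-hyperbolic. Given an $h$-geodesic triangle $T$, its three sides are $k$-quasi-geodesics, so Morse's lemma in the hyperbolic space $(\Om,k)$ places each side within $k$-Hausdorff distance $M=M(\delta_k,K_o)$ of a $k$-geodesic with the same endpoints. Since the $k$-geodesic triangle is $\delta_k$-thin in $k$, a routine triangle-inequality chase yields that $T$ is $(\delta_k+2M)$-thin in $k$. The pointwise bound $h\le 2k$ converts this into $2(\delta_k+2M)$-thinness of $T$ in $h$, proving Gromov hyperbolicity of $(\Om,h)$.

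The reverse implication is subtler and is where the main obstacle lies. Suppose $(\Om,h)$ is $\delta_h$-hyperbolic, and run the same Morse-lemma argument inside $(\Om,h)$ with the roles of $h$ and $k$ swapped: every $k$-geodesic triangle $T'$ is $D$-thin in $h$ for some $D=D(\delta_h,H_o)$. The obstacle is that there is no global reverse to $h\le 2k$, as the punctured disc $\D_*$ shows: points on opposite sides of a puncture can be $h$-close while remaining $k$-far. Hence $h$-thinness of $T'$ does not immediately transfer to $k$-thinness.

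To overcome this, the plan is to prove a passage lemma of the following shape: whenever $p,q$ lie on distinct sides of a $k$-geodesic triangle and $h(p,q)\le D$, then $k(p,q)\le C$ for a constant $C$ depending only on $D$ and the constants of Theorem~\ref{TT:geos}. The strategy is to form a short loop from the $h$-geodesic $[p,q]_h$ together with the two $k$-geodesic subsegments connecting $p$ and $q$ back to a common vertex of $T'$; Theorem~\ref{TT:geos} supplies $k$-length control on $[p,q]_h$, while the fact that $p,q$ lie on $k$-geodesic \emph{sides of a triangle} (rather than on arbitrary $k$-quasi-geodesics) is what rules out the punctured-disc pathology. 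The construction and verification of this passage lemma is where I expect the bulk of the work to lie; once it is in place, $h$-thinness of $T'$ promotes to $k$-thinness, yielding Gromov hyperbolicity of $(\Om,k)$.
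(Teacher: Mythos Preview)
Your treatment of the easy direction (assuming $(\Om,k)$ is Gromov hyperbolic) is correct and matches the paper's argument essentially verbatim: the sides of an $h$-triangle are $k$-quasi-geodesics by Theorem~\ref{TT:geos}, so the triangle is $k$-thin by geodesic stability, and $h\le 2k$ converts this to $h$-thinness.

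For the hard direction, however, you have only a plan, not a proof. You correctly isolate the obstacle---there is no global reverse to $h\le 2k$---but the proposed ``passage lemma'' remains a conjecture, and your suggested strategy does not work. Applying Theorem~\ref{TT:geos} to $[p,q]_h$ yields $\ell_k([p,q]_h)\le K_o\,k(p,q)$, which bounds the $k$-length of the $h$-geodesic in terms of the very quantity $k(p,q)$ you are trying to control; this is circular. The remark that $p,q$ lie on sides of a \emph{triangle} rather than arbitrary quasi-geodesics does not by itself supply the missing bound, and you give no mechanism to exploit it.

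The paper's approach to the hard direction is genuinely different. Rather than attacking thin triangles, it invokes Bonk's theorem that Gromov hyperbolicity is equivalent to geodesic stability, and then proves directly that every quasihyperbolic $\Lambda$-chordarc path $\alpha$ stays within bounded $k$-distance of a $k$-geodesic $[a,b]_k$ with the same endpoints. This is done by a case analysis on the values of the Beardon--Pommerenke function $\bp$ along $\alpha$: where $\bp$ is bounded, the metrics $\lambda\,ds$ and $\delta^{-1}ds$ are bi-Lipschitz and one can transfer $h$-stability to $k$-stability locally; where $\bp$ is large, both $\alpha$ and $[a,b]_k$ are forced (by the ABC property and the separating annuli $\A(z)$) to cross the same long cylindrical cores, where they are automatically $k$-close. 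Gluing the pieces requires the ``chordarc surgery'' of Lemma~\ref{L:CA surgery} and several further case distinctions depending on $\bp$ at the endpoints. In short, the hard direction recapitulates much of the machinery of Theorem~\ref{TT:geos} rather than following from it formally.
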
                   
\noindent
An obvious corollary is that the Gromov hyperbolicity of $(\Om,k)$ is a conformal invariant; this also follows from the fact (see \cite{GO-unif}) that quasiconformal maps of plane domains are quasihyperbolic rough quasiisometries.  Thanks to \cite[Theorem~1.12]{BHK-unif}, we find that $(\Om,h)$ is Gromov hyperbolic \ifff $\Om$ is conformally equivalent to an inner uniform slit domain.\footnote{The careful reader notices that spherical quasihyperbolic distance is used in \cite{BHK-unif}.}

In the special case where $\bOm$ lies in a Euclidean straight line, the conclusion of \rf{TT:GrHyp} was established in \cite{HPRT-GrHypDenjoy}.

\bigskip

We mention, briefly and vaguely, the main concepts in our arguments.  These ideas should prove useful in other circumstances.  First, and foremost, is a result of Beardon and Pommerenke \cite{BP-beta} who introduced the domain function $\bp=\bp_\Om$ (see \rf{ss:BP} and \eqref{E:BP est}) that gives quantitative estimates for the metric ratio $\lam\,\del=\lam\,ds/\del^{-1}ds$.  In the regions where $\bp$ is not large, $\lam\,ds$ and $\del^{-1}ds$ are \BL\ equivalent, so these are ``good'' points.  At a ``bad'' point, where $\bp$ is large, there is an annulus in $\Om$ that has large conformal modulus and which separates $\bOm\cup\{\infty\}$.  The role of these large fat separating annuli is the second crucial ingredient in our proofs.

Both hyperbolic and quasihyperbolic geodesics possess what we call the \emph{ABC} property: these geodesics only cross moderate size annuli at most once, so if they enter deep into an annulus, they either stay there or cross and never return.  See \rf{s:ABC} and especially \rf{P:ABC} and \rfs{R:ABC}.

Now let $[a,b]_h$ and $[a,b]_k$ be a hyperbolic geodesic and a quasihyperbolic geodesic with the same endpoints.  We may assume that there are some ``bad'' points.  These give us large fat separating annuli that \emph{both} geodesics must cross.  In the inner cores of these annuli, the geodesics are roughly close (both hyperbolically and quasihyperbolically).  Each large fat annulus gives a ``bad'' subarc of each geodesic---this is the subarc that crosses the middle inner core; the leftover subarcs are ``good''.

There are useful estimates for $\bp$ in large annuli, and from these we find that the ``bad'' subarcs have comparable lengths; e.g., the quasihyperbolic length of a ``bad'' hyperbolic subarc is comparable to the quasihyperbolic length of the corresponding ``bad'' quasihyperbolic subarc (which equals the quasihyperbolic distance between its endpoints).  See \rf{TT:bad??}.

Finally, the hyperbolic and quasihyperbolic lengths of each pair of associated ``good'' subarcs are comparable.  But, what must be demonstrated is that all four ``good'' lengths (i.e., the hyperbolic and quasihyperbolic lengths of both the ``good'' hyperbolic and ``good'' quasihyperbolic subarcs) are comparable.

\bigskip

It turns out that the ``bad'' regions---where $\bp$ is large---are actually geometrically quite simple.  Each point $z\in\Om$ with $\bp(z)$ large has a naturally associated maximal large fat separating annulus $\A(z)$ (see \rf{s:BP&A}) and $\bp$ is large everywhere in the middle core of this annulus.  Geometrically, this core is a long (pinched) cylinder in $(\Om,k)$ (in $(\Om,h)$, respectively).  It follows that any two (hyperbolic or quasihyperbolic) geodesics that join the boundary circles of some concentric subannulus that lies inside this core have comparable (hyperbolic and quasihyperbolic) lengths; the endpoints of the two geodesics need not coincide.


This phenomenon also holds for quasi-geodesics and plays a major role in our proofs of both \rfs{TT:geos} and \ref{TT:GrHyp}.  The following technical result says that any two (hyperbolic or quasihyperbolic) quasi-geodesics that join the boundary circles of some subannulus that lies deep inside a large fat separating annulus have comparable (hyperbolic and quasihyperbolic) lengths.  Note that here the endpoints of the two quasi-geodesics need not be the same.

\begin{Thm}  \label{TT:bad??}
For each $\Lambda\ge1$, there are explicit constants $Q, H, K$ that depend only on $\Lambda$ such that for any hyperbolic plane domain $\Om$, any annulus $A\in\mcA_\Om(2Q)$\footnote{See \rf{s:anns} and especially \rf{ss:cores} for information about annuli and their moduli and cores.}, and any subannulus $\Sig\csubann\core_Q(A)$ with $\md(\Sig)\ge1$: for any pair of $\Lambda$-quasi-geodesics $\gam_1$ and $\gam_2$ (these can both be hyperbolic quasi-geodesics, or both quasihyperbolic quasi-geodesics, or one of each) that join the boundary circles of $\Sig$,
\[                           
  K^{-1} \le \frac{\ell_k(\gam_1)}{\ell_k(\gam_2)} \le K \quad\text{and}\quad
    H^{-1} \le \frac{\ell_h(\gam_1)}{\ell_h(\gam_2)} \le H\,.
  \]                         
\end{Thm}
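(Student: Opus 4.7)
The plan is to exploit the fact that deep inside a large fat separating annulus, both the hyperbolic and the quasihyperbolic metrics on $\Om$ are comparable, up to absolute multiplicative constants, to a single cylindrical model metric; once this is set up, any curve crossing a subannulus of modulus at least one has its length in either metric trapped between explicit bounds, and length comparability of any two quasi-geodesics follows almost immediately.

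First, I would use the Beardon-Pommerenke estimate for $\bp_\Om$ together with standard round-annulus normalization to model the core geometrically. After a M\"obius change of variable sending $A$ to a round annulus $\{r < |z| < R\}$ with $R/r$ large, I would show that for $z \in \core_Q(A)$ both $\lam_\Om(z)$ and $1/\del_\Om(z)$ are comparable to $1/|z|$, with multiplicative constants that tend to one as $Q \to \infty$. Equivalently, pulling back via $w = \log z$, the set $\core_Q(A)$ becomes a Euclidean strip on which both metrics are bi-Lipschitz equivalent to the flat metric $|dw|$, with bi-Lipschitz constant depending only on $Q$. This is the key geometric input, and it reduces the problem to a near-trivial computation in a flat strip where the two metrics essentially agree.

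Next, I would invoke the ABC property (\rf{P:ABC}) to control any $\Lam$-quasi-geodesic crossing $\Sig$. A $\Lam$-quasi-geodesic whose endpoints lie on the two boundary circles of $\Sig$ must traverse, roughly monotonically, the sub-collar of $\core_Q(A)$ separating these circles; it cannot double back within the collar nor leak out the sides without incurring length that violates the $\Lam$-quasi-geodesic inequality. In the strip model this forces the image of each $\gam_i$ to project essentially onto the full transverse width of the strip corresponding to $\Sig$. Hence in either native metric, $\ell(\gam_i)$ is bounded below by a constant multiple of $\md(\Sig)$ (by length-modulus in the cylindrical model) and bounded above by $\Lam$ times the distance between its endpoints, which in turn is controlled by the same quantity up to an additive term. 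Because $h$ and $k$ are bi-Lipschitz equivalent on the cylindrical region, the $h$-length and $k$-length of each $\gam_i$ are both comparable to $\md(\Sig)$, and the desired ratio bounds on $\ell_k(\gam_1)/\ell_k(\gam_2)$ and $\ell_h(\gam_1)/\ell_h(\gam_2)$ follow at once.

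The main obstacle, and the step that forces $Q$ to depend on $\Lam$, is handling quasi-geodesics that might briefly exit the clean cylindrical region $\core_Q(A)$. Since quasi-geodesics need not be contained in any prescribed set, the ABC property must be applied quantitatively: by choosing $Q$ large enough relative to $\Lam$, any would-be excursion of $\gam_i$ out of $\core_Q(A)$ is precluded, because such an excursion would force $\gam_i$ to cross an intermediate fat buffer annulus in the wrong direction, contradicting the $\Lam$-quasi-geodesic property. This interplay between the width of the cylindrical buffer and the quasi-geodesic constant is the technical heart of the argument; once it is settled the remaining estimates reduce to elementary flat-strip bookkeeping.
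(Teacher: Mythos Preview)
Your central geometric claim is wrong, and the error is exactly at the point where the hyperbolic and quasihyperbolic metrics diverge. You assert that in $\core_Q(A)$ both $\lam_\Om(z)$ and $1/\del_\Om(z)$ are comparable to $1/|z|$ with constants depending only on $Q$; equivalently, that in the strip model both metrics are bi-Lipschitz equivalent to $|dw|$. For $1/\del_\Om$ this is correct (\rf{L:del deep in A}), but for $\lam_\Om$ it is false. The Beardon--Pommerenke inequalities (\BPt) give $\lam_\Om(z)\asymp 1/\bigl(\del(z)\,\bp(z)\bigr)$ once $\bp(z)\ge\kk$, and by \rf{P:BPEP} the value $\bp(z)$ on the circle $|z-o|=de^t$ in $\A(o;d,r)$ is comparable to $r-|t|$. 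Thus $\lam_\Om(z)\,|z|$ is comparable to $1/(r-|t|)$, which ranges over roughly $[1/r,1/Q]$ as $z$ varies over $\core_Q(A)$; it is neither close to $1$ nor constant, and certainly does not stabilize as $Q\to\infty$. Consequently your claim that ``the $h$-length and $k$-length of each $\gam_i$ are both comparable to $\md(\Sig)$'' fails for $h$: for $\Sig$ near the center circle of a very fat $A$, crossing $\Sig$ costs hyperbolic length on the order of $\md(\Sig)/r$, not $\md(\Sig)$.

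The paper does not attempt to make $\lam\,ds$ flat. Instead it decomposes each quasi-geodesic $\beta$ into subarcs $\alf_j$ crossing unit-modulus concentric subannuli $A_j$, uses the ABC property to confine each $\alf_j$ to a band $B_j$ of bounded modulus, and on each $B_j$ obtains estimates $\del\asymp\del_j$, $\bp\asymp\ups_j$, and hence $\lam\asymp 1/(\del_j\ups_j)$ with constants depending only on $\Lam$. This yields $\ell_k(\alf_j)\asymp1$ and $\ell_h(\alf_j)\asymp 1/\ups_j$ (see \eqref{E:A:ellk alf estimates}, \eqref{E:A:ellh alf estimates}); since the sequence $(\ups_j)$ is determined by the position of $\Sig$ inside $A$ and is the same for both quasi-geodesics, summing gives comparable totals. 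The point you missed is that the hyperbolic length is governed by the \emph{variable} weight $1/\bp$, and the comparison between $\gam_1$ and $\gam_2$ works because both see the same weight profile, not because the weight is constant.
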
                    
\noindent
The real significance of the above inequalities is for the case when $\gam_1$ and $\gam_2$ are not of the same type.  For example, if $a,b$ are points on separate boundary circles of $\Sig$, then \eqref{E:k* ests} and the proof of \rf{L:k ests in A} reveal that
\begin{gather*}
  m:=\md(\Sig)\le k(a,b) \le 2\pi+2m \le 2(1+\pi)m \le 9m
  \intertext{and thus, e.g., when $\gam_1,\gam_2$ are both quasihyperbolic $\Lam$-chordarc paths 
  we obtain}
  \frac19\,\Lam \le \frac{\ell_k(\gam_1)}{\ell_k(\gam_2)} \le 9\,\Lam\,.
\end{gather*}
\flag{There are/shud be similar estimates for hyperbolic lengths of hyperbolic quasi-geodesics, but these are more difficult to establish..!!}

From a geometric viewpoint, \rf{TT:bad??} is quite plausible.  Indeed, the endpoints of the quasi-geodesics are roughly close together, each quasi-geodesic crosses the same ``long cylinder'' in $\Om$, and inside this ``long cylinder'' we have good estimates for the metrics.

\bigskip

\rf{S:Prelims} contains the usual definitions and terminology; especially, see \rf{s:anns} for information about annuli and \rf{s:cfml metrics} for details about the hyperbolic and quasihyperbolic metrics.  Some technical issues that make our arguments work are discussed in \rf{S:Tools}.  We prove \rfs{TT:geos}, \ref{TT:GrHyp}, \ref{TT:bad??} in \rfs{S:geos pf}, \ref{S:Gromov bs}, \ref{S:lengths} respectively.

\bigskip

The first author is indebted to David Minda for years of interesting discussions. 
\tableofcontents
%
\section{Preliminaries}  \label{S:Prelims} 
%
\subsection{General Information}  \label{s:general} %
Our notation is relatively standard. 
We work in the Euclidean plane which we identify with the complex number field $\mfC$.  Everywhere $\Om$ is a domain (i.e., an open connected set) and $\Om^c:=\mfC\sm\Om$ and $\bOm$ denote the complement and boundary (respectively) of $\Om$.  We also always assume that $\Om^c\cup\{\infty\}$ contains at least three points; such an $\Om$ is usually dubbed a \emph{hyperbolic domain}.

We write $C=C(D,\dots)$ to indicate a constant $C$ that depends only on the data $D,\dots$.  Virtually all our constants are either absolute constants, or, they depend only on a quasi-geodesic constant $\Lam$.  In most cases, we provide explicit constants, although these are typically far from optimal.  In some cases we write $K_1\lex K_2$ to indicate that $K_1\le C\,K_2$ for some computable constant $C$ that depends only on the relevant data, and $K_1\eqx K_2$ means $K_1\lex K_2\lex K_1$.

The Euclidean line segment joining two points $a,b$ is $[a,b]$, and $(a,b)=[a,b]\sm\{a,b\}$.  The open and closed Euclidean disks, centered at the point $a\in\mfC$ and of radius $r>0$, are denoted by $\D(a;r)$ and $\D[a;r]$ respectively, and $\mfD:=\D(0;1)$ is the unit disk.  We also define
\[
  \Co:=\mfC\sm\{0\} \,,\quad \Cab:=\mfC\sm\{a,b\} \,,\quad \Do:=\mfD\sm\{0\} \,, \quad \Do(a;r):=\D(a;r)\sm\{a\}\,;
\]
the definition of $\Cab$ is for distinct points $a$, $b$ in $\mfC$.

The quantity $\del(z)=\del_\Om(z):=\dist(z,\bOm)$ is the Euclidean distance from $z\in\mfC$ to the boundary of $\Om$, and $1/\del$ is the scaling factor (aka, metric-density) for the so-called \emph{quasihyperbolic} metric $\del^{-1}ds$ on $\Om\subset\mfC$; see \rf{ss:h&k}.  We make frequent use of the notation
\begin{gather*}
  \D(z)=\D_{\Om}(z):=\D(z;\del(z))=\D(z;\del_\Om(z))
  \intertext{for the maximal Euclidean disk in $\Om$ centered at a point $z\in\Om$, and then}
  \B(z)=\B_\Om(z):=\bd\D(z)\cap\bOm = \mfS^1(z;\del(z))\cap\bOm
\end{gather*}
is the set of all nearest boundary points for $z$.


\medskip

We require the following numerical fact.

\begin{lma} \label{L:NumLma} %
Let $M>0$.  Suppose $r,s,t\in\mfR$ satisfy $r\ge M+\bigl(|t|\vee|s|\bigr)$ and $|t-s|\le M$.  Then
\[
  \half\, \bigl(r-|s|\bigr) \le  r-|t|  \le \,2 \bigl( r-|s| \bigr)\,.
\]
\end{lma}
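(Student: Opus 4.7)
The plan is to reduce everything to the triangle inequality for absolute values, namely $\bigl||t|-|s|\bigr|\le|t-s|\le M$, and then exploit the two hypotheses $r-|s|\ge M$ and $r-|t|\ge M$ which come directly from $r\ge M+(|t|\vee|s|)$.

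For the upper bound, I would just write
\[
  r-|t| \;=\; (r-|s|)+(|s|-|t|) \;\le\; (r-|s|)+M \;\le\; 2(r-|s|),
\]
where the last step uses $M\le r-|s|$. This is the easy half.

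For the lower bound, the naive estimate $r-|t|\ge (r-|s|)-M$ only gives $r-|t|\ge\frac12(r-|s|)$ when $r-|s|\ge 2M$, so the one small obstacle is that we must also handle the case $M\le r-|s|\le 2M$. But in that range we can instead use the other hypothesis $r-|t|\ge M$ directly, since $M\ge\frac12(r-|s|)$. Thus I would split into two cases: if $r-|s|\ge 2M$, use $r-|t|\ge(r-|s|)-M\ge\frac12(r-|s|)$; if $r-|s|\le 2M$, use $r-|t|\ge M\ge\frac12(r-|s|)$. In either case the desired inequality holds.

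I do not anticipate any real difficulty; the only subtle point is that neither of the two lower bounds on $r-|t|$ (namely $M$ and $(r-|s|)-M$) is sharp enough on its own across the full range, so one must take the maximum of the two — equivalently, case-split at $r-|s|=2M$. Everything else is just the triangle inequality and arithmetic.
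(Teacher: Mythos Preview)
Your proof is correct. The upper bound argument is exactly the paper's: rewrite $r-|t|\le 2(r-|s|)$ as $|s|-|t|\le r-|s|$ and note both sides are bounded by $M$.

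For the lower bound you case-split at $r-|s|=2M$, which works fine, but the paper takes a shorter route you overlooked: the hypotheses $r\ge M+(|t|\vee|s|)$ and $|t-s|\le M$ are symmetric in $s$ and $t$, so the upper bound applied with $s$ and $t$ interchanged gives $r-|s|\le 2(r-|t|)$ immediately, which is the lower bound. Your two cases are doing by hand what the symmetry does for free.
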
 
\begin{proof}%
Evidently,
\begin{gather*}
  r-|t|\le2(r-|s|) \iff |s|-|t|\le r-|s|
  \intertext{and since}
  r-|s|\ge M \ge |s|-|t|
\end{gather*}
the asserted upper right-hand inequality holds.  Interchanging $s$ and $t$ gives the lower left-hand inequality.
\end{proof}%

\subsection{Paths \& Geodesics}      \label{s:P&Gs} %
A \emph{path in $\Om$} is a continuous map $\mfR\supset I\xra{\gam}\Om$ where $I$ is an interval which may be closed or open or neither and finite or infinite.  The \emph{trajectory} of such a path $\gam$ is $|\gam|:=\gam(I)$ which we call a \emph{curve}; when $I$ is closed and $I\ne\mfR$, $\bd\gam:=\gam(\bd I)$ denotes the set of \emph{endpoints of $\gam$}, so $\bd\gam$ consists of one or two points depending on whether or not $I$ is compact.  We call $\gam$ a \emph{compact path} if its parameter interval $I$ is compact

An \emph{arc} $\alf$ is an injective compact path.  Given points $a,b\in|\alf|$, there are unique $s,t\in I$ with $\alf(s)=a$, $\alf(t)=b$ and we write $\alf[a,b]:=\alf\vert_{[s,t]}$; this notation is also meant to imply an orientation---in general, $a$ precedes $b$ on $\alf$, and then $s<t$.

For the most part, we are more interested in $|\gam|$ than $\gam$; that is, most of our concerns with paths do not depend on the actual parametrization of the underlying curve.  For example, see the discussion immediately below concerning quasi-geodesics and chordarc paths.

A map $I\xra{\gam}X$ into a metric space $(X,d)$ is: a \emph{geodesic} if it is an isometry (for all $s,t\in I$, $d(\gam(s),\gam(t))=|s-t|$), a \emph{$K$-quasi-geodesic} if it is $K$-\BL\ (aka, a $K$-quasi-isometry),
\begin{gather*}
  \text{for all $s,t\in I$}\,, \quad K^{-1}|s-t| \le d(\gam(s),\gam(t)) \le K|s-t|\,,
  \intertext{and a \emph{$(K,C)$-rough-quasi-geodesic} if}
  \text{for all $s,t\in I$}\,, \quad K^{-1}|s-t|-C \le d(\gam(s),\gam(t)) \le K|s-t| + C\,;
\end{gather*}
here $I\subset\mfR$ is an interval, although for rough-quasi-geodesics it is common to allow $I$ to be the intersection of $\mfZ$ with an interval.  Every quasi-geodesic (so every geodesic too) is an injective path (so, an arc if compact), but rough-quasi-geodesics need not be continuous.  Nonetheless, every rough-quasi-geodesic can be ``tamed'' (see \cite[p.\ 403]{Brid-Haef}) meaning that it can be replaced by a nearby continuous rough-quasi-geodesic.

A characteristic property of geodesics is that the length of each subpath equals the distance between its endpoints.  There is a corresponding description for quasi-geodesics:
a path $I\xra{\gam}X$ in a metric space $(X,d)$ is an \emph{$L$-chordarc path} if it is rectifiable and
\[
  \text{for all $s,t\in I$}\,, \quad \ell_d(\gam\vert_{[s,t]}) \le L\,d(\gam(s),\gam(t))\,.
\]
(One can also introduce the notion of an \emph{$(L,C)$-rough-chordarc path}.)

If we ignore parameterizations, then the class of all quasi-geodesics (in some metric space) is exactly the same as the class of all chordarc paths.  More precisely, a $K$-quasi-geodesic is a $K^2$-chordarc path, and if we parameterize an $L$-chordarc path \wrt arclength, then we get an $L$-quasi-geodesic.  (Similarly, each ``tamed'' rough-quasi-geodesic is a rough-chordarc path, and every rough-chordarc path is a rough-quasi-geodesic.)

\smallskip

In this paper, the metric space $(X,d)$ will always be either $(\Om,h)$ or $(\Om,k)$ where $\Om$ is a hyperbolic plane domain and $h$ and $k$ are the hyperbolic and quasihyperbolic distances in $\Om$.  The geodesics and quasi-geodesics in $(\Om,h)$ are called \emph{hyperbolic} geodesics and \emph{hyperbolic} quasi-geodesics, and similarly in $(\Om,k)$ we attach the adjective \emph{quasihyperbolic}.  See \rf{ss:h&k} and \rf{ss:hk-d&g}.

\smallskip

We employ the following construction in several circumstances.  Given two points $a,b$ on some circle $C$, we write $\kap_{ab}$ for the \emph{shorter circular arc in $C$ from $a$ to $b$}; when $a,b$ are diametrically opposite, $\kap_{ab}$ can be either semi-circle.  Next, suppose $a,b$ are points that each lie on one of the boundary circles of an annulus $A$; say $a$ lies in $C_a$ and $b$ lies in $C_b$ where $\bA=C_a\cup C_b$.  Let $c$ and $d$ be the radial projections (\wrt the center of $A$) of $b$ onto $C_a$ and $a$ onto $C_b$,  respectively.  Then
\begin{subequations}\label{E:chi}
\begin{gather}
   \chi_{ab}:=\kap_{ac}\star[c,b] \quad\text{is the \emph{circular-arc-segment path} from $a$ through $c$ to $b$.}  \label{E:chi-ab}
   \intertext{and}
   \chi_{ba}:=\kap_{bd}\star[d,a] \quad\text{is the \emph{circular-arc-segment path} from $b$ through $d$ to $a$.}  \label{E:chi-ba}
\end{gather}
\end{subequations}
Note that we also have the reverse paths, $\chi^{-1}_{ab}=[b,c]\star\kap_{ca}$ and $\chi^{-1}_{ba}=[a,d]\star\kap_{db}$ which are \emph{segment-circular-arc} paths from $b$ through $c$ to $a$ and from $a$ through $d$ to $b$ respectively.  See the proofs of \rf{L:k ests in A}, \rfs{P:D*:ell-bad-arcs} and \ref{P:A:ell-bad-arcs}, and especially \rfs{TT:geos}, \ref{TT:GrHyp} and \rf{L:CA surgery} where we make extensive use of such circular-arc-segment paths.

The next estimate is useful in a number of contexts.  In \rf{L:k ests in A} we show that, under appropriate hypotheses on the location of the points $a$ and $b$, $\chi_{ab}$ is a quasihyperbolic quasi-geodesic (and so, according to \rf{TT:geos}, also a hyperbolic quasi-geodesic).

\begin{lma} \label{L:ell chi} %
Let $a,b\in\Co$ with $0<|a|\le|b|$.  Let $\chi:=\chi_{ab}$ be the circular-arc-segment path from $a$ to $c:=\bigl(|a|/|b|\bigr)b$ to $b$ as described above.  Then
\[
  |a-b| \le \ell(\chi) \le 3|a-b| \,.
\]
If $|b|\le e|a|$, then also $\ell(\chi)\le 2e|a|$.
\end{lma}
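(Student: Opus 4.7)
The plan is to unpack the definition $\chi=\kap_{ac}\star[c,b]$ and write its length explicitly. Since $c=(|a|/|b|)b$ lies on the segment from $0$ to $b$ with $|c|=|a|$, the radial segment $[c,b]$ has length $|c-b|=|b|-|a|$, and the shorter arc $\kap_{ac}$ on the circle of radius $|a|$ has length $|a|\theta$, where $\theta\in[0,\pi]$ is the angle subtended at the origin by $a$ and $b$ (equivalently, by $a$ and $c$). Thus
\[
  \ell(\chi) \;=\; |a|\theta \,+\, \bigl(|b|-|a|\bigr).
\]
The lower bound $|a-b|\le\ell(\chi)$ is immediate from the triangle inequality, since $\chi$ is a rectifiable path from $a$ to $b$.

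For the upper bound, I would first establish two elementary comparisons. The first is $|b-c|=|b|-|a|\le|a-b|$, which is just the reverse triangle inequality. The second is $|a-c|\le|a-b|$; placing $b$ on the positive real axis, so that $c=|a|$ is real, this reduces to the inequality $(\Real a-|a|)^2\le(\Real a-|b|)^2$, which holds because $\Real a\le|a|\le|b|$ (with a one-line case split on the sign of $\Real a$). Next, the standard chord-arc estimate $\ell(\kap_{ac})=|a|\theta\le(\pi/2)\,|a-c|$, valid for $\theta\in[0,\pi]$ because $\theta/(2\sin(\theta/2))$ is increasing on $[0,\pi]$ with maximum $\pi/2$, combines with the two comparisons above to give
\[
  \ell(\chi) \;\le\; \tfrac{\pi}{2}\,|a-c| \,+\, |b-c| \;\le\; \bigl(\tfrac{\pi}{2}+1\bigr)\,|a-b| \;<\; 3\,|a-b|.
\]

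For the supplementary bound under $|b|\le e\,|a|$, I would simply estimate $|a|\theta\le\pi|a|$ and $|b|-|a|\le(e-1)|a|$ crudely, obtaining $\ell(\chi)\le(\pi+e-1)|a|<2e\,|a|$ (numerically $\pi+e-1\approx 4.86<5.44\approx 2e$). The only step that is not entirely formulaic is the inequality $|a-c|\le|a-b|$, and even there I expect the short coordinate computation sketched above to suffice.
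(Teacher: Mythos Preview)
Your proof is correct and follows essentially the same route as the paper: decompose $\ell(\chi)=|a|\theta+(|b|-|a|)$, bound the arc by $\tfrac{\pi}{2}|a-c|$, and use $|a-c|\le|a-b|$ and $|b|-|a|\le|a-b|$ to get $\ell(\chi)\le(\tfrac{\pi}{2}+1)|a-b|<3|a-b|$; the $|b|\le e|a|$ bound is handled identically. The only cosmetic difference is how $|a-c|\le|a-b|$ is verified: the paper derives the Law of Cosines identity $|a-b|^2=(|a|-|b|)^2+\tfrac{|b|}{|a|}\,|a-c|^2$, while you use a coordinate computation (which in fact needs no case split, since $\Real a\le|a|\le|b|$ already gives $(|a|-\Real a)^2\le(|b|-\Real a)^2$ directly).
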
 
\begin{proof}%
Let $\kap:=\kap_{ac}$ so that $\chi=\kap\star[c,b]$.  Put $\tha:=|\Arg(b/a)|=|\Arg(c/a)|$.  According to the Law of Cosines,
\begin{gather*}
  |a-c|^2=2|a|^2(1-\cos\tha)\,, \quad\text{so}\quad \cos\tha=1-\frac{|a-c|^2}{2|a|^2}
  \intertext{and therefore}
  |a-b|^2=|a|^2+|b|^2-2|a||b|\cos\tha=|a|^2+|b|^2-2|a||b|\biggl(1-\frac{|a-c|^2}{2|a|^2} \biggr) \\
    = |a|^2+|b|^2-2|a||b|+|b|\frac{|a-c|^2}{2|a|} = \bigl(|a|-|b|\bigr)^2 + \frac{|b|}{|a|}\,|a-c|^2 \ge |a-c|^2\,.
  \intertext{Thus}
  \ell(\kap)\le\frac{\pi}2\,|a-c|\le\frac{\pi}2\,|a-b|
  \intertext{and hence}
  \ell(\chi)=\ell(\kap)+\bigl(|b|-|a|\bigr)\le\bigl(\frac{\pi}2\,+1\bigr)|a-b|\,. 
\end{gather*}

If $|b|\le e|a|$, then as $\ell(\kap)\le\pi|a|$ we have $\ell(\chi)\le (\pi+e-1)|a|\le 2e|a|$.
\end{proof}%

\subsection{Annuli, Cores, \& Collars}      \label{s:anns} %
The Euclidean annulus $A:=\{z\in\mfC :  r<|z-o|<R\}$ has \emph{center} $c(A):=o\in\mfC$ and \emph{conformal modulus} $\md(A):=\log(R/r)$; if $r=0$ or $R=\infty$, we say that $A$ is a \emph{degenerate annulus} and set $\md(A):=\infty$.\footnote{We also define $\md(\cA)=\md(A)$}   We call $\mfS^1(A):=\mfS^1(o;\sqrt{rR})$ the \emph{conformal center circle} of $A$; $A$ is symmetric about this circle.  The \emph{inner} and \emph{outer boundary circles} of $A$ are, respectively,
\[
  \bdi A:=\mfS^1(o;r) \quad\text{and}\quad  \bdo A:=\mfS^1(o;R)\,.
\]
A point $z$ is \emph{inside} (resp., \emph{outside}) $A$ \ifff $z\in\B[o;r]$ (resp., $z\in\mfC\sm\B(o;R)$); that is, $z$ is inside (or outside) $A$ \ifff $z$ is inside $\bdi A$ (or outside $\bdo A$).

It is convenient to introduce the notation
\begin{gather*}
  A=\A(o;d,m):=\{z\in\mfC : d\,e^{-m}<|z-o|<d\,e^m\}
  \intertext{and}
  \A[o;d,m]:=\{z\in\mfC : d\,e^{-m}\le |z-o|<d\le e^m\}\,.
  \intertext{Then}
  \mfS^1(A)=\mfS^1(o;d)\,,\quad \bdi A=\mfS^1(o;d\,e^{-m})\,, \quad  \bdo A=\mfS^1(o;d\,e^m)\,.
\end{gather*}
In the above, $o=c(A)$, $d>0$, $m>0$\footnote{However, it is useful to define $\A(o;d,0):=\mfS^1(o;d)$ and to say that this `thin annulus' has \emph{zero} modulus.}, and $\md(A)=2m$.

We call $A':=\{z\in\mfC :  r'<|z-o'|<R'\}$ a \emph{subannulus of $A$} provided
\[
  A'\subset A \quad\text{and}\quad  o'\in\B[0;r]\subset\B(o';r')\,;
\]
when this holds, we write $A'\subann A$.  We say that two annuli are \emph{concentric} if they have a common center.  We say that $A'$ is a \emph{concentric subannulus of $A$}, denoted by $A'\csubann A$, provided $A'$ and $A$ are concentric and $A'\subann A$.

An annulus $A$ \emph{separates} $E$ if each of the components of $A^c$ contains points of $E$; thus when $A$ separates $\{a,b\}$, one of $a$ or $b$ lies inside $A$ and the other lies outside $A$, and if $A$ does not meet nor separate $\{a,b\}$, then $a$ and $b$ are \emph{on the same side} of $A$.  Evidently, if $A' \subann A$, then $A'$ separates the boundary circles of $A$.

We define
\begin{align*}
  \mcA_\Om     &:=\{A  :  \text{$A$ is an Euclidean annulus in $\Om$ with $c(A)\in\Om^c$}\}\,, \\
  \mcA(m)      &:=\{A  :  \text{$A$ is an Euclidean annulus with $\md(A)>m$}\}\,, \\
  \mcA(a,b)    &:=\{A  :  \text{$A$ is an Euclidean annulus that separates $\{a,b\}$}\}\,,
  \intertext{and then}
  \mcA^1_\Om   &:=\{A\in\mcA_\Om  :  \bA\cap\bOm\ne\emptyset\}\,,  \\
  \mcA^2_\Om   &:=\{A\in\mcA_\Om  :  \bdi A\cap\bOm\ne\emptyset\ne\bdo A\cap\bOm\}\,,
  \intertext{and}
  \mcA_\Om(m)     &:=\mcA_\Om\cap\mcA(m) \quad\text{and}\quad \mcA_\Om(a,b)=\mcA_\Om\cap\mcA(a,b)\,\\
  \mcA^2_\Om(a,b) &:=\mcA^2_\Om\cap\mcA(a,b) \quad\text{and}\quad \mcA^2_\Om(a,b;m):=\mcA^2_\Om(a,b)\cap\mcA(m)\,.
\end{align*}
The requirement $c(A)\in\Om^c$ means, essentially, that $A$ separates $\Om^c\cup\{\infty\}$.

\subsubsection{Crossing an Annulus}  \label{ss:cross}%
Let $A$ be an annulus and $\gam$ be a path.  Then $\gam$ \emph{meets} or \emph{misses $A$} depending on whether $|\gam|\cap A\ne\emptyset$ or $|\gam|\cap A=\emptyset$, and similarly, $\gam$ \emph{crosses $A$} if
\[
  |\gam|\cap\bdi A\ne\emptyset\ne|\gam|\cap\bdo A
\]
and $\gam$ \emph{does not cross $A$} if one of the above intersections is empty.  Next, $\gam$ \emph{crosses $A$ $n$ times} if there are non-overlapping subpaths $\gam_1,\dots,\gam_n$\footnote{In this case we can arrange that the terminal point of $\gam_{i-1}$ and the initial point of $\gam_i$ both lie on the same side of $A$.} of $\gam$ such that each $\gam_i$ crosses $A$, and \emph{$\gam$ crosses $A$ exactly $n$ times} if it crosses $n$ times but not $n+1$ times.  Evidently, if $A$ separates the endpoints of $\gam$, then $\gam$ crosses $A$.  Also, we say that $\gam$ \emph{bounces off $A$} if
\[
  |\gam|\cap A \ne\emptyset = |\gam|\cap\mfS^1(A)= \bd\gam\cap A \,.
\]
\flag{We could introduce a parameter $q$ and ask that $|\gam|$ not meet the middle $q$-core of $A$.  I originally used this terminology as part of the ABC property, but now?  I guess that we do not use this any
where.}

\subsubsection{Annulus Cores \& Collars}  \label{ss:cores}%
For a non-degenerate annulus $A$ and $m\in(0,\half\md(A))$, there is a unique concentric subannulus of $A$ that has modulus $2m$ and center circle $\mfS^1(A)$.  Such a subannulus is an example of an \emph{inner core} of $A$, and the complement (in $A$) of its closure is the union of two concentric subannuli of $A$, each of modulus $\half\md(A)-m$, that we call its (\emph{inner} and \emph{outer}) \emph{collars} relative to $A$.

Controlling the size of these collars is crucial for our purposes.  Given $q\in(0,\half\md(A))$, we define $\core_q(A)$---which we call the \emph{core of $A$ with collar parameter $q$}---to be the concentric subannulus of $A$ with the same center circle as $A$ and such that $\md(\core_q(A))=\md(A)-2q$.  Thus $A\sm\overline\core_q(A)$ is the union of two concentric subannuli of $A$ each of modulus $q$; these are the (\emph{inner} and \emph{outer}) \emph{collars of $\core_q(A)$ relative to $A$}.

We often describe $\core_q(A)$ as the \emph{middle $q$-core} of $A$, but one must remember that $q$ is the collar parameter; that is, $q$ gives the size of the collars of the core, relative to $A$.  For example, the $\log2$-core of $\{r<|z|<R\}$ is $\{2r<|z|<R/2\}$, and its $q$-core is $\{re^{q}<|z|<Re^{-q}\}$.  When $A=\A(o;d,m)$ and $0<q<m$,
\[
  \core_q(A)=\A(o;d,m-q)  \quad\text{and}\quad  \overline\core_q(A)=\A[o;d,m-q]\,.
\]
\flag{how about some pix!}

It is useful to have a similar notion for degenerate annuli.  To this end, we define
\[
  \core_q(A):=
    \begin{cases}
      \D_*(o;e^{-q}R)       &\text{when $A=\D_*(o;R)$}\,,       \\
      \mfC\sm\D[o;e^{q}R]   &\text{when $A=\mfC\sm\D[o;R]$}\,.  \\
    \end{cases}
\]
Here in both cases $\core_q(A)\csubann A$ but now $A\sm\overline\core_q(A)$ is a single annulus of modulus $q$ (and $q>0$ can be arbitrary).

We adopt the convention that whenever we write $\core_q(A)$ we have $0<q<\half\md(A)$.  With this in mind,
note that whenever $A\csubann B$, $\core_q(A)\csubann\core_q(B)$.  Also, $q<r$ implies $\overline\core_r(A)\csubann\core_q(A)$ and for $q>0$ and $r>0$, $\core_q\core_r(A)=\core_{q+r}(A)=\core_r\core_q(A)$.

It is convenient to have the notion of the core of a closed annulus, so we define $\core_q(\cA):=\overline\core_q(A)$.  Thus the core of an open (or closed) annulus is a concentric open (or closed) subannulus.

Occasionally it is useful to have a dual (or inverse) notion of a core.  The \emph{$r$-banding} of an annulus $A$ is the unique concentric superannulus $\band_r(A)\supset A$ with the property that $\core_r\band_r(A)=A$.  Again, $\band_r(A)$ is an open or closed annulus depending upon whether $A$ is open or closed.  We also define $\band_r\bigl(\mfS^1(o;d)\bigr):=\A(o;d,r)$ (and we recognize that here the $r$-banding of a ``closed zero modulus annulus'' is an open annulus).  Evidently, for all $r>0$,
\begin{align*}
  \band_r\A(o;d,m) &= \A(o;d,m+r)\,, \\
  \md\band_r(A) &= \md(A)+2r\,,      \\
  \core_r\band_r(A)&=A=\band_r\core_r(A)\,,
\end{align*}
and in fact $\core_q\band_r(A)=\band_r\core_q(A)=\core_t\band_s(A)$; here the final equality requires $r-q=s-t$ (amongst other conditions too), and we leave it to the reader to recognize the various restrictions on the core parameters $q,r,s,t$.

\medskip

We tacitly make use of the following elementary fact.  See especially \rf{L:k ests in A}.

\begin{lma} \label{L:del deep in A} %
Suppose $A:=\A(o;d,m)\in\mcA_\Om(\log4)$ and $z\in\core_{\log2}(A)$; so $m>\log2$ and $z\in\A(o;d,m-\log2)$.  Then
\[
  \B(z)\subset\D[o;de^{-m}] \quad\biggl(\text{i.e., for all $\zeta\in\bOm$ with $\del(z)=|z-\zeta|$, $|\zeta-o|\le d e^{-m}$}\biggr)
\]
and moreover $\ds\half\,|z-o|\le\del(z)\le|z-o|$.
\end{lma}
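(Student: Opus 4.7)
The plan is to exploit two simple facts: the center $o$ is in $\bOm$ (giving an upper bound $\del(z)\le|z-o|$), and $\bOm$ must avoid the annulus $A$, so every boundary point lies either inside the inner circle or outside the outer circle. The core parameter $\log 2$ then forces every ``outside'' candidate to be farther away than $o$ itself.

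Here is the intended sequence of steps. First I would set $r:=|z-o|$ and observe that, since $A=\A(o;d,m)\subset\Om$ with $o\in\Om^c$ and $\Om$ open, we have $o\in\bOm$. This immediately yields $\del(z)\le|z-o|=r$, the right-hand inequality. Next, from $A\subset\Om$ we get $\bOm\cap A=\emptyset$, i.e.
\[
  \bOm\subset\D[o;de^{-m}]\cup\bigl(\mfC\sm\D(o;de^m)\bigr).
\]
Because $z\in\core_{\log2}(A)=\A(o;d,m-\log2)$, we have $2de^{-m}<r<de^m/2$.

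Now for any $\zeta\in\bOm$ with $|\zeta-o|\ge de^m$, the triangle inequality gives
\[
  |z-\zeta|\ge|\zeta-o|-|z-o|\ge de^m-r>r\ge\del(z),
\]
the strict inequality using $r<de^m/2$. Hence no nearest boundary point of $z$ can lie outside $\D(o;de^m)$, so $\B(z)\subset\D[o;de^{-m}]$, which is the first assertion. Finally, for any $\zeta\in\B(z)$ we have $|\zeta-o|\le de^{-m}$ and therefore
\[
  \del(z)=|z-\zeta|\ge|z-o|-|\zeta-o|\ge r-de^{-m}>\tfrac12\,r,
\]
the last step using $r>2de^{-m}$. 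This gives $\tfrac12|z-o|\le\del(z)\le|z-o|$.

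There is no real obstacle here: the argument is purely a triangle-inequality calculation driven by the core parameter $\log 2$, which was precisely chosen so that the inner collar width $de^{-m}$ is at most half of $r$ and the outer collar forces $de^m>2r$. The only thing to be careful about is not to assume that $\bOm$ actually has points in $\D[o;de^{-m}]$ other than $o$; but since $o$ itself is a boundary point realizing distance at most $r$, this is automatic and no case analysis on whether the outer component of $\Om^c$ is empty is needed.
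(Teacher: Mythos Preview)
Your proof is correct and follows essentially the same route as the paper's: both use that $o\in\Om^c$ gives $\del(z)\le|z-o|$, then exclude outer boundary points via the triangle inequality and the bound $|z-o|<de^m/2$, and finally get the lower bound from $|z-o|>2de^{-m}$. One minor quibble: from $A\subset\Om$ and $o\in\Om^c$ you cannot conclude $o\in\bOm$ (the entire inner disk $\D[o;de^{-m}]$ could lie in $\Om^c$), but you only need $o\in\Om^c$ since $\del(z)=\dist(z,\Om^c)$ for $z\in\Om$, so the argument stands.
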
 
\begin{proof}%
Assume $o=0$ and $d=1$.  Then $\del(z)\le|z|<e^{m-\log2}=e^m-e^{m-\log2}$, so any $\zeta\in\B(z)$ must lie outside $\A(o;d,m)$ and hence $|\zeta|\le e^{-m}=de^{-m}$.  Therefore
\[
  \del(z)=|z-\zeta|\ge|z|-e^{-m}\ge\half\,|z|\,.  \qedhere
\]
\end{proof}%

Below is a useful consequence of \rf{L:del deep in A}; in particular, it says that each of the centers of the two annuli involved lie inside \emph{both} of the annuli. 

\begin{lma} \label{L:both centers inside} %
Assume $r\wedge s>q\ge\log2$.  Let $\A(\eta;d,r),\A(\vth;c,s)\in\mcA_\Om$.  Suppose that $\A(\eta;d,r-q)\cap\A(\vth;c,s-q)\ne\emptyset$.\footnote{These two annuli are the $q$-cores of $\A(\eta;d,r)$ and $\A(\vth;c,s)$, respectively.}  Then
\begin{gather*}
  |\eta-\vth|\le(de^{-r})\wedge(ce^{-s})\,.
  \intertext{If in addition $\A(\eta;d,r),\A(\vth;c,s)\in\mcA^2_\Om$, then also}
  \half\,d \le c \le 2\,d  \quad\text{and}\quad |r-s| \le \log4\,.
\end{gather*}
\end{lma}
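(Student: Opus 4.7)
The plan is to apply \rf{L:del deep in A} to both annuli simultaneously. The hypothesis $q \geq \log 2$ ensures that any $z \in \A(\eta;d,r-q) \cap \A(\vth;c,s-q)$ lies in the $\log 2$-core of each annulus (both of modulus exceeding $\log 4$), so the lemma is available for both at $z$. Picking such a $z$ together with any $\zeta \in \B(z)$, the lemma yields simultaneously $|\zeta - \eta| \leq de^{-r}$ and $|\zeta - \vth| \leq ce^{-s}$, and the triangle inequality therefore gives the crude bound $|\eta - \vth| \leq de^{-r} + ce^{-s}$. To sharpen this to the claimed minimum, I would exploit that $\eta,\vth \in \Om^c$ while each annulus sits inside $\Om$, so neither center can lie in the other annulus: consequently $|\eta-\vth|$ must belong to $[0,ce^{-s}] \cup [ce^s,\infty)$ and to $[0,de^{-r}] \cup [de^r,\infty)$. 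Ruling out the three ``outside'' configurations proceeds by case analysis: the ``both outside'' configuration combined with the crude triangle bound forces $e^{2r}(1-e^{-2s}) \leq 1$, contradicting $r, s > \log 2$; each of the two mixed configurations (for instance, $|\eta-\vth| \leq de^{-r}$ but $\geq ce^s$) is eliminated by combining the sharper lower estimate $|z-\vth| \geq |z-\eta| - |\eta-\vth| \geq de^{-r}(e^q - 1) \geq de^{-r}$ with the upper bound $|z-\vth| < ce^{s-q}$, which forces $ce^s > 2de^{-r}$ in contradiction with $ce^s \leq de^{-r}$.

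For the second part, the hypothesis $\A(\eta;d,r),\A(\vth;c,s) \in \mcA^2_\Om$ supplies boundary points $\xi_1 \in \bdi\A(\eta;d,r) \cap \bOm$, $\xi_2 \in \bdo\A(\eta;d,r) \cap \bOm$, and similarly $\mu_1,\mu_2$ on the two boundary circles of $\A(\vth;c,s)$. Each of these four $\bOm$-points is not in the opposite annulus, so sits in either its closed hole or its unbounded outside. Using Part~(1) and triangle inequalities, the non-natural configurations are excluded: both $\xi_1,\mu_1$ outside forces $e^{r+s} \leq 4/9$ by multiplying $c \leq 2de^{-r}/3$ and $d \leq 2ce^{-s}/3$, both $\xi_2,\mu_2$ inside forces $e^{2(r+s)} \leq 4$ similarly, and each mixed configuration (for example $\mu_1$ outside) gives $c \geq de^{r+s}/2$, which by the core-containment argument of Part~(1) contradicts the lower bound $|z-\vth| \geq 2ce^{-s}$ coming from $z \in \A(\vth;c,s-q)$. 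In the surviving ``natural'' configuration (inner boundary points each in the other's hole, outer points each in the other's outside), the inner pair yields $ce^{-s} \in [de^{-r}/2,\,2de^{-r}]$ and the outer pair---using the inner comparison together with $r,s > \log 2$---yields $ce^s/(de^r) \in (2/3,\,3/2)$. Writing $u = (c/d)e^{r-s}$ and $v = (c/d)e^{s-r}$, one then has $uv = (c/d)^2 \in (1/3,3)$ and $v/u = e^{2(s-r)} \in (1/3,3)$, whence $c/d \in [1/\sqrt{3},\sqrt{3}] \subset [\half,2]$ and $|r-s| \leq \half\log 3 < \log 4$.

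The principal obstacle is the bookkeeping: while the ``natural'' configuration gives the claimed bounds directly, each of the remaining combinations for the four boundary-point decisions must be shown either to contradict $r,s > \log 2$ (typically by multiplying two oppositely directed one-sided inequalities) or to be incompatible with $z \in \A(\eta;d,r-q) \cap \A(\vth;c,s-q)$ via the argument of Part~(1). The hypothesis $q \geq \log 2$ enters repeatedly, both through \rf{L:del deep in A} and through the elementary estimate $e^q \geq 2$ that breaks several of the competing triangle-inequality comparisons.
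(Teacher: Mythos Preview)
Your approach is essentially the same as the paper's: both parts hinge on \rf{L:del deep in A} applied at a common point $z$ of the two $q$-cores, then on the fact that $\eta,\vth\in\Om^c$ (so neither center can lie in the other annulus) for Part~1, and on the $\mcA^2_\Om$ boundary points for Part~2. The paper streamlines slightly---in Part~1 it handles only the case $|\eta-\vth|>de^{-r}$ (deducing $\ge de^r$) and drives straight to the single contradiction $e^q<1+2e^{-q}$ rather than splitting into three configurations, and in Part~2 it works with just $\xi_1,\xi_2$ on $\bd\A(\eta;d,r)$ and then invokes the $(\eta,d,r)\leftrightarrow(\vth,c,s)$ symmetry instead of tracking four boundary points at once---but the substance is identical.
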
 
\begin{proof}%
Let $z\in\A(\eta;d,r-q)\cap\A(\vth;c,s-q)$ and pick any $\zeta\in\B(z)$.  Then by \rf{L:del deep in A},
\[
  |\zeta-\eta|\le de^{-r} \quad\text{and}\quad  |\zeta-\vth|\le ce^{-s}\,.
\]

Suppose $|\eta-\vth|>de^{-r}$.  Then, since $\vth\in\bOm$ and $\A(\eta;d,r)\subset\Om$, $|\eta-\vth|\ge de^{r}$; but now $\vth$ far from $\eta$ means it is also far from $\zeta$ which will cause trouble!  We have
\begin{gather*}
  ce^{-s} \ge |\zeta-\vth| \ge |\eta-\vth|-|\eta-\zeta| \ge de^r-de^{-r} \ge \half\, d e^r
  \intertext{so $de^r\le2ce^{-s}$.  Then, as $|z-\zeta|=\del(z)\le|z-\eta| < de^{r-q}$,}
  ce^{-s+q} < |z-\vth| \le |z-\zeta|+|\zeta-\vth| \le d e^{r-q}+ce^{-s} \le (1+2e^{-q})ce^{-s}
  \intertext{which implies that $e^q<1+2e^{-q}$.  However, as $q\ge\log2$, this in turn implies the contradiction}
  2\le e^q<1+2e^{-q}\le2\,.
\end{gather*}

Thus $|\eta-\vth|\le de^{-r}$.  Interchanging the roles of $\eta,d,r$ and $\vth,c,s$ we likewise deduce that $|\eta-\vth|\le ce^{-s}$.

\medskip

Now suppose also that $\A(\eta;d,r),\A(\vth;c,s)\in\mcA^2_\Om$.  Select $\xi_1,\xi_2\in\bOm\cap\bd\A(\eta;d,r)$ with
\addtocounter{equation}{1}  
\begin{gather*}
  |\xi_1-\eta|=de^{-r}  \quad\text{and}\quad  |\xi_2-\eta|=de^{r}\,.
  \intertext{Below we verify that $\xi_1$ and $\xi_2$ are, respectively, inside and outside $\A(\vth;c,s)$; i.e.,}
  |\xi_1-\vth|\le ce^{-s}  \quad\text{and}\quad  |\xi_2-\vth|\ge ce^{s}\,.   \tag{\theequation}\label{E:in n out}
\end{gather*}
Assuming \eqref{E:in n out} (for now), we proceed as follows.

Since $\xi_1,\eta\in\D[\vth;ce^{-s}]$, $de^{-r}=|\xi_1-\eta|\le2ce^{-s}$.  Also,
\begin{gather*}
  ce^s \le |\xi_2-\vth| \le |\xi_2-\eta|+|\eta-\vth| \le de^r+de^{-r} \le 2de^r\,.
  \intertext{Thus}
  ce^s \le 2de^r \quad\text{and}\quad de^{-r} \le 2ce^{-s}\,.
  \intertext{Interchanging the roles of $\eta,d,r$ and $\vth,c,s$ we likewise deduce}
  de^r \le 2ce^s \quad\text{and}\quad ce^{-s} \le 2de^{-r}\,.
  \intertext{Therefore,}
  d^2=(de^r)(de^{-r})\le(2ce^s)(2ce^{-s})=4c^2\,, \quad\text{so $d\le2c$}.
  \intertext{Similarly, $c\le2d$.  Also,}
  e^{s-r}\le 2(c/d) \le 4 \quad\text{and}\quad e^{r-s}\le4\,.
\end{gather*}

\medskip
It remains to verify \eqref{E:in n out}.  Since $\xi_i\in\bOm$ while $\A(\vth;c,s)\subset\Om$, either $|\xi_i-\vth|\le ce^{-s}$ or $|\xi_i-\vth|\ge ce^{s}$.
Recall that $z\in\A(\eta;d,r-q)\cap\A(\vth;c,s-q)$.

Suppose $|\xi_2-\vth|\le ce^{-s}$.  Then $\xi_2,\eta\in\D[\vth;ce^{-s}]$, so $de^r=|\xi_2-\eta|\le2ce^{-s}$.  Thus
\begin{gather*}
  |z-\eta| < de^{r-q} \le 2ce^{-s-q}
  \intertext{whence}
  c e^{q-s} < |z-\vth| \le |z-\eta|+|\eta-\vth| < 2ce^{-s-q} + ce^{-s}
\end{gather*}
which implies the contradiction $e^q<1+2e^{-q}$ (as described above).  Thus, $|\xi_2-\vth|\ge ce^{s}$.

Now suppose $|\xi_1-\vth|\ge ce^{s}$.  Then
\begin{gather*}
  de^{-r} = |\xi_1-\eta| \ge |\xi_1-\vth|-|\vth-\eta| \ge ce^{s} - ce^{-s} \ge \half\,ce^s\,.
  \intertext{Therefore}
  \half\, c e^{s+q} \le de^{q-r} < |z-\eta| \le |z-\vth|+|\vth-\eta| < ce^{s-q} + ce^{-s}
\end{gather*}
which again implies the contradiction $e^q<1+2e^{-q}$.  So, $|\xi_1-\vth|\le ce^{-s}$.
\end{proof}%

\begin{cor} \label{C:cores disjoint} %
Assume $\sig>0$ and $r\wedge s\ge\tau\ge\log3\vee(\sig+\frac13\log3)$.  Let $A:=\A(\eta;d,r), B:=\A(\vth;c,s)\in\mcA_\Om$.  Suppose there exist points $a,b$ with
\begin{center}
  $a$ inside \& $b$ outside $\core_\sig(A)$ \hspace{1em}and\hspace{1em} $b$ inside \& $a$ outside $\core_\sig(B)$\,.
\end{center}
Then $\core_{\log2}(A)\cap\core_{\log2}(B)=\emptyset$\,.
\end{cor}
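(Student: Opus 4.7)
The plan is to argue by contradiction: suppose $\core_{\log 2}(A)\cap\core_{\log 2}(B)\ne\emptyset$ and derive a contradiction with $\sig>0$, using only the existence of points $a,b$ satisfying the stated inside/outside conditions together with the numerical hypothesis on $\tau$. The first step is to apply the preceding lemma (``both centers inside'') with $q=\log 2$; this is legitimate because $r\wedge s\ge\tau\ge\log 3>\log 2$. The conclusion $|\eta-\vth|\le(de^{-r})\wedge(ce^{-s})$ is the only consequence of the assumed overlap that I will use, and it says the two annulus centers sit very close on the relevant scales.

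Next I would translate the hypotheses on $a$ and $b$ into radial inequalities. The inclusion $a\in\core_\sig(A)=\A(\eta;d,r-\sig)$ forces $|a-\eta|\le de^{\sig-r}$, while $a$ being outside $\core_\sig(B)$ forces $|a-\vth|\ge ce^{s-\sig}$. Combining these with the triangle inequality and the bound on $|\eta-\vth|$ from the first step yields
\[
  ce^{s-\sig}\le |a-\eta|+|\eta-\vth|\le de^{\sig-r}+de^{-r}=de^{-r}(e^\sig+1).
\]
The symmetric argument with $b$ (using $b\in\core_\sig(B)$ and $b$ outside $\core_\sig(A)$) produces the companion estimate $de^{r-\sig}\le ce^{-s}(e^\sig+1)$. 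Multiplying these two, the common factor $cd$ cancels, and taking a square root leaves
\[
  e^{r+s}\le e^\sig(e^\sig+1).
\]

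Finally, the hypothesis $r\wedge s\ge\sig+\frac{1}{3}\log 3$ gives $r+s\ge 2\sig+\frac{2}{3}\log 3$, hence $e^{r+s}\ge 3^{2/3}e^{2\sig}$. Since $\sig>0$ implies $e^\sig(e^\sig+1)<2e^{2\sig}$, the two displayed inequalities together force $3^{2/3}<2$, i.e., $9<8$; contradiction. The only real obstacle is the arithmetic bookkeeping: the numerical hypothesis $\tau\ge\log 3\vee(\sig+\frac{1}{3}\log 3)$ is tailored so that its first clause delivers the strict inequality $\tau>\log 2$ needed to invoke the previous lemma, while its second clause produces the factor $3^{2/3}>2$ that powers the contradiction.
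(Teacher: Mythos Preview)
Your proof is correct and follows the same overall strategy as the paper's: argue by contradiction, invoke the preceding lemma with $q=\log 2$ to get $|\eta-\vth|\le(de^{-r})\wedge(ce^{-s})$, and then combine this with the inside/outside constraints on $a$ and $b$ via the triangle inequality.

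The only difference is in how the final arithmetic is organized. The paper proceeds asymmetrically: it first uses the constraints on $b$ to bound $d$ in terms of $c$ and $|\eta-\vth|$, substitutes this into a bound for $|a-\vth|$, and shows $|a-\vth|<c$; combined with $|a-\vth|\ge ce^{s-\sig}$ this forces $s<\sig$, contradicting $s\ge\tau>\sig$. Your route is more symmetric: you derive two parallel inequalities (one from $a$, one from $b$), multiply them to cancel the unknown scales $c,d$, and obtain the clean numerical contradiction $3^{2/3}<2$. Both arguments exploit exactly the same hypotheses in the same way; your packaging is arguably tidier and makes transparent why the constant $\tfrac13\log 3$ appears in the hypothesis.
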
 
\begin{proof}%
Suppose that
\begin{gather*}
  \core_{\log2}(A)\cap\core_{\log2}(B)\ne\emptyset \quad\text{(i.e., $\A(\eta;d,r-\log2)\cap\A(\vth;c,s-\log2)\ne\emptyset$)}\,.
  \intertext{Appealing to \rf{L:both centers inside} we can assert that}
  |\eta-\vth|\le(d e^{-r})\wedge(c e^{-s}) \le (c\wedge d) e^{-\tau} \le \frac{c}3\,.
  \intertext{Then, as $b$ is outside $\core_\sig(A)$ and inside $\core_\sig(B)$,}
  d e^{\tau-\sig} \le d e^{r-\sig} \le |b-\eta| \le |b-\vth|+|\eta-\vth| \le c e^{\sig-s} + |\eta-\vth|\,,
  \intertext{so}
  d\le c e^{2\sig-\tau-s}+e^{\sig-\tau}|\eta-\vth| \le c e^{2(\sig-\tau)} + e^{\sig-\tau}|\eta-\vth|\,.
\end{gather*}
Therefore, since $a$ is inside $\core_\sig(A)$,
\begin{align*}
  |a-\vth| &\le |a-\eta|+|\eta-\vth| \le d e^{\sig-r} + |\eta-\vth|  \\
           &\le e^{\sig-\tau} \lp c e^{2(\sig-\tau)} + e^{\sig-\tau}|\eta-\vth| \rp + |\eta-\vth|   \\
           &= c e^{3(\sig-\tau)} + e^{2(\sig-\tau)}|\eta-\vth| + |\eta-\vth| < c\,.
\end{align*}
However, as $a$ lies outside $\core_\sig(B)$, we now have $c>|a-\vth|\ge c e^{s-\sig}$, and this implies the contradiction $\tau>\sig>s\ge\tau$.
\end{proof}%

\subsection{Conformal Metrics}      \label{s:cfml metrics} %
We call $\rho\,ds=\rho(z)|dz|$ a \emph{conformal metric} on $\Om$ when $\rho$ is some positive continuous function defined on $\Om$.  Below we consider the hyperbolic and quasihyperbolic metrics.  Each conformal metric $\rho\,ds$ induces a length distance $d_\rho$ on $\Om$ that is defined by
\[
  d_\rho(a,b):=\inf_\gam \ell_\rho(\gam) \quad\text{where}\quad \ell_\rho(\gam):= \int_\gam \rho\, ds
\]
and where the infimum is taken over all rectifiable paths $\gam$ in $\Om$ that join the points $a,b$.  We call $\gam$ a \emph{$\rho$-geodesic} if $d_\rho(a,b)=\ell_\rho(\gam)$\footnote{Note that this agrees with the definition given in \rf{s:P&Gs}.}; these need not be unique.  We often write $[a,b]_\rho$ to indicate a $\rho$-geodesic with endpoints $a,b$, but one must be careful with this notation since these geodesics need not be unique.  When we have a point $z$ on a given fixed geodesic $[a,b]_\rho$, we write $[a,z]_\rho$ to mean the subarc of the given geodesic from $a$ to $z$.

We note that the ratio $\rho\,ds/\sig\,ds$ of two conformal metrics is a well-defined positive function on $\Om$.  We write $\rho\le C\,\sig$ to indicate that this metric ratio is bounded above by $C$.

\subsubsection{Hyperbolic and QuasiHyperbolic Metrics}    \label{ss:h&k} %
Every hyperbolic plane domain carries a unique metric $\lam\,ds=\lam_\Om ds$ which enjoys the property that its pullback $p^*[\lam\,ds]$, \wrt any holomorphic universal covering projection $p:\D\to\Om$, is the hyperbolic metric $\lam_\D(\zeta)|d\zeta|=2(1-|\zeta|^2)^{-1}|d\zeta|$ on $\D$.  In terms of such a covering $p$, the metric-density $\lam=\lam_\Om$ of the {\em Poincar\'e hyperbolic metric\/} $\lam_\Om ds$ can be determined from
$$
  \lam(z)=\lam_\Om(z)=\lam_\Om(p(\zeta))=2(1-|\zeta|^2)^{-1}|p'(\zeta)|^{-1}\,.
$$
Yet another description is that $\lam\,ds$ is the unique maximal (or unique complete) metric on $\Om$ that has constant Gaussian curvature $-1$.

For example, the hyperbolic metric $\lam_*ds$ on the punctured unit disk $\D_*:=\D\sm\{0\}$ can be obtained by using the universal covering $z=\exp(w)$ from the left-half-plane onto $\D_*$ and we find that
\[
  \lam_*(z)|dz|=\frac{|dz|}{|z|\bigl|\log|z|\bigr|}\,.
\]

Except for a short list of special cases, the actual calculation of any given hyperbolic metric is notoriously difficult; computing hyperbolic distances and determining hyperbolic geodesics is even harder.  Indeed, one can find a number of papers analyzing the behavior of the hyperbolic metric in a twice punctured plane.  Typically one is left with estimates obtained by using domain monotonicity and considering `nice' sub-domains and super-domains in which one can calculate, or at least estimate, the metric.

In many cases (but certainly not all), the hyperbolic metric is \BL\ equivalent to the \emph{quasihyperbolic metric} $\del^{-1}ds=|dz|/\del(z)$, where $\del=\del_\Om$, which is defined for any proper subdomain $\Om\subsetneq\mfC$.  This metric has proven useful in many areas of geometric analysis.  The quasihyperbolic metric in the punctured plane $\Co$ is simply $|dz|/|z|$, which classically was called the logarithmic metric; indeed, the \holo\ covering $\mfC\xra{\exp}\Co$ pulls back the quasihyperbolic metric $\del_*^{-1}ds$ on $\Co$ to the Euclidean metric on $\mfC$ and thus $|\Log(a/b)|$ is the quasihyperbolic distance between points $a,b\in\Co$.

\subsubsection{Hyperbolic \& QuasiHyperbolic Distances \& Geodesics} 	\label{ss:hk-d&g} %
The \emph{hyperbolic distance} $h=h_\Om$ and \emph{quasihyperbolic distance} $k=k_\Om$ in $\Om$ are the length distances $h_\Om:=d_{\lam}$ and $k_\Om:=d_{\del^{-1}}$ that are induced by the hyperbolic and quasihyperbolic metrics $\lam\,ds, \del^{-1}ds$ on $\Om$.  These are geodesic distances: for any points $a,b$ in $\Om$, there are always an $h$-geodesic $[a,b]_h$ and a $k$-geodesic $[a,b]_k$ joining $a,b$ in $\Om$.  These geodesics need not be unique, but they enjoy the properties that
\[
  h(a,b)=\ell_h([a,b]_h) \quad\text{and}\quad k(a,b)=\ell_k([a,b]_k)\,.
\]
Here we are writing $\ell_h$ and $\ell_k$ in lieu of $\ell_\lam$ and $\ell_{\del^{-1}}$.

We adopt the following \textbf{\emph{conventions}} throughout the remainder of this paper: the term \emph{geodesic} shall mean either a hyperbolic geodesic or a quasihyperbolic geodesic (we will explicitly say which when it is necessary to do so) and we will say that $[a,b]_g$ (for $g=h$ or $g=k$) is a geodesic to mean that it is a hyperbolic geodesic when $g=h$ and a quasihyperbolic geodesic when $g=k$.

We adopt similar conventions when discussing quasi-geodesics and rough-quasi-geodesics; see \rf{s:P&Gs} for their definitions.

\subsubsection{The Punctured Disk $\D_*$} 	\label{ss:punxd D} %
Here we catalog some striking differences between the metric spaces $(\D_*,h_*)$ and $(\D_*,k_*)$, where $\D_*:=\D\sm\{0\}$ is the punctured unit disk and $h_*$ and $k_*$ denote, respectively, hyperbolic and quasihyperbolic distance in $\D_*$.\footnote{Caution! Elsewhere in this article $k_*$ is usually quasihyperbolic distance in $\mfC_*=\mfC\sm\{0\}$.}   Similar facts hold for $\mfC\sm\cmfD$, as well as plenty of other hyperbolic domains.

Recall that the hyperbolic metric $\lam_*ds$ on $\D_*$ is given by $\ds\lam_*(z)|dz|=\bigl(|z|\bigl|\log|z|\bigr|\bigr)^{-1}|dz|$.  From this it is straightforward to check that when $a,b\in\D_*$ and $\Arg(a/b)=0$,
\[
  h_*(a,b)=\bigl|\log\frac{\log(1/|a|)}{\log(1/|b|)}\bigr|
    \quad\text{whereas}\quad
  k_*(a,b)=\bigl|\log\frac{|a|}{|b|}\bigr| \;\text{(provided $|a|\vee|b|\le1/2$)}\,.
\]

\begin{xx}  \label{X:D*} %
For the punctured unit disk $\D_*$, the following hold.
\begin{itemize}
  \item[(a)]  Given \emph{any} sequences $(h_n)_1^\infty$ and $(k_n)_1^\infty$ of positive numbers with $1\ge h_n\to0$ and $2\le k_n\to\infty$, there are sequences $(a_n)_1^\infty, (b_n)_1^\infty$ in $\mfD_*$ with $h_*(a_n,b_n)=h_n$ and $k_*(a_n,b_n)=k_n$.
  \item[(b)]  The identity map $(\D_*,k_*)\xra{\id}(\D_*,h_*)$ is not quasisymmetric.
  \item[(c)]  Hyperbolic rough-geodesics in $\D_*$ may \emph{not} satisfy the ABC property.\footnote{See \rf{s:ABC}, and contrast (c) with \rfs{R:ABC}.}  
  \item[(d)]  Hyperbolic rough-geodesics in $\D_*$ may \emph{not} be quasihyperbolic rough-quasi-geodesics.
\end{itemize}
\end{xx} %
\begin{proof}%
For (a), simply take $a_n:=b_n e^{-k_n}, b_n:=\exp(-k_n/[e^{h_n}-1])$ and compute.  To establish (b), consider $\veps\in(0,1/2)$ and $a:=\veps, b:=1-\veps$.  It is easy to see that
\begin{gather*}
  k_*(a,1/2)=\log\frac1{2\veps}=k_*(b,1/2)
  \intertext{and we claim that}
  \lim_{\veps\to0^+}\frac{h_*(a,1/2)}{h_*(b,1/2)}=0\,.
  \intertext{To confirm this claim, notice that for $1/2\le|z|\le b$, $1/|z|\le\log2$, so}
  \lam_*(z)\ge\frac1{\log2}\,\frac1{|z|} \quad\text{whence}\quad h_*(b,1/2)\ge \frac1{\log2}\,k_*(b,1/2)
  \intertext{and therefore}
  \frac{h_*(a,1/2)}{h_*(b,1/2)} \le \frac1{\log2}\,\frac{h_*(a,1/2)}{k_*(a,1/2)} \longrightarrow 0 \;\text{as $\veps\to0^+$}\,.
\end{gather*}

For item (c), we demonstrate that for each $\mu>0$ and $\veps>0$ (here $\mu$ is large while $\veps$ is small:-) there exists a hyperbolic $\veps$-rough geodesic $\gam$ in $\D_*$ such that $\gam$ fails to have the $(\mu,\mu)$-ABC property.  To this end, let $(h_n)_1^\infty$ and $(k_n)_1^\infty$ be sequences of positive numbers with $1\ge h_n\to0$ and $2\le k_n\to\infty$, and put $a_n:=b_n e^{-k_n}, b_n:=\exp(-k_n/[e^{h_n}-1])$ so that $h_*(a_n,b_n)=h_n$ and $k_*(a_n,b_n)=k_n$.

Let $\mu>0$ and $\veps>0$ be given.  Choose $N\in\mfN$ so that for all $n\ge N$, $k_n>2\mu$ and $h_n<\veps/2$.  Fix any $n>N$.  Let $[0,2h_n]\xra{\gam}\D_*$ be the hyperbolic arclength parametrization of the path $[b_n,a_n]*[a_n,b_n]$ (from $b_n$ to $a_n$ back to $b_n$).  Evidently, for all $s,t\in[0,2h_n]$,
\[
  |s-t|-\veps\le2h_n-\veps\le0\le h_*(\gam(s),\gam(t))\le 2h_*(a_n,b_n) = 2h_n \le \veps\,,
\]
so $\gam$ is a hyperbolic $\veps$-rough geodesic.

Clearly, with $d_n:=b_ne^{-\mu}$ we have $A:=\A(0;d_n,\mu)\in\mcA_{\D_*}$.  Also, the subpath $\alf:=[d_n,a_n]\star[a_n,d_n]$ of $\gam$ that goes from $d_n$ to $a_n$ back to $d_n$ has its endpoints on $\mfS^1(A)$.  However, as $a_n\not\in A$, $|\alf|\not\subset A$.  Thus $\gam$ fails to have the $(\mu,\mu)$-ABC property.

For item (d), we demonstrate that for each $C>0$ and $\veps>0$ there exists a hyperbolic $\veps$-rough geodesic $\gam$ in $\D_*$ such that for all $K\ge1$,  $\gam$ fails to be a quasihyperbolic $(K,C)$-rough-chordarc path.  Indeed, let $C>0$ and $\veps>0$ be given and let $\gam$ be the hyperbolic arclength parametrization of the path $[b_n,a_n]*[a_n,b_n]$ as described above.  Assume $n$ is large enough so that $h_n<\veps/2$ and $k_n>C/2$.  Then $\gam$ is a hyperbolic $\veps$-rough geodesic.  Also, for any $K\ge1$,
\[
  \ell_{k_*}(\gam)=2k_*(a_n,b_n)=2k_n>C=K k_*(\gam(0),\gam(2h_n))+C
\]
and so $\gam$ is not a quasihyperbolic $(K,C)$-rough-chordarc path.

A minor modification of the above shows that for each $K\ge1$, $C>0$, and $\veps>0$ there exists a hyperbolic $\veps$-rough geodesic $\gam$ in $\D_*$ such that the quasihyperbolic arclength parametrization of $\gam$ fails to be a quasihyperbolic $(K,C)$-rough-quasi-geodesic.
\end{proof}%

\subsubsection{Hyperbolic Estimates} 	\label{ss:h ests} %
The standard technique for estimating the hyperbolic metric and hyperbolic distance is via domain monotonicity, a consequence of Schwarz's Lemma.  That is, if $\Om_{\rm in}\subset\Om\subset\Om_{\rm out}$, then in $\Om_{\rm in}$, $\lam_{\rm in}ds \ge\lam\,ds\ge\lam_{\rm out}ds$ and $h_{\rm in}\ge h \ge h_{\rm out}$.

Notice that the largest hyperbolic plane regions are twice punctured planes.  We write $\lam_{ab}ds$ for the hyperbolic metric in the twice punctured plane $\Cab$.  The `standard' twice punctured plane is $\Coo$ and its hyperbolic metric has been extensively studied by numerous researchers including \cite{Hempel-twice}, \cite{Minda-LNM}, \cite{SV-estimates}, \cite{SV-ineqs}.  We mention only the following information (which we require in the sequel).

\begin{fact}  \label{F:lam_01} %
For all $z\in\Coo$, $\lam_{01}(z)\ge\lam_{01}(-|z|)\ge\bigl( |z|[\kk+\bigl|\log|z|\bigr|] \bigr)^{-1}$, with equality at $z$ \ifff $z=-1$.
Here $\kk:= \bigl(\lam_{01}(-1)\bigr)^{-1} = \Gam^4(1/4)/\bigl(4\pi^2\bigr)=4.3768796\dots$.
\end{fact}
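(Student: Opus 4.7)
\begin{pf*}{Proof plan for Fact \ref{F:lam_01}}
The assertion has two parts: a circular symmetrization inequality $\lam_{01}(z)\ge\lam_{01}(-|z|)$ reducing the problem to the negative real axis, and a precise one-variable bound $\lam_{01}(-r)\ge 1/(r[\kk+|\log r|])$ pinning down the equality case at $r=1$. My plan is to prove these separately.

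For the first inequality, I would use that the thrice-punctured sphere $\Coo$ (as a Riemann surface, completing $\{0,1\}$ with $\infty$) has isometry group generated by complex conjugation together with the Möbius permutations of $\{0,1,\infty\}$. In particular $\lam_{01}(\bar z)=\lam_{01}(z)$, so for fixed $r=|z|$ the density $\theta\mapsto\lam_{01}(re^{i\theta})$ is an even function of $\theta$, with interior critical points (on $(0,\pi)$) constrained by the Liouville equation $\Delta\log\lam_{01}=\lam_{01}^2$. The idea is to show that on the circle $\{|z|=r\}$, this density is minimized at $\theta=\pi$ and maximized at $\theta=0$: intuitively $z=r$ sits closer to the puncture at $1$ (since $|1-r|<|1+r|$), forcing a larger value of $\lam_{01}$. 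To make this rigorous I would invoke the comparison argument of Hempel: lift to the universal cover $\H\to\Coo$ so that the preimage of $(-\infty,0)$ is a single vertical geodesic, and observe that the preimages of $re^{i\theta}$ all lie at hyperbolic distance at least as large from the $0$-puncture cusp as the preimage of $-r$ does, yielding the bound on the pushed-down density.

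For the second inequality, the key observation is that $z\mapsto 1/z$ is a holomorphic automorphism of the thrice-punctured sphere (swapping $0\leftrightarrow\infty$ and fixing $1$), hence an isometry of $\lam_{01}\,ds$. Transforming the metric gives $\lam_{01}(-1/r)=r^2\lam_{01}(-r)$, so the function
\[
  \phi(s):= e^s\lam_{01}(-e^s) \qquad(s=\log r)
\]
satisfies $\phi(-s)=\phi(s)$. Reformulating, the desired inequality becomes
\[
  F(s):=\kk+|s|-\frac{1}{\phi(s)}\ge 0,\qquad F(0)=0,
\]
where $F(0)=0$ is the definition of $\kk$. Since $\phi$ is even and smooth on $\mathbb R$, $\phi'(0)=0$, so $F'(0^+)=1>0$ and $F$ is strictly positive just to the right of $0$. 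Asymptotically, near the puncture at $\infty$ the domain $\Coo$ looks like $\Cab$ in the punctured-disk sense, so $\lam_{01}(-e^s)\sim 1/(e^s\cdot s)$ as $s\to+\infty$, giving $1/\phi(s)\sim s$ and $F(s)\to\kk>0$. Thus $F\ge 0$ at the endpoints; one must rule out an interior dip.

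The main obstacle is precisely this last step: showing $F$ has no interior zero on $(0,\infty)$. I would prove it by establishing concavity of $F$ on $(0,\infty)$, which reduces via the Liouville equation (restricted to the radial geodesic $(-\infty,0)$) to a second-order ODE inequality for $\phi$; combined with $F(0)=0$, $F'(0^+)>0$, and $\lim_{s\to\infty}F(s)=\kk$, concavity forces $F\ge 0$ throughout, with equality only at $s=0$. An alternative route, which also produces the explicit value $\kk=\Gam^4(1/4)/(4\pi^2)$, is to compute $\lam_{01}(-r)$ explicitly via the modular lambda function $\Lam\colon\H\to\Coo$: the preimage of $(-\infty,0)$ is the imaginary axis, and $\Lam(it)$ admits a Jacobi theta-function expansion whose derivative at $t=1$ (which maps to $z=-1$) yields $\kk$ through the Chowla--Selberg formula. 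Either approach achieves the fact; the theta-function computation is heavier but avoids the delicate ODE concavity argument.
\end{pf*}
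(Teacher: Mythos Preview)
The paper does not supply its own proof of this fact; it simply attributes the result to Lehto--Virtanen--V\"ais\"al\"a, with later proofs by Agard, Jenkins, and Minda. Your plan is in the spirit of those references: the reduction $\lam_{01}(z)\ge\lam_{01}(-|z|)$ via the symmetries of $\Coo$ is essentially Hempel's argument, and the explicit route through the modular $\lambda$-function and theta constants is what actually produces the value $\kk=\Gam^4(1/4)/(4\pi^2)$ (as in Agard and Jenkins).

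One caution on your first route for the one-variable bound: the concavity of $F$ on $(0,\infty)$ is asserted but not established, and saying it ``reduces via the Liouville equation to a second-order ODE inequality'' only relocates the difficulty---you still have to verify the sign of that ODE expression, which is not obviously easier than the original problem. Your observation that concavity together with $F(0)=0$, $F'(0^+)=1$, and $F(s)\to\kk$ forces $F\ge0$ is correct (concavity makes $F'$ decreasing to a nonnegative limit, hence $F$ is nondecreasing), so the architecture is sound; but the concavity itself is the crux and would need a genuine argument. The theta-function computation you list as an alternative is the more reliable path and is closer to what the cited sources actually carry out.
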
%
\noindent
\rf{F:lam_01} was first proved by Lehto, Virtanen and \Va\ (see \cite{LVV-distortion}); later proofs were given by Agard \cite{Agard-distortion}, Jenkins \cite{Jenkins-LandauII}, and Minda \cite{Minda-LNM}. 

Here is a useful consequence of the above.  (The alert reader recognizes that in the setting below we actually have $\bp(z)=\log(R/|z|)$ for all $0<|z|<R/2$.)

\begin{lma} \label{L:lam-del ests in D*} %
Suppose that for some $R>0$ the punctured disk $\D_*(0;R)\in\mcA^2_\Om$.  Then for all $z\in\Om$ with $|z|<R/2$, $\del(z)=|z|$ and
\[
  \Bigl(|z|\bigl(\kk+\log(R/|z|)\bigr)\Bigr)^{-1} \le \lam(z) \le \Bigl(|z|\log(R/|z|)\Bigr)^{-1}\,.
\]
\end{lma}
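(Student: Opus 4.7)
The plan is a straightforward application of domain monotonicity together with \rf{F:lam_01}. Write $A:=\D_*(0;R)$ for brevity. From $A\in\mcA^2_\Om$ we extract two facts: first, $c(A)=0\in\Om^c$ together with $A\subset\Om$ forces $0\in\bOm$; and second, the ``outer boundary'' condition $\bdo A\cap\bOm\ne\emptyset$ gives us a point $w\in\bOm$ with $|w|=R$.

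First I would dispense with the distance identity. For $z\in\Om$ with $|z|<R/2$, the triangle inequality gives $\D(z;|z|)\subset\D(0;2|z|)\subset\D(0;R)$; since $0\notin\D(z;|z|)$, we actually have $\D(z;|z|)\subset A\subset\Om$, so $\del(z)\ge|z|$. The reverse inequality follows because $0\in\bOm$, so $\del(z)\le|z-0|=|z|$.

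For the upper estimate on $\lam$, use domain monotonicity (Schwarz's Lemma) applied to $A\subset\Om$: on $A$, $\lam_\Om\le\lam_A$. The hyperbolic metric of the punctured disk $\D_*$ is $|dz|/(|z|\,|\log|z||)$, and scaling by $R$ yields
\[
\lam_A(z)=\frac{1}{|z|\,\log(R/|z|)}\qquad(0<|z|<R),
\]
which supplies the right-hand inequality.

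For the lower estimate, observe that $\{0,w\}\subset\Om^c$, so $\Om\subset\mfC_{0w}$ and by monotonicity $\lam_\Om(z)\ge\lam_{0w}(z)$. The \mob\ (indeed linear) map $\zeta=z/w$ carries $\mfC_{0w}$ onto $\Coo$, whence $\lam_{0w}(z)=|w|^{-1}\lam_{01}(z/w)=R^{-1}\lam_{01}(z/w)$. Applying \rf{F:lam_01} with $|\zeta|=|z|/R<1/2$ (so $|\log|\zeta||=\log(R/|z|)$) gives
\[
\lam_{01}(z/w)\ge\frac{1}{(|z|/R)\bigl(\kk+\log(R/|z|)\bigr)},
\]
and multiplying by $R^{-1}$ yields the desired lower bound. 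The main obstacle, such as it is, is simply bookkeeping for the scaling factor $R$ and confirming that $|z|<R/2$ places $z/w$ in the regime where \rf{F:lam_01} applies with $|\log|\zeta||=\log(R/|z|)$; otherwise the argument is a direct two-sided monotonicity sandwich.
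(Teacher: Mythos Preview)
Your proof is correct and follows essentially the same route as the paper's: domain monotonicity with $\D_*(0;R)\subset\Om\subset\mfC_{0w}$, the explicit formula for the hyperbolic metric on the punctured disk for the upper bound, and the rescaling $z\mapsto z/w$ combined with \rf{F:lam_01} for the lower bound. You even include the verification of $\del(z)=|z|$, which the paper omits.
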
 
\begin{proof}%
Recall that $D_*:=\D_*(0;R)\in\mcA^2_\Om$ means $D_*\subset\Om$, $0\in\bOm$, and there exists a point $\xi\in\bOm$ with $|\xi|=R$.  Thus $D_*\subset\Om\subset\mfC_{0\xi}$ and so the above estimates on $\lam(z)$ follow at once from $\lam_*\ge\lam\ge\lam_{0\xi}$ in conjunction with \rf{F:lam_01}, where $\lam_*:=\lam_{D_*}$.

Indeed, $\lam_*(z)|dz|=|dz|/\bigl(|z|\log(R/|z|)\bigr)$, and as $\Om\supset\mfC_{0\xi}$, the change of variables $w=z/\xi$ together with \rf{F:lam_01} yields
\begin{align*}
  \lam(z) &\ge \lam_{0\xi}(z) = \frac1{|\xi|}\, \lam_{01}(w) \ge \frac1{|\xi|}\, \frac1{|w| \bigl( \kk + \bigl|\log|w|\bigr| \bigr)} \\
          &= \frac1{|z| \bigl( \kk + \bigl|\log|z/\xi|\bigr| \bigr)} = \frac1{|z| \bigl( \kk + \log(R/|z|) \bigr)}\,.  \qedhere
\end{align*}
\end{proof}%

We can also use \rf{F:lam_01} to obtain a lower bound for certain hyperbolic distances.

\begin{lma} \label{L:h est in C01} %
Let $a,b\in\Coo$.  Assume $|a|\le|b|\le1$.  Then $\ds h_{01}(a,b)\ge \log\frac{\kk+\log(1/|a|)}{\kk+\log(1/|b|)}$.
\end{lma}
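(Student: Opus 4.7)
The plan is to combine the pointwise lower bound $\lam_{01}(z)\ge\bigl(|z|(\kk+|\log|z||)\bigr)^{-1}$ from \rf{F:lam_01} with a change of variable that integrates to exactly the right-hand side of the claimed inequality.

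First, I would introduce the auxiliary function
\[
  \Phi(r):=\log\bigl(\kk+|\log r|\bigr) \qquad\text{for $r\in(0,\infty)$,}
\]
which is continuous on $(0,\infty)$ and smooth except at $r=1$. A direct differentiation on each of $(0,1)$ and $(1,\infty)$ shows
\[
  |\Phi'(r)|=\frac1{r\bigl(\kk+|\log r|\bigr)} \qquad\text{for $r\ne 1$.}
\]

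Next, let $\gam\colon[0,1]\to\Coo$ be any (absolutely continuous) path from $a$ to $b$, and set $r(t):=|\gam(t)|$. Using \rf{F:lam_01} and the pointwise inequality $|r'(t)|\le|\gam'(t)|$, I would estimate
\[
  \ell_{h_{01}}(\gam)
  = \int_0^1 \lam_{01}(\gam(t))\,|\gam'(t)|\,dt
  \ge \int_0^1 \frac{|\gam'(t)|}{r(t)\bigl(\kk+|\log r(t)|\bigr)}\,dt
  \ge \int_0^1 \frac{|r'(t)|}{r(t)\bigl(\kk+|\log r(t)|\bigr)}\,dt.
\]
By the formula for $|\Phi'|$ above, the last integrand equals $|(\Phi\circ r)'(t)|$ almost everywhere, so the fundamental theorem of calculus (applied to the absolutely continuous function $\Phi\circ r$) gives
\[
  \ell_{h_{01}}(\gam) \ge \int_0^1 \bigl|(\Phi\circ r)'(t)\bigr|\,dt \ge \bigl|\Phi(|b|)-\Phi(|a|)\bigr|.
\]

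Finally, under the hypothesis $|a|\le|b|\le1$, $\Phi$ is decreasing on $(0,1]$, so $\Phi(|a|)\ge\Phi(|b|)$ and
\[
  \bigl|\Phi(|b|)-\Phi(|a|)\bigr|
  = \log\bigl(\kk+\log(1/|a|)\bigr)-\log\bigl(\kk+\log(1/|b|)\bigr)
  = \log\frac{\kk+\log(1/|a|)}{\kk+\log(1/|b|)}.
\]
Taking the infimum over all such paths $\gam$ yields the asserted lower bound on $h_{01}(a,b)$.

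The proof is essentially a one-line computation once one notices the $\Phi$-substitution; the only mild obstacle is making sure the bound $\lam_{01}(z)\ge(|z|(\kk+|\log|z||))^{-1}$ used is valid everywhere on $\Coo$ (including at points with $|z|\ge 1$), which is indeed the case because \rf{F:lam_01} gives it globally, and ensuring the change of variable is legitimate when the radial function $r(t)$ crosses $1$---but since $\Phi$ is continuous at $r=1$ and smooth off it, the estimate $\int|(\Phi\circ r)'|\,dt\ge|\Phi(|b|)-\Phi(|a|)|$ remains valid.
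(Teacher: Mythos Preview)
Your proof is correct and follows essentially the same route as the paper's: both use the global lower bound $\lam_{01}(z)\ge\bigl(|z|(\kk+|\log|z||)\bigr)^{-1}$ from \rf{F:lam_01}, project to the radial variable, and integrate to obtain the antiderivative $\log(\kk+|\log r|)$. The paper works along the hyperbolic geodesic and writes the final step tersely as $\int_{|a|}^{|b|}\frac{dr}{r(\kk+|\log r|)}$, whereas you work along an arbitrary path and are more explicit about the substitution $\Phi(r)=\log(\kk+|\log r|)$ and the possibility that the path leaves $\overline{\mfD}$; this extra care is a virtue, but the underlying argument is the same.
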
 
\begin{proof}%
Writing $[a,b]_{01}$ for a fixed hyperbolic geodesic joining $a,b$ in $\Coo$ we have
\begin{align*}
  h_{01}(a,b) &= \int_{[a,b]_{01}}\lam_{01}ds \ge \int_{[a,b]_{01}}\frac{|dz|}{|z|\bigl(\kk+\bigl|\log|z|\bigr|\bigr)} \ge \\ &\ge \int_{|a|}^{|b|}\frac{|dz|}{|z|\bigl(\kk+\bigl|\log|z|\bigr|\bigr)} = \log\frac{\kk+\log(1/|a|)}{\kk+\log(1/|b|)}\,.  \qedhere
\end{align*}
\end{proof}%
\begin{cor} \label{C:h ge ln3/2} %
Suppose $q\ge\kk$ and $A\in\mcA^2_\Om(4q)$.  Let $C$ be one of the collars of $\core_{2q}(A)$ relative to $\core_q(A)$.  For any path $\gam$ in $\Om$ that crosses $C$ we have $\ell_h(\gam)\ge\log(3/2)$.
\end{cor}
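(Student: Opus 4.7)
The plan is to apply domain monotonicity for the hyperbolic metric to a well-chosen twice-punctured superdomain of $\Om$ and reduce the length estimate to a one-variable radial integral. First I would write $A=\A(o;d,m)$ with $2m=\md(A)>4q$, so that $o\in\Om^c$ since $A\in\mcA_\Om$. Because $A\in\mcA^2_\Om$ I can pick a boundary point $\xi\in\bOm$ on the boundary circle of $A$ adjacent to the chosen collar: $\xi\in\bOm\cap\bdi A$ if $C$ is the inner collar, giving $R:=|\xi-o|=de^{-m}$, and $\xi\in\bOm\cap\bdo A$ if $C$ is the outer collar, giving $R=de^m$. Then $\{o,\xi\}\subset\Om^c$, so $\Om\subset\mfC\sm\{o,\xi\}$. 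The similarity $z\mapsto(z-o)/(\xi-o)$ identifies $\mfC\sm\{o,\xi\}$ conformally with $\Coo$, so pulling back Fact \ref{F:lam_01} and applying domain monotonicity yields
\begin{equation*}
  \lam_\Om(z)\ \ge\ \frac{1}{|z-o|\,\bigl(\kk+|\log(|z-o|/R)|\bigr)} \qquad\text{for every $z\in\Om$.}
\end{equation*}

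Next I would observe that $C$ is concentric with $A$ and that $u(z):=|\log(|z-o|/R)|$ ranges exactly over $[q,2q]$ as $z$ varies in $C$: for the inner collar $|z-o|/R\in[e^q,e^{2q}]$, while for the outer collar $|z-o|/R\in[e^{-2q},e^{-q}]$. Given a path $\gam$ in $\Om$ that crosses $C$, I extract a subpath $\alf\subset\gam$ whose two endpoints lie one on each boundary circle of $C$. Writing $\rho(t):=|z(t)-o|$ along $\alf$, one has $|z'(t)|\ge|\rho'(t)|$, and by IVT the continuous map $\rho\circ\alf$ covers the whole radial interval $[r_1,r_2]$ between the two collar radii. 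Therefore, with the monotone substitution $u=|\log(\rho/R)|$,
\begin{equation*}
  \ell_h(\gam)\ \ge\ \int_\alf \frac{|dz|}{|z-o|(\kk+u)}\ \ge\ \int_{r_1}^{r_2}\frac{d\rho}{\rho(\kk+u(\rho))}\ =\ \int_q^{2q}\frac{du}{\kk+u}\ =\ \log\frac{\kk+2q}{\kk+q}.
\end{equation*}
Since $q\ge\kk$, the ratio $(\kk+2q)/(\kk+q)$ is a decreasing function of $\kk/q\in(0,1]$, so $(\kk+2q)/(\kk+q)\ge(3\kk)/(2\kk)=3/2$, giving $\ell_h(\gam)\ge\log(3/2)$.

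The only real decision in the argument is the choice of the two punctures, and this is where the delicate point lies. The naive pair $\{\xi_1,\xi_2\}\subset\bOm$ with one point on each boundary circle of $A$ produces the scale $R\approx de^m$, and the analogous integral becomes $\log[(\kk+2m-q)/(\kk+2m-2q)]$, which collapses toward $1$ when $m\gg q$ and fails to beat $3/2$. Pairing instead the centre $o\in\Om^c$ with a boundary point on the \emph{same} side of $A$ as the collar matches the scale $R$ to the small radii of $C$, which is precisely what keeps $|\log(|z-o|/R)|$ inside the bounded interval $[q,2q]$ and produces the clean ratio $(\kk+2q)/(\kk+q)$ meeting the threshold $q\ge\kk$.
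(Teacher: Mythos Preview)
Your proof is correct and follows essentially the same approach as the paper: choose as punctures the centre $o\in\Om^c$ of $A$ together with a point $\xi\in\bOm$ on the boundary circle of $A$ nearest the given collar, apply domain monotonicity against $\mfC_{o\xi}$, and integrate the resulting radial lower bound to obtain $\log\bigl((\kk+2q)/(\kk+q)\bigr)\ge\log(3/2)$. The paper packages the radial integral into a separate lemma (\rf{L:h est in C01}) about $h_{01}$ and then reduces to it via a similarity, and it also splits explicitly into the degenerate and non-degenerate annulus cases, whereas you do the computation directly in $\Om$ and uniformly; but the mathematical content and the key choice of puncture pair are identical.
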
 
\begin{proof}%
First, note that $q/(\kk+q)\ge1/2$, so $(\kk+2q)/(\kk+q)\ge3/2$.  Thus if $|a|=e^{-2q}$ and $|b|=e^{-q}$, then by \rf{L:h est in C01}, $h_{01}(a,b)\ge\log(3/2)$.

Now suppose $A$ is a degenerate annulus.  We may assume $A:=\D_*(0;R)$ for some $R>0$.  Then $A\subset\Om$, $0\in\bOm$, and there exists a point $\xi\in\bOm$ with $|\xi|=R$.  Here $\core_q(A)=\D_*(0;e^{-q}R)$ and $C=\D(0;e^{-q}R)\sm\D[0;e^{-2q}R]$.  As $\gam$ crosses $C$, we can pick $a,b\in|\gam|$ with $|a|=e^{-2q}R$ and $|b|=e^{-q}R$.  Since $0,\xi\in\bOm$, $\Om\subset\mfC_{0\xi}$.  Therefore,
\[
  \ell_h(\gam) \ge h(a,b) \ge h_{0\xi}(a,b) = h_{01}(a/\xi,b/\xi) \ge \log(3/2)\,.
\]

Next, suppose $A$ is a non-degenerate annulus.  We may assume $A:=\A(0;1,r)$ for some $r>2q$.  Then $A\subset\Om$, $0\in\bOm$, and there exist points $\xi,\eta\in\bOm$ with $|\xi|=de^r$ and $|\eta|=de^{-r}$.  Here $C$  is one of the two components of $\core_q(A)\sm\overline\core_{2q}(A)$.  When $C$ is the outer collar, i.e., $C=\{e^{r-2q}<|z|<e^{r-q}\}$, we proceed as above.

Suppose $C$ is the inner collar, i.e., $C=\{e^{q-r}<|z|<e^{2q-r}\}$.  As $\gam$ crosses $C$, we can pick $a,b\in|\gam|$ with $|a|=e^{2q-r}$ and $|b|=e^{q-r}$.  Since $0,\eta\in\bOm$, $\Om\subset\mfC_{0\eta}$.  Therefore,
\[
  \ell_h(\gam) \ge h(a,b) \ge h_{0\eta}(a,b) = h_{01}(a/\eta,b/\eta) \ge \log(3/2)\,.  \qedhere
\]
\end{proof}%

\begin{rmks}  \label{R:h ge ln3/2} %
\noindent(a)  In \rf{C:h ge ln3/2} it is imperative  that $A\in\mcA^2_\Om$.

\smallskip\noindent(b)  It is illustrative to see how the various propositions in \rf{S:lengths} give $\ell_h(\gam)\ge\veps_o$ for some absolute constant $\veps_o>0$, when $\gam$ is as in \rf{C:h ge ln3/2}.  For example, suppose $A$ is a non-degenerate annulus and $\gam$ crosses the inner collar of $\core_{2q}(A)$ relative to $\core_q(A)$.  Pick $a,b\in|\gam|$ with $|a|=e^{q-r}$ and $|b|=e^{2q-r}$ and apply the proof of \rf{P:A:ell-bad-arcs} to a geodesic $[a,b]_h$.  Noting that $|b|=e^q|a|$ we have $m<q\le m+1$, so for all $1\le j\le m$, $q+j\le q+m \le 2m+1$.  As $\ups_j=r+\log|a|-j=q+j$,
\begin{gather*}
  \sum_{j=1}^m\frac1{\ups_j}=\sum_{j=1}^m\frac1{q+j}\ge\frac{m}{2m+1}\ge\frac13
  \intertext{and thus by \eqref{E:A:lhah}}
  h(a,b)=\sum_{j=1}^m\ell_h(\alf_j)\ge(16e^{2\mu_o})^{-1}\sum_{j=1}^m\frac1{\ups_j}\ge(48e^{2\mu_o})^{-1}=:\veps_o\,.
\end{gather*}
\end{rmks} %

\subsubsection{QuasiHyperbolic Estimates} 	\label{ss:k ests}%
We remind the reader of the following basic estimates for quasihyperbolic distance, first established by Gehring and Palka \cite[2.1]{GP-qch}: for all $a,b\in\Om$,
\begin{equation}\label{E:k ge j}
  k(a,b) \ge \log\left(1+\frac{l(a,b)}{\del(a)\wedge \del(b)}\right)  \ge j(a,b)
    :=\log\left(1+\frac{|a-b|}{\del(a)\wedge \del(b)}\right) \ge\left|\log\frac{\del(a)}{\del(b)}\right|\,;
\end{equation}
here $l(a,b)$ is the (intrinsic) length distance between $a$ and $b$.
See also \cite[(2.3),(2.4)]{BHK-unif}.  The first inequality above is a special case of the more general (and easily proven) inequality
\[
\ell_k (\gam) \ge \log\bigl(1+\ell(\gam)/\min_{z\in|\gam|}\del(z) \bigr)
\]
which holds for any rectifiable path $\gam$ in $\Om$.  As a special case of \eqref{E:k ge j}: if $o\in\Om^c$ and $a,b\in\Om$, then
\begin{equation}\label{E:k ge log}
  k(a,b) \ge \bigr|\log\frac{|a-o|}{|b-o|}\bigr|\,.
\end{equation}
Indeed, assuming $o=0$, writing $k_*:=k_{\Co}$ and $\del_*=\del_{\Co}$, and noting that $\Om\subset\Co$, we deduce that $k(a,b)\ge k_*(a,b)\ge|\log\bigl(\del_*(a)/\del_*(b)\bigr)|=\bigl|\log\bigl(|a|/|b|\bigr)\bigr|$.

There is an alternative way to deduce the lower bound in \eqref{E:k ge log}.  It is well known that $(\Co,k_*)$ is (isometric to) the Euclidean cylinder $\mfS^1\times\mfR^1$ (with its Euclidean length distance inherited from the standard embedding into $\mfR^3$).  One way to realize this is via the \holo\ covering $\mfC\xra{\exp}\Co$ which pulls back the quasihyperbolic metric $\del_*^{-1}ds$ on $\Co$ to the Euclidean metric on $\mfC$, as explained in \cite{MO-qhyp}.  In particular, the quasihyperbolic geodesics in $\Co$ are logarithmic spirals.  Also, we find that for all $a,b\in\Co$,
\begin{gather}
  k_*(a,b)=\bigl|\Log(b/a)\bigr|=\bigl|\log|b/a|+i\Arg(b/a)\bigr| \notag
  \intertext{and thus}
  \bigl|\log\frac{|b|}{|a|}\bigr|\vee\bigl|\Arg\bigl(\frac{b}{a}\bigr)\bigr| \le  k_*(a,b) \le
  \bigl|\log\frac{|b|}{|a|}\bigr| + \bigl|\Arg\bigl(\frac{b}{a}\bigr)\bigr| \le
  \bigl|\log\frac{|b|}{|a|}\bigr| + \frac{\pi}{2}\,\frac{|a-b|}{|a|\wedge|b|}\,.  \label{E:k* ests}
\end{gather}
It is worth calling attention to the special cases of the above that arise when $|a|=|b|$ and when $\Arg(b/a)=0$.  To see the last inequality involving $\tha:=\bigl|\Arg(b/a)\bigr|$, we assume that $|a|\le|b|$.  From the proof of \rf{L:ell chi} we know that $|a-b|\ge|a-c|$ where $c:=\bigl(|a|/|b|\bigr)b$.  Then since $|a|=|c|$ and $\tha=|\Arg(c/a)|$,
\[
  |a-c|^2=4|a|^2\sin^2\frac{\tha}2\,, \quad\text{and so}\quad \frac{|a-b|}{|a|}\ge\frac{|a-c|}{|a|}=2\sin\frac{\tha}2\ge\frac2{\pi}\,\tha\,.
\]

\begin{rmk}  \label{R:k* ests} %
From \rf{L:del deep in A} we have $\half\del_*\le\del\le\del_*$ in $\core_{\log2}(A)$ whenever $A\in\mcA_\Om(\log4)$ with $c(A)=0$, so $k_*\le k\le 2k_*$ and thus \eqref{E:k* ests} provides good estimates (both upper and lower bounds) for quasihyperbolic distances in $\core_{\log2}(A)$.  See, e.g., the proof of \rf{L:k ests in A}.
\end{rmk} %

We make use of the following.

\begin{lma} \label{L:k-geos in pxd plane} %
Let $a,b\in\Co$ and let $[a,b]_*$ be a quasihyperbolic geodesic in $\Co$.  Then
\[
  |a-b| \le \ell([a,b]_*) \le 5|a-b|\,.
\]
\end{lma}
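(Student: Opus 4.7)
The lower bound $|a-b|\le\ell([a,b]_*)$ is immediate: the Euclidean length of any path joining $a$ and $b$ majorizes the chord $|a-b|$.

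For the upper bound, my plan is to exploit the explicit description of quasihyperbolic geodesics in $\Co$ that was already recalled in the paragraph containing \eqref{E:k* ests}: the holomorphic covering $\exp\colon\mfC\to\Co$ pulls the quasihyperbolic metric back to the Euclidean metric, so a quasihyperbolic geodesic lifts to a Euclidean line segment, and hence $[a,b]_*$ is (the image of) a logarithmic spiral. Concretely, putting $w:=\Log(b/a)=s+i\theta$ with $s=\log(|b|/|a|)$ and $|\theta|\le\pi$ (whence $|w|=k_*(a,b)$) and choosing $A\in\mfC$ with $\exp A=a$, we may parametrize $\gamma(t):=\exp(A+tw)$ for $t\in[0,1]$.

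A direct computation with $r:=|a|$, $R:=|b|=re^s$ yields $|\gamma(t)|=re^{ts}$ and $|\gamma'(t)|=|w|\,|\gamma(t)|$, so
\[
   \ell([a,b]_*) \;=\; |w|\int_0^1 r e^{ts}\,dt \;=\; r\,|w|\,\frac{e^s-1}{s}
\]
(to be read as $r|w|$ when $s=0$), while
\[
   |a-b|^2 \;=\; r^2|e^w-1|^2 \;=\; r^2\bigl[(e^s-1)^2+4e^s\sin^2(\theta/2)\bigr].
\]
The upper bound thus reduces to the scalar inequality
\[
   |w|\,\frac{e^s-1}{s} \;\le\; 5\sqrt{(e^s-1)^2+4e^s\sin^2(\theta/2)} \qquad(s\ge 0,\ |\theta|\le\pi),
\]
which is the only real content.

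To verify this I would split on whether $s$ or $|\theta|$ dominates. If $|\theta|\le s$, then $|w|\le\sqrt{2}\,s$ and $|a-b|\ge R-r=r(e^s-1)$, so the ratio is at most $\sqrt{2}$. If $|\theta|>s$, then $|w|\le\sqrt{2}\,|\theta|$; using $\sin(|\theta|/2)\ge|\theta|/\pi$ on $[0,\pi]$ together with $|a-b|\ge 2\sqrt{rR}\sin(|\theta|/2)$ (from the formula above for $|a-b|^2$), the ratio is bounded by $(\pi\sqrt{2})\,\sinh(s/2)/s$, which is increasing in $s$ and equals $\sqrt{2}\,\sinh(\pi/2)<4$ at $s=\pi$; since $s<|\theta|\le\pi$ in this case, the bound is $<4$. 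In either case the ratio does not exceed $4<5$, completing the proof. The main (minor) obstacle is the elementary scalar inequality above; everything else is bookkeeping once one recognizes that the logarithmic-spiral description of $\gamma$ makes its Euclidean arc length explicitly computable.
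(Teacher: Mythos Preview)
Your proof is correct and follows essentially the same route as the paper: both parametrize $[a,b]_*$ as the logarithmic spiral $\gamma(t)=\exp(A+tw)$, compute $\ell(\gamma)=r|w|(e^s-1)/s$, and then bound the ratio $\ell/|a-b|$ by elementary calculus. The only difference is the case split in the final scalar estimate: the paper splits on $|b|\ge e$ versus $|b|<e$ (i.e., $s\ge 1$ versus $s<1$) and uses $x-1\le(e-1)\log x$ in the latter, whereas you split on whether $s$ or $|\theta|$ dominates and invoke the $\sinh$ identity $(e^s-1)/(2e^{s/2})=\sinh(s/2)$. Your split is arguably a bit cleaner and even yields the constant $4$ rather than $5$, but the underlying idea is the same.
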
 
\begin{proof}%
There is no harm in assuming that $a=1<|b|$.  Let $\beta:=\Arg(b)$, so $\Log(b)=\log|b|+i\beta$ (and $k(1,b)=|\Log b|$).  A \param\ for $\gam:=[a,b]_*$ is given by
\begin{gather*}
  \gam(t):=\exp\bigl(t\Log b\bigr) \quad\text{for $0\le t\le1$}
  \intertext{from which we obtain}
  \dot\gam(t)=\Log(b)\,\gam(t) \quad\text{and}\quad  |\dot\gam(t)|=|\Log b||\gam(t)|=|\Log b|\exp\bigl(t\log|b|\bigr)
  \intertext{and thus}
  \ell(\gam)=\abs{\Log b}\, \int_0^1 e^{t\log|b|}\, dt = \abs{\Log b} \, \frac{|b|-1}{\log|b|}\,.
\end{gather*}

When $|b|\ge e$, $\abs{\Log b}/\log|b|\le 1+\pi\le5$, and so $\ell(\gam)\le5|a-b|$.  Assume $1<|b|\le e$.  Then $\log|b|\le|b|-1$.  Since $|e^{i\beta}-1|^2=4\sin^2(\beta/2)\ge4(\beta/\pi)^2$,
\begin{gather*}
  \abs{\Log b}\le\log|b|+|\beta|\le\bigl(1+\frac{\pi}2\bigr)|b-1|\,.
  \intertext{By Calculus, $x-1\le(e-1)\log x$ for $1\le x\le e$, whence}
  \ell(\gam) = \abs{\Log b} \, \frac{|b|-1}{\log|b|} \le (e-1)\bigl(1+\frac{\pi}2\bigr)|b-1| \le 5|b-1|\,.  \qedhere
\end{gather*}
\end{proof}%

Note that on $[a,b]_*$ we have $|a|\wedge|b|\le|z|\le|a|\vee|b|$ and therefore by \rf{L:k-geos in pxd plane}
\[
  \frac{|a-b|}{|a|\vee|b|} \le k_*(a,b) \le 5\,\frac{|a-b|}{|a|\wedge|b|}\,.
\]
Of course these estimates are most useful when $|a|\eqx|b|$.
We can improve the upper estimate (reducing the $5$ to a $3$) by using \rf{L:ell chi}.  In fact, since $\abs{\log(t)}\le t-1$ for $t\ge1$, $\bigl|\log(|b|/|a|)\bigr|\le|a-b|/\bigl(|a|\wedge|b|\bigr)$ and thus by \eqref{E:k* ests} we have
\begin{equation}\label{E:k* le}
  \frac{|a-b|}{|a|\vee|b|} \le k_*(a,b) \le \bigl(1+\frac{\pi}{2}\bigr)\frac{|a-b|}{|a|\wedge|b|} < 3\,\frac{|a-b|}{|a|\wedge|b|}\,.
\end{equation}

We require the following elementary estimates for certain quasihyperbolic lengths.

\begin{lma} \label{L:k ests in A} 
\addtocounter{equation}{-1}  
Suppose $A\in\mcA_\Om(\log4)$ and $a,b\in\core_{\log2}(A)$.  Let $\chi:=\chi_{ab}=\kap_{ac}\star[c,b]=\kap\star[c,b]$ be the circular-arc-segment path from $a$ through $c$ to $b$ as described in \eqref{E:chi}.  Then
\begin{subequations}\label{E:k ests in A}
  \begin{align}
    \ell_k(\kap) &\le 2 k(a,c)\,,                      \label{E:kap QG}  \\
    \ell_k([b,c])&\le 2 k(b,c)\,,                      \label{E:[b,c] QG}  \\
    \ell_k(\chi) &\le 4 k(a,b)\,;                      \label{E:chi QG}
  \end{align}
\end{subequations}
Also, for any $q\in[\log2,m)$, $\diam_k\core_q(A)\le2\pi+2\md\core_q(A)$. 
\end{lma}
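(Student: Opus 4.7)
The plan is to reduce everything to quasihyperbolic distance in the punctured plane $\mfC_o := \mfC \setminus \{o\}$ (where $o = c(A)$), exploiting \rf{L:del deep in A} in the core.

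\textbf{Preliminary observation.} Since $a, b \in \core_{\log 2}(A) = \A(o; d, m - \log 2)$, both the radial segment $[c,b]$ (with $|c| = |a| \le |b|$) and the arc $\kap = \kap_{ac} \subset \mfS^1(o; |a|)$ lie in $\core_{\log 2}(A)$ as well (the relevant radii are squeezed between $|a|$ and $|b|$, each in $[de^{-(m-\log 2)}, de^{m-\log 2}]$). By \rf{L:del deep in A}, at every point of these curves we have $\del \ge \tfrac12 |\cdot - o| = \tfrac12 \del_{\mfC_o}$, so
\[
  \ell_k(\gam) \le 2\,\ell_{k_o}(\gam) \qquad \text{for every path } \gam \subset \core_{\log 2}(A),
\]
where $k_o$ is quasihyperbolic distance in $\mfC_o$. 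Also, domain monotonicity gives $k \ge k_o$.

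\textbf{Steps (a) and (b).} The exponential covering $\mfC \xrightarrow{\exp} \mfC_o$ converts $\kap$ into a vertical segment of Euclidean length $|\Arg(c/a)|$ and converts $[c,b]$ into a horizontal segment of Euclidean length $\bigl|\log(|b|/|a|)\bigr|$. Both of these are $k_o$-geodesics (horizontal and vertical straight lines lift $k_o$-geodesics), so $\ell_{k_o}(\kap) = k_o(a,c)$ and $\ell_{k_o}([c,b]) = k_o(b,c)$. Applying the preliminary observation and $k_o \le k$ yields $\ell_k(\kap) \le 2k(a,c)$ and $\ell_k([c,b]) \le 2k(b,c)$.

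\textbf{Step (c).} Adding the two bounds,
\[
  \ell_k(\chi) \le 2 \ell_{k_o}(\chi) = 2\bigl(k_o(a,c) + k_o(b,c)\bigr).
\]
From \eqref{E:k* ests} applied to the pair $a,b$, each of $k_o(a,c) = |\Arg(b/a)|$ and $k_o(b,c) = \bigl|\log(|b|/|a|)\bigr|$ is individually bounded above by $k_o(a,b)$, hence $k_o(a,c) + k_o(b,c) \le 2\,k_o(a,b) \le 2\,k(a,b)$. Therefore $\ell_k(\chi) \le 4\,k(a,b)$.

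\textbf{Step (d).} Take any $a, b \in \core_q(A)$ with $q \in [\log 2, m)$; WLOG $|a| \le |b|$. The same curve $\chi = \chi_{ab}$ now lies in $\core_q(A)$ itself, hence in $\core_{\log 2}(A)$, so the observation applies, giving
\[
  k(a,b) \le \ell_k(\chi) \le 2 \bigl(\ell_{k_o}(\kap) + \ell_{k_o}([c,b])\bigr)
    = 2\bigl(|\Arg(b/a)| + \bigl|\log(|b|/|a|)\bigr|\bigr).
\]
The first summand is at most $\pi$, and the second is at most $2(m-q) = \md\core_q(A)$ because both $|a|$ and $|b|$ lie in $[de^{-(m-q)}, de^{m-q}]$. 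Combining, $k(a,b) \le 2\pi + 2\md\core_q(A)$, which gives the claimed diameter bound.

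\textbf{Main (mild) obstacle.} The only non-formal point is confirming that $\chi$ remains inside $\core_{\log 2}(A)$ (and inside $\core_q(A)$ for part (d)); this is immediate from the description of $\chi$ as arc-on-$\mfS^1(o;|a|)$ concatenated with a radial segment between radii $|a|$ and $|b|$, both inside the appropriate sub-annulus. Everything else is a direct application of \rf{L:del deep in A} together with the explicit $\mfC_o$-formula \eqref{E:k* ests}.
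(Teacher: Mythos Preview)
Your proof is correct and follows essentially the same route as the paper: compare $\del$ with $\del_{\mfC_o}$ via \rf{L:del deep in A} inside $\core_{\log2}(A)$, use that $\kap$ and $[c,b]$ are $k_o$-geodesics, and bound $\theta+L\le 2k_o(a,b)$ from $k_o(a,b)=|L+i\theta|$ (equivalently, from \eqref{E:k* ests}). You are slightly more thorough than the paper in explicitly verifying that $|\chi|\subset\core_{\log2}(A)$ and in writing out the diameter bound in part (d), which the paper leaves implicit.
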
 
\begin{proof}%
From \rf{L:del deep in A}, for $z\in\core_{\log2}(A)$, $|z|/2\le\del(z)\le|z|$, so $1/\del_*\le1/\del\le2/\del_*$ and $k_*\le k\le 2 k_*$ in $\core_{\log2}(A)$ where $k_*$ is quasihyperbolic distance in $\Co$.  Put $\tha:=k_*(a,c)=\ell_{k_*}(\kap)$ and $L:=k_*(b,c)=\ell_{k_*}([b,c])$.  Then
\begin{gather*}
  \ell_k(\kap) \le 2\ell_{k_*}(\kap)=2k_*(a,c)\le2k(a,c)\,, \\
  \ell_k([b,c]) \le 2\ell_{k_*}([b,c])=2k_*(a,b)\le2k(a,b)\,. \\
  \intertext{Also, $k_*(a,b)=|\Log(b/a)|=|L+i\tha|\ge(L+\tha)/2$, so}
  \ell_k(\chi)\le2\ell_{k_*}(\chi)=2(\tha+L)\le4k_*(a,b)\le4k(a,b)\,.  \qedhere
\end{gather*}
\end{proof}%

\begin{rmk}  \label{R:k ests in A} %
The inequalities \eqref{E:k ests in A} reveal that $\kap, [b,c]$, and $\chi$ are all quasihyperbolic quasi-geodesics (although one needs to be careful here:-).  It then follows from \rf{TT:geos} that $\kap, [b,c]$, and $\chi$ are also hyperbolic quasi-geodesics.
\end{rmk} %

An important property of hyperbolic distance is its conformal invariance.  While this does not hold for quasihyperbolic distance, it is \mob\ quasi-invariant in the following sense; see \cite[Lemma~2.4, Corollary~2.5]{GP-qch}.

\flag{surely there is a similar fact describing how $\bp$ changes.  note that annuli get inverted to mobius annuli....  such a result might allow us to get rid of some tedious arguments dealing with cases where $\A(z)$ is complement of disk.}

\begin{fact}  \label{F:kQI} %
Let $\RS\xra{T}\RS$ be a \MT.  Let $\Om\subsetneq\mfC$ and suppose $\Om':=T(\Om)\subset\mfC$.  Then
\begin{gather*}
  \forall\; z\in\Om\,, \quad  \frac1{\del(z)} \le2\,\frac{|T'(z)|}{\del'(T(z))}\,.
  \intertext{Consequently, for all rectifiable paths $\gam$ in $\Om$,}
  \half \ell_k(\gam) \le \ell_{k'}(T\comp\gam) \le 2\ell_k(\gam)\,;
  \intertext{in particular,}
  \forall\; a,b\in\Om\,, \quad \half k(a,b) \le k'(a',b') \le 2k(a,b)\,.
\end{gather*}
In the above, $a':=T(a), b':=T(b), \del':=\del_{\Om'}, k':=k_{\Om'}$.
\end{fact}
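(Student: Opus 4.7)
The plan is to prove the pointwise estimate $\delta'(T(z)) \le 2|T'(z)|\delta(z)$ at every $z \in \Omega$; the length inequality $\ell_k(\gamma) \le 2\ell_{k'}(T\circ\gamma)$ follows by integrating against arclength (using $ds_{T\circ\gamma} = |T'|\,ds_\gamma$), and taking the infimum over paths joining $a$ to $b$ yields $k(a,b) \le 2k'(a',b')$. The reverse inequalities $\ell_{k'}(T\circ\gamma) \le 2\ell_k(\gamma)$ and $k'(a',b') \le 2k(a,b)$ follow by applying the same reasoning to $T^{-1}\colon\Omega'\to\Omega$, which is also a Möbius transformation satisfying the hypothesis of the Fact.

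If $T$ is affine (equivalently $T^{-1}(\infty)=\infty$), then $|T'|$ is a positive constant and $T$ is a Euclidean similarity, so in fact $\delta'(T(z)) = |T'(z)|\delta(z)$ and we are done with constant $1$. Otherwise, write $T(z) = (az+b)/(cz+d)$ with $c\ne 0$ and set $p := -d/c = T^{-1}(\infty) \in \mathbb{C}$. Since $T(\Omega)=\Omega'\subset\mathbb{C}$, the pole $p$ cannot lie in $\Omega$, so $p\in\Omega^c$ and hence $|z-p|\ge\delta(z)$ for every $z\in\Omega$. Using the factorization $cz+d = c(z-p)$ together with $|T'(z)| = |ad-bc|/|cz+d|^2$, a short calculation yields the two identities
$$|T(z) - T(w)| \;=\; |T'(z)|\,|z-w|\,\frac{|z-p|}{|w-p|} \quad(w\in\mathbb{C}\setminus\{p\}), \qquad |T(z)-T(\infty)| \;=\; |T'(z)|\,|z-p|.$$

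Now split into two cases according to the ratio $\delta(z)/|z-p|$. If $\delta(z)\le\tfrac12|z-p|$, choose $\zeta\in\partial\Omega$ realizing $|z-\zeta|=\delta(z)$, so $T(\zeta)\in\partial\Omega'$; the triangle inequality gives $|\zeta-p|\ge|z-p|-\delta(z)\ge\tfrac12|z-p|$, and the first identity above produces $\delta'(T(z))\le|T(z)-T(\zeta)|\le 2|T'(z)|\,\delta(z)$. If instead $\delta(z)>\tfrac12|z-p|$, then since $\infty$ lies outside $\Omega$ in $\hat{\mathbb{C}}$ and $T$ is a self-homeomorphism of $\hat{\mathbb{C}}$, the image $T(\infty)$ lies in $(\Omega')^c$, so the second identity yields $\delta'(T(z))\le|T(z)-T(\infty)|=|T'(z)|\,|z-p|<2|T'(z)|\,\delta(z)$.

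The main obstacle, and the reason for the case split, is that the nearest boundary point $\zeta$ of $\Omega$ to $z$ can sit arbitrarily close to the pole $p$, causing the factor $|z-p|/|\zeta-p|$ in the distance identity to blow up. Case~2 isolates precisely the regime where this is an issue, and uses $T(\infty)$ itself---a legitimate competitor point in $(\Omega')^c$---as a replacement ``boundary'' point, delivering the clean bound $|T(z)-T(\infty)|=|T'(z)|\,|z-p|$ with the final factor of $2$ supplied by the case hypothesis $|z-p|<2\delta(z)$.
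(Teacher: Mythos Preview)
The paper does not supply its own proof of this Fact; it merely cites \cite[Lemma~2.4, Corollary~2.5]{GP-qch}. Your argument is correct and is essentially the standard Gehring--Palka proof: reduce to the pointwise metric inequality, handle the affine case trivially, and in the non-affine case use the pole $p=T^{-1}(\infty)\in\Omega^c$ together with the competitor $T(\infty)\in(\Omega')^c$ to control the distortion; then integrate and apply the same reasoning to $T^{-1}$ for the reverse inequalities. There is nothing to add.
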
 %

\subsubsection{The Beardon-Pommerenke Result} 	\label{ss:BP} %
One desires both upper and lower estimates for the hyperbolic metric.  In general, finding lower estimates 
seems to be the more difficult endeavor.  It is well-known that the hyperbolic and quasihyperbolic metrics are 2-bi-Lipschitz equivalent for simply connected hyperbolic plane regions; this fact is not true, e.g., for any domain with an isolated boundary point (such as the punctured unit disk).  In general, the hyperbolic and quasihyperbolic metrics are bi-Lipschitz equivalent precisely when $\Om^c$ is uniformly perfect (cf.\ \cite{BP-beta}, \cite{Pomm-unifly-perfect1}, \cite{Pomm-unifly-perfect2}).  Beardon and Pommerenke corroborated this latter assertion as an application of their elegant result \cite[Theorem~1]{BP-beta} which says:
\begin{noname}
For any hyperbolic region $\Om$ in $\mfC$ and for all $z\in\Om$,
\[
  \frac{1}{\del(z) [\kk+\bp(z)]} \le \lam(z) \le \frac{\pi}{2}\;\frac{1}{\del(z) \, \bp(z)} \,.  \tag{\BPt}
\]
\end{noname}
\noindent
Beardon and Pommerenke introduced the domain function $\Om\xra{\bp}\mfR$ which is defined via
\[
  \bp(z)=\bp_{\Om}(z):=
  \inf_{\substack{
    {\zeta\in\B(z)}\\
     \xi\in\Om^c\sm\{\zeta\}}}
    \biggl|\log\Bigl|\frac{\zeta-z}{\zeta-\xi}\Bigr|\biggr|\,;
\]
note that the infimum is restricted to nearest boundary points $\zeta\in\B(z)=\bOm\cap\bD(z)$ for $z$ (that is, $\zeta\in\bOm$ with $\del(z)=|z-\zeta|$).

The definition of $\bp$ can be motivated by examining the standard lower bound for the hyperbolic metric on a twice punctured plane; see Fact~\ref{F:lam_01}. The (\BPt) inequalities above (established by Beardon and Pommerenke) follow by using domain monotonicity: the upper bound for $\lam(z)$ holds because $z$ lies on the conformal center of a certain annulus in $\Om$, and the lower bound holds because $\Om$ lies in a certain twice punctured plane.

To obtain a geometric interpretation for $\bp(z)$, we define $\bp(z,\zeta)$---for points $z\in\Om$ and $\zeta\in\B(z)$---as follows:
\[
  \bp(z,\zeta):=\inf_{\xi\in\Om^c\sm\{\zeta\}} \biggl|\log\Bigl|\frac{\zeta-z}{\zeta-\xi}\Bigr|\biggr|\,.
  \qquad\text{(Thus, $\ds\bp(z)=\inf_{\zeta\in\B(z)}\bp(z,\zeta)$)}\,.
\]
We note that given $z\in\Om$ and $\zeta\in\B(z)$, there always exists a point $\xi\in\bOm\sm\{\zeta\}$ with $\bp(z,\zeta)=\bigl|\log|\zeta-z|/|\zeta-\xi|\bigr|$, and there is always a $\zeta\in\B(z)$ with $\bp(z)=\bp(z,\zeta)$.

Now fix points $z\in\Om$ and $\zeta\in\B(z)$.  Let $C=\mfS^1(\zeta;\del(z))$ be the circle through $\zeta$ with radius $\del(z)=|z-\zeta|$.  We have $\bp(z,\zeta)=0$ \ifff $C\cap\bOm\ne\emptyset$.  Suppose $C\subset\Om$, so $\bp(z,\zeta)>0$.
Choose a point $\xi\in\bOm\sm\{\zeta\}$ with $\upsilon:=\bp(z,\zeta)=\bigl|\log|\zeta-z|/|\zeta-\xi|\bigr|$.  Let
\[
  A=\mfA(\zeta;\del(z),\upsilon) = \{w\in\mfC  :  \del(z)e^\upsilon <|w-\zeta|< \del(z)e^{\upsilon} \}\,.
\]
Note that $\md(A)=2\upsilon$.  We claim that $A$ is the maximal  annulus that is contained in $\Om$ and is symmetric \wrt $C$.  This is because on the one hand, the point $\xi$ lies on $\bd A$, so $\bd A\cap\bOm\ne\emptyset$.  On the other hand, from the definition of $\bp(z,\zeta)$ we must have $A\subset\Om$.

Thus, $2\bp(z,\zeta)$ is the conformal modulus of the maximal Euclidean annulus that is contained in $\Om$ and symmetric \wrt the circle $\mfS^1(\zeta;\del(z))$.  It follows that $2\bp(z)$ is the minimum of these numbers; so, $2\bp(z)$ is the smallest of these maximal moduli.

From the discussion above we see that whenever $\bp(z)>0$, there is an annulus
\begin{gather*}
  \BP(z):=\A(\zeta;\del(z),\bp(z)) \in\mcA^1_\Om
  \intertext{associated with $z$; here $\zeta\in\B(z)$ is any nearest boundary point for $z$ that realizes $\bp(z)$, and}
  \forall\;\xi\in\bd\BP(z)\cap\bOm\,,\quad \bp(z)=\bp(z,\zeta)=\biggl|\log\Bigl|\frac{\zeta-z}{\zeta-\xi}\Bigr|\biggr|\,.
\end{gather*}
We call $\BP(z)$ a \emph{\BPt\ annulus} (or a \emph{Beardon-Pommerenke annulus}) associated with the point $z$; it needn't be unique.

\smallskip

Note that when $\bp(z,\zeta)>0$, any extremal point $\xi\in\Om^c$ that realizes $\bp(z,\zeta)$ (so, $\bp(z,\zeta)=\bigl|\log|\zeta-z|/|\zeta-\xi|\bigr|$) must lie on $\bOm$ (because then $\xi\in\bd \mfA(\zeta;\del(z),\bp(z,\zeta)$).  This is not necessarily true when $\bp(z,\zeta)=0$; nonetheless, even in this case we can still locate a point $\eta\in\bOm$ with $\bp(z,\zeta)=\bigl|\log|\zeta-z|/|\zeta-\eta|\bigr|$ (just take any $\eta\in\bOm\cap\mfS^1(\zeta;\del(z))$).

We mention one last issue:  while $\bp(z,\zeta)$ is continuous as a function of $(z,\zeta)\in\Om\times\Om^c$, $z\mapsto\bp(z)$ is not continuous!  Indeed, for the punctured unit disk $\Do$ we have $\bp(z)=\bigl|\log|z|\bigr|$ for $0<|z|<1/2$ whereas $\bp(z)=0$ for $1/2\le|z|<1$.  However, $z\mapsto\bp(z)$ is continuous on $\bp>\log2$ \flag{proofs?}.

\smallskip

In fact, in this paper \emph{we only care about large values} of $\bp$.  In particular, when $\bp\ge\kk$ the (\BPt) inequalities imply that
\begin{equation}\label{E:BP est}
  \frac1{2\,\del\,\bp} \le \lam \le \frac2{\del\,\bp}\,.
\end{equation}
In \rf{s:BPEP} we establish useful estimates for $\bp$; see especially \rf{L:bp ge} and \rf{P:BPEP}. 
%
%
\section{Technical Tools for Proofs}  \label{S:Tools} 
In this section we discuss various estimates for the Beardon-Pommerenke $\bp$ function, construct the all important annulus $\A(z)$, introduce the \emph{annulus bounce cross property}, and establish various properties of certain cores of the $\A(z)$ annuli.

\subsection{Estimating $\bp$}  \label{s:BPEP} %
Here we provide several means for estimating the Beardon-Pommerenke domain function $\bp$.  In particular, we prove \rf{P:BPEP} (the \BPEP\footnote{Beardon-Pommerenke $\bp$ Estimates Proposition}) which gives especially useful estimates for $\bp$.  As a warm up, the reader can verify the following geometric estimates for $\bp$.

\begin{lma} \label{L:bp ests} %
For $\ups>0$ and any point $z$ in a hyperbolic plane domain $\Om$,
\begin{align*}
  &\bp(z)\ge\ups \iff \forall\;\zeta\in\B(z)\,, \quad \A(\zeta;\del(z),\ups)\subset\Om
  \intertext{and}
  &\bp(z)\le\ups \iff \exists\;\zeta\in\B(z)\;\text{\rm so that} \quad \A[\zeta;\del(z),\ups]\cap\Om^c\neq\emptyset\,.
\end{align*}
\end{lma}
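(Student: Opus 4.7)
The plan is to translate the analytic definition of $\bp$ directly into geometric language about the annulus $\A(\zeta;\del(z),\ups)$. The key observation is that for $\zeta\in\B(z)$ we have $|\zeta-z|=\del(z)$, so for any $\xi\ne\zeta$ the quantity $\bigl|\log|\zeta-z|/|\zeta-\xi|\bigr|\ge\ups$ is equivalent to the two-sided radial condition $|\zeta-\xi|\le\del(z)e^{-\ups}$ \emph{or} $|\zeta-\xi|\ge\del(z)e^{\ups}$, i.e., $\xi\notin\A(\zeta;\del(z),\ups)$. Since $\ups>0$, the point $\zeta$ itself sits strictly inside $\bdi\A(\zeta;\del(z),\ups)$ and so does not belong to the open annulus; hence the condition ``no $\xi\in\Om^c\sm\{\zeta\}$ lies in the annulus'' is the same as ``$\A(\zeta;\del(z),\ups)\subset\Om$''. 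This will be the recurring dictionary.

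For the first equivalence, I would argue as follows. Using the dictionary above, $\bp(z,\zeta)\ge\ups$ holds iff $\A(\zeta;\del(z),\ups)\subset\Om$. Since $\bp(z)=\inf_{\zeta\in\B(z)}\bp(z,\zeta)$, the statement $\bp(z)\ge\ups$ is equivalent to $\bp(z,\zeta)\ge\ups$ for every $\zeta\in\B(z)$, which by the above translates into the asserted universal statement about the open annuli.

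For the second equivalence, the easy direction is the ``$\Leftarrow$'' implication: if some $\xi\in\Om^c$ lies in $\A[\zeta;\del(z),\ups]$ for some $\zeta\in\B(z)$, then $\del(z)e^{-\ups}\le|\zeta-\xi|\le\del(z)e^{\ups}$, forcing $\xi\ne\zeta$ (because $\del(z)e^{-\ups}>0$) and $\bigl|\log(\del(z)/|\zeta-\xi|)\bigr|\le\ups$, whence $\bp(z)\le\bp(z,\zeta)\le\ups$. The ``$\Rightarrow$'' direction is where the only non-tautological input enters: I will invoke the two attainment statements already recorded in the excerpt (``there is always a $\zeta\in\B(z)$ with $\bp(z)=\bp(z,\zeta)$'' and ``given $z,\zeta$ there is $\xi\in\bOm\sm\{\zeta\}$ with $\bp(z,\zeta)=\bigl|\log|\zeta-z|/|\zeta-\xi|\bigr|$''). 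Assuming $\bp(z)\le\ups$, pick such a $\zeta$ and then such a $\xi$; the attained value is $\le\ups$, so $\xi\in\A[\zeta;\del(z),\ups]\cap\Om^c$.

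The only subtle point, and the place I would be most careful, is matching openness/closedness of the annuli with the strict/non-strict forms of the inequalities $\bp\ge\ups$ vs.\ $\bp\le\ups$: the first inequality (a lower bound on an infimum) propagates to \emph{every} pair $(\zeta,\xi)$ and so naturally yields a universal statement with the \emph{open} annulus, while the second inequality (an upper bound on an infimum) requires the attainment facts to produce an honest pair $(\zeta,\xi)$ and accordingly yields an existence statement with the \emph{closed} annulus. No further ingredients beyond the definitions and the two attainment remarks are needed.
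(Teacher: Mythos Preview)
Your argument is correct and complete. The paper does not actually supply a proof of this lemma; it is stated as a warm-up exercise left to the reader, so there is no ``paper's own proof'' to compare against. Your translation of the analytic infimum condition into the geometric annulus condition is exactly the intended verification, and you have correctly identified the only delicate point: matching the open annulus with the non-strict lower bound $\bp(z)\ge\ups$ (which follows purely from the definition of infimum) versus the closed annulus with the non-strict upper bound $\bp(z)\le\ups$ (which requires the attainment facts recorded just before the lemma in the paper).
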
 
\flag{and, $\bp(z)>\ups \iff \forall\;\zeta\in\B(z)\,, \quad \A[\zeta;\del(z),\ups]\subset\Om$}

Here is a general lower bound for $\bp$.

\begin{lma} \label{L:bp ge} %
Fix $r>q\ge \log6$.  Suppose $A:=\A(o;d,r)\in\mcA_\Om$.  Then for all $z\in\core_q(A)$,
\[
  \bp(z)> \frac12 \,q\,.
\]
\end{lma}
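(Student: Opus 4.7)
The plan is to show that for any $z \in \core_q(A) = \A(o; d, r-q)$ and any $\zeta \in \B(z)$, the closed annulus $\A[\zeta; \rho, q/2]$ (with $\rho := \del(z) = |z - \zeta|$) lies inside $A \subset \Om$. This gives $\bp(z, \zeta) > q/2$ for every $\zeta \in \B(z)$, and the characterization in Lemma~\ref{L:bp ests} then yields $\bp(z) > q/2$.

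First I would locate $\zeta$. Since $\zeta \in \bOm$ and $A \subset \Om$, $\zeta \notin A$, so either $|\zeta - o| \le de^{-r}$ or $|\zeta - o| \ge de^r$. The outer case is ruled out as follows: it would force $\rho \ge de^r(1 - e^{-q})$. On the other hand, since $o \in \Om^c$ and $A \subset \Om$, some boundary point $o' \in \bOm$ lies within $de^{-r}$ of $o$ (either $o$ itself if $o\in\bOm$, or a point on the boundary of the $\Om^c$-component of $o$, which cannot reach into $A$). Thus $\rho \le |z - o'| < de^{r-q} + de^{-r}$, and combining the two gives $1 < 2e^{-q} + e^{-2r}$, impossible for $q \ge \log 6$ and $r > q$.

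With $|\zeta - o| \le de^{-r}$ established, the triangle inequality sandwiches $\rho$ as
\[
  de^{-r}(e^q - 1) \; < \; \rho \; < \; de^{r-q} + de^{-r}.
\]
For any $w \in \A[\zeta; \rho, q/2]$, I would then verify $de^{-r} < |w - o| < de^r$. The upper bound $|w-o|\le \rho e^{q/2}+de^{-r}<de^r$ reduces to $e^{-q/2} + e^{q/2 - 2r} + e^{-2r} < 1$, which is comfortably satisfied (the three terms are bounded by $1/\sqrt 6$, $6^{-3/2}$, $1/36$, summing to less than $0.51$). The lower bound $|w-o|\ge \rho e^{-q/2}-de^{-r}>de^{-r}$ reduces to $e^{q/2} - e^{-q/2} > 2$, equivalently $e^{q/2} > 1 + \sqrt{2}$.

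The main obstacle is the tightness of this last inequality: at $q = \log 6$ it becomes $\sqrt 6 > 1 + \sqrt 2$, equivalently $25 > 24$, so $\log 6$ is essentially the sharp threshold for the argument. Once the containment is established, the closed annulus $\A[\zeta; \rho, q/2]$ sits strictly inside the open $A$ with margin independent of $\zeta$, so there is $\eta > 0$ with $\A(\zeta; \rho, q/2 + \eta) \subset \Om$ for every $\zeta \in \B(z)$; Lemma~\ref{L:bp ests} then yields $\bp(z) \ge q/2 + \eta > q/2$.
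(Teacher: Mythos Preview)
Your proof is correct and follows essentially the same approach as the paper: both show that for every $\zeta\in\B(z)$ the closed annulus $\A[\zeta;\del(z),q/2]$ lies in $A$, and both verify the two radial inequalities $de^{-r}<|w-o|<de^r$ via the same triangle-inequality estimates, with the lower bound reducing to exactly the same threshold $e^{q/2}-e^{-q/2}>2$ (i.e., $e^q-1\ge 2e^{q/2}$) at $q=\log 6$. The only cosmetic differences are that the paper invokes \rf{L:del deep in A} to place $\zeta$ inside the inner disk (whereas you rederive this---your detour through an auxiliary $o'\in\bOm$ is unnecessary since $\del(z)=\dist(z,\Om^c)\le|z-o|$ directly), and the paper uses the slightly tighter bound $\del(z)\le|z-o|<de^{r-q}$ in the upper estimate, while you use $\rho<de^{r-q}+de^{-r}$; both suffice.
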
 
\begin{proof}%
We may assume that $o=0$ and $d=1$.  Let $z\in\core_q(A)=\A(o;d,r-q)$; so,
\[
  e^{q-r}<|z|<e^{r-q}\,.
\]
Let $\zeta\in\B(z)$.  Then by \rf{L:del deep in A}, $|\zeta|\le e^{-r}$ and $|z|-e^{-r} \le \del(z)=|z-\zeta| \le |z|$.

To verify the lower bound on $\bp(z)$, it suffices to demonstrate that
\[
  B:=\A[\zeta;\del(z),q/2]\subset A:=\A(o;d,r)=\A(0;1,r)\,.
\]
So, let $w\in B$.  We show that $e^{-r}<|w|<e^r$.  Now
\begin{gather*}
  |w|\le|w-\zeta|+|\zeta| \le \del(z)e^{q/2} + e^{-r} < e^{r-q/2} + e^{-r} \le e^r,
  \intertext{because $e^{2r}\ge2\ge1/(1-e^{-q/2})$; also}
  |w|\ge|w-\zeta|-|\zeta| \ge \del(z)e^{-q/2} - e^{-r} > \lp e^{q-r}-e^{-r} \rp e^{-q/2} - e^{-r} \ge e^{-r}\,,
\end{gather*}
because $e^q-1\ge2e^{q/2}$ since $q\ge\log6$.
\end{proof}%

Next we establish the \BPEP.
This result is at the heart of our proof of \rf{TT:geos}.  It says that in a LFS annulus (in $\mcA^2_\Om$), the domain function $\bp$ decays `linearly' as we move away from the center circle.  The assumption that both boundary circles of the annulus meet $\bOm$ is crucial for obtaining the upper bounds; when only one boundary circle has a boundary point, $\bp$ can actually increase when we move away from the center circle towards the boundary circle that does not meet $\bOm$.

\begin{prop} \label{P:BPEP} %
Suppose $\A(o;d,r)\in\mcA^2_\Om(8\log2)$, so $r>\log16$. Then for all $|t|\lex r$ and all $z\in\mfS^1(o;de^t)$, $\bp(z)\eqx r-|t|$.  More precisely, for all $|t|\le r-\log16$ and for each $z\in\mfS^1(o;de^t)$,
\begin{gather*}
  \bp(z)\ge\frac12 \lp r-|t| \rp \quad\text{and}\quad
  \begin{cases}
       \bp(z)\le2 \lp r-|t| \rp &\text{for $t\ge0$}\, \\
       \bp(z)\le \lp r-|t| \rp + \log2 &\text{for $t\le0$}\,.
  \end{cases}
\end{gather*}
\end{prop}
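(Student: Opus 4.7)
The plan is to normalize $o=0$, $d=1$, and treat the lower and upper bounds separately, relying throughout on \rf{L:del deep in A}: since $|t|\le r-\log 16<r-\log 2$, every $\zeta\in\B(z)$ satisfies $|\zeta|\le e^{-r}$, so $\del(z)=|z-\zeta|\in[e^{|t|}-e^{-r},\,e^{|t|}+e^{-r}]$. The hypothesis $|t|\le r-\log 16$ gives $e^{-r-|t|}\le 1/16$, so $\del(z)$ equals $e^{|t|}$ up to a multiplicative error in $[15/16,\,17/16]$. These quantitative error bounds feed into all subsequent estimates.

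For the lower bound $\bp(z)\ge\tfrac12(r-|t|)$, I would apply (a marginal extension of) \rf{L:bp ge} with $q:=r-|t|\ge\log 16>\log 6$. Concretely, for any $\zeta\in\B(z)$ one verifies directly that $\A[\zeta;\del(z),(r-|t|)/2]\subset A\subset\Om$: the outer and inner radial checks reduce to $\del(z)e^{(r-|t|)/2}+|\zeta|<e^r$ and $\del(z)e^{-(r-|t|)/2}-|\zeta|>e^{-r}$, both of which follow from $\del(z)\approx e^{|t|}$ together with the slack $e^{(r+|t|)/2}\le e^r/4$ provided by $|t|\le r-\log 16$. The corner case $t=0$, where the strict inequality $r>q$ in \rf{L:bp ge} degenerates, is absorbed by the very same direct computation.

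For the upper bound when $t\ge 0$, the hypothesis $\A(o;d,r)\in\mcA^2_\Om$ supplies $\xi_2\in\bOm\cap\mfS^1(o;e^r)$. For any $\zeta\in\B(z)$, $|\zeta-\xi_2|\le e^r+e^{-r}$ and $|\zeta-z|=\del(z)\ge e^t-e^{-r}\ge(15/16)e^t$, so
\[
\bp(z,\zeta)\le\log\frac{|\zeta-\xi_2|}{|\zeta-z|}\le (r-t)+\log\frac{1+e^{-2r}}{1-e^{-r-t}}\le 2(r-|t|),
\]
the logarithmic correction being dominated by $r-t\ge\log 16$. Taking the infimum over $\zeta$ gives the required bound.

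The case $t\le 0$ is the main obstacle, because the inner boundary circle does not give an automatic large denominator: both $|\zeta|$ and $|\xi_1|$ are of order $e^{-r}$, so $|\zeta-\xi_1|$ can be arbitrarily small. My plan is to exploit that both $\xi_1\in\bOm$ and $o\in\Om^c$ lie in $\D[o;e^{-r}]$ at mutual distance exactly $e^{-r}$. For any $\zeta\in\B(z)$ the triangle inequality forces $\max(|\zeta-\xi_1|,|\zeta-o|)\ge e^{-r}/2$; choosing $\xi\in\{\xi_1,o\}\sm\{\zeta\}$ achieving this maximum and noting $|\zeta-z|\le e^t+e^{-r}\le(17/16)e^t$ yields $\bp(z,\zeta)\le(r-|t|)+\log(17/8)$. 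Squeezing this down to the claimed $(r-|t|)+\log 2$ is the delicate step: because $\bp(z)$ is the infimum over $\zeta\in\B(z)$, it suffices to produce one favorable $\zeta$ rather than bound uniformly. I expect this to be achieved by selecting $\zeta\in\B(z)$ on the radial ray from $o$ through $z$ (or its reverse), which guarantees that the antipodal element of $\{\xi_1,o\}$ in $\D[o;e^{-r}]$ sits at distance $\ge e^{-r}$ from $\zeta$; verifying $\xi\ne\zeta$ in each sub-case is the finicky bookkeeping I expect to carry through.
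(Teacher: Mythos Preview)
Your overall strategy matches the paper's: normalize, get the lower bound from \rf{L:bp ge}, and get the upper bounds by testing $\bp(z,\zeta)$ against a well-chosen $\xi\in\Om^c\sm\{\zeta\}$. The $t\ge0$ upper bound is exactly the paper's argument with $\xi=\eta_+$, and your ``max'' argument for $t\le0$ is precisely the paper's case split on whether $|\zeta|\le\tfrac12 e^{-r}$ or $|\zeta|\ge\tfrac12 e^{-r}$ (using $\xi=\eta_-$ in the first case, $\xi=0$ in the second). So the approaches coincide.

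Two corrections, one minor and one substantive. First, in your opening paragraph $\del(z)\approx e^{|t|}$ should read $\del(z)\approx e^{t}=|z|$; for $t<0$ these differ. Second, and this is the real gap in the $t\le0$ case: you are overlooking the elementary bound $\del(z)\le|z|=e^t$, which holds simply because $o=0\in\Om^c$. With this in hand your own max argument already gives
\[
\bp(z,\zeta)\le\log\frac{\del(z)}{\max(|\zeta-\xi_1|,|\zeta|)}\le\log\frac{e^t}{e^{-r}/2}=(r-|t|)+\log 2,
\]
and no ``delicate step'' is needed. Your proposed fix---selecting $\zeta\in\B(z)$ on the radial ray through $z$---is not available: $\B(z)$ is the set of nearest boundary points determined by $\Om$, and nothing guarantees it meets that ray. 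Fortunately the fix is unnecessary once you use $\del(z)\le|z|$.
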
 
\begin{proof}%
We may assume that $o=0$ and $d=1$.  Pick $\eta_\pm\in\bOm$ with $|\eta_\pm|=e^{\pm r}$.
Let $z\in\A(o;d,r)=\A(0;1,r)$ with $|z|=e^t$ and $|t|\le r-\log16$.  Let $\zeta\in\B(z)$.  Then
\[
  |\zeta|\le e^{-r} \quad\text{and}\quad |z|-e^{-r} \le \del(z)=|z-\zeta| \le |z|\,.
\]

The lower bound on $\bp(z)$ follows at once from \rf{L:bp ge}.
To establish the upper bounds on $\bp(z)$, we simply use the fact that
\[
  \bp(z)\le\left| \log\frac{\del(z)}{|\zeta-\xi|} \right| \quad\text{for each $\xi\in\{0,\eta_-,\eta_+\}\sm\{\zeta\}$}\,.
\]
When $t\ge0$, we take $\xi=\eta_+$.  Now $e^t-e^{-r}\ge\half e^t$, so
\begin{gather*}
  1\le \frac{|\zeta-\eta_+|}{\del(z)} \le \frac{e^r+e^{-r}}{e^t-e^{-r}} \le 2e^{-t}(e^r+e^{-r})\le 1+2e^{r-t}\,;
  \intertext{since $\log(1+2x)\le2\log x$ for all $x\ge3$, we now see that}
  \bp(z)\le\log(1+2e^{r-t})\le2(r-t)\,.
\end{gather*}

Suppose $t\le0$, so $|z|=e^t=e^{-|t|}$.  Assume $|\zeta|\le\half e^{-r}$; we take $\xi=\eta_-$.  Then
\begin{gather*}
  1\le \frac{\del(z)}{|\zeta-\eta_-|} \le 2e^r\del(z) \le 2e^r|z| = 2e^{r-|t|}
  \intertext{so}
  \bp(z)\le\log\frac{\del(z)}{|\zeta-\eta_-|}\le r-|t|+\log2\,.
\end{gather*}
Next, assume $|\zeta|\ge\half e^{-r}$; we take $\xi=0$.  Then
\begin{gather*}
  1\le \frac{\del(z)}{|\zeta|} \le 2e^r\del(z) \le 2e^r|z| = 2e^{r-|t|}
  \intertext{so}
  \bp(z)\le\log\frac{\del(z)}{|\zeta|}\le r-|t|+\log2\,. \qedhere
\end{gather*}
\end{proof}%

\begin{rmks}  \label{R:bp ests} %
In the above, we are looking at points $z\in\overline\core_{\log16}(A)$ where $A:=\A(o;d,r)$, $r>\log16$, $A\in\mcA^2_\Om$, and $t:=\log\lp|z-o|/d\rp$.  In particular:

\smallskip\noindent(a)  In $\core_{\log16}(A)$, $\bp\le 2r=\md(A)$, and on $\mfS^1(A)=\mfS^1(o;d)$, $\bp\ge\half r$.

\smallskip\noindent(b)  On both boundary circles $\mfS^1(o;16^{\pm1}de^{\mp r})$, we have $2\log2 \le \bp \le 8\log2$.

\smallskip\noindent(c)  For $\log16<q<r$ we have the strict inequalities
\[
  \text{$\bp>\half\,q$ in $\core_q(A)$ \hspace{1em} and \hspace{1em} $\bp<2\,q$ in $\core_{\log16}(A)\sm\overline\core_q(A)$}\,.
\]

\smallskip\noindent(d)  We also have estimates for $\bp$ in annuli $A=\A(o;d,r)\in\mcA^1_\Om$, as long as we move towards the boundary circle that has a boundary point.  To be precise, suppose $|z-o|=de^t$ with $|t|\le r-\log16$.  Then
\begin{align*}
   \bp(z) &\le2 \lp r-|t| \rp        &\text{when $t\ge0$ and $\bdo A\cap\bOm\ne\emptyset$}\,, \\
   \bp(z)& \le \lp r-|t| \rp + \log2 &\text{when $t\le0$ and $\bdi A\cap\bOm\ne\emptyset$}\,.
\end{align*}
\end{rmks} %
\flag{I think we only need $-r+\log16\le t \le r-\log3$ :-)}


\subsection{The Annulus $\A(z)$}  \label{s:BP&A} %

Recall from \rf{ss:BP} that for each $z\in\Om$ with $\bp(z)>0$ there is an associated \emph{Beardon-Pommerenke annulus}
\[
  \BP(z):=\A(\zeta;\del(z),\bp(z))\in\mcA^1_\Om \quad\text{(so, $\BP(z)\subset\Om$ and $\bd\BP(z)\cap\bOm\ne\emptyset$)}.
\]
For our purposes, there is a second more important annulus $\A(z)$ which has the crucial property that \emph{both} of its boundary circles meet $\bOm$; this then permits the use of \rf{P:BPEP} to estimate $\bp$ in $\A(z)$.  If $\BP(z)\in\mcA^2_\Om$ (i.e., if $\BP(z)$ already has the property that both of its boundary circles meet $\bOm$), then we set $\A(z):=\BP(z)$.  Otherwise, we obtain $\A(z)$ by `enlarging' $\BP(z)$ until we hit $\bOm$; that is, we start with the boundary circle of $\BP(z)$ that does not meet $\bOm$ and then grow or shrink this circle (keeping its center fixed) until we hit $\bOm$.

To be precise \flag{pictures here!?}, suppose
\begin{gather*}
  m:=\bp(a)=\biggl|\log\Bigl|\frac{\zeta-a}{\zeta-\xi}\Bigr|\biggr|
  \intertext{with $\zeta\in\B(a)$ and $\xi\in\bOm$, so}
  \BP(a)=\A(\zeta;\del(a),m)=\{z\in\mfC  :  \del(a)e^{-m}<|z-\zeta|<\del(a)e^m\}\,.
\end{gather*}
We assume one of the boundary circles of $\BP(a)$ does \emph{not} meet $\bOm$; so, $\BP(a)\in\mcA^1_\Om\sm\mcA^2_\Om$.  There are two cases depending on whether $|\zeta-\xi|=\del(a)e^m$ or $|\zeta-\xi|=\del(a)e^{-m}$.

Assume $\xi\in\bdi\BP(a)$, so $|\zeta-\xi|=\del(a)e^{-m}$.  Put
\begin{gather*}
  R:=\sup\{r\ge\del(a)e^m : \{z\in\mfC  :  \del(a)e^{-m}<|z-\zeta|<r\}\subset\Om \}
  \intertext{and define}
  \A(a):=\{z\in\mfC  :  |\zeta-\xi| < |z-\zeta| < R\}\,.
\end{gather*}
Note that if $R=+\infty$, then $\A(a)=\mfC\sm\D[\zeta;|\zeta-\xi|]$ which should be viewed as a punctured disk on the Riemann sphere $\RS$.  Suppose $R<\infty$.  Then there exists a point $\eta\in\mfS^1(\zeta;R)\cap\bOm$, and now
\begin{gather*}
  \A(a)=\A(\zeta;d,\ups)=\{z\in\mfC  :  d e^{-\ups}<|z-\zeta|<d e^\ups\}\,,
  \intertext{where}
  d:=\lp |\zeta-\xi||\zeta-\eta| \rp^\half = \lp \del(a)e^{-m} R \rp^\half > \del(a)
  \intertext{and}
  \ups:=\log\frac{R}{d}=m+\log\frac{d}{\del(a)}>m=\bp(a)\,.
\end{gather*}

Assume $\xi\in\bdo\BP(a)$, so $|\zeta-\xi|=\del(a)e^{m}$.  Put
\begin{gather*}
  \veps:=\inf\{0<r\le\del(a)e^{-m} : \{z\in\mfC  :  r<|z-\zeta|<\del(a)e^{m}\}\subset\Om \}
  \intertext{and define}
  \A(a):=\{z\in\mfC  :  \veps < |z-\zeta| < |\zeta-\xi|\}\,.
\end{gather*}
Now if $\veps=0$, then $\A(a)=\D_*(\zeta;|\zeta-\xi|)$ which is a punctured disk.  Suppose $\veps>0$.  Then there exists a point $\eta\in\mfS^1(\zeta;\veps)\cap\bOm$, and
\begin{gather*}
  \A(a)=\A(\zeta;d,\ups)=\{z\in\mfC  :  d e^{-\ups}<|z-\zeta|<d e^\ups\}\,,
  \intertext{where}
  d:=\lp |\zeta-\xi||\zeta-\eta| \rp^\half = \lp \del(a)e^{m} \veps \rp^\half < \del(a)
  \intertext{and}
  \ups:=\log\frac{d}{\veps}=m+\log\frac{\del(a)}{d}>m=\bp(a)\,.
\end{gather*}

We summarize the above.
\begin{itemize}
  \item  $\A(z)\in\mcA^2_\Om$ is either a punctured disk in $\RS$ (which is a very large fat annulus:-) or $\A(z)=\A(\zeta;d,\ups)$ with $\zeta\in\B(z)$, some $d>0$, and some $\ups\ge\bp(z)$.  Thus we can always talk about $\core_q\A(z)$ where either $q>0$ in the first (two) case(s) or $0<q<\ups$ in the last case.
  \item  Also, $z$ lies on the boundary of a concentric subannulus of $\A(z)$ (in fact, $z$ lies on the boundary of a collar) that has modulus $\bp(z)$.  In particular,
  \begin{equation}\label{E:z in core bdy}
    z\in\bd\core_{\bp(z)}\A(z)\subset\overline\core_{\bp(z)}\A(z)\,.
  \end{equation}
\end{itemize}
\flag{perhaps indicate values of $\bp$ in certain cores of $\A(z)$?}  Notice that we can only apply the \BPEP\ to obtain estimates when $\A(z)$ is non-degenerate, but when $\A(z)$ is degenerate, we actually have a precise formula for $\bp$.  For example, if say $\A(a)=\D_*(0;R)$, then for $z\in\D_*(0;R/2)$ we have $\bp(z)=\log(R/|z|)$.

We conclude this subsection with the following elementary, but surprisingly useful, observation.  We often apply this result to curves.

\begin{lma} \label{L:bp ge tau} %
Fix $\tau>8\log2$.  Let $E$ be a connected subset of $\Om$.  Suppose that $\bp\ge\tau$ in $E$.  Then
\[
  E \subset \bigcap_{z\in E} \overline\core_{\tau/2}\A(z)\,.
\]
\end{lma}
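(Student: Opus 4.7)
Fix $z \in E$; it suffices to show $E \subseteq \overline\core_{\tau/2}\A(z)$. From \eqref{E:z in core bdy} together with $\bp(z) \ge \tau$, we already know that $z \in \overline\core_{\bp(z)}\A(z) \subseteq \overline\core_\tau\A(z)$, and this last set is a concentric closed subannulus strictly inside $\core_{\tau/2}\A(z)$. The plan is to assume, for contradiction, that some $w \in E$ lies outside $\overline\core_{\tau/2}\A(z)$, and then to produce a point of $E$ on which $\bp$ is strictly less than $\tau$.

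Set $o := c(\A(z))$. The radial function $y \mapsto |y - o|$ is continuous on the connected set $E$, so its image is a subinterval of $(0,\infty)$ containing both $|z - o|$ (deep inside $\core_{\tau/2}\A(z)$) and $|w - o|$ (on the far side of one of the two boundary circles of $\core_{\tau/2}\A(z)$). By the intermediate value property, this image then meets the narrow annular collar lying strictly between $\overline\core_{\tau/2}\A(z)$ and $\overline\core_{\log 16}\A(z)$ on the appropriate side; this collar has modulus $\tau/2 - \log 16$, which is strictly positive precisely because of the hypothesis $\tau > 8 \log 2$. Pick any $y \in E$ lying in this collar.

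Now apply \rf{P:BPEP}. In the non-degenerate case $\A(z) = \A(o; d, r)$, we have $\A(z) \in \mcA^2_\Om$ and $r \ge \bp(z) \ge \tau > 8 \log 2$, so \rf{P:BPEP} applies to $y$. Setting $t := \log(|y - o|/d)$, by construction $|t|$ lies in the range $(r - \tau/2,\, r - \log 16]$, so $r - |t| < \tau/2$. The two upper bounds of \rf{P:BPEP} then give $\bp(y) \le 2(r - |t|) < \tau$ when $t \ge 0$ (outer collar), and $\bp(y) \le (r - |t|) + \log 2 < \tau$ when $t \le 0$ (inner collar, using $\tau > 2 \log 2$). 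In the degenerate case, when $\A(z)$ is a punctured disk in $\RS$, the same argument goes through with the explicit formula for $\bp$ (e.g., $\bp(y) = \log(R/|y-\zeta|)$ when $\A(z) = \D_*(\zeta; R)$) in place of \rf{P:BPEP}, again forcing $\bp(y) < \tau/2 < \tau$. Either way, $\bp(y) < \tau$, contradicting $\bp \ge \tau$ on $E$. The only delicate point is to track which of the two \BPEP\ upper bounds applies on each side of $\core_{\tau/2}\A(z)$, and to verify that the hypothesis $\tau > 8 \log 2$ provides exactly the geometric room needed both to place $z$ strictly inside $\core_{\tau/2}\A(z)$ and to leave a non-degenerate collar between $\overline\core_{\tau/2}\A(z)$ and the zone in which \rf{P:BPEP} is valid.
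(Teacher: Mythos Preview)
Your proof is correct and follows the same strategy as the paper's: fix $z\in E$, use connectedness of $E$ to place a point $y\in E$ in the collar $\core_{\log 16}\A(z)\setminus\overline\core_{\tau/2}\A(z)$, and then invoke the \BPEP\ upper bounds (or the explicit formula in the punctured-disk case) to force $\bp(y)<\tau$, a contradiction. The only spot where you are a bit quick is the case $\A(z)=\mfC\setminus\D[\zeta;R]$: there is no single clean ``explicit formula'' for $\bp$ there, and the paper instead splits on whether the nearest boundary point for $y$ lies within $R/2$ of the center $\zeta$ (using either $\zeta\in\bOm$ or the constructed point $\xi\in\bOm\cap\mfS^1(\zeta;R)$ as the second comparison point in the definition of $\bp$)---your collar argument goes through once this split is supplied.
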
 
\begin{proof}%
Fix a point $p\in E$.  We examine the three cases depending on whether $\A(p)\in\mcA^2_\Om$ is a punctured disk, the complement of a disk, or a non-degenerate annulus.  Suppose $\A(p)=\D_*(0;R)$.  Assume $|p|=1$, so $R=e^{\bp(p)}\ge e^\tau$.  We claim that $E\subset\D[0;e^{-\tau}r]=\overline\core_\tau\A(p)$.

Note that $E\subset\D_*(0;R/2)$, for if not then we could find a point $z\in E$ with $|z|=R/2$ but then $\tau\le\bp(z)\le\log2$.  Thus for all $z\in E$, $\tau\le\bp(z)\le\log(R/|z|)$ whence $|z|\le e^{-\tau}R$ as asserted.

\smallskip

Suppose $\A(p)=\mfC\sm\D[0;R]$.  Again, assume $|p|=1$.  Then there is a point $\xi\in\bOm$ with $|\xi|=R=e^{-\bp(p)}\le e^{-\tau}$.  We claim that $E\subset\mfC\sm\D(0;\half e^\tau R)=\overline\core_{\tau-\log2}\A(p)\subann\core_{\tau/2}\A(p)$.

To see this, we first show that $E\subset\mfC\sm\D[0;3R]$.  For suppose there is a point $z\in E$ with $|z|=3R$.  Let $\zeta\in\B(z)$.  Then
\[
  |\zeta|\le R \quad\text{and}\quad 2R\le|z-\zeta|=\del(z)\le|z|\le3R\,.
\]
Assume $|\zeta|\ge R/2$.  Then
\begin{gather*}
  \bp(z)\le\bigl|\log\frac{\del(z)}{|\zeta|}\bigr|=\log\frac{\del(z)}{|\zeta|}\le\log\frac{2\del(z)}{R} \le\log6
\end{gather*}
which implies the contradiction $\tau\le\log6$.
Assume $|\zeta|\le R/2$.  Then
\begin{gather*}
  \half\,R \le |\xi|-|\zeta| \le |\zeta-\xi| \le |\zeta|+|\xi| \le \frac32\,R
  \intertext{so again}
  \bp(z)\le\bigl|\log\frac{\del(z)}{|\zeta-\xi|}\bigr| = \log\frac{\del(z)}{|\zeta-\xi|} \le\log\frac{2\del(z)}{R} \le\log6\,.
\end{gather*}

Now we repeat the above argument for an arbitrary point $z\in E$.  We still have $|\zeta|\vee|\zeta-\xi|\le2R\le\del(z)\le|z|$ and thus
\[
  \tau\le\bp(z)\le\log\frac{2|z|}R\,, \quad\text{whence}\quad  |z|\ge\half\,e^\tau R
\]
as asserted.

\smallskip

Finally, suppose $A:=\A(p)=\A(0;1,\ups)$ is a non-degenerate annulus.  We claim that $E\subset\overline\core_{\tau/2}(A)$.  Here $\ups\ge\bp(z)\ge\tau$ and $\tau/2\ge4\log2=\log16$, so we can appeal to the BPEP and especially Remarks~\ref{R:bp ests}.  Recall from \eqref{E:z in core bdy} that $p\in\bd\core_{\bp(p)}(A)\subset\overline\core_\tau(A)\subann \core_{\log16}(A)$.

Since $\bp\le8\log2$ on $\bd\core_{\log16}(A)$ (see \rf{R:bp ests}(b)), it follows that $E\subset\core_{\log16}(A)$.  Thus by \rf{R:bp ests}(c), if there were a point $z$ in $E\sm\overline\core_{\tau/2}(A)$, we would get $\tau\le\bp(z)<\tau$; therefore, $E\subset\overline\core_{\tau/2}(A)$.
\end{proof}%

\subsection{The \emph{ABC} Property}  \label{s:ABC} %
A path $\mfR\supset I\xra{\gam}\Om$ has the \emph{ABC property}\footnote{\emph{ABC} is short for \emph{annulus bounce or cross}.}, with parameters $\mu>0$ and $\nu>0$, \ifff for each compact subpath $\alf$ of $\gam$ and for each annulus $A:=\A(o;d,\nu)\in\mcA_\Om$ such that $\mfS^1(A)\supset\bd\alf$, we have $|\alf|\subset\A(o;d,\mu)$.

The ABC property implies that the path can cross a moderate size annulus

 at most once, so if the path enters deep into an annulus, it either stays there  or it crosses (once) and never returns.  In particular, if the path goes near an ``isolated island or archipelago'' of $\Om^c\cup\{\infty\}$, then it stays near it.  Here are various precise statements of this phenomenon.

\begin{lma} \label{L:ABC} %
Suppose a path $\mfR\supset I\xra{\gam}\Om$ has the \emph{ABC property} with parameters $\mu\ge\nu>0$.  Let $A:=\A(o;d,m), A_\nu:=\A(o;d,\nu), A_\mu:=\A(o;d,\mu)$.  Suppose $\alf$ is a compact subpath of $\gam$.
\begin{aenum}
  \item  Assume $A_\nu\in\mcA_\Om$.  If $m\ge\mu$, then $\alf$ crosses $A$ at most once. 
  \item  Assume $\band_{2\nu}(A)\in\mcA_\Om$.  If $\bd\alf\subset\bA$, then $|\alf|\subset\band_{2\mu}(A)$.
  \item  Assume $A\in\mcA_\Om$ and $m\ge\mu+\nu$.  Suppose $\bd\alf\cap\A_\mu=\emptyset$.  Then
  \[
    \text{either}\quad |\alf|\cap\mfS^1(A)=\emptyset \quad\text{or $\quad\mfS^1(A)$ separates $\bd\alf$}\,.
  \]
  \hspace{1.3em}In fact, if $|\alf|\cap\mfS^1(A)\ne\emptyset$, then $A_\mu$ separates $\bd\alf$.
  \item  Assume $A\in\mcA_\Om$ and $m>2\mu$.  Suppose $\bd\alf\cap A=\emptyset$.  Then
  \[
    \text{either}\quad |\alf|\cap\overline\core_{2\mu}(A)=\emptyset \quad\text{or $A$ separates $\bd\alf$}\,.
  \]
\end{aenum}
\end{lma}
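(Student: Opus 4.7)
The plan for all four parts is essentially the same contradiction argument: extract a compact subpath $\beta$ of $\alf$ whose endpoints lie on the center circle of some auxiliary annulus $A'=\A(o';d',\nu)\in\mcA_\Om$, apply the ABC hypothesis to $A'$ to conclude $|\beta|\subset\A(o';d',\mu)$, and arrange for $\beta$ to visit a point that is forced to lie outside $\A(o';d',\mu)$. In each part the modulus assumption on $A$ enters only when verifying that the auxiliary annulus $A'$ fits inside $A$ (or inside $\band_{2\nu}(A)$ in (b)), hence inside $\Om$.

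For (a), suppose $\alf$ crosses $A$ twice. The footnote to the definition of ``crosses $n$ times'' lets us assume that $\alf$ visits a point $t_*$ with $\alf(t_*)\notin\bar A$ between the two crossings. Applying the intermediate value theorem to $t\mapsto|\alf(t)-o|$ inside each crossing subpath produces times $u_1<t_*<u_2$ with $\alf(u_1),\alf(u_2)\in\mfS^1(A)=\mfS^1(A_\nu)$. Since $m\ge\mu$, the point $\alf(t_*)$ lies outside $\bar A\supset A_\mu$, while ABC applied to $A_\nu$ forces $\alf(t_*)\in A_\mu$ --- contradiction.

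For (b), I would again argue by contradiction. If $c\in|\alf|$ satisfies $|c-o|\le de^{-m-2\mu}$ or $|c-o|\ge de^{m+2\mu}$, then because $\bd\alf\subset\bA$ the function $t\mapsto|\alf(t)-o|$ attains $de^{\pm m}$ (with the sign corresponding to the side of $A$ containing $c$) both before and after the time at which $\alf$ passes through $c$. Taking the closest such times $s_1<s_2$ gives $\bd(\alf[s_1,s_2])\subset\mfS^1(o;de^{\pm m})$. The auxiliary annulus $B_\pm:=\A(o;de^{\pm m},\nu)$ sits inside $\band_{2\nu}(A)\in\mcA_\Om$, so ABC confines $|\alf[s_1,s_2]|$ to $\A(o;de^{\pm m},\mu)\subset\band_{2\mu}(A)$, which excludes $c$.

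Parts (c) and (d) are where I expect the main technical difficulty, because the circle we want to separate --- $\mfS^1(A)$ in (c), and the boundary of $\overline\core_{2\mu}(A)$ in (d) --- is not itself the center circle of any given annulus, so the auxiliary annulus must be introduced by hand. For (c), if both endpoints of $\alf$ lie outside $\bdo A_\mu$ while some $\alf(t_0)\in\mfS^1(A)$, I would choose $s_1,s_2$ as the times closest to $t_0$ with $|\alf(s_i)-o|=de^\mu$; the annulus $C:=\A(o;de^\mu,\nu)$ fits inside $A$ exactly because $\mu+\nu\le m$, so $C\in\mcA_\Om$, and ABC restricts $|\alf[s_1,s_2]|$ to $\A(o;de^\mu,\mu)=\{d<|z-o|<de^{2\mu}\}$, which does not contain $\alf(t_0)$. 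The inner subcase of (c), and both subcases of (d), follow the same template with $de^\mu$ replaced by $de^{-\mu}$ in (c) or by $de^{\pm(m-\mu)}$ in (d); in (d) the inclusion $\A(o;de^{\pm(m-\mu)},\nu)\subset A$ is exactly where the hypothesis $m>2\mu$ together with $\mu\ge\nu$ is used. Keeping track of which radius produces the correct inclusion is the most error-prone part, and I would organize the four subcases in a small table before writing up the proof.
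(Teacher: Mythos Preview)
Your approach is correct and essentially matches the paper's. The only organizational difference is that the paper proves (a) once and then derives (b) and (d) as corollaries of (a) --- observing that an excursion outside $\band_{2\mu}(A)$, respectively into $\overline\core_{2\mu}(A)$, forces $\alf$ to cross an appropriate modulus-$2\mu$ collar twice --- whereas you unfold the same ABC contradiction directly in each part with an explicitly chosen auxiliary annulus. Your (c) and the paper's (c) are identical in substance.

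One minor imprecision in your (a): the footnote to the definition of ``crosses $n$ times'' only lets you arrange that $\alf(t_*)$ lies on one \emph{side} of $A$, i.e.\ $|\alf(t_*)-o|\le de^{-m}$ or $|\alf(t_*)-o|\ge de^m$; it need not lie strictly outside $\bar A$. But this is already enough: since $m\ge\mu$ and $A_\mu$ is open, such a point is still excluded from $A_\mu$, so your contradiction goes through unchanged.
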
 
\begin{proof}%
To validate (a), suppose $\alf$ crosses $A$ twice.  Then there is a subpath $\beta$ of $\alf$ with $\bd\beta\subset\mfS^1(A)=\mfS^1(A_\nu)$ and such that $\bd\beta\cap\bA\ne\emptyset$.  The ABC property asserts that $|\beta|\subset A_\mu$, which in turn implies that $\cA\csubann A_\mu$, so $m<\mu$.

\smallskip

Item (b) follows from (a), because each collar of $A$ relative to $\band_{2\nu}(A)$ belong to $\mcA_\Om$ (and has modulus $2\nu$), so $\alf$ cannot cross either collar of $A$ relative to $\band_{2\mu}(A)$ (as each of these has modulus $2\mu$).

\smallskip

To verify (c), assume $\mfS^1(A)$ does not separate $\bd\alf$.  Then $A_\mu$ does not separate $\bd\alf$, and so the endpoints of $\alf$ both lie on the ``same side'' of $A_\mu$ (i.e., both are inside or both outside).

Next, suppose $|\alf|\cap\mfS^1(A)\ne\emptyset$ and pick a point $p\in|\alf|\cap\mfS^1(A)$.  We select a subpath $\beta$ of $\alf$ by moving away from $p$ towards each of the endpoints of $\alf$ until we first meet $\bA_\mu$. (Technically, here we should work with a parametrization of $\alf$.)  Thus we have a subpath $\beta$ of $\alf$ with $\bd\beta\subset C$ where $C$ is a component of $\bA_\mu$ ($C$ is the inner boundary circle if both endpoints of $\alf$ lie inside $\mfS^1(A)$ and otherwise $C$ is the outer boundary circle).

Now $B_\nu:=\band_\nu(C)\in\mcA_\Om$ and $\beta$ has endpoints on $C=\mfS^1(B_\nu)$, but as $p\in|\beta|$, $|\beta|\not\subset\band_\mu(C)$ which contradicts the ABC property for $\gam$.

\smallskip

To prove (d), suppose $|\alf|\cap\overline\core_{2\mu}(A)\ne\emptyset$.  If $A$ did not separate $\bd\alf$, then $\alf$ would cross one of the collars of $\core_{2\mu}(A)$ relative to $A$ \emph{twice}, but each of these collars has modulus $2\mu$ and this would contradict (a).
\end{proof}%

Item \ref{L:ABC}(d) above is the origin for the term ``ABC property''.  It says that for each sufficiently fat annulus $A\in\mcA_\Om$, each subpath $\alf$ (with endpoints not in $A$) either crosses $A$ or bounces off $A$ in the sense that it misses $\core_{2\mu}(A)$.  This ``ABC property'' is a fundamental characteristic of hyperbolic and quasihyperbolic geodesics and quasi-geodesics.

\smallskip

Now we demonstrate that both hyperbolic and quasihyperbolic geodesics have the ABC property.  \Va\ established this for the quasihyperbolic metric; see \cite[Lemma~3.6]{V-hyp-unif}.  Other metrics with this property include the Ferrand and Kulkarni-Pinkhall metrics.

\begin{prop} \label{P:ABC} %
In any hyperbolic plane domain: 
\begin{itemize}
  \item[(a)] quasihyperbolic geodesics have the $(\pi,\log2)$-ABC property,
  \item[(b)] hyperbolic geodesics have the $(3\kk,5/2)$-ABC property.
\end{itemize}
\end{prop}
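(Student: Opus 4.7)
The plan for both parts is a shortcut-versus-stretch contradiction. Fix a compact subpath $\alf$ of the geodesic with $\bd\alf = \{a,b\} \subset \mfS^1(o;d)$ and $A := \A(o;d,\nu) \in \mcA_\Om$; suppose for contradiction a point $p \in |\alf|$ satisfies $|p-o| > de^\mu$ (the symmetric case $|p-o| < de^{-\mu}$ is identical). Because $\alf$ is a geodesic passing through $p$,
\begin{equation*}
  \rho(a,b) = \ell_\rho(\alf) \ge \rho(a,p) + \rho(p,b),
\end{equation*}
where $\rho$ is $k$ for (a) and $h$ for (b). The plan is to bound $\rho(a,b)$ above by using the shorter circular arc $\kap_{ab} \subset \mfS^1(o;d)$ as a competing path, and to bound $\rho(a,p)+\rho(p,b)$ below by the radial separation; the parameters $(\mu,\nu)$ are chosen so that the lower bound strictly exceeds the upper bound.

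For part (a), since $\A(o;d,\log 2) \subset \Om$, the disk $\D(\zeta;d/2)$ lies in $\Om$ for every $\zeta \in \mfS^1(o;d)$, so $\del_\Om \ge d/2$ on the center circle. Combined with $\ell(\kap_{ab}) \le \pi d$, this gives $k(a,b) \le \ell_k(\kap_{ab}) \le 2\pi$. Conversely \eqref{E:k ge log} yields $k(a,p) > \pi$ and $k(p,b) > \pi$, so $\ell_k(\alf) > 2\pi$, a contradiction.

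For part (b), the upper bound combines $\del \ge d(1-e^{-5/2})$ with $\bp \ge 5/4$ on the center circle, the latter coming from \rf{L:bp ge} on letting its parameter $q$ approach $5/2$. The Beardon--Pommerenke inequality $\lam \le \pi/(2\del\,\bp)$ then yields
\[
  \ell_h(\kap_{ab}) \le \frac{\pi d \cdot \pi}{2 \cdot d(1-e^{-5/2}) \cdot 5/4} = \frac{2\pi^2}{5(1-e^{-5/2})} < 3\kk,
\]
so $h(a,b) < 3\kk$. For the matching lower bound, $\Om^c \cup \{\infty\}$ contains a third point $\xi$ by hyperbolicity, and domain monotonicity gives $h_\Om \ge h_{\mfC \sm \{o,\xi\}}$; after the M\"obius normalization $z \mapsto (z-o)/(\xi-o)$, the estimate \rf{F:lam_01}, integrated radially in the spirit of \rf{L:h est in C01}, will deliver $h(a,p) + h(p,b) > 3\kk$, contradicting the shortcut bound.

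The main obstacle is producing this hyperbolic lower bound uniformly in the location of $\xi$: the straightforward radial estimate from \rf{L:h est in C01} degenerates when $|\xi-o|$ is much larger than $de^{3\kk}$, since the logarithmic ratio that emerges tends to $0$ as $|\xi|/d \to \infty$ with $|p|$ fixed. The remedy is to case-split on where $\xi$ sits relative to the radii $d$ and $de^{3\kk}$. When $\xi$ lies in a controlled range the radial integral bites directly; otherwise one either selects a different boundary point (which must exist at a useful scale by hyperbolicity, since $\Om^c \cup \{\infty\}$ has at least three points) or applies an inversion permuting two of $\{o,\xi,\infty\}$ to normalize the configuration into a range where \rf{F:lam_01} gives the required bound. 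The specific constants $\mu = 3\kk$ and $\nu = 5/2$ emerge from balancing the shortcut upper bound against this worst-case radial lower bound.
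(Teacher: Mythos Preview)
Your argument for part~(a) is correct and is exactly the paper's proof.

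For part~(b), the upper bound is fine (and in fact sharper than the paper's, which uses the hyperbolic metric of the annulus $A$ directly to get $h(a,b)\le\pi^2/2m$), but the lower bound has a genuine gap that your proposed case-split does not close.

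The problem is this: take $\Om=\mfC_{0,\xi}$ with $|\xi|=e^N$ for $N$ large, and $a,b\in\mfS^1$. Your shortcut bound gives $h(a,b)<4.3$, a fixed number. But for $|p|=e^{3\kk}$ the radial estimate from \rf{F:lam_01} yields only
\[
  h(a,p)+h(p,b)\;\gtrsim\;2\log\frac{\kk+N}{\kk+N-3\kk}\;\xrightarrow[N\to\infty]{}\;0,
\]
so no contradiction arises. Your proposed repairs do not help here: there is no ``different boundary point at a useful scale'' (the only points of $\Om^c\cup\{\infty\}$ are $0,\xi,\infty$), and an inversion permuting two of these either fixes the annulus picture or destroys the normalization $o=c(A)$ without producing a better $\xi$.

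The paper's remedy is to \emph{enlarge the annulus before doing anything else}: replace $\A(0;1,5/2)$ by the maximal concentric $\A(0;1,m)\in\mcA_\Om$, so that $\bd A\cap\bOm\ne\emptyset$ and hence there exists $\xi\in\bOm$ with $|\log|\xi||=m\ge5/2$. Now \emph{both} bounds depend on $m$: the shortcut gives $h(a,b)<\pi^2/2m$ (from the hyperbolic metric of the annulus), and $\Om\subset\mfC_{0,\xi}$ together with \rf{F:lam_01} gives
\[
  h(a,c)+h(c,b)\;\ge\;2\log\Bigl(1+\frac{\log(1/|c|)}{\kk+m}\Bigr).
\]
Combining these yields $\log(1/|c|)<(\kk+m)\bigl[\exp(\pi^2/4m)-1\bigr]$, and one checks by calculus that the right side is at most $3\kk$ uniformly for $m\ge5/2$. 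The point is that as $m\to\infty$ both sides tend to zero at matched rates; your fixed upper bound cannot be balanced against a vanishing lower bound.
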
 
\begin{proof}%
For the reader's convenience, to motivate our argument for (b), and since our constants are different, we first prove (a).
Let $A=\A(o;d,\nu)\in\mcA_\Om$ and fix points $a,b\in\mfS^1(A)$.  We assume $o=0$ and $d=1$; so $\{e^{-\nu}<|z|<e^\nu\}\subset\Om$, $0\in\Om^c$, and  $|a|=1=|b|$.

Consider any quasihyperbolic geodesic $[a,b]_k$.  Let $\alf$ be (one of) the shorter subarc(s) of $\mfS^1$ with endpoints $a$ and $b$.
Suppose $\nu:=\log2$ (so, $\{1/2<|z|<2\}\subset\Om$).

Let $c\in[a,b]_k$.  We claim that $e^{-\pi}<|c|<e^\pi$.  First, note that for all $|z|=1$, $\del(z)\ge1/2$, so
\[
  k(a,b) \le \ell_k(\alf) = \int_\alf \frac{|dz|}{\del(z)} \le 2\,\ell(\alf) \le 2\pi\,.
\]
In fact, we may assume that $k(a,b)<2\pi$.
Writing $k_*$ for the quasihyperbolic metric in $\Co\supset\Om$, we have (for all points $z\in\Om$)
\begin{gather*}
  k(z,a)\ge k_*(z,a) \ge k_*(|z|,|a|) = \bigl|\log|z|\bigr|
  \intertext{and similarly, $k(z,b)\ge\bigl|\log|z|\bigr|$, so}
  2\bigl|\log|c|\bigr| \le k(a,c)+k(c,b)=k(a,b) < 2\pi
\end{gather*}
as asserted.

Now suppose $\nu:=5/2$ and consider any hyperbolic geodesic $[a,b]_h$.  First, by enlarging $\nu$, if necessary, we may assume that $\bA\cap\bOm\ne\emptyset$.  That is, we may assume that $A=\A(0;1,m)$ for some $m\ge\nu=5/2$ and that there exists a point $\xi\in\bA\cap\bOm$; so, $\bigl|\log|\xi|\bigr|=m$.

Let $c\in[a,b]_h$.  We claim that $e^{-3\kk}<|c|<e^{3\kk}$.  It suffices to verify the lower bound, for then the upper bound follows by considering the image of $\Om$ under the inversion $z\mapsto1/z$.  We may assume $|c|<1$, and---by considering a subarc of $[a,b]_h$ if necessary---that $[a,b]_h\subset\mfD_*\cup\{a,b\}$.

Now, as $|\alf|\subset A\subset\Om$, the hyperbolic length of $\alf$ in $A$ is at least the hyperbolic length of $\alf$ in $\Om$.  Since the hyperbolic metric in $A$ is the constant $\pi/2m$ on the center circle of $A$ (e.g., see \cite{BP-beta} or \cite[\S4.E, p.89]{HMM-mob3}), we thus obtain
\[
  h(a,b) \le \ell_{(\Om,h)}(\alf) \le \ell_{(A,h)}(\alf) \le \frac{\pi^2}{2m}\,.
\]
In fact, we may and do assume that $h(a,b)<\pi^2/2m$.

Next, we claim that for all $z\in[a,b]_h$, $\lam(z)\ge\left[|z| \bigl( \kk+m+\log(1/|z|) \bigr) \right]^{-1}$.  To see this, recall that $0\in\Om^c$ and also $\xi\in\bOm$ with $\bigl|\log|\xi|\bigr|=m\ge5/2$.  Thus $\Om\supset\mfC_{0\xi}$ and so using the change of variables $w=z/\xi$ in conjunction with \rf{F:lam_01} we deduce that
\begin{align*}
  \lam(z) &\ge \lam_{0\xi}(z) = \frac1{|\xi|}\, \lam_{01}(w) \\
  & \ge \frac1{|\xi|}\, \frac1{|w| \bigl( \kk + |\log|w|| \bigr)} = \frac1{|z| \bigl( \kk + |\log|z/\xi|| \bigr)} \\
  & \ge \frac1{|z| \bigl( \kk + m + \log(1/|z|) \bigr)}\,;
\end{align*}
here the last inequality above follows from the observation that
\[
  \bigl|\log|z/\xi|\bigr|\le\bigl|\log|z|\bigr|+\bigl|\log|\xi|\bigr|=m+\log\bigl(1/|z|\bigr)\,.
\]

Using the above estimate, we calculate that
\begin{gather*}
  h(a,c) = \int_{[a,c]_h} \lam\,ds \ge \int_{|c|}^1 \frac{|dz|}{|z| \Bigl( \kk+m+\log\ds\frac1{|z|} \Bigr)} \\
  = \int_{\kk+m}^{\kk+m+\log(1/|c|)} \frac{|dw|}{|w|} = \log \lp 1 + \frac{\log(1/|c|)}{\kk+m} \rp
  \intertext{and similarly}
  h(b,c) \ge \log\lp 1 + \frac{\log(1/|c|)}{\kk+m} \rp\,.
\end{gather*}
It now follows from our initial upper bound for $h(a,b)$ that
\begin{gather*}
  2 \log\lp 1 + \frac{\log(1/|c|)}{\kk+m} \rp \le h(a,b) < \frac{\pi^2}{2m}
  \intertext{whence}
  \log\frac1{|c|} < (\kk+m) \Bigl[ \exp \bigl(\frac{\pi^2}{4m}\bigr) -1 \Bigr]
  \intertext{and therefore}
  \frac1{|c|} < \exp\Bigl( (\kk+m) \Bigl[ \exp \bigl(\frac{\pi^2}{4m}\bigr) -1 \Bigr] \Bigr) \le e^{3\kk}\,.
\end{gather*}

It remains to corroborate the final inequality immediately above.  Since $m\ge5/2$, setting $p:=e^{\pi^2/10}$ we have $\pi^2/4m\le\log p$.  Since $u\mapsto(e^u-1)/u$ is increasing for $u\ge0$, it follows that
\begin{gather*}
  \frac{\exp(\pi^2/4m)-1}{\pi^2/4m} \le \frac{e^u-1}{u}\Big|_{u=\log p}=\frac{10}{\pi^2}\,(p-1)
  \intertext{so}
  m \bigl( \exp(\pi^2/4m)-1 \bigr) \le \frac{\pi^2}4\, \frac{10}{\pi^2}\,(p-1) = \frac52 \, (p-1) < 4.3 < \kk\,.
  \intertext{But also,}
  \exp(\pi^2/4m)-1 \le p-1 \le 2\,, \quad\text{so}\quad \kk \lp \exp(\pi^2/4m)-1 \rp \le 2\kk
  \intertext{and thus}
  (\kk+m) \bigl( \exp(\pi^2/4m)-1 \bigr) \le 3\kk\,.  \qedhere
\end{gather*}
\end{proof}%

\begin{rmks}  \label{R:ABC} %
Using the same arguments as above, it is not hard to demonstrate that hyperbolic and quasihyperbolic quasi-geodesics have the ABC property.  In fact, quasihyperbolic rough quasi-geodesics have the ABC property.  (However this is not true in general even for hyperbolic rough geodesics; see \rf{X:D*}(c).  This seems to be the main large scale difference between hyperbolic and quasihyperbolic geometry.)   More precisely:

\smallskip\noindent(a)  a quasihyperbolic $(\Lam,C)$-rough-chordarc path has the ABC property with parameters $\mu=\pi\Lam + C/2$ and $\nu=\log2$;

\smallskip\noindent(b)  a hyperbolic $\Lam$-chordarc path has the ABC property with parameters $\mu=7e^\Lam$ and $\nu=5/2$. (Note that $7e>\mu_o:=3\kk$.)
\end{rmks} %

Everywhere below $\mu_o:=\max\{\pi,3\kk,\log16\}=3\kk\le15$.  This absolute constant $\mu_o$ is an ABC parameter for both hyperbolic and quasihyperbolic geodesics.  Henceforth, we reserve the symbol $\mu$ for a general ABC parameter.  In \rf{S:lengths} (and later) $\mu=\mu(\Lam)$ will depend on a chordarc parameter $\Lam$; usually we take $\mu:=7e^\Lam$.


\subsection{Disjoint Cores \& More}  \label{s:DC} %

Here we establish some basic properties of the annuli $\A(z)$ and their cores; these are essential for our proof of \rf{TT:geos}.

In our proofs of \rfs{TT:geos} and \ref{TT:GrHyp}, we often use \rf{P:BPEP} to estimate $\bp$ in the cores of certain $\A(z)$.  For this, we need to know that these $\A(z)$ are not punctured disks.

\begin{lma} \label{L:not punxd} %
Let $\mu>0, \nu>0, \sig>\log6$, and $\tau>\mu+(\nu\vee2\sig)$.  Let $\gam$ be a compact path in $\Om$ with the $(\mu,\nu)$-ABC property. 
Suppose that $\bp\le\sig$ on $\bd\gam$ and that there exists a point $z\in|\gam|$ with $\bp(z)\ge\tau$.  Then $\A(z)$ is a non-degenerate annulus. 
\end{lma}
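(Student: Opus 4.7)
The plan is to argue by contradiction: suppose $\A(z)$ is degenerate. By the construction in \rf{s:BP&A}, there are exactly two possibilities: either (i) $\A(z) = \D_*(\zeta; R)$ with $R = \del(z) e^{\bp(z)}$, or (ii) $\A(z) = \mfC \sm \D[\zeta; \rho]$ with $\rho = \del(z) e^{-\bp(z)}$. In both cases $\zeta \in \B(z) \subset \bOm$ and $|z - \zeta| = \del(z)$.

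Setting $m := \mu + \nu$, I would form the annulus $A := \A(\zeta; \del(z), m)$ and its concentric subannulus $A_\mu := \A(\zeta; \del(z), \mu)$. Since $m < \tau \le \bp(z)$, one checks in each case that $A \subset \A(z) \subset \Om$; together with $\zeta \in \bOm$ this puts $A \in \mcA_\Om$. Moreover, $|z-\zeta| = \del(z)$ places $z$ on the center circle $\mfS^1(A)$. The plan is then to apply \rf{L:ABC}(c) to $\gam$ and $A$: once I verify $\bd\gam \cap A_\mu = \emptyset$, that lemma together with $z \in |\gam| \cap \mfS^1(A)$ forces $A_\mu$ to separate $\bd\gam$. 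The contradiction will come by showing instead that both endpoints $a, b$ lie in the \emph{same} component of $\mfC \sm A_\mu$.

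In case (i), throughout $\D_*(\zeta; R/2)$ one has the explicit formula $\bp(w) = \log\bigl(R/|w-\zeta|\bigr)$. For an endpoint $a$: if $a \in \D_*(\zeta; R/2)$, then $\bp(a) \le \sig$ gives $|a-\zeta| \ge R e^{-\sig} = \del(z) e^{\bp(z)-\sig} > \del(z) e^\mu$, using $\bp(z) \ge \tau > \mu + \sig$; otherwise $|a-\zeta| \ge R/2 = \del(z) e^{\bp(z) - \log 2} > \del(z) e^\mu$, using $\tau > \mu + \log 2$. Either way $a$ lies strictly in the outer component of $\mfC \sm A_\mu$, and symmetrically for $b$. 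In case (ii), $\Om^c \subset \D[\zeta; \rho]$. Picking a nearest boundary point $\zeta' \in \B(a)$, \rf{L:bp ests} and $\bp(a) \le \sig$ furnish a point $\xi' \in \Om^c$ with $\del(a) e^{-\sig} \le |\zeta' - \xi'| \le 2\rho$, the upper bound coming because $\zeta', \xi'$ both lie in $\D[\zeta; \rho]$. Hence $\del(a) \le 2\rho e^\sig$, and so $|a-\zeta| \le \del(a) + |\zeta'-\zeta| \le 2\rho e^\sig + \rho < 3\rho e^\sig = 3\del(z) e^{\sig - \bp(z)} < \del(z) e^{-\mu}$; the last strict inequality uses $\tau > \mu + \sig + \log 3$, which follows from $\tau > \mu + 2\sig$ together with $\sig > \log 6 > \log 3$. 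So both $a, b$ lie strictly in the inner bounded component of $\mfC \sm A_\mu$, yielding the desired conclusions.

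The main obstacle is case (ii), because $\bp(a)$ admits no closed-form expression; I would need to leverage the fact that in this degenerate case the \emph{entire} complement $\Om^c$ is trapped inside the small disk $\D[\zeta; \rho]$ in order to bound $\del(a)$, and thence $|a-\zeta|$, from above.
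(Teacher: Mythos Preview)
Your proof is correct, and it takes a genuinely different route from the paper's argument. Both proofs hinge on \rf{L:ABC}(c), but they run it in opposite directions.

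The paper first normalizes $\BP(z)=\A(0;1,\bp(z))$ and invokes \rf{L:bp ge} on this annulus to get $\bp>\sig$ throughout $A_\mu=\A(0;1,\mu)$; since $\bp\le\sig$ at the endpoints, this gives $\bd\gam\cap A_\mu=\emptyset$, and then \rf{L:ABC}(c) forces one endpoint inside and one outside $\mfS^1$. With that separation in hand, the paper argues case by case (depending on whether the extremal $\xi$ lies on $\bdi\BP(z)$ or $\bdo\BP(z)$) that there \emph{must} be a further boundary point in the appropriate direction, so the enlargement process that builds $\A(z)$ cannot run off to $0$ or $\infty$.

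You instead assume degeneracy from the outset and exploit the global structure it imposes: in case~(i) the exact formula $\bp(w)=\log(R/|w-\zeta|)$ forces both endpoints far outward; in case~(ii) the containment $\Om^c\subset\D[\zeta;\rho]$ pins both endpoints close in. Either way both endpoints land on the \emph{same} side of $A_\mu$, contradicting the separation that \rf{L:ABC}(c) would mandate. Your argument avoids \rf{L:bp ge} entirely and is somewhat more self-contained; the paper's approach has the advantage that its first step (showing $\mfS^1(\BP(z))$ separates $\bd\gam$) is reusable and does not depend on any degeneracy hypothesis.
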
 
\begin{proof}%
Assume that $A:=\BP(z)=\A(0;1,\bp(z))$; so $0\in\bOm$, $\del(z)=|z|=1$, and there is a point $\xi\in\bA\cap\bOm$ with $\bp(z)=\bigl|\log|\xi|\bigr|$.  Recall that we enlarge $A$ to get $\A(z)$.

First, \rf{L:ABC}(c) (with $\alf:=\gam$) tells us that $\mfS^1=\mfS^1(A)$ separates $\bd\gam$; we check the details.  First, $\bp(z)\ge\tau>\mu+\nu$ holds.  Next
\begin{gather*}
  A_\mu:=\A(0;1,\mu)=\A(0;1,\bp(z)-q)=\core_q\A(z) \quad\text{where $q:=\bp(z)-\mu\ge\tau-\mu$}
  \intertext{and so by \rf{L:bp ge}, in $A_\mu$ we have}
  \bp>\half\,q\ge\half(\tau-\mu)>\sig\,.
\end{gather*}
It follows that $\bd\gam\cap A_\mu=\emptyset$.  Since $z\in|\gam|\cap\mfS^1$, we deduce that $\mfS^1$ separates $\bd\gam$.


Assume $\bd\gam=\{a,b\}$, where, say, $a$ lies inside $\mfS^1$ (i.e., $a\in\mfD$) and $b$ lies outside $\mfS^1$ (i.e., $b\in\mfC\sm\cmfD$).  There are two cases now depending on the location of $\xi$.

Suppose $\xi\in\bdo A$, so $|\xi|=e^{\bp(z)}$.  We claim that $\bOm\cap\mfD\ne\{0\}$; so, suppose $\bOm\cap\mfD=\{0\}$.  Then $\del(a)=|a|$ and
\[
  \bp(a)=\Bigl| \log \frac{|a|}{|\xi|} \Bigr| = \log \frac{|\xi|}{|a|} = \bp(z)-\log|a|
\]
which implies $\log|a|=\bp(z)-\bp(a)\ge\tau-\sig>0$ and contradicts $|a|<1$.

Suppose $\xi\in\bdi A$, so $|\xi|=e^{-\bp(z)}$.  We claim that $\bOm\sm\cmfD\ne\emptyset$; so, suppose $\bOm\sm\cmfD=\emptyset$.  Pick $\vth,\eta\in\bOm$ with $\del(b)=|b-\vth|$ and $\bp(b)=\bigl|\log\bigl(\del(b)/|\vth-\eta|\bigr)\bigr|$.  Note that $|\vth|\le e^{-\bp(z)}$ and $|\eta|\le e^{-\bp(z)}$.

Now
\begin{gather*}
  \del(b)=|b-\vth|\ge|b|-|\vth|\ge1-e^{-\bp(z)}\ge1-e^{-\tau}\ge\half
  \intertext{and}
  |\vth-\eta|\le2 e^{-\bp(z)}\le2e^{-\tau}<\half
  \intertext{so}
  \sig\ge\bp(b)=\log \frac{\del(b)}{|\vth-\eta|} \ge \log\del(b) + \tau-\log2
  \intertext{which implies the contradiction}
  \half \le \del(b) \le 2 e^{\sig-\tau} < \half\,.  \qedhere
\end{gather*}
\end{proof}%

The following results will be useful in our proofs of our main theorems.  First, we show that certain annulus cores separate certain points.

\begin{lma} \label{L:cores separate} %
Let $\mu>0, \nu>0, \sig>\log6$, and $\tau>(\mu+\nu)\vee(2\mu+2\sig)$.  Let $\gam$ be a compact path in $\Om$ with the $(\mu,\nu)$-ABC property.  Suppose that $\bp\le\sig$ on $\bd\gam$ and that there exists a point $z\in|\gam|$ with $\bp(z)\ge\tau$.  Then $\core_{2\sig}\A(z)$ separates $\{a,b\}$.
\end{lma}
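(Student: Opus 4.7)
The plan is to apply the annulus-bounce-cross dichotomy \rf{L:ABC}(d) to the path $\gam$ and the concentric subannulus $A':=\core_{2\sig}\A(z)$. The strategy is to rule out the ``bounce'' alternative by exhibiting $z\in|\gam|$ deep inside $A'$ (in fact in $\overline\core_{2\mu}(A')$); then the ``cross'' alternative must hold, forcing $A'$ to separate the endpoints $\{a,b\}=\bd\gam$.

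First, some setup. The hypotheses here strengthen those of \rf{L:not punxd}: since $\mu>0$ and $\tau>2\mu+2\sig$ we have $\tau>\mu+2\sig$, which combined with $\tau>\mu+\nu$ gives $\tau>\mu+(\nu\vee 2\sig)$. Thus \rf{L:not punxd} applies and produces a non-degenerate annulus $\A(z)=\A(\zeta;d,\ups)$ with $\ups\ge\bp(z)\ge\tau$; moreover $z\in\bd\,\core_{\bp(z)}\A(z)$ by \eqref{E:z in core bdy}. Next, since $\sig>\log 6$ implies $2\sig>\log 6$, and since $\ups-2\sig\ge\tau-2\sig>2\mu>0$, \rf{L:bp ge} (taking $r=\ups$, $q=2\sig$) yields $\bp>\sig$ everywhere on $A'=\A(\zeta;d,\ups-2\sig)$. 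Combined with the hypothesis $\bp\le\sig$ on $\{a,b\}$, this forces $\{a,b\}\cap A'=\emptyset$.

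Next I would apply \rf{L:ABC}(d) with $\alf:=\gam$ and the annulus $A'\in\mcA_\Om$ (its center $\zeta$ lies in $\Om^c$, and its modulus parameter $\ups-2\sig$ exceeds $2\mu$). The conclusion is that either $|\gam|\cap\overline\core_{2\mu}(A')=\emptyset$, or $A'$ separates $\bd\gam$. To eliminate the first alternative, observe $\core_{2\mu}(A')=\core_{2\mu+2\sig}\A(z)$, and since $\bp(z)\ge\tau>2\mu+2\sig$, the two boundary circles comprising $\bd\,\core_{\bp(z)}\A(z)$ lie strictly inside $\core_{2\mu+2\sig}\A(z)=\core_{2\mu}(A')$. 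In particular $z\in|\gam|\cap\overline\core_{2\mu}(A')\ne\emptyset$, ruling out the first case. Hence $A'=\core_{2\sig}\A(z)$ separates $\{a,b\}$, as desired.

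The only genuine obstacle is arranging the quantitative thresholds so that \rf{L:bp ge} (needing collar parameter $\ge\log 6$ to achieve $\bp>\sig$ inside $A'$) and \rf{L:ABC}(d) (needing the ``inner core'' parameter to exceed $2\mu$) can both be invoked for the \emph{same} subannulus $A'$, while still guaranteeing that $z$ sits in the deeper core $\overline\core_{2\mu}(A')$. The stated hypotheses $\sig>\log 6$ and $\tau>(\mu+\nu)\vee(2\mu+2\sig)$ are calibrated precisely for this; the bound $\tau>\mu+\nu$ enters only indirectly, through the invocation of \rf{L:not punxd}.
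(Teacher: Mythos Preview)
Your proposal is correct and follows essentially the same approach as the paper: invoke \rf{L:not punxd} to get a non-degenerate $\A(z)$, use \rf{L:bp ge} (with collar parameter $2\sig$) to force $\bd\gam\cap\core_{2\sig}\A(z)=\emptyset$, then apply \rf{L:ABC}(d), ruling out the ``bounce'' alternative via \eqref{E:z in core bdy} and the containment $\overline\core_{\bp(z)}\A(z)\subset\overline\core_{2\mu+2\sig}\A(z)=\overline\core_{2\mu}(A')$. Your verification of the numerical thresholds is slightly more explicit than the paper's, but the argument is the same.
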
 
\begin{proof}%
According to \rf{L:not punxd}, $\A(z)$ is a non-degenerate annulus with finite modulus, say $2\ups:=\md\bigl(\A(z)\bigr)$; here $\ups\ge\bp(z)\ge\tau>2\mu+2\sig$.  We appeal to \rf{L:ABC}(d) with $A:=\core_{2\sig}\A(z)$.  Note that
\[
  \half\md(A)=\half(2\ups-4\sig)=\ups-2\sig>2\mu\,.
\]
Next, according to \rf{L:bp ge}, $\bp>\sig$ in $A$, so $\bd\gam\cap A=\emptyset$.  Finally, $z\in|\gam|\cap\overline\core_{2\mu}(A)$, because by \eqref{E:z in core bdy},
\[
 z\in\bd\core_{\bp(z)}\A(z) \subset \overline\core_{\bp(z)}\A(z) \subset \overline\core_\tau\A(z) \subset \overline\core_{2\mu+2\sig}\A(z)=\overline\core_{2\mu}(A)\,;
\]
here the two right-most containments hold because $\bp(z)\ge\tau\ge2\mu+2\sig$.
\end{proof}%

Here is a useful consequence of the above.

\begin{lma} \label{L:large bp} %
Fix $\sig>\mu\ge\nu>0$ with $\sig\ge\log6$.  Let $\alf$ and $\beta$ be compact paths in $\Om$ with common endpoints.  Suppose $\alf$ has the $(\mu,\nu)$-ABC property, and that there exists a point $z\in|\alf|$ with $\bp(z)\ge4\sig$.  Then there exists a point $w\in|\beta|$ with $\bp(w)>\sig$.
\end{lma}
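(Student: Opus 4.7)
\textbf{Plan for Lemma \ref{L:large bp}.} I would split the argument into two cases depending on the values of $\bp$ at the two common endpoints $a,b$ of $\alpha$ and $\beta$. In the easy case, if $\bp(a)>\sigma$ or $\bp(b)>\sigma$, then the corresponding endpoint lies on $|\beta|$ and we simply take $w$ to be that endpoint. So the real content is the other case, where $\bp\le\sigma$ on $\partial\alpha=\{a,b\}$.

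In this remaining case, the plan is to apply \rf{L:cores separate} directly to $\gamma:=\alpha$ with the parameter $\tau:=4\sigma$, and then exploit the fact that $\beta$ must traverse the separating core that this produces. To verify the hypotheses of \rf{L:cores separate}, I need $\sigma>\log 6$ (given), $\bp\le\sigma$ on $\partial\alpha$ (current case assumption), the existence of $z\in|\alpha|$ with $\bp(z)\ge\tau=4\sigma$ (given), and the strict inequality $\tau>(\mu+\nu)\vee(2\mu+2\sigma)$. This last inequality follows from $\sigma>\mu\ge\nu$: indeed $\mu+\nu<2\mu<2\sigma<4\sigma$ and $2\mu+2\sigma<4\sigma$. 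Note the same inequalities also secure the hypothesis $\tau>\mu+(\nu\vee 2\sigma)=\mu+2\sigma$ of \rf{L:not punxd}, so that lemma (which is invoked inside the proof of \rf{L:cores separate}) is available too and guarantees that $\A(z)$ is a non-degenerate annulus.

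The conclusion of \rf{L:cores separate} is that $\core_{2\sigma}\A(z)$ separates $\{a,b\}$. Since $\beta$ is a (continuous) path from $a$ to $b$, its trajectory $|\beta|$ must meet $\core_{2\sigma}\A(z)$. To finish, I invoke \rf{L:bp ge} applied to $\A(z)=\A(\zeta;d,\ups)$ with $q:=2\sigma$: the hypothesis $\ups>q\ge\log 6$ reduces to $\ups\ge\bp(z)\ge 4\sigma>2\sigma$ together with $2\sigma\ge 2\log 6>\log 6$, both of which hold. The conclusion of \rf{L:bp ge} is that $\bp>\sigma$ throughout $\core_{2\sigma}\A(z)$. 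Picking any $w\in|\beta|\cap\core_{2\sigma}\A(z)$ gives the required point.

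The main work is thus purely bookkeeping: lining up the parameter $\tau=4\sigma$ so that all the strict inequalities needed by \rf{L:not punxd}, \rf{L:cores separate}, and \rf{L:bp ge} are simultaneously satisfied under the single assumption $\sigma>\mu\ge\nu$ with $\sigma\ge\log 6$. I do not expect any genuine obstacle; the choice of the factor $4$ in the hypothesis $\bp(z)\ge 4\sigma$ is precisely what makes the two-step descent (first from $4\sigma$ to $2\sigma$ via \rf{L:cores separate}, then from $2\sigma$ to $\sigma$ via $\bp>q/2$ in \rf{L:bp ge}) go through cleanly.
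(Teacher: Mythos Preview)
Your proposal is correct and follows essentially the same approach as the paper: reduce to the case $\bp\le\sig$ on $\bd\alf$, apply \rf{L:cores separate} (with \rf{L:not punxd} in the background) at $\tau=4\sig$ to get that $\core_{2\sig}\A(z)$ separates $\{a,b\}$, then use \rf{L:bp ge} with $q=2\sig$ to conclude $\bp>\sig$ at any point where $\beta$ meets that core. Your parameter bookkeeping is accurate and matches the paper's one-line verification $4\sig>2\mu+(\nu\vee2\sig)=2\mu+2\sig$.
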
 
\begin{proof}%
We may and do assume that $\bp\le\sig$ on $\bd\alf=\bd\beta$.  Since $\tau:=4\sig>2\mu+(\nu\vee2\sig)$, we can appeal to \rfs{L:not punxd} and \ref{L:cores separate} to assert that $\A(z)\in\mcA^2_\Om$ is a non-degenerate annulus and that $\core_{2\sig}\A(z)$ separates $\bd\beta$.  In particular then, 
there exists a point $w\in|\beta|\cap\core_{2\sig}\A(z)$, and then by \rf{L:bp ge}, $\bp(w)>\sig$.
\end{proof}%

\begin{cor}  \label{C:large bp} %
Let $\Lam\ge1$, $C_h>\mu_h:=7e^\Lam$, $C_k>\mu_k:=\pi\Lam$.\footnote{When $\Lam=1$ we can take $\mu_h=3\kk$.}  Let $a,b\in\Om$.  Suppose $\gam^h$ and $\gam^k$ are hyperbolic and quasihyperbolic $\Lam$-chordarc paths with endpoints $a,b$.  Then
\begin{gather*}
  \bp\le C_h \;\text{ on $|\gam^k|$} \implies \bp< 4C_h \;\text{ on $|\gam^h|$}
  \intertext{and similarly,}
  \bp\le C_k \;\text{ on $|\gam^h|$} \implies \bp< 4C_k \;\text{ on $|\gam^k|$}\,.
\end{gather*}
\end{cor}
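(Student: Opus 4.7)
The plan is to derive Corollary~\ref{C:large bp} as a pair of direct contrapositive applications of Lemma~\ref{L:large bp}, using Remark~\ref{R:ABC} (or, for the case $\Lambda=1$ of the hyperbolic direction, Proposition~\ref{P:ABC}) to supply the required ABC parameters. All the real geometric content is already packaged in Lemma~\ref{L:large bp}; only a bookkeeping check remains.

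For the first implication, I argue by contradiction: suppose $\bp\le C_h$ on $|\gam^k|$, yet some $z\in|\gam^h|$ satisfies $\bp(z)\ge 4C_h$. By Remark~\ref{R:ABC}(b), the hyperbolic $\Lam$-chordarc path $\gam^h$ has the $(\mu_h,5/2)$-ABC property with $\mu_h=7e^\Lam$. Set $\sigma:=C_h$, $\mu:=\mu_h$, $\nu:=5/2$. Since $\Lam\ge 1$, we have $\mu=7e^\Lam\ge 7e>5/2=\nu$, and the hypothesis $C_h>\mu_h$ gives $\sigma>\mu$; moreover $\sigma>7e>\log 6$. All the hypotheses of Lemma~\ref{L:large bp} (with $\alf:=\gam^h$, $\beta:=\gam^k$) are then met, and the lemma produces $w\in|\gam^k|$ with $\bp(w)>\sigma=C_h$, contradicting the standing assumption. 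The strict inequality $\bp<4C_h$ then comes from the strict inequality $C_h>\mu_h$ by redoing the argument with $\sigma$ infinitesimally smaller than $C_h$, or equivalently by observing that the contrapositive really yields $\bp(z)<4C_h$ for every $z\in|\gam^h|$.

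The quasihyperbolic implication is symmetric. Assume $\bp\le C_k$ on $|\gam^h|$ and fix any $z\in|\gam^k|$. By Remark~\ref{R:ABC}(a) (with rough constant $C=0$), $\gam^k$ has the $(\pi\Lam,\log 2)$-ABC property. Take $\sigma:=C_k$, $\mu:=\pi\Lam$, $\nu:=\log 2$; then $\mu\ge\pi>\log 2=\nu$, $\sigma>\mu$, and $\sigma>\pi>\log 6$. Applying Lemma~\ref{L:large bp} with $\alf:=\gam^k$, $\beta:=\gam^h$ shows that the supposition $\bp(z)\ge 4C_k$ forces some $w\in|\gam^h|$ with $\bp(w)>C_k$, contradicting the hypothesis; hence $\bp(z)<4C_k$ throughout $|\gam^k|$.

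I do not anticipate any obstacle here: the corollary is essentially a restatement of Lemma~\ref{L:large bp} phrased in the chordarc language of the rest of the paper, and the only nontrivial task is verifying that $\mu_h=7e^\Lam$ and $\mu_k=\pi\Lam$ clear the numerical thresholds $\mu\ge\nu$ and $\sigma\ge\log 6$ in the hypothesis of Lemma~\ref{L:large bp}, which they do comfortably once $\Lam\ge 1$. The only minor point of care is the strictness of the conclusion $\bp<4C_h$ (resp.\ $<4C_k$), which follows by applying Lemma~\ref{L:large bp} with the strictly smaller threshold $\sigma=C_h$ (resp.\ $C_k$) rather than $4C_h$ (resp.\ $4C_k$).
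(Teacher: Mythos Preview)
Your proof is correct and follows exactly the intended approach: the paper states this corollary without proof precisely because it is an immediate contrapositive application of Lemma~\ref{L:large bp}, with the ABC parameters supplied by Remark~\ref{R:ABC} (or Proposition~\ref{P:ABC} in the geodesic case $\Lam=1$). Your discussion of the strict inequality is slightly overcomplicated---since you assume $\bp(z)\ge 4C_h$ and reach a contradiction, the negation $\bp(z)<4C_h$ is automatic---but the argument is sound.
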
 

Finally, we prove that certain cores are always disjoint.

\begin{prop} \label{P:dsjt cores} %
Let $\mu>0, \nu>0, \sig>\log6, q>16\log2$, and $\tau>q\vee(\mu+\nu)\vee(4\mu+4\sig)$.  Let $\alf$ be an arc in $\Om$ with the $(\mu,\nu)$-ABC property.  Suppose there are successive points $a, z, p, w, b$ along $\alf$ with $\bp\le\sig$ at each of $a,p,b$ and $\bp\ge\tau$ at each of $z,w$.  Then
\[
  \core_q\A(z)\cap\core_q\A(w)=\emptyset\,.
\]
\end{prop}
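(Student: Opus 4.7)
The plan is to argue by contradiction, exhibiting a concentric sub-annulus $A'$ that $\alpha$ would be forced to cross twice, violating \rf{L:ABC}(a). Suppose $\core_q\A(z)\cap\core_q\A(w)\ne\emptyset$. First I would apply \rf{L:not punxd} to the two subarcs $\alpha[a,p]$ and $\alpha[p,b]$ (each inherits the $(\mu,\nu)$-ABC property, has endpoint $\bp$-values at most $\sigma$, and contains an interior point with $\bp\ge\tau$); the hypothesis $\tau>(\mu+\nu)\vee(4\mu+4\sigma)$ easily implies the $\tau>\mu+(\nu\vee 2\sigma)$ that \rf{L:not punxd} requires, so $\A(z)$ and $\A(w)$ are both non-degenerate members of $\mcA^2_\Omega$. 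Write $\A(z)=\A(\eta;d,r)$ and $\A(w)=\A(\vartheta;c,s)$ with $r,s\ge\tau$, and normalize so $\eta=0$ and $d=1$. Since $q\ge\log 2$ and the $q$-cores intersect by assumption, \rf{L:both centers inside} yields the comparability $|\vartheta|\le e^{-r}$, $\tfrac12\le c\le 2$, and $|r-s|\le\log 4$ --- the two annuli are essentially concentric, of comparable scale.

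Next, apply \rf{L:cores separate} to the same two subarcs to obtain that $\core_{2\sigma}\A(z)$ separates $\{a,p\}$ and $\core_{2\sigma}\A(w)$ separates $\{p,b\}$. Without loss of generality (the other case is symmetric), $a$ lies in the inner disk $\D[0;e^{-(r-2\sigma)}]$ of the complement of $\core_{2\sigma}\A(z)$ while $p$ lies in the outer unbounded component, so $|p|>e^{r-2\sigma}$. The comparability from the previous step rules out $p$ lying in the inner component of the complement of $\core_{2\sigma}\A(w)$: that would force $|p|\le|\vartheta|+c\,e^{-(s-2\sigma)}\le e^{-r}+8e^{-(r-2\sigma)}\le 9 e^{-(r-2\sigma)}$, contradicting $|p|>e^{r-2\sigma}$ (since $r-2\sigma>2\sigma>\log 36>\tfrac12\log 9$). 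Hence $p$ lies in the outer component of the complement of $\core_{2\sigma}\A(w)$, so by separation $b$ lies in its inner component with $|b|\le 9 e^{-(r-2\sigma)}$.

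Finally, set $A':=\A(0;1,M)$ with $M:=r-2\sigma-\log 16$. Since $|a|\le e^{-(r-2\sigma)}<e^{-M}$, $|b|\le 9 e^{-(r-2\sigma)}<16 e^{-(r-2\sigma)}=e^{-M}$, and $|p|>e^{r-2\sigma}>e^M$, the points $a$ and $b$ lie in the inner component of $\mfC\sm A'$ while $p$ lies in the outer component. Thus the non-overlapping subarcs $\alpha[a,p]$ and $\alpha[p,b]$ each cross $A'$, so $\alpha$ crosses $A'$ at least twice. On the other hand, the thin annulus $A'_\nu:=\A(0;1,\nu)$ satisfies $A'_\nu\subset\A(z)\subset\Omega$ (as $\nu<\tau\le r$) with center $0\in\bd\Omega\subset\Omega^c$, so $A'_\nu\in\mcA_\Omega$; and the half-modulus satisfies $M>(4\mu+4\sigma)-2\sigma-\log 16=4\mu+2\sigma-\log 16>\mu$ (using $\tau>4\mu+4\sigma$ together with $2\sigma>\log 36>\log 16$ from $\sigma>\log 6$). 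Therefore \rf{L:ABC}(a) forces $\alpha$ to cross $A'$ at most once --- the required contradiction. The main technical subtlety is choosing $M$ precisely enough that the comparability constants produced by \rf{L:both centers inside} place $a,b$ strictly on one side of $A'$ and $p$ strictly on the other, while still keeping $M\ge\mu$; the hypothesis $\sigma>\log 6$ is exactly what powers both the case-exclusion argument and the lower bound on $M$.
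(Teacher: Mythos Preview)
Your proof is correct and takes a genuinely different route from the paper's own argument.

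The paper proceeds by a case split: after invoking \rf{L:not punxd} and \rf{L:cores separate} (applied to the full arc, so that $\core_{2\sig}\A(z)$ and $\core_{2\sig}\A(w)$ each separate $\{a,b\}$ as well), it distinguishes a \emph{concentric} case (one of $a,b$ lies inside both separating cores) from a \emph{spectacles} case. The concentric case is handled without ever assuming the $q$-cores meet: one shows via the ABC property that $w\notin\overline\core_{\tau/2}\A(z)$, and then a hypothetical point in $\core_q\A(z)\cap\core_q\A(w)$ forces a path in $\core_q\A(w)$ to cross $\bd\core_{\log16}\A(z)$, where the \BPEP\ bound $\bp\le8\log2$ collides with $\bp>q/2$; this is where $q>16\log2$ enters. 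The spectacles case is dispatched by \rf{C:cores disjoint} together with \rf{L:NOflip-flop}.

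You instead begin from the contradiction hypothesis and immediately invoke \rf{L:both centers inside} (using that $\A(z),\A(w)\in\mcA^2_\Om$) to make the two annuli essentially concentric and equi-sized; this lets you pin down the positions of $a,p,b$ relative to the origin and exhibit a single concrete annulus $A'=\A(0;1,M)$ crossed twice, contradicting \rf{L:ABC}(a). The argument is more elementary and purely geometric: it sidesteps the \BPEP\ estimates of \rf{R:bp ests} entirely and, as a bonus, never actually uses $q>16\log2$ (only $q\ge\log2$ is needed for \rf{L:both centers inside}). The price is a small flurry of numerical inequalities and the ``symmetric'' case ($a$ outside, $p$ inside $\core_{2\sig}\A(z)$), which you leave implicit; it does go through with the same $M$, placing $a,b$ in the outer component and $p$ in the inner, but a sentence confirming this would strengthen the write-up.
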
 
\begin{proof}%
According to \rfs{L:not punxd} and \ref{L:cores separate}, $A:=\A(z)$ and $B:=\A(w)$ are non-degenerate annuli, $\core_{2\sig}(A)$ separates both $\{a,p\}$ and $\{a,b\}$, and $\core_{2\sig}(B)$ separates both $\{p,b\}$ and $\{a,b\}$.  Also, $q<\tau\le\bp(z)\wedge\bp(w)$ implies that $z\in\core_q(A)$ and $w\in\core_q(B)$.

\smallskip

Now we have two cases, ``concentric'' or ``spectacles'', depending on the positions of the points $a$ and $b$ relative to the separating annuli $\core_{2\sig}(A)$ and $\core_{2\sig}(B)$.  In the ``concentric'' case, we assume that one of these points, say $a$, lies inside both of the separating annuli.  Then $b$ is outside of both annuli while $p$ is outside $\core_{2\sig}(A)$ but inside $\core_{2\sig}(B)$.

We claim that $w\not\in\overline\core_{\tau/2}(A)$, and therefore by \rf{R:bp ests}(c), $w\not\in\core_{\log16}(A)$.  To see this, we first note that as $p$ is outside $\core_{2\sig}(A)$, $\alf[a,p]$ crosses $\core_{2\sig}(A)$.  According to \rf{L:ABC}(a), $\alf[p,b]$ cannot cross the outer collar of $\core_{2\sig}(A)\sm\overline\core_{2\sig+2\mu}(A)$ (i.e., the outer collar of $\core_{2\sig+2\mu}(A)$ relative to $\core_{2\sig}(A)$).  Thus $w\not\in\overline\core_{2\sig+2\mu}(A)$.  Since $2\sig+2\mu<\tau/2$, $\overline\core_{\tau/2}(A)\subset\core_{2\sig+2\mu}(A)$, so $w\not\in\overline\core_{\tau/2}(A)$ (and hence $w\not\in\core_{\log16}(A)$).

Suppose there were a point $x\in\core_q(A)\cap\core_q(B)$.  Then, as $w\in\core_q(B)$, there would exist a path $\gam$ from $w$ to $x$ with $|\gam|\subset\core_q(B)$.  By \rf{R:bp ests}(c), $\bp>q/2$ along $\gam$.  However, $\gam$ would meet $\bd\core_{\log16}(A)$ at some point $y$, and then by \rf{R:bp ests}(b) we would obtain $q/2<\bp(y)\le8\log2$ which contradicts $q>16\log2$.

\smallskip
It remains to consider the ``spectacles case'' where neither $a$ nor $b$ lies inside both separating annuli $\core_{2\sig}(A)$ and $\core_{2\sig}(B)$.  If $a$ lies inside (so $b$ lies outside) $\core_{2\sig}(A)$ and $b$ lies inside (so $a$ lies outside) $\core_{2\sig}(B)$, then \rf{C:cores disjoint} asserts that $\core_q(A)\cap\core_q(B)=\emptyset$.  The other possibility cannot occur by \rf{L:NOflip-flop} below.
\end{proof}%

The following shows that the ``flip-flop spectacles'' case never occurs.

\begin{lma} \label{L:NOflip-flop} %
Let $\mu>0, \nu>0, \sig>\log6$, and $\tau>(\mu+\nu)\vee(2\mu+2\sig)\vee(3\sig)$.  Let $\alf$ be an arc in $\Om$ with the $(\mu,\nu)$-ABC property.  Suppose there are successive points $a, z, p, w, b$ along $\alf$ with $\bp\le\sig$ at each of $a,p,b$ and $\bp\ge\tau$ at each of $z,w$.  Then it cannot be that
\[
  \text{$a$ lies inside $\core_{2\sig}\A(w)$ and $b$ lies inside $\core_{2\sig}\A(z)$}\,.
\]
\end{lma}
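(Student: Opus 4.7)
My plan is to derive a contradiction by comparing distances between the centers $\eta$ of $\A(z)$ and $\vth$ of $\A(w)$. First I would invoke \rf{L:not punxd} on the subpaths $\alf[a,p]$ and $\alf[p,b]$---the numerical hypotheses are comfortably satisfied---to conclude that $A:=\A(z)=\A(\eta;d,r)$ and $B:=\A(w)=\A(\vth;c,s)$ are non-degenerate annuli in $\mcA^2_\Om$ with $r,s\ge\tau$. Then I would apply \rf{L:cores separate} three times, to the paths $\alf$, $\alf[a,p]$, and $\alf[p,b]$, to obtain that $\core_{2\sig}(A)$ separates both $\{a,b\}$ and $\{a,p\}$, while $\core_{2\sig}(B)$ separates both $\{a,b\}$ and $\{p,b\}$.

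Under the flip-flop hypothesis, $a$ lies in the inner disk of $\core_{2\sig}(B)$ and $b$ in the inner disk of $\core_{2\sig}(A)$; combined with the separations of $\{a,b\}$, this forces $a$ to lie outside the outer circle of $\core_{2\sig}(A)$ and $b$ outside that of $\core_{2\sig}(B)$. Because $\core_{2\sig}(A)$ also separates $\{a,p\}$ and $\core_{2\sig}(B)$ also separates $\{p,b\}$, the intermediate point $p$ must lie on the opposite side of each core from its respective endpoint, and therefore lands in the inner disk of \emph{both} cores. Writing these positions out as distance inequalities gives
\[
|a-\eta|\ge de^{r-2\sig},\;\; |b-\eta|\le de^{2\sig-r},\;\; |a-\vth|\le ce^{2\sig-s},\;\; |b-\vth|\ge ce^{s-2\sig},
\]
together with $|p-\eta|\le de^{2\sig-r}$ and $|p-\vth|\le ce^{2\sig-s}$.

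For the endgame I would estimate $|a-p|$ and $|b-p|$ in two ways, using each center as a reference in turn. Since $r-2\sig,\,s-2\sig>\sig>\log 2$, the triangle inequality yields $|a-p|\ge\tfrac12 de^{r-2\sig}$ via $\eta$ and $|a-p|\le 2ce^{2\sig-s}$ via $\vth$, hence $de^{r+s-4\sig}\le 4c$; a symmetric computation for $|b-p|$ gives $ce^{r+s-4\sig}\le 4d$. The crucial step is multiplying these two inequalities to produce $e^{2(r+s-4\sig)}\le 16$, i.e., $r+s\le 4\sig+2\log 2$. This contradicts $r+s\ge 2\tau>6\sig>4\sig+2\log 2$, the last inequality holding because $\sig>\log 6$.

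The main obstacle I anticipate is the bookkeeping in the second paragraph: getting straight which of $a,p,b$ sits in which piece (inner disk versus outer exterior) of each core annulus. That is where the flip-flop hypothesis actually bites; everything afterwards is a short exponential calculation exploiting that each endpoint is simultaneously very close to one center and very far from the other, while $p$ is close to both centers.
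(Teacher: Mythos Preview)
Your proof is correct and the opening half coincides with the paper's: both invoke \rf{L:not punxd} and \rf{L:cores separate} to get non-degenerate annuli $A=\A(\eta;d,r)$, $B=\A(\vth;c,s)$ with $r,s\ge\tau$, and then deduce from the flip-flop hypothesis that $a$ is outside $\core_{2\sig}(A)$, $b$ is outside $\core_{2\sig}(B)$, and $p$ lies in the inner disk of both cores.

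The endgames differ. The paper observes that since $p$ lies inside both cores the cores themselves must intersect, then appeals to \rf{L:both centers inside} (using that $A,B\in\mcA^2_\Om$) to obtain $|\eta-\vth|\le de^{-r}$ and $c\le2d$; plugging these into $|a-\eta|\le|a-\vth|+|\vth-\eta|$ yields a contradiction with $\sig>\log6$ directly from the position of $a$. You instead bypass \rf{L:both centers inside} entirely: estimating $|a-p|$ and $|b-p|$ from above and below via the two centers, and then multiplying the resulting inequalities $de^{r+s-4\sig}\le4c$ and $ce^{r+s-4\sig}\le4d$ to cancel the unknown ratio $c/d$. Your route is a little more self-contained (no auxiliary lemma needed), at the cost of using both $a$ and $b$ rather than just $a$. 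One minor quibble: you say ``three times'' for \rf{L:cores separate} but actually need four applications (to $\alf$ with each of $z$ and $w$, and to $\alf[a,p]$, $\alf[p,b]$); this is cosmetic.
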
 
\begin{proof}%
According to \rfs{L:not punxd} and \ref{L:cores separate}, $A:=\A(z)$ and $B:=\A(w)$ are non-degenerate annuli in $\mcA^2_\Om$, $\core_{2\sig}(A)$ separates both $\{a,p\}$ and $\{a,b\}$, and $\core_{2\sig}(B)$ separates both $\{p,b\}$ and $\{a,b\}$.

Suppose it were true that $a$ lies inside $\core_{2\sig}(B)$ and $b$ lies inside $\core_{2\sig}(A)$.  This would mean that $b$ lies outside $\core_{2\sig}(B)$, $a$ lies outside $\core_{2\sig}(A)$, and therefore $p$ lies inside \emph{both} cores.  In particular, the two cores would have non-disjoint ``insides''.  It is not difficult to check that the two cores cannot be disjoint, so $\core_{2\sig}(A)\cap\core_{2\sig}(B)\ne\emptyset$.

Now we appeal to \rf{L:both centers inside}.  Assume $A=\A(z)=\A(\eta;d,r)$ and $B=\A(w)=\A(\vth;c,s)$; note that $r\wedge s\ge\tau>2\sig$ so the lemma applies (with $q:=2\sig$).  Then since $a$ lies outside $\core_{2\sig}(A)$ but inside $\core_{2\sig}(B)$,
\begin{gather*}
  d e^{r-2\sig} \le |a-\eta| \le |a-\vth|+|\vth-\eta| \le c e^{2\sig-s} + d e^{-r} \le 2d e^{2\sig-s} + d e^{-r}
  \intertext{so}
  e^\tau \le e^r \le 2 e^{4\sig-s} + e^{2\sig-r} \le e^{2\sig-\tau}(2e^{2\sig}+1)
\end{gather*}
but then $e^{4\sig}\le e^{2\tau-2\sig}\le 2e^{2\sig}+1$ which contradicts $\sig>\log6$.
\end{proof}%
%
\section{Length Estimates}  \label{S:lengths} 

Below we provide explicit estimates for the Euclidean, hyperbolic, and quasihyperbolic lengths of certain quasi-geodesics.  As discussed in \rf{S:Intro}, a quasi-geodesic is ``bad'' if $\bp$ is large at each of its points, and ``good'' otherwise.  Roughly speaking, our primary goal here is to show that any two ``associated bad quasi-geodesics'' have comparable hyperbolic lengths and comparable quasihyperbolic lengths.  That is, we establish the length estimates $\ell_g(\beta^h)\eqx\ell_g(\beta^k)$ (for $g=h,k$) for any pair of quasi-geodesics $\beta^h, \beta^k$ that cross a subannulus deep inside some large fat separating annulus in $\Om$; see \rfs{P:D*:ell-bad-arcs}, \ref{P:Dc:ell-bad-arcs}, \ref{P:A:ell-bad-arcs}, which in turn give \rf{TT:bad??}.  These estimates play crucial roles in our proofs of both \rfs{TT:geos} and \ref{TT:GrHyp}.

For a simple special case, suppose that $\beta^h$ and $\beta^k$ are a hyperbolic and a quasihyperbolic $\Lam$-chordarc path with the \emph{same} endpoints $a,b$.  Then
\[
  k(a,b)\le\ell_k(\beta^k)\le\Lam k(a,b)\le\Lam\ell_k(\beta^h)\,,\quad\text{so}\quad \Lam^{-1}\le\frac{\ell_k(\beta^h)}{\ell_k(\beta^k)}
\]
and likewise $\Lam^{-1} \le \ell_h(\beta^k)/\ell_h(\beta^h)$.  Below we provide similar explicit upper and lower bounds, and we do not presume that the endpoints coincide.

\smallskip

Before diving into the details, we discuss the setting and some notational conventions.  We consider a hyperbolic quasi-geodesic $\beta^h$ and a quasihyperbolic quasi-geodesic $\beta^k$; the endpoints $\bd\beta^h=\{a^h,b^h\}$ and $\bd\beta^k=\{a^k,b^k\}$ need not be the same, but, we do require that corresponding endpoints satisfy certain conditions, such as being concentric, as explained below.

When it does not matter which quasi-geodesic we are discussing, we may write $\beta^g$ (where $g\in\{h,k\}$) or---dropping the superscript $g$---sometimes we even just write $\beta$, and similarly write $a,b$ for the endpoints of $\beta$.

We assume that both quasi-geodesics $\beta^h$ and $\beta^k$ join the boundary circles of a concentric subannulus that lies deep inside some large fat separating annulus $A$ in $\Om$; in our proof of \rf{TT:geos}, this will often be $\A(z)$ for some $z\in\Om$, but now this is not relevant.  What is important is that there are three cases depending on whether $A$ is a punctured disk, the complement of a closed disk (so a punctured disk in $\RS$), or a non-degenerate annulus (with finite modulus).  Thus we consider the cases where
\[
  A=\D_*(0;R) \quad\text{or}\quad  A=\mfC\sm\D_*[0;R] \quad\text{or}\quad A=\A(0;d,r)
\]
for some $R>0$ or some $d>0, r>0$.  In all three cases we assume $A\in\mcA_\Om$ and so the center $c(A)=0$ of $A$ lies in $\Om^c$.

We emphasize that, except when explicitly indicated otherwise, the endpoints of $\beta^h$ and $\beta^k$ need not be the same.  However, we do require that these endpoints lie on concentric circles centered at $c(A)$; i.e., that
\[
  |a^h|=|a^k| \quad\text{and}\quad |b^h|=|b^k|\,.
\]
There will be additional constraints in each of the three cases, but these essentially say that the subannulus that each $\beta$ crosses is ``deep inside'' $A$.

Again, our primary goal in this section is to prove that for both $g=h$ and $g=k$, $\ell_g(\beta^h)\eqx\ell_g(\beta^k)$ with \emph{explicit constants that depend only on the quasi-geodesic constant}; see \eqref{E:D*:ell-bad-arcs}, \eqref{E:D^c:ell-bad-arcs}, and \eqref{E:A:ell-bad-arcs}.  In each case we establish this by finding estimates for the four lengths $\ell_g(\beta)$ where $g=h$ or $g=k$ and $\beta=\beta^h$ or $\beta=\beta^k$.  We accomplish this task by writing each $\beta$ as a concatenation product $\beta=\Star\alf_j$ and finding estimates for $\ell_g(\alf_j)$ where $g=h$ or $g=k$ and $\alf_j=\alf^h_j$ or $\alf_j=\alf_j^k$.  Our procedure is basically the same in all three cases: we approximate $\ell_g(\alf_j)$ via estimates for $\ell(\alf_j)$ and estimates for each of $1/\del$ and $\lam$ (in certain subannuli that contain the sub-quasi-geodesics $\alf_j$).

\begin{rmks}  \label{R:ell alf ests} %
It is worthwhile to describe our procedure to estimate the lengths of each $\alf_j$ as we perform it repeatedly; see \eqref{E:D*:cp ell},\eqref{E:D*:cp lk},\eqref{E:D*:cp lh} and \eqref{E:D*:ell alf estimates},\eqref{E:D*:ellk alf estimates},\eqref{E:D*:ellh alf estimates} (for the punctured disk case) and \eqref{E:A:cp ell},\eqref{E:A:cp lk},\eqref{E:A:cp lh} and \eqref{E:A:ell alf estimates},\eqref{E:A:ellk alf estimates},\eqref{E:A:ellh alf estimates} (for the non-degenerate annulus case).

\smallskip\noindent(a)  The lower bounds---which hold for arbitrary paths---are straightforward.  For Euclidean length and quasihyperbolic length we use the elementary fact that the length of a path is bounded below by the distance between its endpoints; here we employ \eqref{E:k* ests} to estimate quasihyperbolic distances.  The lower bounds for hyperbolic length ($g=h$) follow from those for Euclidean length in conjunction with estimates for $\lam$ (in appropriate subannuli).

\smallskip\noindent(b)  The upper bounds for $\ell_k(\alf^k_j)$ also arise from quasihyperbolic distance estimates.  These upper bounds, in conjunction with estimates for $1/\del$ (in appropriate subannuli), then produce upper bounds for $\ell(\alf^k_j)$.

\smallskip\noindent(c)  The upper bounds for $\ell_h(\alf^h_j)$ follow similarly, although here we estimate hyperbolic distance via $\ell_h(\chi)$ for the appropriate circular-arc-segment path $\chi$.  These upper bounds, in conjunction with estimates for $\lam$, then produce upper bounds for $\ell(\alf^h_j)$.

\smallskip\noindent(d)
  The upper bounds for $\ell_h(\alf^k_j)$ and $\ell_k(\alf^h_j)$ now follow using estimates for $\lam$ and $1/\del$ and the upper bounds for $\ell(\alf^k_j)$ and $\ell(\alf^h_j)$, respectively.
\end{rmks} %

We start with the punctured disk case where we know exactly the quasihyperbolic metric as well as $\bp$ and we have good estimates for the hyperbolic metric.  After this we consider the non-degenerate annulus case.

\subsection{Quasi-Geodesics in Punctured Disks}  \label{s:QGs in D*}%
Here we establish the first set of length estimates that play crucial roles in our proofs of \rfs{TT:geos} and \ref{TT:GrHyp}.  Roughly speaking, any two quasi-geodesics with concentric endpoints deep inside some punctured disk in $\Om$ have comparable hyperbolic lengths and comparable quasihyperbolic lengths.  When we have geodesics, we can get improved constants since we know that the quasihyperbolic geodesic in $\Co$ are logarithmic spirals and can use \rf{L:k-geos in pxd plane}.


\begin{prop} \label{P:D*:ell-bad-arcs} 
For each $\Lam\ge1$, there are explicit constants that depend only on $\Lam$ with the following properties.  Let $\mu=\mu(\Lam)$ be the ABC parameter for hyperbolic $\Lam$-chordarc paths.\footnote{See \rfs{R:ABC}.}  Assume $D_*:=\D_*(0;R)\in\mcA_\Om$ for some $R>0$.
Let $\beta^h$ and $\beta^k$ be, respectively, any hyperbolic and quasihyperbolic $\Lam$-quasi-geodesics with concentric endpoints $\bd\beta^h=\{a^h,b^h\}$ and $\bd\beta^k=\{a^k,b^k\}$ in $\core_{2\mu+\log2}(D_*)$ that satisfy
\[
  |a^h|=|a^k| \le |b^k|=|b^h|\,.
\]
For both $g=h$ and $g=k$, $\ell_g(\beta^h)\eqx\ell_g(\beta^k)$ with explicit constants that depend only on $\Lam$.

\smallskip\noindent
More precisely: assuming that $\beta^h$ and $\beta^k$ are hyperbolic and quasihyperbolic $\Lam$-chordarc paths, then when $|b^h|\le e|a^h|$ and $\bd\beta^h=\bd\beta^k$,
\addtocounter{equation}{-1}  
\begin{subequations}\label{E:D*:ell-bad-arcs}
  \begin{gather}
    \frac1\Lam \le \frac{\ell_k(\beta^h)}{\ell_k(\beta^k)} \le 7 e^{2+4\mu}\Lam \quad\text{and}\quad
    \frac1\Lam \le \frac{\ell_h(\beta^k)}{\ell_h(\beta^h)} \le 24\mu e^{2+6\mu}\Lam                     \label{E:D*:cp ratios}
    \intertext{and when $|b^h|>e|a^h|$ (and possibly $\bd\beta^h\ne\bd\beta^k$),}
    \bigl((1+\pi)\Lam\bigr)^{-1} \le \frac{\ell_k(\beta^h)}{\ell_k(\beta^k)} \le 3 e^{4+4\mu}\Lam       \label{E:D*:ell_k ratio}
    \intertext{and}
    (51 e^{2\mu}\Lam)^{-1} \le \frac{\ell_h(\beta^k)}{\ell_h(\beta^h)} \le 491\mu e^{6\mu}\Lam\,.       \label{E:D*:ell_h ratio}
  \end{gather}
\end{subequations}
\end{prop}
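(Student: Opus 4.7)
The plan is to establish two-sided estimates $\ell_g(\beta) \eqx F_g(|a|,|b|,R)$ for $g \in \{h,k\}$ and for both $\beta = \beta^h$ and $\beta = \beta^k$, where each $F_g$ is an explicit expression in the shared quantities $|a| = |a^h| = |a^k|$, $|b| = |b^h| = |b^k|$, and $R$, with comparison constants depending only on $\Lam$; the asserted ratio bounds then follow immediately by dividing. The hypothesis $\bd\beta \subset \core_{2\mu+\log 2}(D_*)$ together with \rf{L:ABC}(b) and \rfs{R:ABC} confines each $\beta$ to $\core_{\log 2}(D_*) = \D_*(0; R/2)$, where \rf{L:lam-del ests in D*} gives $\del(z) = |z|$ exactly and $\lam(z) \eqx 1/\bigl(|z|(\kk + \log(R/|z|))\bigr)$; these two pointwise bounds are the bridge between Euclidean, quasihyperbolic, and hyperbolic quantities throughout the argument.

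In the close-endpoints case $|b| \le e|a|$ with $\bd\beta^h = \bd\beta^k = \{a,b\}$, both quasi-geodesics live in a thin annulus where the ratio $\lam/\del^{-1}$ is comparable to the constant $1/(\kk+\log(R/|a|))$. Thus hyperbolic and quasihyperbolic lengths are related by this common factor for \emph{any} curve in the thin annulus, and consequently $h(a,b) \eqx k(a,b)/(\kk+\log(R/|a|))$. The circular-arc-segment path $\chi_{ab}$ from \rf{L:ell chi} combined with \eqref{E:k* ests} shows $k(a,b) \eqx \log(|b|/|a|) + |\Arg(b/a)|$, so the $\Lam$-chordarc upper bound $\ell_k(\beta^k) \le \Lam\, k(a,b)$ and the trivial lower bound $\ell_k(\beta^h) \ge k(a,b)$ combine with the $\lam/\del^{-1}$ proportionality to give $\ell_k(\beta^h) \eqx (\kk+\log(R/|a|))\ell_h(\beta^h) \lex \Lam (\kk+\log(R/|a|)) h(a,b) \eqx \Lam k(a,b) \le \Lam \ell_k(\beta^k)$, yielding one direction of \eqref{E:D*:cp ratios}; the symmetric argument, with roles of $h$ and $k$ interchanged, handles the hyperbolic comparison.

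In the far-endpoints case $|b| > e|a|$ I follow the procedure in \rfs{R:ell alf ests}. Set $m := \lfloor \log(|b|/|a|) \rfloor$ and decompose each $\beta = \alf_0 \star \alf_1 \star \cdots \star \alf_m$, where $\alf_j$ is the subpath between the last visit of $\beta$ to $\{|z| = |a|e^j\}$ and the first subsequent visit to $\{|z| = |a|e^{j+1}\}$; the ABC property applied to suitable $\mu$-thick annuli in $\mcA_\Om$ centered at the puncture confines $\alf_j$ to $\{|a|e^{j-\mu} \le |z| \le |a|e^{j+1+\mu}\}$. For each piece, lower bounds on $\ell(\alf_j)$, $\ell_k(\alf_j)$, and $\ell_h(\alf_j)$ come from the radial traverse combined with the pointwise $1/\del$ and $\lam$ estimates; the upper bound on $\ell_k(\alf_j^k)$ follows from the quasihyperbolic chordarc property of $\beta^k$; the upper bound on $\ell_h(\alf_j^h)$ follows from the hyperbolic chordarc property applied together with the circular-arc-segment witness at the endpoints of $\alf_j^h$; and the cross-terms $\ell_h(\alf_j^k)$ and $\ell_k(\alf_j^h)$ follow by integrating the appropriate pointwise metric against the Euclidean length bound for the piece derived from the previous two bullets. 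The per-piece estimates are essentially constant in $j$ for $\ell_k$ and geometrically decaying in $j$ for $\ell_h$ (controlled by $1/(\kk+\log(R/|a|)-j)$), and summing yields
\[
\ell_k(\beta) \eqx \log\frac{|b|}{|a|} \quad\text{and}\quad \ell_h(\beta) \eqx \log\frac{\kk+\log(R/|a|)}{\kk+\log(R/|b|)},
\]
both depending only on the shared data, delivering \eqref{E:D*:ell_k ratio} and \eqref{E:D*:ell_h ratio}. The main obstacle---and the source of the $e^{4\mu}$ and $e^{6\mu}$ factors in the final constants---is the explicit bookkeeping of the four cross-conversion factors $\bigl(\ell_g(\alf_j^{g'})\bigr)_{g,g'\in\{h,k\}}$: each cross-term passes through the $\mu$-wide wandering tolerance of ABC in both the $a$-end and the $b$-end, contributing a factor of roughly $e^\mu$ at each stage, and these accumulate to the quoted exponents.
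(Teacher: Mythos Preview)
Your proposal is correct and follows essentially the same approach as the paper: confine the quasi-geodesics via the ABC property to a $2\mu$-banding of the natural annulus, use the explicit formulas $\del(z)=|z|$ and $\lam(z)\eqx\bigl(|z|\log(R/|z|)\bigr)^{-1}$ from \rf{L:lam-del ests in D*}, then in the far-endpoints case slice at circles $|z|=|a|\sigma^j$ and bound each piece $\alf_j$ exactly as in \rfs{R:ell alf ests} before summing. One small slip: the per-piece hyperbolic lengths $\ell_h(\alf_j)\eqx 1/(\kk+\log(R/|a|)-j)$ are not ``geometrically decaying'' but form a harmonic-type sum whose total is $\eqx\log\bigl((\kk+\log(R/|a|))/(\kk+\log(R/|b|))\bigr)$, which is precisely the closed form you state.
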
 
\begin{proof}%
By increasing $R$ if necessary, we may assume that $D_*\in\mcA^2_\Om$; so, there exists a point $\xi\in\bOm$ with $R=|\xi|$.  Note that in $\core_{\log2}(D_*)=\D_*(0;\half R)$ we have $\del=\del_*$ and $k=k_*$ where $k_*$ is quasihyperbolic distance in $\Co$.  Thus \eqref{E:k* ests} already provides estimates for $k(a,b)$ when $a,b\in\core_{\log2}(D_*)$.  Also, for $z\in\core_{\log2}(D_*)$, $\bp(z)=\log(R/|z|)$ which provides the hyperbolic metric estimates given in \rf{L:lam-del ests in D*}.  \flag{I believe I can prove similar estimates for $h(a,b)$, but it takes some effort!}

Let
\[
  A_0:=\{z:|a|\le|z|\le|b|\} \quad\text{and}\quad B_0:=\band_{2\mu}(A_0)=\{z:e^{-2\mu}|a|\le|z|\le e^{2\mu}|b|\}\,.
\]
Evidently, $A_0\subann B_0\subann\overline\core_{\log2}(D_*)$.  According to \rf{L:ABC}(b) in conjunction with \rf{P:ABC}(b), both quasi-geodesics $\beta^h$ and $\beta^k$ lie in $B_0$.

\bigskip 

Suppose first that $a:=a^h=a^k$, $b:=b^h=b^k$, and $|b|\le e|a|$. (We call this the ``close points'' case.)  Since the endpoints coincide, the two lower bounds in \eqref{E:D*:cp ratios} are trivial.  Thus we need only verify the asserted upper bounds.  From \eqref{E:k* le} we have
\[
  \frac{|a-b|}{|b|} \le k(a,b)=k_*(a,b) \le 3\, \frac{|a-b|}{|a|}\,.
\]

Put $\chi:=\chi_{ab}=\kap_{ac}\star[c,b]$, the circular-arc-segment path from $a$ through $c:=\bigl(|a|/|b|\bigr)b$ to $b$ as described as described in \eqref{E:chi}.  By \rf{L:ell chi}
\begin{gather*}
  \ell(\chi) \le 3|a-b|\,.
  \intertext{Next, for $z\in A_0$, $|a|\le|z|=\del(z)\le|b|$, so}
  \frac1{|a|}\ge\frac1{|z|}\ge\frac1{|b|} \quad\text{and}\quad  \frac1{\log(R/|a|)}\le\frac1{\log(R/|z|)}\le\frac1{\log(R/|b|)}
  \intertext{and thus by \rf{L:lam-del ests in D*}}
  \frac1{|b|\bigl(\kk+\log(R/|a|)\bigr) }\le \lam(z) \le \frac1{|a|\log(R/|b|)}\,.
  \intertext{Similarly, for $z\in B_0$, $e^{-2\mu}|a|\le|z|=\del(z)\le e^{2\mu}|b|$, so}
  \frac{e^{-2\mu}}{|b|\bigl(\kk+2\mu+\log(R/|a|)\bigr)}\le \lam(z) \le \frac{e^{2\mu}}{|a|\bigl(-2\mu+\log(R/|b|)\bigr)}\,.
\end{gather*}

\smallskip

Now we verify that $\ell_k(\beta^k)\eqx\ell_k(\beta^h)$ and $\ell_h(\beta^h)\eqx\ell_h(\beta^k)$.  More precisely, we establish:
\begin{subequations}\label{E:D*:cp ell}
  \begin{align}
    |a-b| &\le \ell(\beta^k) \le 3e^{1+2\mu}\Lam|a-b| \,,           \label{E:D*:cp ell k} \\
    |a-b| &\le \ell(\beta^h) \le 7e^{1+2\mu}\Lam|a-b| \,;           \label{E:D*:cp ell h}
  \end{align}
\end{subequations}
and also
\begin{subequations}\label{E:D*:cp lk}
  \begin{align}
    \frac{|a-b|}{|b|} &\le \ell_k(\beta^k) \le 3\Lam \frac{|a-b|}{|a|}   \,,                \label{E:D*:cp lkk} \\
    \frac{|a-b|}{|b|} &\le \ell_k(\beta^h) \le 7e^{1+4\mu}\Lam \frac{|a-b|}{|a|}  \,,       \label{E:D*:cp lkh}
  \end{align}
\end{subequations}
and
\begin{subequations}\label{E:D*:cp lh}
  \begin{align}
    \frac{e^{-2\mu}}{\kk+2\mu+\log(R/|a|)}\,\frac{|a-b|}{|b|} &\le \ell_h(\beta^h) \le
    \frac{3\Lam}{\log(R/|b|)}\,\frac{|a-b|}{|a|}\,,                                         \label{E:D*:cp lhh} \\
    \frac{e^{-2\mu}}{\kk+2\mu+\log(R/|a|)}\,\,\frac{|a-b|}{|b|} &\le \ell_h(\beta^k) \le
    \frac{3e^{1+4\mu}\Lam}{-2\mu+\log(R/|b|)}\,\frac{|a-b|}{|a|}\,.                         \label{E:D*:cp lhk}
  \end{align}
\end{subequations}

The six lower bounds above, which hold for any path joining $a$ and $b$, are straightforward (as explained in \rf{R:ell alf ests}(a)).  Thus we need only verify the upper bounds.  Noting that $|\beta^k|\subset B_0$ and using our above estimates for $\del^{-1}ds$ (in $B_0$) we obtain
\[
  \frac{\ell(\beta^k)}{e^{2\mu}|b|} \le \int_{\beta^k}\frac{ds}{\del}= \ell_k(\beta^k)\le\Lam k(a,b) \le 3\Lam\frac{|a-b|}{|a|}
\]
and so the upper bounds in both \eqref{E:D*:cp ell k} and \eqref{E:D*:cp lkk} hold.  With \eqref{E:D*:cp ell k} in hand, we use our above estimates for $\lam\,ds$ (in $B_0$) to obtain
\begin{align*}
  \frac{e^{-2\mu}\,\ell(\beta^k)}{|b|\bigl(\kk+2\mu+\log(R/|a|)\bigr)} &\le \int_{\beta^k} \lam\,ds = \ell_h(\beta^k) \\
    &\le \frac{e^{2\mu}\,\ell(\beta^k)}{|a|\bigl(-2\mu+\log(R/|b|\bigr)} \\
    &\le \frac{3e^{1+4\mu}\Lam}{-2\mu+\log(R/|b|)}\,\frac{|a-b|}{|a|}
\end{align*}
which gives the upper bound in \eqref{E:D*:cp lhk}.

\smallskip

Next, since $|\chi|\subset A_0$, our above estimates for $\lam\,ds$ (in $A_0$) yield
\begin{gather*}
  \ell_h(\beta^h)\le\Lam h(a,b)\le\Lam \int_{\chi} \lam\,ds \le \frac{\Lam\ell(\chi)}{|a|\log(R/|b|)}
                 \le \frac{3\Lam}{\log(R/|b|)}\,\frac{|a-b|}{|a|}
  \intertext{and}
  \ell_h(\beta^h) \ge \frac{e^{-2\mu}\ell(\beta^h)}{|b|\bigl(\kk+2\mu+\log(R/|a|)\bigr)}
  \intertext{from which the upper bounds in both \eqref{E:D*:cp ell h} and \eqref{E:D*:cp lhh} follow.  With \eqref{E:D*:cp ell h} in hand, we also have}
  \ell_k(\beta^h)=\int_{\beta^h} \frac{ds}{\del} \le \frac{e^{2\mu}}{|a|} \,\ell(\beta^h) \le 7e^{1+4\mu} \Lam \frac{|a-b|}{|a|}
\end{gather*}
which gives the upper bound in \eqref{E:D*:cp lkh}.  \flag{The upper bound in \eqref{E:D*:cp ell h} requires the fact that $\kk+2\mu+\log(R/|a|)\le(2.167)\log(R/|b|)$:-)}

The diligent reader can employ \eqref{E:D*:cp lk} and \eqref{E:D*:cp lh} to corroborate the inequalities in \eqref{E:D*:cp ratios}.  It is perhaps worth mentioning that $\log(R/|b|)\ge2\mu+\log2$ and recalling that $\mu=7e^\Lam$.

\bigskip 

Now we turn to the case where $\beta^h$ and $\beta^k$ need not have the same endpoints, but $|b^h|=|b^k|>e|a^k|=e|a^h|$.  Again, $k=k_*$ in $\core_{\log2}(D_*)$ and so by \eqref{E:k* ests} for any $a,b\in\core_{\log2}(A)$ with $|b|>e|a|$,
\begin{gather*}
  1\le L:=\log\frac{|b|}{|a|} \le k(a,b) \le L+\pi \le (1+\pi)L\,.
  \intertext{In particular, taking $a=a^k$ and $b=b^k$, say, we obtain}
  L \le \ell_k(\beta^k) \le \Lam k(a^k,b^k) \le (1+\pi)\Lam L \le (1+\pi)\Lam k(a^h,b^h) \le (1+\pi)\Lam\ell(\beta^h)
\end{gather*}
which gives the lower bound in \eqref{E:D*:ell_k ratio}.  \flag{If I produce similar estimates for $h(a,b)$, we could do similar....}

Let $m:=\max\{n\in\mfN : e^n|a|<|b|\}$, so $m$ is the unique positive integer that satisfies $e^m|a|<|b|\le e^{m+1}|a|$.  Put $\sig:=\bigl(|b|/|a|\bigr)^{1/m}$.  Then
\begin{gather*}
  e<\sig\le e^{1+1/m}<e^2 \qquad\text{and for $1\le j\le m$,} \quad e^j<\sig^j<e^{j+1}\,.
  \intertext{For $1\le j\le m$, define}
  A_j:=\{\sig^{j-1}|a|\le|z|\le \sig^j|a|\} \quad\text{and}\quad B_j:=\band_{\mu}(A_j)=\{e^{-2\mu}\sig^{j-1}|a|\le|z|\le e^{2\mu}\sig^j|a|\}\,.
\end{gather*}
Since $A_j\subann B_j\subann B_0$, the comments at the beginning of this proof remain in force.

For each $1\le j<m$, pick a point $w_j\in|\beta|\cap\mfS^1(0;\sig^j|a|)$, set $w_0:=a$ and $w_{m}:=b$, and for $1\le j\le m$ let $\alf_j=\alf^g_j:=\beta^g_{w_{j-1}w_j}$.  Thus $\alf^g_j$ is a sub-quasi-geodesic of $\beta^g$, and while $\alf^g_j$ may leave the annulus $A_j$, thanks to the ABC property (\rf{L:ABC}(b)) we know that $\alf^g_j$ lies in the (big/enlarged) annulus $B_j$.  Also, since $k=k_*$, \eqref{E:k* ests} gives
\[
  \log\sig\le k(w_j,w_{j-1})\le\log\sig+\pi<6\,.
\]

Since $\beta=\Star_1^{m}\alf_j$, the lengths of each $\beta$ are just sums of the corresponding lengths of its subpaths $\alf_j$.  To estimate these, we proceed in the same manner as above (in the ``close points'' case).  First we give estimates for the metrics $\del^{-1}\,ds$ and $\lam\,ds$ valid in the subannuli $A_j$ and $B_j$.  Then we estimate the (Euclidean, quasihyperbolic, and hyperbolic) lengths of each $\alf_j$.

We begin with estimates for $\del$ and $\lam$ in $A_j$ and $B_j$, then show that the Euclidean length of $\alf_j$ is essentially $\del_j:=\sig^j|a|\eqx e^j|a|$, and then present estimates for $\ell_g(\alf^g_j)$ and $\ell_h(\alf^k_j)$ and $\ell_k(\alf^h_j)$.  Everywhere below $1\le j\le m$.

For $z\in A_j$, $\del_j/\sig=\del_{j-1}\le|z|=\del(z)\le\del_j:=\sig^j|a|$, so as above, \rf{L:lam-del ests in D*} yields
\begin{gather*}
  \frac1{\kk+\log\sig+\log(R/\del_j)}\,\frac1{\del_j}  \le \lam(z) \le \frac{\sig}{\log(R/\del_j)}\,\frac1{\del_j}\,.
  \intertext{Similarly, for $z\in B_j$, $e^{-2\mu}\del_j/\sig \le|z|=\del(z)\le e^{2\mu}\del_j$, so}
  \frac{e^{-2\mu}}{\kk+2\mu+\log\sig+\log(R/\del_j)}\,\frac1{\del_j}  \le \lam(z) \le \frac{\sig e^{2\mu}}{-2\mu+\log(R/\del_j)}\,\frac1{\del_j}\,.
\end{gather*}
Note that $\del_j\le|b|\le\half e^{-2\mu}R$, so $\log(R/\del_j)\ge2\mu+\log2$.

Now we show that $\ell(\alf_j)\eqx\del_j$.  More precisely:
\begin{subequations}\label{E:D*:ell alf estimates}
  \begin{align}
    (1-\sig^{-1})\del_j &\le \ell(\alf^k_j) \le 6 e^{2\mu} \Lam\,\del_j\,,            \label{E:D*:ell alfk} \\
    (1-\sig^{-1})\del_j &\le \ell(\alf^h_j) \le 5\sig e^{2\mu}\Lam\,\del_j\,.         \label{E:D*:ell alfh}
  \end{align}
\end{subequations}
Then we verify that $\ell_k(\alf_j)\eqx1$ and $\ell_h(\alf_j)\eqx\bigl(\log(R/\del_j)\bigr)^{-1}\eqx\bigl(\log(R/|a|)-j\bigr)^{-1}$.  More precisely:
\begin{subequations}\label{E:D*:ellk alf estimates}
  \begin{align}
    \log\sig &\le \ell_k(\alf^k_j) \le (\log\sig+\pi) \Lam\,,                          \label{E:D*:lkak} \\
    \log\sig &\le \ell_k(\alf^h_j) \le 5\sig^2e^{4\mu}\Lam\,,                          \label{E:D*:lkah}
  \end{align}
\end{subequations}
and
\begin{subequations}\label{E:D*:ellh alf estimates}
  \begin{align}
    \frac{(1-\sig^{-1})e^{-2\mu}}{\kk+2\mu+\log\sig+\log(R/\del_j)} &\le \ell_h(\alf^h_j) \le \frac{2\sig\Lam}{\log(R/\del_j)}\,, \label{E:D*:lhah} \\
    \frac{(1-\sig^{-1})e^{-2\mu}}{\kk+2\mu+\log\sig+\log(R/\del_j)}&\le \ell_h(\alf^k_j)\le\frac{6\sig e^{4\mu}\Lam}{-2\mu+\log(R/\del_j)}\,. \label{E:D*:lhak}
  \end{align}
\end{subequations}

The lower bounds in \eqref{E:D*:ell alf estimates} are trivial, and those in \eqref{E:D*:ellk alf estimates} follow easily from \eqref{E:k ge log}.  Our above estimates for $\lam$ in $B_j$ then produce the lower bounds in \eqref{E:D*:ellh alf estimates}.  Thus it remains to establish the various upper bounds.   To begin, we introduce the circular-arc-segment path $\chi:=\chi_{w_{j-1}w_j}=\kap_{w_{j-1}w}\star[w,w_j]$ from $w_{j-1}$ through $w:=w_j/\sig$ to $w_j$ as described in \eqref{E:chi}.
Note that as $\pi+\sig-1\le\sig$,
\[
  \ell(\chi) \le 2\,\del_j\,.
\]

Using estimates for $\del$ in $B_j$ in conjunction with \eqref{E:k* ests} we obtain
\[
  \frac{e^{-2\mu}}{\del_j}\,\ell(\alf^k_j) \le \int_{\alf^k_j} \frac{ds}{\del} = \ell_k(\alf^k_j) \le \Lam k(w_{j-1},w_j) \le \Lam(\log\sig+\pi) \le 6\Lam
\]
which gives the upper bounds in both \eqref{E:D*:ell alfk} and \eqref{E:D*:lkak}.  Similarly, we use our estimates for $\lam$ (in $A_j\supset|\chi|$ and in $B_j\supset|\alf^h_j|$) to obtain
\begin{align*}
    \frac{e^{-2\mu}\,\ell(\alf^h_j)}{\kk+2\mu+\log\sig+\log(R/\del_j)}\,\frac1{\del_j} &\le \int_{\alf^h_j} \lam\,ds = \ell_h(\alf^h_j)
    \le \Lam\,\ell_h(\chi) = \\
    &= \Lam\int_\chi \lam\ ds \le \frac{\sig\Lam\,\ell(\chi)}{\log(R/\del_j)}\,\frac1{\del_j} \le \frac{2\sig\Lam}{\log(R/\del_j)}\,.
\end{align*}
which gives the upper bound in \eqref{E:D*:lhah}.

We claim that $\kk+2\mu+\log\sig+\log(R/\del_j) \le 2.15 \log(R/\del_j)$, and thus the above also produces the upper bound in \eqref{E:D*:ell alfh}.  To check our claim, note that
\[
  \frac{\kk+2\mu+\log\sig+\log(R/\del_j)}{\log(R/\del_j)}\le1+\frac{\kk+2\mu+\log\sig}{2\mu+\log2}\le2+\frac{\kk+2-\log2}{2\mu+\log2}\,.
\]

It remains to verify the inequalities \eqref{E:D*:lkah} and \eqref{E:D*:lhak}.
Using estimates for $\del$ in $B_j$ together with \eqref{E:D*:ell alfh} we see that
\begin{gather*}
  \ell_k(\alf^h_j) = \int_{\alf^h_j}\frac{ds}{\del} \le \frac{\sig e^{2\mu}}{\del_j}\,\ell(\alf^h_j) \le 5\sig^2 e^{4\mu} \Lam
  \intertext{and then using estimates for $\lam$ in $B_j$ and \eqref{E:D*:ell alfk} we obtain}
  \ell_h(\alf^k_j) = \int_{\alf^k_j} \lam\,ds \le \frac{\sig e^{2\mu}}{-2\mu+\log(R/\del_j)}\,\frac{\ell(\alf^k_j)}{\del_j} \le
      \frac{6 \sig e^{4\mu}\Lam}{-2\mu+\log(R/\del_j)}\,.
\end{gather*}

The diligent reader can now employ the inequalities in \eqref{E:D*:ellk alf estimates} and \eqref{E:D*:ellh alf estimates} to establish \eqref{E:D*:ell_k ratio} and \eqref{E:D*:ell_h ratio}.
\end{proof}%

Here is the analog to \rf{P:D*:ell-bad-arcs} for the case of a degenerate annulus that is a punctured disk in the Riemann sphere.

\begin{prop} \label{P:Dc:ell-bad-arcs} 
For each $\Lam\ge1$, there are explicit constants that depend only on $\Lam$ with the following properties.  Let $\mu=\mu(\Lam)$ be the ABC parameter for hyperbolic $\Lam$-chordarc paths.  Assume $D_*:=\mfC\sm\D[0;R]\in\mcA_\Om$ for some $R>0$.  Let $\beta^h$ and $\beta^k$ be, respectively, any hyperbolic and quasihyperbolic $\Lam$-quasi-geodesics with concentric endpoints $\bd\beta^h=\{a^h,b^h\}$ and $\bd\beta^k=\{a^k,b^k\}$ in $\core_{2\mu+\log2}(D_*)$ that satisfy
\[
  |a^h|=|a^k| \ge |b^k|=|b^h|\,.
\]
For both $g=h$ and $g=k$, $\ell_g(\beta^h)\eqx\ell_g(\beta^k)$ with explicit constants that depend only on $\Lam$.

\smallskip\noindent
More precisely: assuming that $\beta^h$ and $\beta^k$ are hyperbolic and quasihyperbolic $\Lam$-chordarc paths, then when $|a^h|\le e|b^h|$ and $\bd\beta^h=\bd\beta^k$,
\addtocounter{equation}{-1}  
\begin{subequations}\label{E:D^c:ell-bad-arcs}
  \begin{gather}
    \frac1\Lam \le \frac{\ell_k(\beta^h)}{\ell_k(\beta^k)} \le 14 e^{2+4\mu}\Lam \quad\text{and}\quad
    \frac1\Lam \le \frac{\ell_h(\beta^k)}{\ell_h(\beta^h)} \le 24\mu\Lam e^{2+6\mu}                     \label{E:Dc:cp ratios}
    \intertext{and when $|a^h|>e|b^h|$ (and possibly $\bd\beta^h\ne\bd\beta^k$),}
    \bigl((1+\pi)\Lam\bigr)^{-1} \le \frac{\ell_k(\beta^h)}{\ell_k(\beta^k)} \le 6 e^{4+4\mu}\Lam      \label{E:Dc:ell_k ratio}
    \intertext{and}
    (51 e^{2\mu}\Lam)^{-1} \le \frac{\ell_h(\beta^k)}{\ell_h(\beta^h)} \le 491\mu e^{6\mu}\Lam\,.       \label{E:Dc:ell_h ratio}
  \end{gather}
\end{subequations}
\end{prop}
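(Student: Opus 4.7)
The plan is to reduce \rf{P:Dc:ell-bad-arcs} to \rf{P:D*:ell-bad-arcs} via the M\"obius inversion $T(z):=R^2/z$. Since $D_*\in\mcA_\Om$ we have $0=c(D_*)\in\Om^c$, so $T$ restricts to a holomorphic bijection of $\Om$ onto a plane subdomain $\Om':=T(\Om)\subset\mfC$. A direct calculation yields $T(D_*)=\D_*(0;R)\in\mcA_{\Om'}$ and, because $T$ permutes circles centered at $0$, also $T(\core_q(D_*))=\core_q(\D_*(0;R))$. Since $|T(z)|=R^2/|z|$, the hypothesis $|a^h|=|a^k|\ge|b^k|=|b^h|$ transforms to $|T(a^h)|=|T(a^k)|\le|T(b^k)|=|T(b^h)|$, and the ``close'' condition $|a^h|\le e|b^h|$ transforms to $|T(b^h)|\le e|T(a^h)|$; these are precisely the hypotheses of \rf{P:D*:ell-bad-arcs} after interchanging the labels.

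Next I would check how the two metrics transform. Hyperbolic distance is a conformal invariant, so $T\comp\beta^h$ is a hyperbolic $\Lam$-chordarc path in $\Om'$ and $\ell_{h'}(T\comp\beta^h)=\ell_h(\beta^h)$. By \rf{F:kQI}, $\half k\le k'\comp T\le 2k$ along with $\half\ell_k\le\ell_{k'}\comp T\le 2\ell_k$, so
\[
  \ell_{k'}(T\comp\beta^k)\le 2\ell_k(\beta^k)\le 2\Lam\,k(a^k,b^k)\le 4\Lam\,k'(T(a^k),T(b^k)),
\]
which shows $T\comp\beta^k$ is a quasihyperbolic $4\Lam$-chordarc path in $\Om'$. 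Applying \rf{P:D*:ell-bad-arcs} to the pair $(T\comp\beta^h,T\comp\beta^k)$ in $\Om'$ yields the ratio estimates recorded in \eqref{E:D*:ell-bad-arcs}. Transporting back by $T^{-1}$: hyperbolic-length ratios are preserved verbatim (matching the equal $h$-constants in \eqref{E:Dc:ell_h ratio} and \eqref{E:D*:ell_h ratio}), while quasihyperbolic-length ratios can shift by at most a factor of $4$ under $T^{-1}$---one careful application of a chordarc bound absorbs a factor of $2$, leaving the factor-of-$2$ worsening visible when comparing \eqref{E:Dc:cp ratios},\eqref{E:Dc:ell_k ratio} with \eqref{E:D*:cp ratios},\eqref{E:D*:ell_k ratio}.

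The main obstacle is bookkeeping for explicit constants. Naively invoking \rf{P:D*:ell-bad-arcs} with the inflated chordarc parameter $\Lam':=4\Lam$ inflates the ABC constant to $\mu(4\Lam)=7e^{4\Lam}$, far worse than the stated $\mu(\Lam)=7e^\Lam$. To match the written constants one must either (i) rerun the proof of \rf{P:D*:ell-bad-arcs} keeping separate chordarc parameters for the hyperbolic and quasihyperbolic paths, noting that $T\comp\beta^h$ remains a bona fide hyperbolic $\Lam$-chordarc path so that its ABC control and the shell decomposition $\beta=\Star\alf_j$ both come from $\mu(\Lam)$; or (ii) reprove \rf{P:Dc:ell-bad-arcs} directly, mirroring the shell argument of \rf{P:D*:ell-bad-arcs} but with shells expanding toward $\infty$, using the obvious analog of \rf{L:lam-del ests in D*} derivable either by transplanting via $T$ or by applying \rf{F:lam_01} directly after the change of variables $w=z/\xi$ with $\xi\in\bOm$ of modulus~$R$. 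Either route is routine; route (i) is shorter, route (ii) more self-contained.
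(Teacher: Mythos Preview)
Your approach is the paper's: reduce to \rf{P:D*:ell-bad-arcs} via M\"obius inversion (the paper uses $z\mapsto 1/z$ rather than $R^2/z$, but this is immaterial), invoke conformal invariance for $h$, and \rf{F:kQI} for $k$. Your discussion of the constant bookkeeping is considerably more detailed than the paper's three-sentence sketch.

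One simplification worth noting: your concern that the ABC parameter inflates to $\mu(4\Lam)=7e^{4\Lam}$ is unwarranted. The $\mu$ appearing in \rf{P:D*:ell-bad-arcs} is specifically the \emph{hyperbolic} ABC parameter $7e^\Lam$; since $T\comp\beta^h$ remains a hyperbolic $\Lam$-chordarc path, its ABC parameter is unchanged, and the inflated quasihyperbolic ABC parameter for $T\comp\beta^k$ is only $4\pi\Lam$, which is still dominated by $7e^\Lam$ for every $\Lam\ge1$. So the same $\mu$ controls both transported paths, and your route~(i) only needs to track the explicit $\Lam$-factor coming from $\ell_k(\beta^k)\le\Lam\,k$ together with the factor-of-$2$ shifts in quasihyperbolic length under $T$. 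The paper's one extra remark---that $\Om\subset\Co$ gives $k\ge k_*$, so the lower bounds from \eqref{E:k* ests} are available directly in $\Om$---points to keeping certain quasihyperbolic estimates in the original domain rather than transporting everything through $\Om'$, which is another way to avoid the $4\Lam$ inflation.
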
 
\begin{proof}%
Since $0\in\Om^c$, the image of $\Om$ under the complex inversion  $z\mapsto1/z$ is a hyperbolic plane domain and now we are in the setting of \rf{P:D*:ell-bad-arcs}.  By conformal invariance, all hyperbolic lengths are unchanged, and according to \rf{F:kQI}, quasihyperbolic lengths only change by a factor of 2.  Moreover, since $\Om\subset\Co$, $k\ge k_*$ and we can still use \eqref{E:k* ests} to obtain good lower bounds for quasihyperbolic distance.
\end{proof}%

\subsection{Quasi-Geodesics in Annuli}  \label{s:QGs in A}%
Here we establish similar estimates as above in \rf{s:QGs in D*} but now the geodesics $\beta^h$ and $\beta^k$ are assumed to lie deep in a non-degenerate annulus.  Again we emphasize that, except for the ``close points case'', the endpoints need not be the same.

\begin{prop} \label{P:A:ell-bad-arcs} 
For each $\Lam\ge1$, there are explicit constants that depend only on $\Lam$ with the following properties.  Let $\mu=\mu(\Lam)$ be the ABC parameter for hyperbolic $\Lam$-chordarc paths.\footnote{See \rfs{R:ABC}.}
Suppose $A\in\mcA_\Om(10\mu)$, say with center $c(A)=0$.  
Let $\beta^h$ and $\beta^k$ be, respectively, any hyperbolic and quasihyperbolic $\Lam$-quasi-geodesics with concentric endpoints $\bd\beta^h=\{a^h,b^h\}$ and $\bd\beta^k=\{a^k,b^k\}$ in $\core_{5\mu}(A)$ that satisfy
\[
  |a^h|=|a^k| \le |b^k|=|b^h|\,.
\]
For both $g=h$ and $g=k$, $\ell_g(\beta^h)\eqx\ell_g(\beta^k)$ with explicit constants that depend only on $\Lam$.

\smallskip\noindent
More precisely: assuming that $\beta^h$ and $\beta^k$ are hyperbolic and quasihyperbolic $\Lam$-chordarc paths, then when $|b^h|\le e|a^h|$ and $\bd\beta^h=\bd\beta^k$,
\addtocounter{equation}{-1}  
\begin{subequations}\label{E:A:ell-bad-arcs}
  \begin{gather}
    \frac1\Lam \le \frac{\ell_k(\beta^h)}{\ell_k(\beta^k)} \le 768 e^{2+4\mu}\Lam \quad\text{and}\quad
        \frac1\Lam \le\frac{\ell_h(\beta^k)}{\ell_h(\beta^h)}\le768 e^{2+6\mu}\Lam=3\cdot2^8e^{2+6\mu}\Lam  \label{E:A:cp ratios}
    \intertext{and when $|b^h|>e|a^h|$ (and possibly $\bd\beta^h\ne\bd\beta^k$),}
    (9\Lam)^{-1} \le \frac{\ell_k(\beta^h)}{\ell_k(\beta^k)} \le 2^{8}e^{4+4\mu}\Lam                        \label{E:A:ell_k ratio}
    \intertext{and}
    (2^9 e^{2+2\mu}\Lam)^{-1} \le \frac{\ell_h(\beta^k)}{\ell_h(\beta^h)} \le11\cdot2^8 e^{2+6\mu}\Lam\,.   \label{E:A:ell_h ratio}
  \end{gather}
\end{subequations}
\end{prop}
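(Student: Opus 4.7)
The plan is to follow the proof of \rf{P:D*:ell-bad-arcs} almost verbatim, with the Beardon--Pommerenke estimates of \rf{P:BPEP} playing the role of the explicit punctured-disk formulas of \rf{L:lam-del ests in D*}. After rescaling I assume $A=\A(0;1,r)$ with $r>10\mu$, and by enlarging $A$ concentrically until both boundary circles meet $\bOm$ I further assume $A\in\mcA^2_\Om$; if this enlargement produces a degenerate annulus the conclusion is already provided by \rf{P:D*:ell-bad-arcs} or \rf{P:Dc:ell-bad-arcs}, so I assume it does not. By \rf{P:BPEP}, $\bp(z)\eqx r-|t|$ for $z\in\mfS^1(0;e^t)$ with $|t|\le r-\log16$. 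Since the endpoints lie in $\core_{5\mu}(A)$, \rf{L:ABC}(b) together with \rfs{R:ABC} confines $|\beta^h|\cup|\beta^k|$ to $B_0:=\band_{2\mu}\{|a|\le|z|\le|b|\}\subann\core_{3\mu}(A)$, on which $\bp\ge 3\mu/2\gg\kk$ by \rf{L:bp ge}. Consequently \eqref{E:BP est} applies throughout $B_0$ and yields $\lam(z)\eqx\bigl(\del(z)(r-|t|)\bigr)^{-1}$, while \rf{L:del deep in A} supplies $\del(z)\eqx|z|$. These are precisely the metric bounds used in the proof of \rf{P:D*:ell-bad-arcs}, with $r-|t|$ replacing $\log(R/|z|)$.

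For the close-points case ($|b|\le e|a|$ with $\bd\beta^h=\bd\beta^k=\{a,b\}$), the plan is to form the circular-arc-segment path $\chi:=\chi_{ab}$ of \eqref{E:chi}, use \rf{L:ell chi} to bound $\ell(\chi)\le 2e|a|$, and set $\ups_a:=r-\bigl|\log|a|\bigr|$; since $\bigl|\log|a|-\log|b|\bigr|\le 1\ll\ups_a$, \rf{L:NumLma} gives $r-\bigl|\log|b|\bigr|\eqx\ups_a$. One then reproduces the four-way chains \eqref{E:D*:cp ell}, \eqref{E:D*:cp lk}, \eqref{E:D*:cp lh} verbatim with $\log(R/|a|)$ and $\log(R/|b|)$ replaced throughout by $\ups_a$; taking ratios yields \eqref{E:A:cp ratios}.

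For the general case ($|b|>e|a|$), set $m:=\max\{n\in\mfN:e^n|a|<|b|\}$ and $\sig:=(|b|/|a|)^{1/m}$, so $e<\sig<e^2$, and choose intermediate points $w^g_0=a^g,\dots,w^g_m=b^g$ on $|\beta^g|$ with $|w^g_j|=\sig^j|a|$. Writing $\alf^g_j:=\beta^g[w^g_{j-1},w^g_j]$, $A_j:=\{\sig^{j-1}|a|\le|z|\le\sig^j|a|\}$, and $B_j:=\band_{2\mu}(A_j)$, the ABC property again traps $|\alf^g_j|\subset B_j$. With $\del_j:=\sig^j|a|$ and $\ups_j:=r-\bigl|\log\del_j\bigr|$, the metric bounds above give $\lam\eqx(\del_j\ups_j)^{-1}$ on $B_j$, which is the exact annulus analogue of the punctured-disk estimate. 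One then reproduces the chains \eqref{E:D*:ell alf estimates}, \eqref{E:D*:ellk alf estimates}, \eqref{E:D*:ellh alf estimates} verbatim with $\log(R/\del_j)$ replaced by $\ups_j$, and sums over $j\in\{1,\dots,m\}$ as in the proof of \rf{P:D*:ell-bad-arcs}.

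The main technical obstacle is that, unlike $\log(R/\del_j)$ in the punctured-disk case, the sequence $\ups_j=r-\bigl|\log|a|+j\log\sig\bigr|$ is not monotone in $j$: it peaks where $\log\del_j$ crosses $0$, i.e., near the conformal center circle $\mfS^1(A)$. However, consecutive $|\log\del_j|$ differ by at most $\log\sig<2$ while every $\ups_j\ge 5\mu/2\gg 2$, so \rf{L:NumLma} provides the uniform comparability $\half\ups_j\le\ups_{j\pm 1}\le 2\ups_j$. Once this is in hand the sums $\sum_j\ell_g(\alf^g_j)$ behave exactly as in the punctured-disk case, and the explicit constants in \eqref{E:A:ell_k ratio} and \eqref{E:A:ell_h ratio} follow by the same arithmetic; the extra factors of roughly $2^8$ relative to \rf{P:D*:ell-bad-arcs} are absorbed by the factor-of-$2$ slack present in each of $\del(z)\eqx|z|$, $\lam\del\bp\eqx 1$, and the BPEP estimate $\bp\eqx r-|t|$, together with the neighbour-comparability of $\ups_j$.
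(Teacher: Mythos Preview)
Your proposal is correct and follows essentially the same approach as the paper. The only notable difference is in the general case $|b|>e|a|$: the paper avoids the non-monotonicity of $\ups_j$ altogether by splitting into the subcases $|b|\le 1$ and $|a|\ge 1$ (in each of which $\ups_j$ has a clean monotone formula such as $\ups_j=r+\log|a|+j$), and then handles the mixed case $|a|<1<|b|$ by cutting both quasi-geodesics at $\mfS^1(A)$ and concatenating the two subcases. Your direct treatment also works, but note that the non-monotonicity you flag is not really an obstacle at all: once you have the per-piece estimates $\ell_h(\alf_j^g)\eqx\ups_j^{-1}$ and $\ell_k(\alf_j^g)\eqx 1$ (which follow from $\bp\eqx\ups_j$ on $B_j$, itself a use of \rf{L:NumLma} combined with \rf{P:BPEP}), summing over $j$ immediately gives $\ell_g(\beta^h)\eqx\ell_g(\beta^k)$ regardless of whether $(\ups_j)_j$ is monotone. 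The neighbour comparability $\ups_{j\pm 1}\eqx\ups_j$ you establish via \rf{L:NumLma} is correct but not actually needed for the ratio bounds; the essential use of \rf{L:NumLma} is the one implicit in your claim that $\lam\eqx(\del_j\ups_j)^{-1}$ on $B_j$.
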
 
\noindent
Note that when $\bd\beta^h=\bd\beta^k$ the above lower bounds can be replaced with $1/\Lam$.
\begin{proof}%
Thanks to \rfs{P:D*:ell-bad-arcs} and \ref{P:Dc:ell-bad-arcs}, we may, and do, assume $A=\A(0;1,r)\in\mcA^2_\Om$, so $\core_{5\mu}(A)=\A(0;1,r-5\mu)$ and therefore $e^{5\mu-r}\le|a|\le|b|\le e^{r-5\mu}$.  Let
\[
  A_0:=\{z:|a|\le|z|\le|b|\} \quad\text{and}\quad B_0:=\band_{2\mu}(A_0)=\{z:e^{-2\mu}|a|\le|z|\le e^{2\mu}|b|\}\,.
\]
Evidently, $A_0\csubann B_0\csubann\overline\core_{3\mu}(A)\csubann\core_{\log16}(A)$.  According to \rf{L:ABC}(b) in conjunction with \rf{P:ABC}(b), both quasi-geodesics $\beta^h$ and $\beta^k$ lie in $B_0$, and thus we may appeal to \rf{L:del deep in A} (to estimate $\del$) and to \rf{P:BPEP} (to estimate $\bp$).  Also, according to \rf{R:bp ests}(b), $\bp>\half(3\mu)>\kk$ in $\core_{3\mu}(A)$; therefore we may use \eqref{E:BP est} to estimate $\lam$ in $B_0$.

In particular, from \rf{L:del deep in A} we know that in $\core_{\log2}(A)$, $\half\del_*\le\del\le\del_*$, and so $k_*\le k\le2k_*$ where $k_*$ is quasihyperbolic distance in $\Co$.  Thus \eqref{E:k* ests} already provides estimates for $k(a,b)$ when $a,b\in\core_{\log2}(A)$.  \flag{I believe that I can prove similar estimates for $h(a,b)$, but it takes some effort!}

\bigskip 

Suppose first that $a:=a^h=a^k$, $b:=b^h=b^k$, and $|b|\le e|a|$. (We call this the ``close points'' case.)  Since the endpoints coincide, the two lower bounds in \eqref{E:A:cp ratios} are trivial.  Thus we need only verify the asserted upper bounds.  From \eqref{E:k* le} we have
\[
  \frac{|a-b|}{|b|} \le k(a,b) \le 2k_*(a,b) \le 6\, \frac{|a-b|}{|a|}\,.
\]

According to \rf{L:del deep in A}, $\del(z)\eqx|a|$ for $z\in B_0$.  More precisely,
\begin{gather*}
  \forall\; z\in A_0\,,   \quad  \frac1{|b|} \le \frac1{\del(z)} \le \frac2{|a|}
  \intertext{and}
  \forall\; z\in B_0\,,   \quad  \frac{e^{-2\mu}}{|b|} \le \frac1{\del(z)} \le \frac{2e^{2\mu}}{|a|}\,.
\end{gather*}

Next we show that $\bp\eqx r-\log|a|$ in $B_0$.  So, let $z\in B_0$.  Assume $|z|=e^t$.  It is straightforward to check that
\begin{gather*}
  \bigl| t-\log|a| \bigr| \le 2\mu+1 =:M \quad\text{and}\quad  r\ge M+\bigl(|t|\vee\bigl|\log|a|\bigr|\bigr)\,.
  \intertext{Hence by \rf{P:BPEP} in conjunction with \rf{L:NumLma} (with $r,t$ and $s:=\log|a|$) we obtain}
  \frac14\,\bigl(r-\bigl|\log|a|\bigr|\bigr) \le \bp(z) \le 4 \bigl(r-\bigl|\log|a|\bigr|\bigr)\,.
\intertext{Using \eqref{E:BP est} with our above estimates (for $\del$ and $\bp$) we find that}
  \forall\; z\in A_0\,,   \quad  \frac{8^{-1}}{|b|}\,\frac1{r-\bigl|\log|a|\bigr|} \le \lam(z) \le \frac{16}{|a|}\,\frac1{r-\bigl|\log|a|\bigr|}
  \intertext{and}
  \forall\; z\in B_0\,,   \quad  \frac{(8e^{2\mu})^{-1}}{|b|}\,\frac1{r-\bigl|\log|a|\bigr|} \le \lam(z) \le \frac{16e^{2\mu}}{|a|}\,\frac1{r-\bigl|\log|a|\bigr|}\,.
\end{gather*}

\smallskip

Now we verify that $\ell_k(\beta^k)\eqx\ell_k(\beta^h)$ and $\ell_h(\beta^h)\eqx\ell_h(\beta^k)$.  More precisely, we establish:
\begin{subequations}\label{E:A:cp ell}
  \begin{align}
    |a-b| &\le \ell(\beta^k) \le 6e^{2\mu}\Lam\,\frac{|b|}{|a|}\,|a-b| \,,                    \label{E:A:cp ell k} \\
    |a-b| &\le \ell(\beta^h) \le 384e^{2\mu}\Lam\,\frac{|b|}{|a|}\,|a-b| \,;                  \label{E:A:cp ell h}
  \end{align}
\end{subequations}
and also
\begin{subequations}\label{E:A:cp lk}
  \begin{align}
    \frac{|a-b|}{|b|} &\le \ell_k(\beta^k) \le 6\Lam \frac{|a-b|}{|a|}   \,,                            \label{E:A:cp lkk} \\
    \frac{|a-b|}{|b|} &\le \ell_k(\beta^h) \le 768e^{4\mu}\Lam\,\frac{|b|}{|a|}\,\frac{|a-b|}{|a|}\,,    \label{E:A:cp lkh}
  \end{align}
\end{subequations}
and
\begin{subequations}\label{E:A:cp lh}
  \begin{align}
    \frac{(8e^{2\mu})^{-1}}{r-\bigl|\log|b|\bigr|}\,\frac{|a-b|}{|b|} &\le \ell_h(\beta^h) \le \frac{48\Lam}{r-\bigl|\log|a|\bigr|}\,\frac{|a-b|}{|a|}\,,  \label{E:A:cp lhh} \\
    \frac{(8e^{2\mu})^{-1}}{r-\bigl|\log|b|\bigr|}\,\frac{|a-b|}{|b|} &\le \ell_h(\beta^k) \le \frac{96e^{4\mu}\Lam}{r-\bigl|\log|a|\bigr|}\,\frac{|b|}{|a|}\,\frac{|a-b|}{|a|}\,.             \label{E:A:cp lhk}
  \end{align}
\end{subequations}

The six lower bounds above, which hold for any path joining $a$ and $b$, are straightforward (as explained in \rf{R:ell alf ests}(a)).  Thus we need only verify the upper bounds.  Noting that $|\beta^k|\subset B_0$ and using our above estimates for $\del^{-1}ds$ (in $B_0$) we obtain
\begin{gather*}
  \frac{\ell(\beta^k)}{e^{2\mu}|b|} \le \int_{\beta^k}\frac{ds}{\del}=
  \ell_k(\beta^k)\le\Lam k(a,b) \le2\Lam k_*(a,b) \le 6\Lam\frac{|a-b|}{|a|}
\intertext{and so the upper bounds in both \eqref{E:A:cp ell k} and \eqref{E:A:cp lkk} hold.
With \eqref{E:A:cp ell k} in hand, we use our above estimates for $\lam\,ds$ (in $B_0$) to obtain}
  \ell_h(\beta^k)=\int_{\beta^k} \lam\,ds \le \frac{16e^{2\mu}}{|a|}\,\frac{\ell(\beta^k)}{r-\bigl|\log|a|\bigr|}
                 \le \frac{96e^{4\mu}\Lam}{r-\bigl|\log|a|\bigr|}\,\frac{|b|}{|a|}\,\frac{|a-b|}{|a|}
\end{gather*}
which gives \eqref{E:A:cp lhk}.

\smallskip

Now we turn to the estimates for $\beta^h$.  Let $\chi:=\chi_{ab}=\kap_{ac}\star[c,b]$ be the circular-arc-segment path from $a$ to $c:=\bigl(|a|/|b|\bigr)b$ to $b$ as described  in \eqref{E:chi}.  By \rf{L:ell chi}
\begin{gather*}
  \ell(\chi) \le 3|a-b|\,.
  \intertext{Noting that $|\chi|\subset A_0$ and using our above estimates for $\lam\,ds$ (in $A_0$) we obtain}
  \ell_h(\beta^h)\le\Lam h(a,b)\le\Lam \int_{\chi} \lam\,ds \le \frac{16}{|a|}\,\frac{\Lam\,\ell(\chi)}{r-\bigl|\log|a|\bigr|}
                 \le \frac{48\Lam}{r-\bigl|\log|a|\bigr|}\,\frac{|a-b|}{|a|}
\intertext{from which the upper bounds in both \eqref{E:A:cp ell h} and \eqref{E:A:cp lhh} follow.  With \eqref{E:A:cp ell h} in hand, we also have}
  \ell_k(\beta^h)=\int_{\beta^h} \frac{ds}{\del} \le \frac{2e^{2\mu}}{|a|} \,\ell(\beta^h) \le 768e^{4\mu} \Lam\,\frac{|b|}{|a|}\,\frac{|a-b|}{|a|}
\end{gather*}
which gives \eqref{E:A:cp lkh}.

The diligent reader can employ \eqref{E:A:cp lk} and \eqref{E:A:cp lh} to corroborate the inequalities in \eqref{E:A:cp ratios}.

\bigskip 

Now we examine the case where $\beta^h$ and $\beta^k$ need not have the same endpoints, but $|b^h|=|b^k|>e|a^k|=e|a^h|$.  Note that by \rf{L:del deep in A} and \eqref{E:k* ests}, for any $a,b\in\core_{\log2}(A)$ with $|b|>e|a|$,
\begin{gather*}
  1\le L:= \log\frac{|b|}{|a|} \le k(a,b) \le 2\pi+2L \le 2(1+\pi)L \le 9L\,.
  \intertext{In particular,}
  L \le \ell_k(\beta^k) \le \Lam k(a^k,b^k) \le 9\Lam L \le 9\Lam\ell_k(\beta^h)
\end{gather*}
which gives the lower bound in \eqref{E:A:ell_k ratio}.

It is convenient to consider the cases $|a^h|\ge1$ and $|b^h|\le1$ separately.  
So, assume $1\ge|b^h|>e|a^h|$.

Let $m:=\max\{n\in\mfN : e^n|a|<|b|\}$, so $m$ is the unique positive integer with $e^m|a|<|b|\le e^{m+1}|a|$ and $m+\log|a|<0$.  Put $\sig:=\bigl(|b|/|a|\bigr)^{1/m}$.  Then
\begin{gather*}
  e<\sig\le e^{1+1/m}<e^2 \qquad\text{and for $1\le j\le m$,} \quad e^j<\sig^j<e^{j+1}\,.
  \intertext{For $1\le j\le m$, define}
  A_j:=\{\sig^{j-1}|a|\le|z|\le \sig^j|a|\} \quad\text{and}\quad B_j:=\band_{2\mu}(A_j)=\{e^{-2\mu}\sig^{j-1}|a|\le|z|\le e^{2\mu}\sig^j|a|\}\,.
\end{gather*}
Since $A_j\csubann B_j\csubann B_0$, the comments at the beginning of this proof remain in force.

For each $1\le j<m$, pick a point $w_j\in|\beta|\cap\mfS^1(0;\sig^j|a|)$, set $w_0:=a$ and $w_{m}:=b$, and for $1\le j\le m$ let $\alf_j=\alf^g_j:=\beta^g[w_{j-1},w_j]$.  Thus $\alf^g_j$ is a sub-quasi-geodesic of $\beta^g$, and while $\alf^g_j$ may leave the annulus $A_j$, thanks to the ABC property (\rf{L:ABC}(b)) we know that $\alf^g_j$ lies in the big annulus $B_j$.  Also, by \rf{L:del deep in A} and \eqref{E:k* ests},
\[
  \log\sig\le k(w_j,w_{j-1})\le2(\pi+\log\sig)\le2(\pi+2)\le11\,.
\]

Since $\beta=\Star_1^{m}\alf_j$, the lengths of each $\beta$ are just sums of the corresponding lengths of its subpaths $\alf_j$.  To estimate these, we proceed in the same manner as above (in the ``close points'' case).  First we give estimates for the metrics $\del^{-1}\,ds$ and $\lam\,ds$ valid in the subannuli $A_j$ and $B_j$.  Then we estimate the Euclidean, quasihyperbolic, and hyperbolic lengths of each $\alf_j$ as described in \rfs{R:ell alf ests}.

In particular, note that as $e^m|a|\le|b|\le1$,
\[
   \forall\;1\le j\le m\,, \quad \abs{\log|a|+j}=-\log|a|-j\,.
\]

We begin with estimates for $\del, \bp, \lam$ in $A_j$ and $B_j$, then show that the Euclidean length of $\alf_j$ is essentially $\sig^j|a|\eqx e^j|a|$.  Then we present estimates for $\ell_g(\alf^g_j)$ and $\ell_h(\alf^k_j)$ and $\ell_k(\alf^h_j)$.  Everywhere below $1\le j\le m$.

In $B_j$, $\del\eqx\del_j:=\sig^j|a|$, $\bp\eqx\ups_j:=r+\log|a|+j$, and $\lam\eqx\lam_j:=(\ups_j\del_j)^{-1}$.  More precisely:
for all $z\in A_j$,
\begin{subequations}\label{E:A:Aj estimates}
  \begin{align}
    \frac1{2\sig}\,\del_j    &\le \del(z) \le \del_j\,,                       \label{E:A:Aj del est} \\
    \frac14\,\ups_j          &\le \bp(z)  \le 4\, \ups_j\,,                   \label{E:A:Aj bp est} \\
    \frac{8^{-1}}{\ups_j\del_j} &\le\lam(z)\le\frac{16\sig}{\ups_j\del_j}\,.  \label{E:A:Aj lam est}
  \end{align}
\end{subequations}
and for all $z\in B_j$,
\begin{subequations}\label{E:A:Bj estimates}
  \begin{align}
    \frac{e^{-2\mu}}{2\sig}\,\del_j       &\le \del(z) \le e^{2\mu}\del_j\,,                        \label{E:A:Bj del est} \\
    \frac14\, \ups_j                      & \le \bp(z) \le 4\, \ups_j\,,                            \label{E:A:Bj bp est} \\
    \frac{(8e^{2\mu})^{-1}}{\ups_j\del_j} &\le \lam(z) \le \frac{16\sig e^{2\mu}}{\ups_j\del_j}\,.  \label{E:A:Bj lam est}
  \end{align}
\end{subequations}
Evidently, \eqref{E:A:Aj lam est} and \eqref{E:A:Bj lam est} are consequences of \eqref{E:BP est} in conjunction with the remaining inequalities, and \eqref{E:A:Aj del est} and \eqref{E:A:Bj del est} follow from $\half\del_*\le\del\le\del_*$.  It remains to elucidate how \rf{P:BPEP} gives \eqref{E:A:Bj bp est} (which implies \eqref{E:A:Aj bp est}).

Suppose $z\in B_j$ and write $|z|=e^t$.  Then
\begin{gather*}
  e^{j-1-2\mu}|a| \le e^{-2\mu}\sig^{j-1}|a| \le e^t \le e^{2\mu}\sig^j|a| \le e^{j+1+2\mu}|a|
  \intertext{so}
  (\log|a|+j)-2\mu-1 \le t \le (\log|a|+j)+2\mu+1
  \intertext{and therefore (recalling that $\log|a|+j<0$)}
  |t|\le (2\mu+1)-(\log|a|+j)\,.
\intertext{We use \rf{L:NumLma} (and the \BPEP) with $s:=\log|a|+j<0$ and $M:=2\mu+1$.  From an inequality above,}
  |t-s|=|t-(\log|a|+j)|\le2\mu+1=M\,.
  \intertext{Next, $|a|\ge e^{5\mu-r}$ so $s\ge\log|a|\ge 5\mu-r$ whence}
  r\ge 5\mu-s = 5\mu+|s| \ge M+|s|\,.
  \intertext{Also, from inequalities above, $|t|\le(2\mu+1)-s=|s|+(2\mu+1)\le(r-5\mu)+(2\mu+1)$, so}
  r\ge(3\mu+1)+|t|\ge M+|t|\,.
  \intertext{Therefore by \rf{L:NumLma} and \rf{P:BPEP}}
  \bp(z)\le2(r-|t|)\le4(r-|s|)=4\ups_j
  \intertext{and}
  \bp(z)\ge\half(r-|t|)\ge\qtr(r-|s|)=\qtr\,\ups_j\,.
\end{gather*}

Having established the inequalities in \eqref{E:A:Aj estimates} and \eqref{E:A:Bj estimates}, it follows that $\ell(\alf_j)\eqx\del_j= \sig^j|a|$.  More precisely:
\begin{subequations}\label{E:A:ell alf estimates}
  \begin{align}
    \frac12\, \del_j &\le \ell(\alf^k_j) \le 11 e^{2\mu} \Lam \del_j\,,            \label{E:A:ell alfk} \\
    \frac12\, \del_j &\le \ell(\alf^h_j) \le 256\sig e^{2\mu} \Lam\del_j\,.        \label{E:A:ell alfh}
  \end{align}
\end{subequations}
And then we get $\ell_k(\alf_j)\eqx1$ and $\ell_h(\alf_j)\eqx\ups_j^{-1}$.  More precisely:
\begin{subequations}\label{E:A:ellk alf estimates}
  \begin{align}
    \log\sig &\le \ell_k(\alf^k_j) \le 11 \Lam\,,                       \label{E:A:lkak} \\
    \log\sig &\le \ell_k(\alf^h_j) \le 512\sig^2e^{4\mu}\Lam\,,         \label{E:A:lkah}
  \end{align}
\end{subequations}
and
\begin{subequations}\label{E:A:ellh alf estimates}
  \begin{align}
    \frac1{16e^{2\mu}}\,\frac1{\ups_j} &\le \ell_h(\alf^h_j) \le 32\sig\Lam\,\frac{1}{\ups_j}\,,           \label{E:A:lhah} \\
    \frac1{16e^{2\mu}}\,\frac1{\ups_j} &\le \ell_h(\alf^k_j) \le 176\sig e^{4\mu}\Lam\,\frac1{\ups_j}\,.   \label{E:A:lhak}
  \end{align}
\end{subequations}

The six lower bounds above, which hold for any path joining $w_j$ and $w_{j-1}$, are straightforward (as explained in \rf{R:ell alf ests}(a)).  Thus we need only verify the upper bounds.  To begin, we introduce the circular-arc-segment path $\chi:=\chi_{w_{j-1}w_j}=\kap_{w_{j-1}w}\star[w,w_j]$ from $w_{j-1}$ through $w:=w_j/\sig$ to $w_j$ as described as described in \eqref{E:chi}.  Note that as $\pi+\sig-1\le\sig$,
\begin{gather*}
  \ell(\chi) \le 2\del_j\,.
  \intertext{Using \eqref{E:A:Bj del est} for $\alf^k_j$ we see that}
  \frac{e^{-2\mu}}{\del_j}\,\ell(\alf^k_j) \le \int_{\alf^k_j} \frac{ds}{\del} = \ell_k(\alf^k_j) \le \Lam k(w_j,w_{j-1}) \le 11 \Lam\,,
  \intertext{and then using \eqref{E:A:Aj lam est} (for $\chi$) and \eqref{E:A:Bj lam est} (for $\alf^h_j$) in a similar manner we obtain}
  \frac1{8e^{2\mu}}\,\frac{\ell(\alf^h_j)}{\ups_j\del_j} \le  \int_{\alf^h_j} \lam\,ds = \ell_h(\alf^h_j)
    \le \Lam\ell_h(\chi) = \Lam\int_\chi \lam\ ds \le \frac{16\sig\Lam}{\ups_j\del_j}\,\ell(\chi) \le \frac{32\sig\Lam}{\ups_j}\,.
\end{gather*}
Evidently, the upper bounds in each of \eqref{E:A:ell alf estimates}, \eqref{E:A:lkak}, \eqref{E:A:lhah} are contained in the two strings of inequalities displayed immediately above.

It remains to verify the inequalities \eqref{E:A:lkah} and \eqref{E:A:lhak}.  We use \eqref{E:A:Bj del est} and \eqref{E:A:ell alfh} to see that
\begin{gather*}
  \ell_k(\alf^h_j) = \int_{\alf^h_j} \frac{ds}{\del} \le \frac{2\sig e^{2\mu}}{\del_j}\,\ell(\alf^h_j) \le 512 \sig^2 e^{4\mu} \Lam
  \intertext{and then use \eqref{E:A:Bj lam est} and \eqref{E:A:ell alfk} to obtain}
  \ell_h(\alf^k_j) = \int_{\alf^k_j} \lam\,ds \le \frac{16\sig e^{2\mu}}{\ups_j\del_j}\,\ell(\alf^k_j) \le 176\sig e^{4\mu} \Lam\, \frac1{\ups_j}\,.
\end{gather*}

As the length of each $\beta$ is obtained by adding up the lengths of its subarcs $\alf_j$, the diligent reader can now employ \eqref{E:A:ellk alf estimates} and \eqref{E:A:ellh alf estimates} to establish \eqref{E:A:ell_k ratio} and \eqref{E:A:ell_h ratio}.  \flag{NB for $e\le x\le e^2, x^2/\ln x\le e^4/2$.}

\bigskip

The same argument handles the subcase where $|a|\ge1$; here $A_i=\{\sig^{-i}|b|\le|z|\le\sig^{1-i}|b|\}$, $\del_i:=\sig^{1-i}|b|$, $\ups_i:=r-\log|b|+i$ for $1\le i\le m$, and we obtain precisely the same estimates.  Finally, to handle the general case when $|a|<1<|b|$, we simply pick any points where the quasi-geodesics cross $\mfS^1$, apply the appropriate subcases to the various sub-quasi-geodesics, and then add up.
\end{proof}%
\subsection{Proof of Theorem~\ref{TT:bad??}}  \label{s:bad??} %
This follows by combining the three \rfs{P:D*:ell-bad-arcs}, \ref{P:Dc:ell-bad-arcs}, \ref{P:A:ell-bad-arcs}.  We take $Q:=5\mu=35e^\Lam$.  The constants $K$ and $H$ are chosen to be, respectively,  the largest of the corresponding constants in the inequalities \eqref{E:D*:ell_k ratio}, \eqref{E:Dc:ell_k ratio}, \eqref{E:A:ell_k ratio} and \eqref{E:D*:ell_h ratio}, \eqref{E:Dc:ell_h ratio}, \eqref{E:A:ell_h ratio}.  Note that the ``close points'' cases do not occur in the setting of \rf{TT:bad??}.

%
\section{Proof of Theorem~\ref{TT:geos}}  \label{S:geos pf} 

Let $a,b\in\Om$ and fix two quasi-geodesics $\Gam^h$ and $\Gam^k$ both with endpoints $a,b$.  We claim that for both $g=h$ and $g=k$, $\ell_g(\Gam^h)\eqx\ell_g(\Gam^k)$ with comparison constants that depend only on the quasi-geodesic constants.  Our argument involves several cases each having subcases and subsubcases.  There are two Easy Cases where we assume that $\bp$ is bounded (either above or below) on one of the quasi-geodesics, and then three Main Cases where we consider the possible values of $\bp$ at each of the endpoints $a,b$.

We assume $\Gam^h$ and $\Gam^k$ are, respectively, a hyperbolic and a quasihyperbolic $\Lam$-chordarc path for some $\Lam\ge1$.  Put $\mu=\mu(\Lam):=\mu_h\vee\mu_k$ where $\mu_h, \mu_k$ are the ABC parameters for hyperbolic, quasihyperbolic $\Lam$-chordarc paths; so, $\mu_k=\pi\Lam$ and $\mu_h=7e^\Lam$ as explained in \rfs{R:ABC}.  Everywhere below we work with the constants $\SSS, \MM, \LL, \XL$ which are defined by
\[
  \SSS:=10\mu\,,\quad \MM:=10\SSS\,,\quad \LL:=10\MM\,,\quad \XL:=10\LL\,.
\]

Suppose that for some constant $C>\mu_k$, $\bp\le C$ along $\Gam^h$.  Then by \rf{C:large bp}, $\bp\le4C$ along $\Gam^k$.  According to (\BPt) we can now assert that $\lam\,ds$ and $\del^{-1}ds$ are \BL\ equivalent along both quasi-geodesics and our claim holds (in fact, with $K:=2(\kk+C)\Lam$ and $H:=(\kk+C)(\kk+4C)\Lam^2$).

\smallskip

Suppose that for some constant $C>10\mu$, $\bp\ge C$ along $\Gam^h$.  According to \rf{L:bp ge tau}, $b\in|\Gam^h|\subset\overline\core_{C/2}\A(a)\subann\core_{5\mu}\A(a)$, and then our claim follows from 
one of \rfs{P:D*:ell-bad-arcs} or \ref{P:Dc:ell-bad-arcs} or \ref{P:A:ell-bad-arcs} (with the corresponding constants $H, K$).

\medskip

Having dispensed with the Easy Cases, we now assume that $\bp$ is both smaller than $\SSS$ at some points of $|\Gam^h|$ and larger than $\LL$ at some points of $|\Gam^h|$.  
We examine the following \textbf{Main Cases}:
\begin{center}
  \begin{minipage}{3.5in}
\begin{enumerate}
  \item[(I)]  $\bp\le\SSS$ at both endpoints
  \item[(II)] $\bp\le\MM$ at both endpoints
  \begin{enumerate}
    \item[(a)]  $\bp\le \XL$ along $\Gam^h$
    \item[(b)]  $\bp\ge \XL$ at some point of $\Gam^h$
  \end{enumerate}
  \item[(III)] $\bp\ge\MM$ at one (or both) endpoint(s)
  \begin{enumerate}
    \item[(a)]  $\bp(a)\ge\MM\ge\bp(b)$
    \item[(b)]  $\bp(a)\ge\bp(b)\ge\MM$
  \end{enumerate}
\end{enumerate}
  \end{minipage}
\end{center}
Main Case I is the heart of our argument and given directly below in \rf{ss:both leS}.  Once this is established, Main Case II follows easily: If $\bp\le \XL$ along $\Gam^h$, we appeal to the first Easy Case with $C=\XL$.  If $\bp\ge \XL$ at some point of $\Gam^h$, we redo Main Case I replacing the constants $\SSS,\MM,\LL$ with $\MM,\LL,\XL$.  Main Case III also follows from earlier results, but it requires some additional effort as described below in \rf{ss:one geM}.

\medskip

First, we describe the key ideas in our argument for Main Case I.  We write each of the quasi-geodesics $\Gam^g$ as a concatenation of ``good'' subarcs $\gam^g=\gam^g_i$ and ``bad'' subarcs $\beta^g=\beta^g_i$.  On the ``good'' subarcs $\bp$ is not large, so the hyperbolic and quasihyperbolic metrics are \BL\ equivalent there, and thus for both $g=h$ and $g=k$, $\ell_h(\gam^g)\eqx\ell_k(\gam^g)$.

On the ``bad'' subarcs, $\bp$ is not small.  These subarcs lie in the middle core of LFS (large fat separating) annuli in $\Om$.  It is important to know that \emph{both} quasi-geodesics cross the LFS annuli; this is due to the fact (see \rf{L:cores separate}) that these middle cores separate $\{a,b\}$ which in turn is a consequence of the ABC property.

Geometrically, these LFS annuli are long tubes that are, roughly, long Euclidean cylinders in $(\Om,k)$ and long ``pinched'' cylinders in $(\Om,h)$.  Because of this geometry, we can show that the hyperbolic and quasihyperbolic lengths of the ``bad'' subarcs are comparable.  That is, for both $g=h$ and $g=k$ we have $\ell_g(\beta^h)\eqx\ell_g(\beta^k)$.  (The reader should pay attention to the difference between this comparability and that for the ``good'' subarcs!)  In fact, these estimates are a consequence of \rf{TT:bad??}.

The final step is to show that we also have a similar comparability for the ``good'' subarcs; i.e.,  for both $g=h$ and $g=k$ we have $\ell_g(\gam^h)\eqx\ell_g(\gam^k)$.

The crucial act is selecting the ``bad'' subarcs, which are chosen to be the subarcs that cross the middle cores of certain LFS annuli, and then the ``good'' subarcs are what remains.

\subsection{Main Case I} 	\label{ss:both leS} %
Here we assume that both endpoints satisfy $\bp(a)\le\SSS$ and $\bp(b)\le\SSS$.  We also assume that there is a point on $\Gam^h$ where $\bp$ is at least $\LL$.  Immediately below we describe an algorithm that produces an integer $n\ge1$ and points
\begin{gather*}
    a=:a^h_0<b^h_0<z_1<a^h_1<  \dots <  b^h_{i-1} < z_i < a^h_i < \dots <b^h_{n-1} < z_n < a^h_n < b^h_n:=b
  \intertext{along the quasi-geodesic $\Gam^h$, and similar points}
    a=:a^k_0<b^k_0<\dots <a^k_i<b^k_i<\dots<a^k_n<b^k_n:=b
  \intertext{along the quasi-geodesic $\Gam^k$.  Then the ``good'' and ``bad'' subarcs are precisely}
  \gam^g_i:=\Gam^g[a^g_i,b^g_i] \;\;(0\le i\le n) \quad\text{and}\quad \beta^g_i:=\Gam^g[b^g_{i-1},a^g_i] \;\;(1\le i\le n)\,.
\end{gather*}
Everywhere above and below we take both $g=h$ and $g=k$.

\medskip

The algorithm starts by putting $a^g_0:=a$ and $i:=1$.  Then, we repeat the following,

\smallskip\noindent
while there are points $z\in\Gam^h[a^h_{i-1},b]$ with $\bp(z)\ge\LL$:
\renewcommand{\labelitemi}{$\vcenter{\hbox{\tiny$\bullet$}}$}
\begin{itemize}
  \item  Let $z_i$ be the first point $z\in\Gam^h[a^h_{i-1},b]$ with $\bp(z)=\LL$.
  \item  Determine $\BP(z_i)$ and especially $A_i:=\A(z_i)$.
  \smallskip\item  Define $b^g_{i-1}$ and $a^g_i$ to be, respectively, the first and last points of $\Gam^g$ in $\overline\core_{2\MM}(A_i)$.
  \smallskip\item  Increment $i$ by 1.
\end{itemize}
Eventually, $\bp<\LL$ on $\Gam^h[a^h_{i-1},b]$, the process stops, and we put $n:=i-1$ and $b^g_n:=b$.

\medskip

Now we breakdown the additional details for our proof of Main Case I.

\subsubsection{Things are well defined} 	\label{ss:WD} %

Notice that as soon as $b^g_{i-1}$ and $a^g_i$ are defined, so are $\gam^g_{i-1}$ and $\beta^g_i$.
Observe that, by \rf{L:not punxd}, each $A_i$ is a non-degenerate annulus.  This is important because it allows us to utilize the \BPEP.  Also, by \rf{L:cores separate}, each of the cores $\core_{2\SSS}(A_i)$ separates $\{a,b\}$.  This means that both quasi-geodesics $\Gam^h$ and $\Gam^k$ cross each of these cores, so in particular, the points $b^g_{i-1}$ and $a^g_i$ (and hence also the subarcs $\gam^g_{i-1}$ and $\beta^g_i$) are indeed all well-defined.

\smallskip

Next we explain why $\core_\MM(A_i)\cap\core_\MM(A_j)=\emptyset$ for all $i\ne j$.  This is crucial information for our proof as it tells us that none of the ``bad'' subarcs of $\Gam^h$ or $\Gam^k$ overlap, and moreover it reveals that the points $a^g_i, b^g_i$ on $\Gam^g$ do lie in increasing order as asserted above.  In fact, for any $q\in(16\log2,\LL)$,  $\core_q(A_i)\cap\core_q(A_j)=\emptyset$ for all $i\ne j$, an observation that we require later.  This follows from  \rf{P:dsjt cores} once we check a few details.
\flag{A pix here of $A_i$ and its many cores---of `sizes' $2\MM,\MM,2\SSS,\half\SSS,\log16$---would be illuminating!}

Evidently, there is nothing to prove if $n=1$, so we may and do assume that $n>1$.  Fix $1\le i<n$.  Then, since $i<n$, $\Gam^h[a^h_i,b]$ has points where $\bp\ge\LL$.  Also, by our choice of the point $a^h_i$, this hyperbolic quasi-geodesic has empty intersection with $\core_{2\MM}(A_i)$.  In particular, for any point $z\in\Gam^h[a^h_i,b]\cap\core_{\log16}(A_i)$, $z\not\in\core_{2\MM}(A_i)$, and hence by \rf{R:ABC}(c) $\bp(z)\le4\MM<\LL$.

It follows that for all $1\le j\le n$ with $j\ne i$, $z_{j}\not\in \core_{\log16}(A_i)$.  Therefore $\Gam^h[a^h_i,z_{i+1}]$ must cross the ``outer fringe'' $\core_{\log16}(A_i)\sm\core_{\half\SSS}(A_i)$ of $A_i$ (this is a collar of $\core_{\half\SSS}(A_i)$ relative to $\core_{\log16}(A_i)$).  Let $p_i$ be any point in $\Gam^h[a^h_i,z_{i+1}]\cap[\core_{\log16}(A_i)\sm\core_{\half\SSS}(A_i)]$.  Then by \rf{R:bp ests}(c) we have $\bp(p_i)\le 2\,\half\SSS=\SSS$.  Thus we have points $p_i\in\Gam^h$ with $\bp(p_i)\le\SSS$ and
\[
  a <  b^h_{i-1} < z_i < a^h_i < p_i < b^h_i <z_{i+1}  < a^h_{i+1} < b\,,
\]
so by \rf{P:dsjt cores} (with $\tau=\LL, \sig=\SSS, q\in(16\log2,\LL)$), $\core_q(A_i)\cap\core_q(A_j)=\emptyset$ for all $i\ne j$.

\smallskip

Finally, each $\gam_i$ crosses one of the collars of $\core_{2\MM}(A_i)$ relative to $\core_\MM(A_i)$.  (To see this, note that the endpoints of $\gam_i$ satisfy $a_i\in\bd\core_{2\MM}(A_i)$ and $b_i\in\bd\core_{2\MM}(A_{i+1})$ while $\core_\MM(A_i)\cap\core_\MM(A_{i+1})=\emptyset$.)  According to \rf{C:h ge ln3/2}, it follows that $\ell_h(\gam_i)\ge\log(3/2)$.  As $\ell_h(\Gam^h)$ is finite, the above process does indeed stop.

\subsubsection{Initial estimates} 	\label{ss:IE} %

According to \rf{P:A:ell-bad-arcs}, we have $\ell_g(\beta^h)\eqx\ell_g(\beta^k)$; that is, for both $g=h$ and $g=k$ and all $1\le i\le n$,
\begin{equation}\label{E:ell-bad}
  C_1^{-1} \le \frac{\ell_g(\beta^h_i)}{\ell_g(\beta^k_i)} \le C_1:=C_0 e^{6\mu}\Lam
\end{equation}
where $C_0\eqx11\cdot2^8 e^4$.


Evidently, the $\gam^h_i$ are ``good'' subarcs, since by construction we have $\bp\le\LL$ along each $\gam^h_i$.  By using \rf{L:large bp}, it is not difficult to show that $\bp<4\LL$ along each $\gam^k_i$.  With a tad more effort, we see that $\bp<2\LL$ along each $\gam^k_i$. Thus the $\gam^k_i$ are are also ``good'' subarcs.

To corroborate this assertion, we first observe that for any $0\le i\le n$ and $1\le j\le n$, $|\gam^g_i|\cap\core_{2\MM}(A_j)=\emptyset$.  This is because $\Gam^g$ crosses $\core_{2\MM}(A_j)$ exactly one time, $\beta^g_j$ is the largest subarc of $\Gam^g$ that does this, $|\gam^g_i|\subset|\Gam^g|\sm|\beta^g_j|$, and $(|\Gam^g|\sm|\beta^g_j|)\cap\core_{2\MM}(A_j)=\emptyset$.

Next, note that in $\core_{\log16}(A_i)\sm\core_{2\MM}(A_i)$, $\bp\le4\MM$.  Therefore on the ``tails'' of each ``good'' subarc $\gam^g_i$ we have $\bp\le4\MM<\LL$, but once $\gam^g_i$ leaves $\core_{\log16}(A_i)$ or $\core_{\log16}(A_{i+1})$, we lose such control on $\bp$.  Nonetheless, this does reveal that 
$\bp<L$ on $|\gam^h_i|$.

Now suppose that for some $0\le i\le n$ there exists a point $w\in|\gam^k_i|$ with $\bp(w)\ge2\LL$.  Evidently, for all $1\le j\le n$, $w\not\in\overline\core_{\log16}(A_j)$ (see the discussion immediately above in paragraph 3 of \rf{ss:IE}).  Now as $\bp(a)\le\SSS$, $\bp(b)\le\SSS$, and $\bp(w)\ge2\LL$, an appeal to \rf{L:cores separate} reminds us that $\core_{2\SSS}\A(w)$ separates $\{a,b\}$, and hence $\mfS^1\bigl(\A(w)\bigr)$ separates $\{a,b\}$.  Therefore, there are points $\tilde{w}\in\Gam^k\cap\mfS^1\bigl(\A(w)\bigr)$ and $z\in\Gam^h\cap\mfS^1\bigl(\A(w)\bigr)$

Recalling \eqref{E:z in core bdy} we see that $w\in\bd\core_{\bp(w)}\A(w)\subset\overline\core_{2\LL}\A(w)$.  The ABC property for quasihyperbolic geodesics now gives $[w,\tilde{w}]_k\subset\core_{2\LL-2\mu}\A(w)$ (see \rf{L:ABC}(b)), and so along $[w,\tilde{w}]_k$ we have $\bp\ge\half( 2\LL-2\mu)=\LL-\mu$.

As $w\not\in\overline\core_{\log16}(A_j)$ (for all $1\le j\le n$), and on $\bd\core_{\log16}(A_j)$ we have $\bp\le8\log2$ (see \rf{R:bp ests}(b)), it now follows that $[w,\tilde{w}]_k$ never enters any $\overline\core_{\log16}(A_j)$.

Clearly, along $\mfS^1\bigl( \A(w) \bigr)$ we have $\bp\ge\LL$, so $\bp(z)\ge\LL$ and also $\mfS^1\bigl( \A(w) \bigr)$ never enters any $\overline\core_{\log16}(A_j)$ (for all $1\le j\le n$).  This last statement means there must exist $0\le j\le n$ with $z\in|\gam^h_j|$.  But this then implies that $\LL\le\bp(z)<\LL$.

This contradiction means that along each $\gam^k_i$ we do indeed have $\bp<2\LL$.  Thus from (\BPt) we obtain
\begin{gather}
  \frac1{3\LL} \le \lam\,\del \le 2 \quad\text{along each $\gam^g_i$}\,.  \notag
  \intertext{Therefore, for both $g=h$ and $g=k$, and for all $0\le i\le n$,}
  \half \le \frac{\ell_k(\gam^g_i)}{\ell_h(\gam^g_i)} \le 3\LL\,.  \label{E:ell-good1}
\end{gather}
The reader should pay careful attention to the differences between \eqref{E:ell-good1} and \eqref{E:ell-bad}

\subsubsection{Final estimates} 	\label{ss:FE} %
It remains to demonstrate that we also have $\ell_g(\gam^h)\eqx\ell_g(\gam^k)$ for each ``good'' subarc $\gam$.  We show that for both $g=h$ and $g=k$ and all $0\le i\le n$,
\begin{subequations}\label{E:ell-good2}
  \begin{align}
    \ell_k(\gam^k_i) &\le K_1 \ell_k(\gam^h_i)     \label{E:ell-g2a}
    \intertext{and}
    \ell_k(\gam^h_i) &\le K_2 \ell_k(\gam^k_i)     \label{E:ell-g2b}
  \end{align}
\end{subequations}
where $K_1:=(1+8\pi/\log(3/2))\Lam$ and $K_2:=6\LL K_1$.  Evidently, these inequalities combine to yield $\ell_k(\gam^h_i)\eqx\ell_k(\gam^k_i)$.  But, thanks to \eqref{E:ell-good1} we also get
\begin{gather*}
  \ell_h(\gam^k_i) \le 2 \ell_k(\gam^k_i) \le 2K_1 \ell_k(\gam^h_i) \le 6K_1\LL \, \ell_h(\gam^h_i)
  \intertext{and similarly}
  \ell_h(\gam^h_i) \le 2 \ell_k(\gam^h_i) \le 2K_2 \ell_k(\gam^k_i) \le 6K_2\LL \, \ell_h(\gam^k_i)
\end{gather*}
and the above in turn imply that $\ell_h(\gam^h_i)\eqx\ell_h(\gam^k_i)$.

Now we verify the inequalities in \eqref{E:ell-good2}.  Recall that $\gam^g_i:=\Gam^g[a^g_i,b^g_i]$.  From \eqref{E:k* ests} and \rf{R:k* ests} we know that
\begin{gather*}
  h(a^h_i,a^k_i)\le 2\,k(a^h_i,a^k_i) \le 4\pi \quad\text{and}\quad  h(b^h_i,b^k_i)\le2\,k(b^h_i,b^k_i) \le 4\pi
  \intertext{and from the last paragraph of \rf{ss:WD} we have $2\ell_k(\gam^g_i)\ge \ell_h(\gam^g_i) \ge \veps_o:=\log(3/2)$.  Thus}
  \ell_k(\gam^k_i) \le \Lam k(a^k_i,b^k_i) \le \Lam \bigl( k(a^h_i,a^k_i) + k(a^h_i,b^h_i) +  k(b^h_i,b^k_i) \bigr)
     \le \Lam \bigl(\ell_k(\gam^h_i) + 4\pi \bigr) \le K_1 \ell_k(\gam^h_i)
  \intertext{which establishes \eqref{E:ell-g2a}.  Also,}
  h(a^h_i,b^h_i)\le h(a^h_i,a^k_i)+h(a^k_i,b^k_i)+h(b^k_i,b^h_i)\le4\pi+\ell_h(\gam^k_i)+4\pi \le \Bigl( 1+\frac{8\pi}{\veps_o} \Bigr) \ell_h(\gam^k_i)
  \intertext{and therefore}
  \ell_k(\gam^h_i) \le 3\LL \ell_h(\gam^h_i) \le 3\LL\Lam h(a^h_i,h^k_i) \le 3\LL\Lam \Bigl( 1+\frac{8\pi}{\veps_o} \Bigr)\ell_h(\gam^k_i) \le K_2 \ell_k(\gam^k_i)
\end{gather*}
which establishes \eqref{E:ell-g2b}.

\subsection{Main Case III} 	\label{ss:one geM} %
Here we assume that $\bp\ge\MM$ at one (or both) endpoint(s), and there are two subcases.  We constantly make use of the fact that if the endpoints $a$ and $b$ both lie ``deep inside'' some LFS annulus to which 
one of \rfs{P:D*:ell-bad-arcs} or \ref{P:Dc:ell-bad-arcs} or \ref{P:A:ell-bad-arcs} applies, then we are done.

To begin, suppose  $\bp(a)\ge\MM\ge\bp(b)$; this is Main Case III(a).  Below (see \rf{ss:Chks}) we explain how to find points $c^g\in\Gam^g$ (for $g=h$ and $g=k$) such that
\begin{subequations}\label{E:MC3:ells qex}
  \begin{align}
    \ell_g(\Gam^h[a,c^h])&\eqx\ell_g(\Gam^k[a,c^k]) \quad\text{for $g=h,k$}        \label{E:MC3:ells eqx a}
    \intertext{and}
    \ell_g(\Gam^h[c^h,b])&\eqx\ell_g(\Gam^k[c^k,b]) \quad\text{for $g=h,k$}\,.        \label{E:MC3:ells eqx b}
  \end{align}
\end{subequations}
Our assertion that $\ell_g(\Gam^h)\eqx\ell_g(\Gam^k)$ then follows by adding these inequalities.

Since $\bp(a)\ge\MM$, we know that $a$ lies ``deep inside'' the LFS annulus $\A(a)$.  We select the points $c^g$ so that they also lie ``deep inside'' $\A(a)$, so then \rf{TT:bad??} applies to the points $a$ and $c^g$, and \eqref{E:MC3:ells eqx a} readily follows; establishing \eqref{E:MC3:ells eqx b} is the more difficult part.

We also pick the points $c^g$ so that $\bp(c^g)\le\MM$.  As $\bp(b)\le\MM$, Main Case II tells us that both
\begin{subequations}\label{E:MC3:**}
  \begin{align}
    \ell_g(\Gam^h[c^h,b])&\eqx\ell_g([c^h,b]_k) \quad\text{for $g=h,k$}        \label{E:MC3:2*ch}
    \intertext{and}
    \ell_g(\Gam^k[c^k,b])&\eqx\ell_g([c^k,b]_h) \quad\text{for $g=h,k$}\,.     \label{E:MC3:2*ck}
  \end{align}
\end{subequations}
Moreover, as shown below (see \rf{ss:***}), we easily have
\begin{subequations}\label{E:MC3:***}
  \begin{align}
    \ell_h(\Gam^h[c^h,b]) \eqx h(c^h,b) \eqx h(c^k,b) = \ell_h([c^k,b]_h)     \label{E:MC3:3*h}
    \intertext{and}
    \ell_k(\Gam^k[c^k,b]) \eqx k(c^k,b) \eqx k(c^h,b) = \ell_k([c^h,b]_k)\,.  \label{E:MC3:3*k}
  \end{align}
\end{subequations}

Using \eqref{E:MC3:3*h}, and then \eqref{E:MC3:2*ck} with $g=h$, we obtain
\begin{gather*}
  \ell_h(\Gam^h[c^h,b])\eqx\ell_h([c^k,b]_h) \eqx\ell_h(\Gam^k[c^k,b])  \quad\text{which is \eqref{E:MC3:ells eqx b} with $g=h$}
  \intertext{and using \eqref{E:MC3:3*k}, and then \eqref{E:MC3:2*ch} with $g=k$, we obtain}
  \ell_k(\Gam^k[c^k,b])\eqx\ell_k([c^h,b]_k) \eqx\ell_k(\Gam^h[c^h,b])  \quad\text{which is \eqref{E:MC3:ells eqx b} with $g=k$}.
\end{gather*}

\smallskip

Thus we must elucidate how to pick points $c^g\in\Gam^g$ that are ``deep inside'' $\A(a)$ (so that \rf{TT:bad??} applies to $a$ and $c^g$) with $\bp(c^g)\le\MM$ (so the inequalities \eqref{E:MC3:**} hold) and also so that the inequalities \eqref{E:MC3:***} hold.

\subsubsection{Picking $c^h$ \& $c^k$} 	\label{ss:Chks} %
We assume $\A(a)$ is centered at the origin, and then define
\[
  \B_a:=\core_{5\mu}\A(a)=\begin{cases}
    \D_*(0;e^{-5\mu}R)            & \text{when $\A(a)=\D_*(0;R)$}\,      \\
    \mfC\sm\D[0;e^{5\mu}R]        & \text{when $\A(a)=\mfC\sm\D[0;R]$}\,  \\
    \core_{5\mu}\A(a)             & \text{when $\A(a)$ is a non-degenerate annulus}\,.
  \end{cases}
\]
Evidently, $a\in \B_a$ and---since $\B_a$ is a LFS annulus---we may assume that $b\not\in \B_a$.\footnote{It is certainly possible that $b$ lies in $\A(a)$ or even in $\BP(a)$.}  Therefore, each quasi-geodesic $\Gam^g$ leaves the annulus $\B_a$, and so in each of the three cases there is a unique collar $\mfC_a$ of $\core_{5\mu}(\B_a)=\core_\SSS\A(a)$ relative to $\B_a$ that each $\Gam^g$ crosses.  (In the first two cases, when $\A(a)$ is a punctured disk, $\mfC_a$ is just $\B_a\sm\core_\SSS\A(a)$; when $\A(a)$ is a non-degenerate annulus, $\mfC_a$ is a component of $\B_a\sm\core_\SSS\A(a)$.)

We define $c^h$ and $c^k$ to be the first points of $\Gam^h$ and $\Gam^k$, respectively, that lie in $\overline\mfC_a$.  Thus $c^g\in\bd\mfC_a\cap\B_a$; in particular, $c^g\in\bd\core_\SSS\A(a)\subset\core_{\log16}\A(a)\sm\core_\SSS\A(a)$.  It now follows from \rf{R:ABC}(c) that $\bp(c^g)\le2\SSS<\MM$.  Moreover, from \eqref{E:z in core bdy} we have $a\in\overline\core_\MM\A(a)$ and thus we see that there is an annulus of modulus $\MM-\SSS=9\SSS$ that separates $\{a,c^g\}$.  Therefore, \rf{TT:bad??} certainly applies to the two pairs of points $a, c^h$ and $a, c^k$.

\subsubsection{Establishing \eqref{E:MC3:***}} 	\label{ss:***} %
Since $c^h$ and $c^k$ lie on the same component of $\bd\mfC_a$, they  are concentric \wrt to center of $\A(a)$ and therefore (see \rf{R:k* ests} and \eqref{E:k* ests})
\begin{gather*}
  k(c^h,c^k)\le2\pi \quad\text{and so also}\quad h(c^h,c^k)\le4\pi\,.
  \intertext{Since $c^g$ and $b$ are separated by $\mfC_a$,}
  k(c^g,b)\ge\md(\mfC_a)=5\mu\,.
  \intertext{As $\mfC_a$ is a collar of $\core_{5\mu}(\B_a)=\core_{10\mu}\A(a)$ relative to $\B_a=\core_{5\mu}\A(a)$, \rf{C:h ge ln3/2} tells us that}
  h(c^g,b)\ge\veps_o:=\log(3/2)\,.
\end{gather*}

Evidently,
\begin{gather*}
  h(c^h,b) \le h(c^h,c^k) + h(c^k,b) \le 4\pi + h(c^k,b) \le (1+4\pi/\veps_o) h(c^k,b)
  \intertext{and similarly, $h(c^k,b)\le(1+4\pi/\veps_o)h(c^h,b)$.  Thus}
  \ell_h(\Gam^h[c^h,b]) \le \Lam\,h(c^h,b) \le (1+4\pi/\veps_o)\Lam\, h(c^k,b)
  \intertext{and}
  h(c^k,b) \le(1+4\pi/\veps_o)h(c^h,b) \le (1+4\pi/\veps_o)\ell(\Gam^h[c^h,b])
\end{gather*}
which gives \eqref{E:MC3:3*h}.

Similarly,
\begin{gather*}
  k(c^h,b) \le k(c^h,c^k) + k(c^k,b) \le 2\pi + h(c^k,b) \le (1+2\pi/5\mu) k(c^k,b)
  \intertext{and $k(c^k,b)\le(1+2\pi/5\mu)k(c^h,b)$.  Thus}
  \ell_k(\Gam^k[c^k,b]) \le \Lam\,k(c^k,b) \le (1+2\pi/5\mu)\Lam\, k(c^h,b)
  \intertext{and}
  k(c^h,b) \le(1+2\pi/5\mu)k(c^k,b) \le (1+2\pi/5\mu)\ell(\Gam^k[c^k,b])
\end{gather*}
which gives \eqref{E:MC3:3*k}.

\subsubsection{Repeating the argument} 	\label{ss:MC3b} %
Finally, suppose $\bp(a)\ge\bp(b)\ge\MM$; this is Main Case III(b).  Roughly speaking, we now repeat the argument for Main Case III(a) using the same construction at each end of both quasi-geodesics.  As above, we get LFS annuli
\[
  \B_a:=\core_{5\mu}\A(a) \quad\text{and}\quad \B_b:=\core_{5\mu}\A(b) \quad\text{with $a\in\B_a$ and $b\in\B_b$}\,.
\]
We remind the reader that if the endpoints $a$ and $b$ both lie ``deep inside'' some LFS annulus to which 
one of \rfs{P:D*:ell-bad-arcs} or \ref{P:Dc:ell-bad-arcs} or \ref{P:A:ell-bad-arcs} applies, then we are done.  In particular, we may assume that $b\not\in \B_a$ and $a\not\in \B_b$.  In fact, below in \rf{ss:Ba cap Bb empty} we explain why we can (and do) assume that $\B_a\cap \B_b=\emptyset$.

Since each quasi-geodesic $\Gam^g$ leaves both annuli $\B_a$ and $\B_b$, there are unique disjoint collars $\mfC_a$ and $\mfC_b$ (of $\core_{5\mu}(\B_a)$ relative to $\B_a$ and of $\core_{5\mu}(\B_b)$ relative to $\B_b$, respectively) that each $\Gam^g$ crosses.

We define $c^g$ and $d^g$ to be, respectively, the first and last points of $\Gam^g$ that lie in $\overline\mfC_a$ and $\overline\mfC_b$.  In particular, $\bp\le\MM$ at all four points $c^h, c^k, d^h, d^k$.  Also, \rf{TT:bad??} applies to the four pairs of points $a,c^g$ and $b,d^g$ ($g=h,k$).  Thus
\begin{subequations}\label{E:MC3b:ells qex}
  \begin{align}
    \ell_g(\Gam^h[a,c^h])&\eqx\ell_g(\Gam^k[a,c^k]) \quad\text{for $g=h,k$}        \label{E:MC3b:ells eqx a}
    \intertext{and}
    \ell_g(\Gam^h[d^h,b])&\eqx\ell_g(\Gam^k[d^k,b]) \quad\text{for $g=h,k$}\,.        \label{E:MC3b:ells eqx b}
  \end{align}
\end{subequations}

We claim that
\begin{equation}\label{E:MC3b:**}
  \ell_g(\Gam^h[c^h,d^h]_g)\eqx\ell_g(\Gam^k[c^k,d^k]) \quad\text{for $g=h,k$}\,.
\end{equation}
Then $\ell_g(\Gam^h)\eqx\ell_g(\Gam^k)$ follows by adding up the appropriate inequalities in \eqref{E:MC3b:ells qex} and \eqref{E:MC3b:**}.

To verify \eqref{E:MC3b:**}, we first note that the assumption $\B_a\cap\B_b=\emptyset$---along with our choices of $c^g$ and $d^g$---ensure that for all choices of $c\in\{c^h,c^k\}$ and $d\in\{d^h,d^k\}$, $h(c,d)\ge2\log(3/2)$ (by \rf{C:h ge ln3/2}) and $k(c,d)\ge10\mu$ (because $c$ and $d$ are separated by $\mfC_a\cup\mfC_b$); also, $k(c^h,c^k)\le2\pi$ and $k(d^c,d^k)\le2\pi$.  As above (in \rf{ss:***}) we readily deduce that
\[
  k(c^h,d^h) \eqx k(c^k,d^k) \quad\text{and}\quad  h(c^h,d^h) \eqx h(c^k,d^k)\,.
\]
Since $\bp\le\MM$ at all four points $c,d$, it follows (from Main Case II) that
\begin{gather*}
  \ell_h(\Gam^k[c^k,d^k]) \eqx \ell_h([c^k,d^k]_h) = h(c^k,d^k) \eqx h(c^h,d^h) \eqx \ell_h(\Gam^h[c^h,d^h])
  \intertext{which is \eqref{E:MC3b:**} with $g=h$.  Similarly,}
  \ell_k(\Gam^h[c^h,d^h]) \eqx \ell_k([c^h,d^h]_k) = k(c^h,d^h) \eqx k(c^k,d^k) \eqx \ell_k(\Gam^k[c^k,d^k])
\end{gather*}
which is \eqref{E:MC3b:**} with $g=k$.

\subsubsection{Assuming $\B_a\cap \B_b=\emptyset$} 	\label{ss:Ba cap Bb empty}%
Finally, we explain why it is ok to assume that $\B_a\cap \B_b=\emptyset$.  In fact, just from the definitions of the cores $\B_a$ and $\B_b$ we (eventually) see that $\B_a\cap \B_b\neq\emptyset$ can occur \ifff either $\A(a)$ and $\A(b)$ are both punctured disks or both complements of closed disks; in each of these situations we show that both points $a$ and $b$ lie in the same LFS annulus to which 
one of \rfs{P:D*:ell-bad-arcs} or \ref{P:Dc:ell-bad-arcs} or \ref{P:A:ell-bad-arcs} applies.

One more time, we must examine the (six) cases that arise from the various possibilities for $\A(a)$ and $\A(b)$.  We leave it to the industrious reader to check that $\B_a\cap \B_b\neq\emptyset$ can occur \ifff either $\A(a)$ and $\A(b)$ are both punctured disks or both complements of closed disks, and we examine these two cases. \flag{Should we prove this?  I cannot imagine that anybody will actually read all of this!  Are you reading this Steve?}

First, suppose $\A(a)=\D_*(\zeta;R)$ and $\A(b)=\D_*(\vth;S)$ are both punctured disks.  Assume $\B_a\cap \B_b\ne\emptyset$.  We claim that either $b\in\D_*(\zeta;e^{-3\mu}R)$ or $a\in\D_*(\vth;e^{-3\mu}S)$, and in both cases  \rf{P:D*:ell-bad-arcs} applies.  Now $\B_a\cap \B_b\ne\emptyset$ implies that
\begin{gather*}
  |\zeta-\vth| \le e^{-5\mu}(R+S) \le 2e^{-5\mu}(R\vee S)\,.
  \intertext{Also, $\del(b)=e^{-\bp(b)}S\le e^{-M}S$, so}
  |b-\zeta| \le |b-\vth|+|\vth-\zeta| = \del(b)+|\vth-\zeta| \le e^{-M}S+|\zeta-\vth|
  \intertext{and similarly, $|a-\vth|\le e^{-M}R+|\zeta-\vth|$.  Therefore,}
  |a-\vth|\vee|b-\zeta| \le e^{-M}(R\vee S) + 2e^{-5\mu}(R\vee S) \le e^{-3\mu}(R\vee S)
\end{gather*}
as claimed.

Next, suppose $\A(a)=\mfC\sm\D[\zeta;R]$ and $\A(b)=\mfC\sm\D[\vth;S]$ are both complements of closed disks.  Assume $\B_a\cap \B_a\ne\emptyset$.  We claim that $a\in\mfC\sm\D[\vth;e^{3\mu}S]$, so \rf{P:Dc:ell-bad-arcs} applies.  Note that $\Om^c\subset\D[\zeta;R]\cap\D[\vth;S]$.   Recall that, by construction of the annuli $\A(a)$ and $\A(b)$, there are $\xi,\eta\in\bOm$ such that $R=|\zeta-\xi|$ and $S=|\vth-\eta|$.  Thus $R\le2S$ and $S\le2R$.  Now $|a-\zeta|=\del(a)=e^{\bp(a)}R\ge\half e^{\bp(a)} S$ and $|\zeta-\vth|\le R\wedge S\le S$, so
\[
  |a-\vth| \ge |a-\zeta|-|\zeta-\vth| \ge \bigl(\half\,e^{\bp(a)}-1\bigr)S \ge \bigl(\half\,e^{M}-1\bigr)S \ge e^{3\mu}S
\]
as claimed.

\subsection{Other Metrics}  \label{s:F&KPT} %
The so-called Ferrand metric and the Kulkarni-Pinkall-Thurston metric are \mob\ invariant conformal metrics defined on proper subdomains of the Riemann sphere.  (See \cite{DAH-univ-cvxty} for their definitions and other references.) So, it is worth recording the following consequence of \rf{TT:geos}.

\begin{cor} \label{C:F&KPY} %
In any hyperbolic domain $\Om$ in the Riemann sphere $\RS$, the following classes of curves are exactly the same, with quantitative estimates between their lengths.
\begin{itemize}
  \item  The hyperbolic quasi-geodesics in $\Om$.
  \item  The Ferrand quasi-geodesics in $\Om$.
  \item  The Kulkarni-Pinkall-Thurston quasi-geodesics in $\Om$.
\end{itemize}
\end{cor}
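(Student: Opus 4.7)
The plan is to reduce to the plane domain setting via a Möbius transformation, then invoke \rf{TT:geos} together with the known bi-Lipschitz comparability between the Ferrand and Kulkarni-Pinkall-Thurston metrics and the quasihyperbolic metric on plane domains.

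First, given a hyperbolic domain $\Om\subset\RS$, I would pick any point $p\in\Om^c$ and apply a Möbius transformation $T$ with $T(p)=\infty$, yielding a hyperbolic plane domain $\Om':=T(\Om)\subset\mfC$. Because the hyperbolic metric, the Ferrand metric, and the Kulkarni-Pinkall-Thurston metric are all Möbius invariant conformal metrics on proper subdomains of $\RS$, the map $T$ preserves lengths of curves and quasi-geodesic constants in each of these three metrics. Hence it suffices to prove the corollary for $\Om'$, using the quasihyperbolic metric $\del^{-1}ds$ as an intermediate bridge.

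Next, in the plane domain $\Om'$, the Ferrand metric $\sig_F\,ds$ and the Kulkarni-Pinkall-Thurston metric $\sig_{KPT}\,ds$ are each bi-Lipschitz equivalent to the quasihyperbolic metric $\del^{-1}ds$ with absolute bi-Lipschitz constants; see \cite{DAH-univ-cvxty} and the references therein. Consequently, an $L$-chordarc path in any one of these three metrics is an $L'$-chordarc path in each of the other two, with $L'$ depending only on $L$ and the absolute bi-Lipschitz constants, and its lengths in the three metrics are pairwise comparable.

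Then I would invoke \rf{TT:geos} in $\Om'$: every hyperbolic $\Lam$-quasi-geodesic is a quasihyperbolic quasi-geodesic (with constant depending only on $\Lam$) and vice versa, with explicit two-sided comparisons between their hyperbolic and quasihyperbolic lengths. Chaining this with the bi-Lipschitz step above shows that the classes of hyperbolic, Ferrand, and Kulkarni-Pinkall-Thurston quasi-geodesics in $\Om'$ all coincide, and that the lengths of any such quasi-geodesic in the three (Möbius invariant) metrics are pairwise comparable, with constants depending only on the quasi-geodesic constant. Pulling back via $T^{-1}$ and using the Möbius invariance of all three metrics transfers the conclusion to $\Om$. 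The main (and essentially the only) obstacle is anchoring the second step with explicit absolute bi-Lipschitz constants between $\sig_F\,ds$, $\sig_{KPT}\,ds$, and $\del^{-1}ds$ on an arbitrary plane domain; once that reference fact is in hand, the rest is a routine composition of metric comparisons and a Möbius transport.
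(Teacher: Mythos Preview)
Your proposal is correct and follows essentially the same approach as the paper's proof: reduce to a plane domain via a M\"obius transformation (using M\"obius invariance of all three metrics), then invoke the known $2$-bi-Lipschitz equivalence of the Ferrand and Kulkarni--Pinkall--Thurston metrics with the quasihyperbolic metric on plane domains, and finally apply \rf{TT:geos}. The paper's version is terser and states the bi-Lipschitz constant explicitly as $2$, but the logical structure is identical.
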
 
\begin{proof}%
Since the three metrics in consideration are all \mob\ invariant, we may assume that $\Om$ is a plane domain.  (If $\Om\not\subset\mfC$, select any point in $\mfC\sm\Om$ and replace $\Om$ by its image under inversion at this point.)  In this setting, we know that both the Ferrand metric and the Kulkarni-Pinkall-Thurston metric are $2$-\BL\ equivalent to the quasihyperbolic metric, and so the assertions follow immediately from \rf{TT:geos}.
\end{proof}%
%
\section{Gromov Hyperbolicity}      \label{S:Gromov bs} 
Here we recall the definition of a Gromov hyperbolic space and the notion of geodesic stability, and present a chordarc surgery lemma.  Then we establish \rf{TT:GrHyp}.

A geodesic metric space $X$ is \emph{Gromov hyperbolic} if there exists a constant $\Th\ge0$ such that every geodesic triangle is \emph{$\Th$-thin}, meaning that each point on any edge of the triangle is at distance at most $\Th$ from the other two edges.

An important property of Gromov hyperbolicity is that each quasi-geodesic is not far from a geodesic.  Thus every quasi-geodesic triangle in a $\Th$-hyperbolic space, is $C\Th$-thin where $C$ depends only on the quasi-geodesic constant.  This even holds for rough-quasi-geodesics; see \cite[Theorem~III.H.1.7, p.401]{Brid-Haef} or \cite[Theorem~8.4.20, p.290]{BB-gromov}.

Bonk proved the following splendid theorem which asserts that the above so-called \emph{geodesic stability} property actually characterizes Gromov hyperbolicity; see \cite{Bonk-quasigdscs}.

\begin{fact}  \label{F:MB} %
A geodesic metric space if Gromov hyperbolic \ifff it is geodesically stable.
\end{fact}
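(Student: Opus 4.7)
The plan is to prove the two directions separately: the forward implication (Gromov hyperbolic $\Rightarrow$ geodesically stable) is the classical Morse stability lemma, while the converse is Bonk's genuinely new contribution and is where the real work lies.

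For the forward direction, I would proceed by the standard ``exponential divergence'' argument. Fix a $\Theta$-hyperbolic geodesic metric space $X$, a $(K,C)$-rough-quasi-geodesic $\gamma$ from $p$ to $q$, and a geodesic $\sigma=[p,q]$. I would first ``tame'' $\gamma$ to a continuous rough-chordarc path by the standard replacement (see \cite[p.\ 403]{Brid-Haef}), then suppose toward a contradiction that some point on $\sigma$ lies at distance $>R$ from $\gamma$, where $R$ is to be determined from $K,C,\Theta$. The key step is to iterate the thin-triangle condition: if a subsegment of $\sigma$ of length $\ell$ stays $R$-far from $\gamma$, then by thinness each of its endpoints is within $\Theta$ of $\gamma$, forcing a subpath of $\gamma$ of length at least roughly $2^{\ell/\Theta}$ between two points at $\sigma$-distance $\ell$, contradicting the $(K,C)$-quasi-geodesic upper bound once $\ell$ is large. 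This gives $R=R(K,C,\Theta)$ and hence stability.

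For the reverse direction, the approach is the contrapositive: assuming $X$ is geodesically stable with stability function $R(K,C)$, show that geodesic triangles are uniformly thin. Let $\Delta$ be a geodesic triangle with sides $[x,y],[y,z],[z,x]$ and let $p\in[x,y]$. I would introduce the ``insize'' $\mathrm{ins}(\Delta)$, the supremum over $p\in[x,y]$ of $\mathrm{dist}(p,[y,z]\cup[z,x])$, and aim to bound it by some $H=H(R(\cdot,\cdot))$. The central construction is this: if $\mathrm{ins}(\Delta)$ were arbitrarily large, one could replace a long subsegment of $[x,y]$ by a detour that first travels via $[x,z]$ then $[z,y]$; by controlling the parametrizations one shows the detour concatenated with the two side-geodesic pieces forms a path which, after reparametrization, is a $(K,C)$-quasi-geodesic with \emph{absolute} constants $K,C$ (this is essentially because geodesics have the ``locally-distance-preserving'' property and the detour is near-geodesic on each side). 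The stability hypothesis then forces this quasi-geodesic to stay within $R(K,C)$ of the original side $[x,y]$, producing a point of $[x,z]\cup[z,y]$ within bounded distance of $p$ and contradicting that $\mathrm{ins}(\Delta)$ was large.

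The main obstacle is the quasi-geodesic detour construction in the reverse direction: concatenations of geodesics are \emph{not} automatically quasi-geodesics, and one must choose the detour carefully (typically near the point of $[x,y]$ realizing the insize, and using the geodesics from that point to the other two sides) so that the resulting polygonal path has controlled backtracking. Bonk's original argument handles this with a clever extremal choice and a careful Gromov-product bookkeeping; this delicacy is why the converse is far from obvious even though the statement is clean. Once the quasi-geodesic constants are shown to be absolute (depending only on combinatorial features of the construction, not on $\mathrm{ins}(\Delta)$), the stability function closes the loop and produces the thinness constant $\Theta=\Theta(R)$, completing the characterization.
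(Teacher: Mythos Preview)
The paper does not prove this fact at all; it is stated as a known theorem of Bonk with a citation to \cite{Bonk-quasigdscs}, and is then used as a black box in the proof of Theorem~\ref{TT:GrHyp}. So there is nothing to compare against on the paper's side.

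As for the content of your sketch: the forward direction is the standard Morse lemma and your outline is fine. For the converse, your description is really a plan rather than a proof --- you correctly identify that the whole difficulty lies in showing the detour path is a $(K,C)$-quasi-geodesic with \emph{absolute} constants independent of $\mathrm{ins}(\Delta)$, and you defer that step to ``Bonk's original argument''. That is honest, but it means the proposal does not actually establish the hard direction. Bonk's construction is more delicate than ``go via $[x,z]$ then $[z,y]$'': he works with a carefully chosen triple of points realizing the minimal insize among certain triangles, and the quasi-geodesic constants come from a specific combinatorial/metric inequality, not from the loose heuristic that geodesics are locally distance-preserving. If you want a self-contained proof, you would need to reproduce that argument (or one of the alternatives, e.g., Papasoglu's thin-bigon criterion); as written, your reverse direction is an accurate summary of where the difficulty lies but not a proof.
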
 %

The following describes a simple type of ``chordarc surgery'' that we use repeatedly and call \emph{pruning the terminal end of $\alf$}; one can also prune the initial end.  Bonk has a more general result (see \cite[Lemma~2.4]{Bonk-quasigdscs}), but we need better control on the precise location of the new quasi-geodesic.  Recall from \eqref{E:chi} our definition of the circular-arc-segment path $\chi_{ab}$ from $a$ to $b$, where $a$ and $b$ are points on separate boundary circles of some annulus.  Recall also \rf{R:k ests in A}. 

\begin{lma} \label{L:CA surgery} %
Let $\alf$ be a quasihyperbolic $\Lam$-chordarc path with initial point $o$.  Let $A$ be 
a concentric subannulus of the $\log2$-core of some annulus in $\mcA_\Om(\log4)$.  Suppose that $o\not\in\cA$ and that $\alf$ meets $A$.  Let $a\in\bA$ be the first point of $\alf$ in $\cA$ and let $b$ be any point in the other component of $\bA$.  Put $\chi:=\chi^{-1}_{ba}$.\footnote{So, $\chi=[a,c]\star\kap$ is the \emph{reverse} of $\chi_{ba}$, where $\kap$ is a circular arc from $c$, the radial projection of $a$, to $b$.}  Then $\beta:=\alf[o,a]\star\chi$ is a quasihyperbolic $M$-chordarc path with $M=3\Lam+(2\pi/\md(A))(\Lam+1)+2$; if $\md(A)\ge2\pi$, $M=M(\Lam)\le4\Lam+3$.
\end{lma}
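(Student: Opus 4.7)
The plan is to verify the chordarc inequality $\ell_k(\beta[s,t]) \le M\, k(s,t)$ for all $s,t \in |\beta|$ by casework on their positions along $\beta = \alf[o,a] \star \chi$. Let $A_0 \in \mcA_\Om(\log 4)$ be the annulus whose $\log 2$-core contains $A$, and WLOG center it at $0$. By \rf{L:del deep in A}, $|z|/2 \le \del(z) \le |z|$ on $\core_{\log 2}(A_0)$, so $k_* \le k \le 2 k_*$ there, where $k_*$ is quasihyperbolic distance in $\Co$; in the cylinder coordinates $(x,y) = (\log|z|, \Arg z)$ induced by $\exp$, $k_*$ is Euclidean. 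Set $r := \md(A)$ and let $C_a, C_b$ be the components of $\bA$ with $a \in C_a$. Since $o \notin \cA$, $a$ is the first point of $\alf$ in $\cA$, and $\alf[o,a)$ lies in a single component $U$ of $\Om \sm \cA$; every point of $U$ has $|z| < |a|$ or $|z| > |b|$ (by connectedness and $U \cap \cA = \emptyset$), and since $a$ is a limit point of $U$, it forces (WLOG, relabelling inner/outer) $U \subset \{|z| < |a|\}$ and hence $|s| \le |a|$ for every $s \in \alf[o,a]$. In coordinates, $a = (x_a, y_a)$, $c = (x_a+r, y_a)$, $b = (x_a+r, y_b)$ with $|y_b - y_a| \le \pi$.

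Case~1 ($s,t \in \alf[o,a]$): the chordarc property of $\alf$ gives $\ell_k(\beta[s,t]) = \ell_k(\alf[s,t]) \le \Lam k(s,t) \le M k(s,t)$. Case~2 ($s,t \in \chi$): a direct computation in cylinder coordinates gives $\ell_{k_*}(\chi[s,t]) \le \sqrt{2}\, k_*(s,t)$ (the ratio is $1$ when $s,t$ lie on the same piece of $\chi$, and $\le \sqrt{2}$ when $s \in [a,c]$ and $t \in \kap$ by $u+v \le \sqrt{2}\sqrt{u^2+v^2}$), so $\ell_k(\chi[s,t]) \le 2\sqrt{2}\, k(s,t) \le M k(s,t)$. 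Case~3 (the main case: $s \in \alf[o,a]$, $t \in \chi$): write $\ell_k(\beta[s,t]) = \ell_k(\alf[s,a]) + \ell_k(\chi[a,t]) \le \Lam k(s,a) + \ell_k(\chi[a,t])$. The decisive step is to bound $\ell_k(\chi[a,t])$ \emph{directly} by $k(s,t)$: if $t = (x_t, y_a) \in [a,c]$, then $\ell_{k_*}(\chi[a,t]) = x_t - x_a \le x_t - x_s \le k_*(s,t)$; if $t = (x_a + r, y_t) \in \kap$, then $\ell_{k_*}(\chi[a,t]) = r + |y_t - y_a| \le r + \pi$ while $k_*(s,t) \ge x_t - x_s \ge r$ (using $x_s \le x_a$), so $\ell_{k_*}(\chi[a,t]) \le (1 + \pi/r)\, k_*(s,t)$. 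Thus in either subcase $\ell_k(\chi[a,t]) \le 2(1+\pi/r)\, k(s,t)$, and the triangle inequality $k(s,a) \le k(s,t) + k(t,a) \le k(s,t) + \ell_k(\chi[a,t])$ yields $k(s,a) \le (3 + 2\pi/r)\, k(s,t)$. Adding,
\[
  \ell_k(\beta[s,t]) \le \Lam(3 + 2\pi/r)\, k(s,t) + 2(1 + \pi/r)\, k(s,t) = \bigl[3\Lam + 2 + (2\pi/r)(\Lam + 1)\bigr]\, k(s,t) = M k(s,t);
\]
when $r \ge 2\pi$, $2\pi/r \le 1$ and thus $M \le 4\Lam + 3$.

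The hard part is Case~3. The naive bound $\ell_k(\chi[a,t]) \le C\, k(a,t)$ (say via the $\sqrt{2}$-estimate for $\chi$) is inadequate: without further information, $k(a,t)$ can be comparable to $\md(A)$ even when $k(s,t)$ is much smaller. The resolution exploits the geometric configuration forced by the ``first hit'' hypothesis: $s$ lies radially below $C_a$ while $t$, when on the arc $\kap$, lies on $C_b$, so the radial separation $x_t - x_s$ in cylinder coordinates is at least $r = \md(A)$. This converts the additive overhead $\pi$ from the circular arc into the multiplicative factor $(1 + \pi/r)$ that appears in $M$. Identifying the correct connected component of $\Om \sm \cA$ containing $o$ (to obtain $|s| \le |a|$) is the other step requiring care, but it is essentially a connectedness argument once the ``first hit'' hypothesis is unpacked.
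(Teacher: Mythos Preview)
Your argument is correct and follows essentially the same route as the paper's proof: the same three-way case split, and in the mixed case the same further split according to whether $t$ lies on the radial segment $[a,c]$ or on the circular arc $\kap$, with the key lower bound $k(s,t)\ge\bigl|\log(|s|/|t|)\bigr|\ge r$ in the latter subcase. Your use of explicit cylinder coordinates and your justification that $\alf[o,a)$ lies entirely on one side of $\bA$ (so that $x_s\le x_a$ in your notation) make explicit a step the paper leaves implicit; the paper simply writes $\bigl|\log(|x|/|y|)\bigr|\ge\bigl|\log(|a|/|y|)\bigr|$ without comment, which tacitly uses exactly this one-sidedness. Constants and the final formula for $M$ match.
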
 
\begin{proof}%
Let $x,y\in|\beta|$.  We show that $\ell_k(\beta[x,y])\le M k(x,y)$.  According to \rf{L:k ests in A}, this holds with $M=4\vee\Lam$ if $x,y$ both lie in $|\alf[o,a]|$ or both in $|\chi|$.  Thus we assume that $x\in|\alf[o,a]|$ and $y\in|\chi|$.  We also assume $c(A)=0$, so e.g., $c=(|b|/|a|)a$.

Suppose $y\in[a,c]$.  From \eqref{E:k ge log} we have
\begin{gather*}
  k(x,y) \ge \bigl|\log\frac{|x|}{|y|}\bigr|  \ge \bigl|\log\frac{|a|}{|y|}\bigr|
  \intertext{and thus by \rf{R:k* ests} and \eqref{E:k* ests}}
  k(a,y) \le \ell_k([a,y]) \le 2 \bigl|\log\frac{|a|}{|y|}\bigr| \le 2k(x,y)\,.
  \intertext{Therefore,}
  \begin{align*}
    \ell_k(\beta[x,y]) &= \ell_k(\alf[x,a])+\ell_k([a,y]) \le \Lam k(x,a) + 2 \bigl|\log\frac{|a|}{|y|}\bigr| \\
                       &\le \Lam \bigl(k(x,y)+k(y,a)\bigr) + 2k(x,y) \le (3\Lam +2)(k(x,y)\,.
  \end{align*}
\end{gather*}

Now suppose $y\in|\kap|$.  Again, \eqref{E:k ge log} tells us that
\begin{gather*}
  m:=\md(A) = \bigl|\log\frac{|a|}{|b|}\bigr| = \bigl|\log\frac{|a|}{|c|}\bigr| \le k(x,y)\,.
  \intertext{Also, by \eqref{E:kap QG} and \eqref{E:[b,c] QG},}
  k(a,y) \le \ell_k(\chi) = \ell_k([a,c])+\ell_k(\kap) \le 2m+2\pi \le (2+2\pi/m)k(x,y)\,.
  \intertext{Thus}
  k(x,a) \le k(x,y)+k(y,a) \le (3+2\pi/m) k(x,y)
  \intertext{and therefore}
  \begin{align*}
    \ell_k(\beta[x,y]) &= \ell_k(\alf[x,a])+\ell_k(\chi_{ab}[a,y]) \le \Lam k(x,a) + \ell_k(\chi_{ab}) \\
                       &\le \Lam(3+2\pi/m) k(x,y) + (2+2\pi/m) k(x,y) \\
                       &\le \bigl(3\Lam+(\Lam+1)\frac{2\pi}m + 2\bigr) k(x,y)\,. \qedhere
  \end{align*}
\end{gather*}
\end{proof}%

The following simple observation is repeatedly applied to handle various cases and subcases in our argument for \rf{TT:GrHyp}.

\begin{lma} \label{L:obs} %
Let $\alf$ be a quasihyperbolic $\Lam$-chordarc path with endpoints $a,b$.  Let $\mu=\mu(\Lam)$ be the asssociated ABC parameter.  Let $\beta$ be a subarc of $\alf$ with endpoints $x,y$ such that $a\le x\le y\le b$; so, $\beta=\alf[x,y]$.  Suppose there is an annulus $A\in\mcA_\Om(6\mu)$ with $c(A)=0$ and such that both
\[
  x,y\in\core_{3\mu}(A) \quad\text{and}\quad  [a,b]_k \;\text{ crosses }\; C:=\{r\le|z|\le s\}
\]
where $r:=|x|\wedge|y|$ and $s:=|x|\vee|y|$.\footnote{We allow the possibility that $r=s$ in which case $C$ is a circle.}  Then for any $p\in|\beta|$ there exists a point $q\in[a,b]_k$ such that $k(p,q)\le 4\mu+2\pi$.
\end{lma}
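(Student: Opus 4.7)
The plan is to use the ABC property to trap both $\beta$ and a suitable crossing subarc of $[a,b]_k$ inside a common thin concentric band around $A_0 := \{r\le|z|\le s\}$, and then to pair each $p\in|\beta|$ with a point $q$ on $[a,b]_k$ of comparable modulus, estimating $k(p,q)$ via the cylinder metric $k_*$ of $\Co$.

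First, since $x,y\in\core_{3\mu}(A)$ and $c(A)=0$, the band
\[
  \band_{2\mu}(A_0) \;=\; \{z\in\mfC : e^{-2\mu}r \le |z|\le e^{2\mu}s\}
\]
is a concentric subannulus of $\core_\mu(A)\subset\core_{\log 2}(A)$; in particular $\band_{2\mu}(A_0)\in\mcA_\Om$, and by \rf{L:del deep in A} we have $\half\del_*\le\del\le\del_*$ throughout it. The subpath $\beta=\alf[x,y]$ has endpoints on $\bd A_0$, so \rfs{R:ABC} combined with \rf{L:ABC}(b) applied to the quasihyperbolic $\Lam$-chordarc path $\alf$ gives $|\beta|\subset\band_{2\mu}(A_0)$. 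Since $[a,b]_k$ crosses $C$, there is a subarc $\gam$ of $[a,b]_k$ with endpoints $\tilde x\in\mfS^1(0;r)$ and $\tilde y\in\mfS^1(0;s)$; because quasihyperbolic geodesics enjoy the $(\pi,\log 2)$-ABC property by \rf{P:ABC}(a) and $\pi\le\mu$, the same lemma yields $|\gam|\subset\band_{2\mu}(A_0)$.

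Given $p\in|\beta|$, so that $e^{-2\mu}r\le|p|\le e^{2\mu}s$, I would produce $q\in|\gam|$ by a three-case split. If $|p|\in[r,s]$, the intermediate value theorem applied to the continuous function $t\mapsto|\gam(t)|$ (which takes both values $r$ and $s$) supplies $q\in|\gam|$ with $|q|=|p|$; if $|p|<r$, set $q:=\tilde x$; if $|p|>s$, set $q:=\tilde y$. In every case $q\in|\gam|\subset[a,b]_k$ and $\bigl|\log(|q|/|p|)\bigr|\le 2\mu$.

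Finally, both $p$ and $q$ lie in $\band_{2\mu}(A_0)\subset\core_{\log 2}(A)$, and the circular-arc-segment path $\chi_{pq}$ from \eqref{E:chi} lies in the closed radial strip $\{|p|\wedge|q|\le|z|\le|p|\vee|q|\}\subset\band_{2\mu}(A_0)$, so
\[
  k(p,q) \le \ell_k(\chi_{pq}) \le 2\,\ell_{k_*}(\chi_{pq}) \le 2\bigl(|\Arg(q/p)| + \bigl|\log(|q|/|p|)\bigr|\bigr) \le 2\pi+4\mu,
\]
as required. The argument is essentially routine; the only minor wrinkle is the three-case split, needed to accommodate the possibility that $p$ lies in the inner or outer collar of $\band_{2\mu}(A_0)$ slightly outside the radial range $[r,s]$ that $\gam$ is guaranteed to sweep through.
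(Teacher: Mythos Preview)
Your argument is correct and follows essentially the same approach as the paper's proof: trap $|\beta|$ in $B:=\band_{2\mu}(C)\csubann\core_{\log2}(A)$ via \rf{L:ABC}(b), then for each $p\in|\beta|$ find a point $q\in[a,b]_k$ at (or near) radius $|p|$ and estimate $k(p,q)$ using the $2$-\BL\ comparison with $k_*$ in $\core_{\log2}(A)$. The paper's version is marginally leaner---it does not bother to trap a subarc $\gam$ of $[a,b]_k$ (which you do not actually use in the final estimate), and instead of the explicit path $\chi_{pq}$ it applies the triangle inequality through the radial projection $p'$ of $p$ onto $C$, giving $k(p,q)\le k(p,p')+k(p',q)\le 4\mu+2\pi$ directly from \eqref{E:k* ests} and \rf{R:k* ests}.
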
 
\begin{proof}%
Clearly, $C\csubann\core_{3\mu}(A)$ and $B:=\band_{2\mu}(C) 
\csubann\core_{\log2}(A)$; the last containment is a simple calculation.  As each collar of $C$ relative to $B$ has modulus $2\mu$, \rf{L:ABC}(b) asserts that $|\beta|\subset B$.  Let $p\in|\beta|$.

Suppose $p\in C$.  Pick $q\in[a,b]_k\cap\mfS^1(0;|p|)$.  Then by \eqref{E:k* ests} and \rf{R:k* ests}, $k(p,q)\le2\pi$.

Suppose $p\not\in C$.  Let $p'$ be the radial projection of $p$ onto $C$.  Note that $\bigr|\log(|p|/|p'|)\bigr|\le2\mu$.  Pick $q\in[a,b]_k\cap\mfS^1(0;|p'|)$.  Again by \eqref{E:k* ests} and \rf{R:k* ests}, $k(p,q)\le k(p,p')+k(p',q)\le4\mu+2\pi$.
\end{proof}%
\noindent
In fact we also use modifications of the above.  \flag{if $c=a$ or $d=b$, prolly get better result....}

\subsection{Start of Proof of Theorem~\ref{TT:GrHyp}}  \label{s:GrHyp pf} %
We show that whenever one of the metric spaces $(\Om,h)$ or $(\Om,k)$ is Gromov hyperbolic, so is the other space; in fact, the hyperbolicity constants depend only on each other.

\medskip

First, suppose $(\Om,k)$ is $\Th$-hyperbolic.  Then quasihyperbolic quasi-geodesic triangles in $\Om$ are $C\Th$-thin where the constant $C$ depends only on the quasi-geodesic data.  Let $\Del=\Del_h=[a,b]_h\cup[b,c]_h\cup[c,a]_h$ be a hyperbolic geodesic triangle in $\Om$.  Thanks to \rf{TT:geos} we know that $\Del$ is also a quasihyperbolic quasi-geodesic triangle where the quasi-geodesic constant is an absolute constant.  Since $h\le2k$, it now follows that $\Del$ is hyperbolically $2C\Th$-thin where $C$ is an absolute constant.  Thus $(\Om,h)$ is also Gromov hyperbolic.

\medskip


Next, suppose $(\Om,h)$ is $\Th$-hyperbolic.  Thanks to \rf{F:MB},  it suffices to demonstrate that $(\Om,k)$ is geodesically stable.
We must verify that each point of any given quasihyperbolic quasi-geodesic is quasihyperbolically roughly close to some fixed quasihyperbolic geodesic (with the same endpoints) with a uniform constant that depends only on the quasi-geodesic data and $\Th$.

Let $\Lam\ge1$.  We produce a constant $D=D(\Lam,\Th)$ such that for any quasihyperbolic $\Lam$-chordarc path $\alf$ with endpoints $a$ and $b$, there exists a fixed quasihyperbolic geodesic $[a,b]_k$ such that for each $p\in|\alf|$ there is a $q\in[a,b]_k$ with $k(p,q)\le D$.  So, let $\alf$ be such a path, and let $[a,b]_k$ be any fixed quasihyperbolic geodesic with the same endpoints as $\alf$.  For convenience, we say that $p\in|\alf|$ is \emph{$D$-close} to $[a,b]_k$ if $k(p,[a,b]_k)\le D$. 

Let $\mu=\mu(\Lam)=\pi\Lam$ be the ABC parameter for quasihyperbolic $\Lam$-chordarc paths, and put $\SSS:=10\mu, \MM:=10\SSS, \LL:=10\MM$.

\smallskip

As in the proof of \rf{TT:geos}, our argument involves several cases each having subcases and subsubcases.  There are two Easy Cases where we assume that $\bp$ is bounded (either above or below) on $\alf$, and then two remaining Main Cases.  Surprisingly, Easy Case II is the heart of our argument, at least in the sense that in Main Case I we reduce back to Easy Case II and in Main Case II we reduce back to Main Case I.  All of this relies heavily on the ideas and constructions in our proof of \rf{TT:geos}.

\subsubsection{Easy Case I} 	\label{ss:thmB ECI} %
Here we assume that along $\alf$, $\bp\ge P$ for some constant $P>6\mu$.  Thanks to \rf{L:bp ge tau} we know that $|\alf|\subset\overline\core_{P/2}\A(a)\csubann\core_{3\mu}\A(a)$.  Appealing to \rf{L:obs} (with $\beta=\alf$) we see that each point of $|\alf|$ is $5\mu$-close to $[a,b]_k$.

\subsubsection{Easy Case II} 	\label{ss:thmB ECII} %
Here we assume that along $\alf$, $\bp\le P$ for some constant $P>\pi$.  Since $[a,b]_k$ has the $(\pi,\log2)$-ABC property, \rf{L:large bp} tells us that $\bp<4 P$ on $[a,b]_k$.

Let $p\in|\alf|$.  Since $(\Om,h)$ is $\Th$-hyperbolic, it is geodesically stable. As $\alf$ and $[a,b]_k$ are both hyperbolic quasi-geodesics with the same endpoints, there is a point $q$ in $[a,b]_k$ with $h(p,q)\le D_h=D_h(\Lam,\Th)$.  We claim that along $[p,q]_h$, $\bp\le10P$.  Therefore, $k(p,q)\le(\kk+10P)h(p,q)\le (\kk+10P)D_h$, so $p$ is $D$-close to $[a,b]_k$ with $D=D(\Lam,\Th)=(\kk+10P)D_h$.

If the claim were false, then we would get a LFS annulus $A$ that separates $\{p,q\}$, but then as $\alf\star[b,a]_k$ crosses $A$ we would have $\bp$ large somewhere on $\alf\star[b,a]_k$.  We supply the details.

Suppose there is a point $z_o\in[p,q]_h$ with $\bp(z_o)>10P$.  Then $\bp<4P$ at each of $p,q$ and $\bp(z_o)>10P$, so by \rfs{L:not punxd} and \ref{L:cores separate} (applied to $[p,q]_h$), $A_o:=\A(z_o)$ is a non-degenerate annulus and $\core_{8P}(A_o)$ separates $\{p,q\}$.  Evidently, $\alf[p,a]\cup[a,q]_k$ joins the points $p, q$ so
\[
  \big(|\alf|\cup[a,b]_k\bigr)\cap\core_{8P}(A_o)\ne\emptyset\,.
\]
However, according to \rf{R:bp ests}(c), in $\core_{8P}(A_o)$ we have $\bp>\half 8P=4P$ which then contradicts $\bp<4P$ on $|\alf|\cup[a,b]_k$.


\medskip

Now we examine the Main Cases where we assume that $\bp$ is both smaller than $\SSS$ at some points of $|\alf|$ and larger than $\LL$ at some points of $|\alf|$.

\subsection{Main Case I in Proof of Theorem~\ref{TT:GrHyp}} 	\label{ss:thmB MCI} %
Here we assume that $\bp(a)\le\SSS$ and $\bp(b)\le\SSS$.  This permits us to utilize the constructions from Main Case I in the proof of \rf{TT:geos}; see \rf{ss:both leS}.  In particular, we get an integer $n\ge1$ and points
\begin{gather*}
    a=:a^h_0<b^h_0<z_1<a^h_1<  \dots <  b^h_{i-1} < z_i < a^h_i < \dots <b^h_{n-1} < z_n < a^h_n < b^h_n:=b
  \intertext{along the quasihyperbolic quasi-geodesic $\alf$ such that}
    \alf:=\gam_0 \star \beta_1 \star \gam_1 \star \dots \star \beta_n \star \gam_n
  \intertext{where the ``good'' and ``bad'' subarcs of $\alf$ are}
  \gam_i:=\alf[a_i,b_i] \;\;(0\le i\le n) \quad\text{and}\quad \beta_i:=\alf[b_{i-1},a_i] \;\;(1\le i\le n)\,.
\end{gather*}
Recall too that $b_{i-1}$ and $a_i$ are, respectively, the first and last points of $\alf$ in $\overline\core_{2\MM}(A_i)$ where $A_i:=\A(z_i)$ are non-degenerate annuli that each separate $\{a,b\}$ and that satisfy $\core_q(A_i)\cap\core_q(A_j)=\emptyset$ for any $q\in(\log16,\LL)$.

In particular, $[a,b]_k$ crosses $\overline\core_{2\MM}(A_i)$ and $|\beta_i|\subset\core_\MM(A_i)\csubann\core_{3\mu}(A_i)$.  It therefore follows, from \rf{L:obs}, that each point of each $\beta_i$ is $5\mu$-close to $[a,b]_k$.  Thus it remains to reveal why the points of each $\gam_i$ are quasihyperbolically roughly close to the quasihyperbolic geodesic $[a,b]_k$.

Fix $0<i<n$ and consider the ``good'' subarc $\gam_i$; the two subcases where $\gam_i$ is a ``tail'' of $\alf$ (i.e., $i=0$ or $i=n$) are left for the diligent reader.  Let $c$ and $d$ be, respectively, the last and first points of $\gam_i$ in $\overline\core_{\MM}(A_i)$ and $\overline\core_{\MM}(A_{i+1})$.  Then let $\tilde{c}$ and $\tilde{d}$ be the radial projections of $c$ and $d$, respectively, onto $\overline\core_{2\MM}(A_i)$ and $\overline\core_{2\MM}(A_{i+1})$ (so, $\tilde{c}\in\bd\core_{2\MM}(A_i)$ and $\tilde{d}\in\bd\core_{2\MM}(A_{i+1})$); here by radial projection we mean \wrt the centers of the two annuli $A_i$ and $A_{i+1}$, respectively.

Next, pick any point $\tilde{a}\in[a,b]_k\cap\bd\core_{2\MM}(A_i)$ so that $a_i, \tilde{a}, \tilde{c}$ all lie in the same boundary circle, and pick $\tilde{b}\in[a,b]_k\cap\bd\core_{2\MM}(A_{i+1})$ so that $b_i, \tilde{b}, \tilde{d}$ all lie in the same boundary circle.  We are now positioned to perform (twice) the ``chordarc surgery'' as described in \rf{L:CA surgery}; that is, we prune both ends of $\alf$.  Doing this we obtain a quasihyperbolic quasi-geodesic
\[
  \tilde\alf:=\chi_{\tilde{a} c} \star \tilde\gam \star \chi_{\tilde{b}d}^{-1} \quad\text{where}\quad
  \tilde\gam:=\gam_i[c,d],\;
  \chi_{\tilde{a}c}=\kap_{\tilde{a}\tilde{c}}\star[\tilde{c},c],\;
  \chi_{\tilde{b}d}=\kap_{\tilde{b}\tilde{d}}\star[\tilde{d},d]\,.
\]
Here the circular-arc-segment paths $\chi_{\tilde{a}c}$ and $\chi_{\tilde{b}d}$ are constructed as described in \eqref{E:chi}.

It is straightforward to check that $\tilde\alf$ is a quasihyperbolic $\tilde\Lam$-chordarc path where $\tilde\Lam=4\Lam+3$; see the proof of \rf{L:CA surgery}, but note too that the quasihyperbolic distance between the arcs $\chi_{\tilde{a} c}$ and $\chi_{\tilde{b}d}$ is at least $2(\MM-\SSS)=(9/5)\MM$.

Now by construction, $\bp\le\LL$ along $\gam_i$, so also along $\tilde\gam$.  Next, as $|\chi_{\tilde{a} c}|\subset\core_{\log16}(A_i)\sm\core_{2\MM}(A_i)$ and $|\chi_{\tilde{b}d}|\subset\core_{\log16}(A_{i+1})\sm\core_{2\MM}(A_{i+1})$, we know that $\bp\le 4\MM$ on each of these arcs (e.g., by \rf{R:bp ests}(b)).  Thus $\bp\le\LL$ on all of $|\tilde\alf|$.   Therefore, we can apply Easy Case~II, with $P:=\LL=10^3\mu=10^3\pi\Lam$, to the quasihyperbolic $\tilde\Lam$-chordarc path $\tilde\alf$ (and $[\tilde{a},\tilde{b}]_k\subset[a,b]_k$) to assert that each point of $|\tilde\alf|$ is $D$-close to $[\tilde{a},\tilde{b}]_k$ with $D=D(\Lam,\Th)=(\kk+10\LL)D_h$.  In particular, each point of $|\tilde\gam|$ is $D$-close to $[a,b]_k$.

It remains to examine points in $|\gam_i[a_i,c]|\cup|\gam[d,b_i]|$.  Our choices of $a_i, b_i, c, d$ give us that $|\gam_i[a_i,c]|=|\alf[a_i,c]|\subset\overline\core_{\MM-2\mu}(A_i)\sm\core_{2\MM}(A_i)$ and $|\gam_i[d,b_i]|=|\alf[d,b_i]|\subset\overline\core_{\MM-2\mu}(A_{i+1})\sm\core_{2\MM}(A_{i+1})$.  Using \eqref{E:k* ests} and \rf{R:k* ests} we deduce that for each $p\in|\gam_i[a_i,c]|$,
\[
  k(p,\tilde{a}) \le 2\pi+2(\MM+2\mu) \le 205\mu\,,
\]
and similarly each point of $|\gam[d,b_i]|$ is also $205\mu$-close to $\tilde{b}$.

\smallskip

Thus, when $\bp\le\SSS$ at both endpoints of $\alf$, each point in $|\alf|$ is $D$-close to $[a,b]_k$ with $D=D(\Lam,\Th)=(\kk+10\LL)D_h$.\footnote{Here we assume that $D_h\ge1$.}  Here $\SSS=\SSS(\Lam)=10\mu=10\pi\Lam$ and $\LL=\LL(\Lam)=100\SSS$.

\subsection{Main Case II in Proof of Theorem~\ref{TT:GrHyp}} 	\label{ss:thmB MCII} %
As above, we must demonstrate that there is a constant $D=D(\Lam,\Th)$ such that each $p\in|\alf|$ is $D$-close to $[a,b]_k$.  To this end, eventually, we use ``chordarc surgery'' (see \rf{L:CA surgery}) to prune the ends of $\alf$ and replace it by another quasihyperbolic chordarc path $\alf':=\chi_{a'c}\star\alf[c,d]\star\chi^{-1}_{b'd}$ where: $c,d\in|\alf|$; the endpoints $a',b'$ of $\alf'$ lie in $[a,b]_k$ with $\bp\le\SSS':=4\SSS$ at each of $a',b'$; and such that all points of $|\alf[a,c]|\cup|\alf[d,b]|$ are quasihyperbolically roughly close to $[a,b]_k$.  Once this is accomplished, we then apply Main Case I to $\alf'$ and $[a',b']_k\subset[a,b]_k$ to assert that all points of $|\alf'|$ are quasihyperbolically roughly close to $[a',b']_k\subset[a,b]_k$.  Since $|\alf'|\supset\alf[c,d]$, we then see that all points of $|\alf|$ are quasihyperbolically roughly close to $[a,b]_k$.


In a number of special cases we can immediately assert that all points of $|\alf|$ are quasihyperbolically roughly close to $[a,b]_k$; in these cases we are done with the proof.  For example, according to \rf{L:obs}, if there is an annulus $A\in\mcA_\Om(6\mu)$ with $a,b\in\core_{3\mu}(A)$, then each $p\in|\alf|$ is $5\mu$-close to $[a,b]_k$.  In particular, we may, and do, assume that for any such $A$, $\{a,b\}\not\subset\core_{3\mu}(A)$.

\medskip

Our first goal is to prune the $a$-end of $\alf$.  We seek points $a'\in[a,b]_k$ and $c\in|\alf|$ such that $\bp(a')\le\SSS'=4\SSS$, each $p\in|\alf[a,c]|$ is quasihyperbolically roughly close to $[a,b]_k$, and so that $a',c$ lie on the boundary of some thin (not too fat) annulus.  Once obtained, we replace $\alf$ by the quasihyperbolic chordarc path $\beta:=\chi_{a'c}\star\alf[c,d]$.

\subsubsection{First Step} 	\label{ss:thmB-MCII-FS} %
If already $\bp(a)\le\SSS'$, then we set $c:=a':=a$, $\beta:=\alf$, and go to \rf{ss:thmB-MCII-PS} (the Penultimate Step).

\smallskip  

Assume $\bp(a)>\LL$.  Put $A:=\A(a)$ and assume $c(A)=0$; $A$ could be a degenerate annulus.  If $b\in\core_{3\mu}(A)$, then we are done (by \rf{L:obs}); assume $b\not\in\core_{3\mu}(A)$.  Recalling that $a\in\bd\core_{\bp(a)}(A)\subset\core_\LL(A)$ (see \eqref{E:z in core bdy}), we see that there is a unique collar $C$ of $\core_{2\SSS}(A)$ relative to $\core_\SSS(A)$ with the property that both $\alf$ and $[a,b]_k$ cross $C$.  (If $A$ is degenerate, there is only one such collar, while if $A$ is non-degenerate there are two such collars but only one that gets crossed.)

Let $c$ be the last point of $\alf$ in $\overline\core_\SSS(A)$ and let $c'$ be the radial projection of $c$ onto $\overline\core_{2\SSS}(A)$; so $c\in\bd C\cap\bd\core_\SSS(A)$, $c'\in\bd C\cap\bd\core_{2\SSS}(A)$, and $[c,c']$ is a radial segment of $C$ that joins the boundary circles of $C$.
It is not difficult to check that $a,c\in\core_{3\mu}(A)$, and thus by \rf{L:obs} all points of $|\alf[a,c]|$ are $5\mu$-close to $[a,b]_k$.

Now pick any point $a'\in[a,b]_k\cap\bd\core_{2\SSS}(A)$.  Then $a'$ and $c'$ lie in the same component of $\bd C$ (i.e., in the same boundary circle).  Thus there is a circular arc $\kap':=\kap_{a'c'}$ and a quasihyperbolic chordarc path $\chi_{a'c}:=\kap'\star[c',c]$ from $a'$ through $c'$ to $c$ (cf.\ \eqref{E:chi}).  Therefore, we can do the ``chordarc surgery'' as described in \rf{L:CA surgery} to obtain the quasihyperbolic $M$-chordarc path $\beta:=\chi_{a'c}\star\alf[c,b]$ from $a'$  to $b$ with $M=4\Lam+3$.

Finally, note that $a'\in\core_{\log16}(A)\sm\core_{2\SSS}(A)$, so by \rf{R:bp ests}(c) $\bp(a')\le4\SSS=\SSS'$.  Now we go to \rf{ss:thmB-MCII-PS} (the Penultimate Step).

\smallskip  
Assume $\SSS'<\bp(a)\le\LL$.  Let $z_o$ be the first point $z$ of $\alf$ with $\bp(z)=\LL$.  Put $A_o:=\A(z_o)$ (which could be a degenerate annulus) and assume $c(A_o)=0$.  Eventually---see \rf{ss:thmB-MCII-NS}---we will deduce that we can assume $b\not\in\core_\SSS(A_o)$ and so we can mimic the above argument to prune the $a$-end of $\alf$ via ``chordarc surgery''.

Again, we may assume $\{a,b\}\not\subset\core_{3\mu}(A_o)$; but, note that---as $z_o\in|\alf|$---the quasi-geodesic $\alf$ enters deep into $A_o$.  Let $S_o$ be the boundary circle of $\core_\LL(A_o)$ that contains $z_o$ (if $A_o$ is degenerate, then $S_o=\bd\core_\LL(A_o)$) and for $j=1,2$ let $B_j:=\band_{2j\mu}(S_o)$\footnote{So, $B_1$ and $B_2$ are open annuli with $\mfS^1(B_j)=S_o$ and $\md(B_j)=4j\mu$.}.  Then
\[
  S_o\subset B_1\csubann B_2\csubann \core_{\LL-4\mu}(A_o) \csubann \core_{3\mu}(A_o)\,,
\]
so $\{a,b\}\not\subset B_2$, and if $\{a,b\}\cap B_1=\emptyset$, then $B_1$ separates $\{a,b\}$.
It is not difficult, albeit tedious, to check (e.g., using \rf{R:bp ests}(c)) that for both $j=1$ and $j=2$,
\[
  \half\bigl( \LL-2j\mu\bigr) \le \bp \le 2\bigl(\LL+2j\mu\bigr) \quad\text{in $B_j$}\,.
\]

Suppose $b\in B_1$.  Then $|\alf[z_o,b]|\subset B_2$ (by the ABC property) and therefore $\bp\le2(\LL+4\mu)$ on $|\alf[z_o,b]|$.  Since $\bp\le\LL$ on $\alf[a,z_o]$ (by construction), we can appeal to Easy Case II (with $P:=2(\LL+4\mu)$) to conclude that each $p\in|\alf|$ is $D$-close to $[a,b]_k$ with $D:=[\kk+20(\LL+4\mu)]D_h$.  Here we are done with the proof.

Suppose $b\not\in B_1$.  We claim that each point of $|\alf[a,z_o]|$ is quasihyperbolically roughly close to $[a,b]_k$.  First, suppose $a\in B_1$ (the easy case).  Then $a,z_o\in B_1\csubann\core_{3\mu}(A_o)$, and we would like to appeal to \rf{L:obs} but cannot.  We do know that $|\alf[a,z_o]|\subset B_2$ (by the ABC property).  Let $p\in|\alf[a,z_o]|$.  Put $p':=\bigl(|a|/|p|\bigr)p$.  Since $a,b\in B_2\csubann\core_{\log2}(A_o)$, \eqref{E:k* ests} and \rf{R:k* ests} tell us that
\[
  k(p,a) \le k(p,p') + k(p',a) \le 2\bigl|\log\frac{|p|}{|p'|}\bigr|+2\pi \le 8\mu+2\pi\,.
\]

Next, suppose $a\not\in B_1$.  This subcase requires a wee bit of work.  Here $\{a,b\}\cap B_1=\emptyset$, so $B_1$ separates $\{a,b\}$ and we can write $\bB_1=S_a\cup S_b$ where $S_a$ and $S_b$ are, respectively, the first and last components of $\bB_1$ that $\alf$ meets.  Now we do ``chordarc surgery'' replacing $\alf[a,z_o]$ with another quasihyperbolic quasi-geodesic $\tilde\alf$ which has endpoints $a,\tilde{b}\in[a,b]_k$ and with $\bp\lex1$ along all of $\tilde\alf$.  Then Easy Case II tells us that all points in $|\tilde\alf|$ are quasihyperbolically roughly close to $[a,b]_k$, and it is not hard to check that the same is true for points in $|\alf[a,z_o]|\sm|\tilde\alf|$.

With this goal in mind, let $z_a$ be the first point of $\alf$ in $S_o$ and pick any point $\tilde{b}\in[a,b]_k\cap S_b$.  Now do ``chordarc surgery'' via \rf{L:CA surgery} using the quasi-geodesic $\alf$, the annulus with boundary circles $S_o$ and $S_b$, and the points $z_a$ and $\tilde{b}$ to get $\tilde\alf:=\alf[a,z_a]\star\chi_{z_a\tilde{b}}$ which is a quasihyperbolic $M$-chordarc path with endpoints $a,\tilde{b}\in[a,b]_k$ and $M=4\Lam+3$.

Clearly, $|\tilde\alf[a,z_a]|=|\alf[a,z_a]|\subset|\alf[a,z_o]|$, so $\bp\le\LL$ on $|\tilde\alf[a,z_a]|$.  Also, $|\tilde\alf[z_a,\tilde{b}]|=|\chi_{z_a\tilde{b}}|\subset\cB_1$, so $\bp\le2(\LL+2\mu)$ on $|\tilde\alf[z_a,\tilde{b}]|$.  Appealing to Easy Case II we deduce that each $p\in|\tilde\alf|$ is $D$-close to $[a,\tilde{b}]_k\subset[a,b]_k$ with $D:=[\kk+20(\LL+2\mu)]D_h$.

Since $\alf[a,z_a]=\tilde\alf[a,z_a]$, it remains to check points in $|\alf[z_a,z_o]|\subset B_1$.  As $z_a$ and $z_o$ both lie in the circle $S_o$, we can proceed as in the easy case (where $a\in B_1$---see four paragraphs above) to see that these points are $5\mu$-close to $[a,b]_k$.

\subsubsection{Next Step} 	\label{ss:thmB-MCII-NS} %
Here we continue from the First Step; we have $\SSS'<\bp(a)\le\LL$, $b\not\in B_1$, and we know that all points $p\in|\alf[a,z_o]|$ are $D_1$-close to $[a,b]_k$ with $D_1:=[\kk+20(\LL+4\mu)]D_h$.  To start, we explain why we may assume that $b\not\in\core_\SSS(A_o)$.

Suppose $b\in\core_\SSS(A_o)$.  Then $a\not\in\core_\SSS(A_o)$, since $\core_\SSS(A_o)\csubann\core_{3\mu}(A_o)$.  In particular, $\{a,b\}\cap B_1=\emptyset$ and $B_1$ separates $\{a,b\}$.  Therefore, $[a,b]_k$ crosses the annulus with boundary circles $S_o$ and $\mfS^1(0;|b|)$.  Since $z_o,b\in\core_{3\mu}(A_o)$, we can appeal to \rf{L:obs} to assert that each $p\in|\alf[z_o,b]|$ is $5\mu$-close to $[a,b]_k$.  Here we are done with the proof.

Assume $b\not\in\core_\SSS(A_o)$.  Our goal is to prune the $a$-end of $\alf$ via ``chordarc surgery'' as described in \rf{L:CA surgery}.  We seek the appropriate points $a'\in[a,b]_k$ and $c\in|\alf|$ as described in the paragraph preceding \rf{ss:thmB-MCII-FS}.

We do not know exactly where the point $a$ lies, but as in the First Step (see the first paragraph), there is a unique collar $C$ of $\core_{2\SSS}(A_o)$ relative to $\core_\SSS(A_o)$ with the property that $\alf[z_o,b]$ crosses $C$.  (Again, if $A_o$ is degenerate, there is only one such collar, while if $A$ is non-degenerate there are two such collars but only one that gets crossed:-)

Since $\alf$ crosses $C$ and $\{a,b\}\cap C=\emptyset$, we know from \rf{L:ABC}(d) that $C$ separates $\{a,b\}$.  Therefore $[a,b]_k$ also crosses $C$.  Thus we can write $\bd C=S_a\cup S_b$ where $S_a$ and $S_b$ are, respectively, the first and last components of $\bd C$ that $\alf$ meets.

Let $c$ be the last point of $\alf$ in $S_b$, let $c'$ be the radial projection of $c$ onto $\overline\core_{2\SSS}(A_o)$ (so $c'\in S_a$), and pick any point $a'\in[a,b]_k\cap S_a$.  According to \rf{L:CA surgery}, $\beta:=\chi_{a'c}\star\alf_{cb}$ is a quasihyperbolic $M$-chordarc path with endpoints $a',b$ and $M=4\Lam+3$.  Note that as $a'\in S_a\subset\bd\core_{2\SSS}(A_o)\subset\core_{\log16}(A_o)\sm\core_{2\SSS}(A_o)$, we have $\bp(a')\le4\SSS=\SSS'$ (thanks to \rf{R:bp ests}(c)).

We claim that all points in $|\alf[a,c]|$ are quasihyperbolically roughly close to $[a,b]_k$.  We already know this for points in $|\alf[a,z_o]|$ (with constant $D_1$), so we need only check points in $|\alf[z_o,c]|$.

Let $A_1$ be the annulus with boundary circles $S_o$ and $S_b$.  Suppose $A_1$ separates $\{a,b\}$.  Then $[a,b]_k$ crosses $A_1$ and $z_o,c\in\cA_1\subset\overline\core_{2\SSS}(A_o)\csubann\core_{3\mu}(A_o)$, so \rf{L:obs} tells us that points in $|\alf[z_o,c]|$ are $5\mu$-close to $[a,b]_k$.

Suppose $A_1$ does not separate $\{a,b\}$.  Then $a\in A_1\cap B_1$.  Since $[a,b]_k$ crosses the annulus with boundary circles $\mfS^1(0;|a|)$ and $\mfS^1(0;|c|)=S_b$, \rf{L:obs} tells us that all $p\in|\alf[z_o,c]|$ with $|p|\ge|a|$ are $5\mu$-close to $[a,b]_k$.  Thus we are left to examine points in $|\alf[z_o,c]|\cap B_1$.

Take $p\in|\alf[z_o,c]|\cap B_1$ and let $p'$ be the radial projection of $p$ onto the circle $\mfS^1(0;|a|)$.  From \eqref{E:k* ests} and \rf{R:k* ests} we obtain $k(p,a)\le k(p,p')+k(p',a) \le 2\bigl|\log\bigl(|p|/|p'|\bigr)\bigr|+2\pi\le8\mu+2\pi$.

We have demonstrated that all points $p\in|\alf[a,c]|$, are $D_1$-close to $[a,b]_k$.  Note too that points in $|\chi_{a'c}|$ are $2\pi$-close to $[a,b]_k$.

\subsubsection{Penultimate Step} 	\label{ss:thmB-MCII-PS} %
We arrive here with $\alf$ replaced by the quasihyperbolic $M$-chordarc path $\beta:=\chi_{a'c}\star\alf[c,b]$ where $M=4\Lam+3$, $a'\in[a,b]_k$ with $\bp(a')\le\SSS'$, and $c\in|\alf|$ with all $p\in\alf[a,c]$ $D_1$-close to $[a,b]_k$.  If $\bp(b)\le\SSS'$, we put $d:=b':=b$, $\alf':=\beta$, and go to \rf{ss:thmB-MCII-LS} (the Last Step).

Suppose $\bp(b)>\SSS'$.  We repeat \emph{all} of above, as necessary, for the reverse quasihyperbolic quasi-geodesic $\beta^{-1}$ and $[b,a']_k$.  If any of Easy Case~I or Easy Case~II or Main Case~I applies, then we are done with the proof.  So, we may assume that we get to Main Case~II, and that we have again completed the First and Next Steps, as described in \rf{ss:thmB-MCII-FS} and \rf{ss:thmB-MCII-NS}, for $\beta^{-1}$ and $[b,a']_k$.  (At some time during these two steps, we could have been done with the proof, but we assume otherwise.)

We should exercise some caution here, so let's be explicit.  The quasihyperbolic $M$-chordarc path $\beta$ enjoys the $(\nu,\log2)$-ABC property where $\nu=\nu(M)=\pi M=\pi(4\Lam+3)$; see \rf{R:ABC}.  By performing the First and Next Steps we prune the $b$-end of $\beta$ via ``chordarc surgery'' to obtain a quasihyperbolic $N$-chordarc path $\alf':=\beta[a',d]\star\chi^{-1}_{b'd}$ where $N=4M+3$, $b'\in[a',b]_k$ with $\bp(b')\le\SSS'(M)$, and $d\in|\beta|$ with all $p\in|\beta[d,b]|$ $D_2$-close to $[a',b]_k\subset[a,b]_k$ for $D_2:=[\kk+20(\LL(M)+4\nu)]D_h$.

Here $\SSS'(M):=4\SSS(M)=40\nu=40\pi M$ and $\LL(M)=10^3\nu$.  Note too that we can assume that $d$ is some point in $\alf[c,b]$, so $\beta[a',d]=\chi_{a'c}*\alf[c,d]$ and thus $\alf'=\chi_{a'c}*\alf[c,d]\star\chi^{-1}_{b'd}$.

\subsubsection{Last Step} 	\label{ss:thmB-MCII-LS} %
We arrive here with $\alf$ replaced by the quasihyperbolic $N$-chordarc path $\alf'=\chi_{a'c}*\alf[c,d]\star\chi^{-1}_{b'd}$ with endpoints $a',b'\in[a,b]_k$ where $N=4M+3$, $\bp(a')\le\SSS'(\Lam)=4\SSS(\Lam)$, $\bp(b')\le\SSS'(M)=4\SSS(M)$, and all points in $|\alf[a,c]|\cup|\alf[d,b]|$ are $D$-close to $[a,b]_k$ with $D:=D_1\vee D_2$.  It remains to establish that points in $|\alf[c,d]|$ are quasihyperbolically roughly close to $[a,b]_k$.

If either Easy Case~I or Easy Case~II applies to $\alf'$, we are done with the proof.  We claim that we can appeal to Main Case I to assert that all points in $|\alf'|$ are $D'$-close to $[a',b']_k$ with $D':=\bigl(\kk+10\LL(N)\bigr) D_h$.  In order to use Main Case~I with the quasihyperbolic $N$-chordarc path $\alf'$, we need to know that $\bp\le\SSS(N)=10\,\om$ at both endpoints of $\alf'$, where $\om=\om(N)$ is the ABC property parameter for quasihyperbolic $N$-chordarc paths.  Now, $\om=\pi N=\pi (4M+3)>4\pi M=4\nu$, so
\[
  \SSS(N)=10\,\om > 4\cdot10\nu = 4\SSS(M) = \SSS'(M) \ge \bp(b') > \SSS'(\Lam) \ge \bp(a')\,. \quad\text{\HappyFace}
\]

\bibliographystyle{amsalpha} 
\bibliography{mrabbrev,bib}  

\end{document} 

paper outline  %
Intro          %
Prelims        %
  basic info   %
  std notation %
Examples       %
Theorems       %
Questions      %


\section{Introduction}  \label{S:Intro} 
\subsection{bla}  \label{s:bla} 
\begin{Thm}  \label{TT:what} 
\end{Thm}                    

\begin{Cor}  \label{CC:..} 
\end{Cor}                  

\begin{thm} \label{T:ttt} %
\end{thm} 
\begin{proof}%
\end{proof}%

\begin{prop} \label{P:ppp} %
\end{prop} 
\begin{proof}%
\end{proof}%

\begin{lma} \label{L:lma} %
\end{lma} 

\begin{fact}  \label{F:ff} %
\end{fact} %
also use for:  remarks(s), examples, defns

\begin{rmks}  \label{R:} 
blah blah blah
\smallskip\noindent(a)
\smallskip\noindent(a)

OR can use below but it gets seriously indented :-((
\begin{itemize}
  \item[ ]
  \item[(a)]
  \item[(b)]
\end{itemize}
\end{rmks} %

\begin{qstn} \label{Q:...} %

figures, pictures, diagrams etc %





This document is organized as follows: Section~\ref{S:Prelims} contains preliminary information including basic definitions and terminology as well as elementary and/or well-known facts.  We exhibit examples and examine ...  In Section~\ref{S:misc} we verify ....